\setlist[itemize,1]{leftmargin=\dimexpr 26pt-.1in}
\newtheorem{PARA}{}[section]
\newtheorem{theorem}[PARA]{Theorem}
\newtheorem{corollary}[PARA]{Corollary}
\newtheorem{lemma}[PARA]{Lemma}
\newtheorem{proposition}[PARA]{Proposition}
\newtheorem{definition}[PARA]{Definition}
\newtheorem{definition-proposition}[PARA]{Definition-Proposition}
\newtheorem{conjecture}[PARA]{Conjecture}
\theoremstyle{definition}
\newtheorem{remark}[PARA]{Remark}
\theoremstyle{theorem}
\newtheorem{example}[PARA]{Example}
\newcommand{\para}{\begin{PARA}\rm}
\newcommand{\arap}{\end{PARA}\rm}
\newcommand{\dfn}{\begin{definition}\rm}
\newcommand{\nfd}{\end{definition}\rm}
\newcommand{\rmk}{\begin{remark}\rm}
\newcommand{\kmr}{\end{remark}\rm}
\newcommand{\xmpl}{\begin{example}\rm}
\newcommand{\lpmx}{\end{example}\rm}
\newcommand{\cC}{\mathcal{C}}
\newcommand{\cD}{\mathcal{D}}
\newcommand{\cH}{\mathcal{H}}
\newcommand{\cM}{\mathcal{M}}
\newcommand{\cP}{\mathcal{P}}
\renewcommand{\H}{{\mathbb{H}}}
\newcommand{\N}{{\mathbb{N}}}
\newcommand{\R}{{\mathbb{R}}}
\newcommand{\Z}{{\mathbb{Z}}}
\newcommand{\Ccal}{{\mathcal{C}}}
\newcommand{\Mcal}{{\mathcal{M}}}
\newcommand{\Ncal}{{\mathcal{N}}}
\newcommand{\coker}{\mathrm{ coker }}  
\newcommand{\colim}{\mathrm{ colim}\, }  
\newcommand{\im}{\mathrm{im}\,}        
\newcommand{\ev}{\mathrm{ev}}
\newcommand{\coev}{\mathrm{coev}}
\newcommand{\Hom}{\mathrm{Hom}}
\newcommand{\out}{\mathrm{out}}
\newcommand{\inn}{\mathrm{in}}
\newcommand{\eps}{{\varepsilon}}
\newcommand{\CZ}{\mathrm{CZ}}
\def\NABLA#1{{\mathop{\nabla\kern-.5ex\lower1ex\hbox{$#1$}}}}
\def\Nabla#1{\nabla\kern-.5ex{}_{#1}}
\def\Tabla#1{\Tilde\nabla\kern-.5ex{}_{#1}}
\renewcommand{\Tilde}{\widetilde}
\newcommand{\p}{{\partial}}
\newcommand{\ol}{\overline}
\newcommand{\wh}{\widehat}
\newcommand{\simplex}{{\Delta \hspace{-8.1pt}\Delta}}
\newcommand{\bk}{\mathbf{k}}
\newcommand{\hatotimes}{{\widehat\otimes}}
\newcommand{\boldDelta}{{\boldsymbol{\Delta}}}
\newcommand{\boldlambda}{{\boldsymbol{\lambda}}}
\newcommand{\boldmu}{{\boldsymbol{\mu}}}
\newcommand{\boldeta}{{\boldsymbol{\eta}}}
\newcommand{\boldeps}{{\boldsymbol{\eps}}}
\newcommand{\boldc}{{\boldsymbol{c}}}
\newcommand{\boldp}{{\boldsymbol{p}}}
\renewcommand{\comment}[1]{}
\begin{document}

\title[BV bialgebra structures]{BV bialgebra structures in Floer theory and string topology}
\author{Janko Latschev}
\address{Universit\"at Hamburg \newline Fachbereich Mathematik,
Bundesstrasse 55 (Geomatikum), 20146 Hamburg, Germany}
\email{janko.latschev@uni-hamburg.de} 
\author{Alexandru Oancea}
\address{Universit\'e de Strasbourg \newline Institut de Recherche Math\'ematique Avanc\'ee \newline Strasbourg, France}
\email{oancea@unistra.fr}
\date{\today}


\begin{abstract} 
We derive the notions of BV unital infinitesimal bialgebra and BV Frobenius algebra from the topology of suitable compactifications of moduli spaces of decorated genus 0 curves. We construct these structures respectively on reduced symplectic homology and Rabinowitz Floer homology. As an application, we construct these structures in nonequivariant string topology. We also show how the Lie bialgebra structure in equivariant string topology, and more generally on $S^1$-equivariant symplectic homology, is obtained as a formal consequence.  
\end{abstract}

\maketitle

{\footnotesize{\tableofcontents}}


\section{Introduction}\label{sec:introduction}

This paper is situated at the crossroads of quantum algebra, symplectic topology, and string topology. We \emph{define} two new algebraic structures, which we call ``(odd) BV unital infinitesimal bialgebra" and ``(odd) BV Frobenius algebra", and we show that the former is the correct nonequivariant counterpart of the notion of Lie bialgebra, and the latter is the outcome of the Drinfel'd double construction on the former. We \emph{derive} the definition of these algebraic structures from the geometry of suitable compactified moduli spaces of curves. As a consequence, we \emph{construct} these algebraic structures on symplectic- and Rabinowitz Floer homology, which are Floer theoretic invariants of convex exact symplectic manifolds. Using the dictionary between the Floer theory of cotangent bundles and string topology, we \emph{specialize} these algebraic structures to the loop homology, respectively to the Rabinowitz loop homology, of closed orientable smooth manifolds. As an illustration, we \emph{compute} these structures for the string topology of spheres of odd dimension $\ge 3$.      

We use field coefficients throughout the paper, but it is worth pointing out that the definitions and basic statements in~\S\ref{ssec:definitions} also make sense with coefficients in a unital ring.

In \cite{CHO-PD, CO-reduced}, the second author together with Cieliebak and Hingston have introduced new versions of bialgebras, i.e., graded unital infinitesimal bialgebras and graded Frobenius algebras, and constructed instances of these structures on the symplectic homology of a suitable Liouville domain and on Rabinowitz Floer homology of a Liouville filled contact manifold. As a consequence, these structures were also shown to exist in the string topology of a smooth manifold. As the relevant homology groups are typically infinite dimensional, these authors have adapted the formalism of Tate vector spaces for Floer theory in order to phrase duality theorems. We recall this formalism in Appendix~\ref{sec:Tate}, and we will use it as well when dealing with infinite-dimensional structures.

In both these contexts, there is a natural unary operator of square zero present, which combines with the product of the respective bialgebra into a BV algebra structure. The purpose of this work is to clarify the interaction of all three operations: product, coproduct and BV operator. Two new algebraic structures arise, which we call
\begin{center}
{\em (odd) BV unital infinitesimal bialgebras}, 
\end{center}
and, respectively,
\begin{center}
{\em (odd) BV Frobenius algebras}.
\end{center}
Their precise definitions appear in~\S\ref{sec:BVuiBVFrob} in purely algebraic form. In that section, we use the graphical language of trees to give alternative representations for algebraic relations. 
As might be expected, besides the standard 7-term relation~\eqref{eq:7_term_for_Delta_and_mu} of a BV algebra, the definition of a BV unital infinitesimal bialgebra involves a similar 7-term relation~\eqref{eq:7_term_for_Delta_and_lambda} for a ``BV coalgebra" and a new relation~\eqref{eq:11term} involving all three structures, which in its most general form has 11 terms. 

The next statement explains the relation between the two structures.

{\bf Theorem~A.} {\it (i) (Lemma~\ref{lem:BVFrob-BVui}) Every odd BV Frobenius algebra is an odd BV unital infinitesimal bialgebra.

(ii) (Proposition~\ref{prop:BVui-BVFrob}) Under the assumption that the coproduct of the unit vanishes, an odd BV unital infinitesimal bialgebra determines an odd BV Frobenius algebra via the Drinfel'd double construction. 
}

The Drinfel'd double construction is a classical tool which originates in the study of Lie bialgebras and which allows to understand bialgebra structures as algebra structures~\cite{Drinfeld-quantum-groups}. Loosely stated, it associates to a vector space $A$ the vector space $D(A)=A\oplus A^\vee$, with $A^\vee$ the dual of $A$. As such, a bialgebra structure of some kind on $A$, expressed by the data of a product and a coproduct, can be equivalently phrased as an algebra structure of the same kind on $D(A)$. See~\S\ref{sec:stringtop} for more details.  

The last statement in Theorem~A generalizes to the graded and BV case previous results of Zhelyabin~\cite{Zhelyabin1997}, Aguiar~\cite{Aguiar2001} and Bai~\cite{Bai2010}. It answers a question left open in~\cite{CO-algebra} and sheds light on the passage from $A_2^+$-structures to $A_2$-structures on the cone in~\cite[\S7]{CO-cones}. 

An important aspect of this work is that the basic algebraic relations in the two algebraic structures that we introduce are derived purely geometrically from the study of suitable moduli spaces of curves, see~\S\ref{sec:moduli}. For more algebraically minded readers this might look like a detour, but in fact this geometric approach is really crucial for the construction of our algebraic structures in Floer theory. The overall strategy is not new in itself, but we depart from the classical literature in at least two ways: we take a first step in the direction of \emph{dioperads} and we make essential use of the \emph{weights} in deriving algebraic identities. 

To explain this, let us recall that the relationship of the framed little 2-disk operad to BV algebras is well-known~\cite{Getzler-BV}. It has also been known since the work of Kimura, Stasheff and Voronov~\cite{Kimura_Stasheff_Voronov_1995} that a topological 
model for this operad can be built from moduli spaces of spheres with one output and many input punctures: the existence of this model forms the basis for the many constructions of such structures arising in Floer-type theories~\cite{Seidel07,Seidel-disjoinable,Ekholm-Oancea,Abouzaid-Groman-Varolgunes}, and these moduli spaces and their higher genus analogues also underlie structures and operations present in real enumerative geometry~\cite{LiuMelissa}, symplectic field theory~\cite{EGH}, and string topology~\cite{Sullivan-background}. We review the relevant parts of the discussion of these moduli spaces in~\S\ref{ssec:moduli_kto1} and~\S\ref{ssec:moduli_2or3to1}. In \S\ref{ssec:moduli_1toell}, we give an analogous description of the geometry of the spaces relevant for the BV coalgebra structures, where
\emph{the curves are enriched with weights at the output punctures}. While the appearance of these weights might seem innocuous, they do in fact play an important role. In general, they give rise to certain correction terms in algebraic identities, which we avoid below by restricting to high enough dimensions. 
In a discussion of the complete dioperad structure these corrections would have to be taken more seriously. Here we only take a first step beyond the operadic framework by looking at a moduli space of spheres with two input and two output punctures. In this context, the weights are central in our geometric derivation of the 11-term relation involving product, coproduct and BV operator in \S\ref{ssec:moduli_2to2}.
As was also suggested in~\cite{CO-cones} and~\cite{Ekholm-Oancea}, this points towards a hierarchy of moduli spaces of curves with multiple inputs, multiple outputs, and weights at the outputs, which in genus zero should yield a generalization of (and a geometric perspective on) the structure described by Poirier and Tradler~\cite{PoirierTradler-V_d}.
 
In~\S\S\ref{sec:reduced}--\ref{sec:RFH} we use the geometric results on these moduli spaces of domains discussed in \S\ref{sec:moduli} to construct versions of our new algebraic structures in Floer theory. This theory, originating in the groundbreaking work of Floer~\cite{Floer-SympFixedPoints,Floer-Unregularized}, is the main source of symplectic invariants in symplectic topology, such as \emph{symplectic homology} and \emph{Rabinowitz Floer homology}, which feature in our next theorem. The first statement applies to a class of symplectic manifolds of fundamental importance called \emph{Weinstein domains}~\cite{Cieliebak-Eliashberg-book}, subject to a technical condition called ``strong essentiality" which was introduced in~\cite{CO-reduced}. Key for our later applications to string topology is the fact that the class of strongly essential Weinstein domains includes unit disc cotangent bundles of closed orientable manifolds. The second statement applies to the class of \emph{Liouville domains}, which can be thought of as an enlargement of the class of Weinstein domains~\cite{Eliashberg-Gromov}.

{\bf Theorem~B.}
{\it (i) (Theorem~\ref{thm:BV_unital_infinitesimal_reduced}) Given a strongly essential Weinstein domain $W$ of dimension $2n\ge 8$, reduced symplectic homology $\ol{S\H}_*(W)$ carries the structure of an odd BV unital infinitesimal bialgebra. 

(ii) (Theorem~\ref{thm:BVFrob}) Given a contact manifold $\p W$ with Liouville filling $W$, its Rabinowitz Floer homology $S\H_*(\p W)$ carries the structure of an odd BV Frobenius algebra. 
}

We refer to {\it loc. cit.} for the relevant definitions, and we remind the reader again that in this
infinite dimensional case the notion of a Frobenius algebra has to be understood in the context of Tate vector spaces~\cite{CO-algebra,CO-Tate}, see also Appendix~\ref{sec:Tate}.

In~\S\ref{sec:stringtop} we explain how these results specialize to statements in string topology, understood broadly as the study of algebraic structures on the homology and cohomology of loop- and path spaces~\cite{CS}. The dictionary is provided by the Viterbo isomorphism: given a closed manifold $Q$, let $\Lambda=\Lambda Q$ be its free loop space, and $D^*Q, S^*Q$ its unit disc, resp. unit sphere, cotangent bundle. We then have $\ol{S\H}_*(D^*Q;\sigma)\simeq \ol\H_*\Lambda$, the reduced homology of the free loop space of $Q$, where $\sigma$ is a suitable local system on $\Lambda$~\cite{Viterbo-cotangent,AS-Legendre,Abouzaid-cotangent,Kragh,SW}, and $S\H_*(S^*Q;\sigma)\simeq \widehat \H_*\Lambda$, the Rabinowitz loop homology of the free loop space of $Q$~\cite{CHO-MorseFloerGH}. The group $\ol\H_*\Lambda$ should be thought of as a variant of the homology $\H_*\Lambda$ of the free loop space of $Q$, and the group $\widehat \H_*\Lambda$ can be thought of as a Drinfel'd double construction on $\H_*\Lambda$.

{\bf Theorem C.} 
{\it (i) (Theorem~\ref{thm:H-BV-bialgebra}) Given a closed oriented manifold of dimension $\ge 4$,  reduced loop homology $\ol\H_*\Lambda$ carries the structure of an odd BV unital infinitesimal bialgebra. 

(ii) (Theorem~\ref{thm:RFH-BV-Frobenius}) Given a closed oriented manifold, its Rabinowitz loop homology $\widehat\H_*\Lambda$ carries the structure of an odd BV Frobenius algebra. 
}

These statements are the BV counterparts of the results proved by Cieliebak, Hingston and the second author in~\cite{CO-reduced,CHO-MorseFloerGH}. In the statement about reduced loop homology we restrict to orientable manifolds in order to ensure that their disc cotangent bundles are strongly essential Weinstein domains in the sense of~\cite{CO-reduced}. The statement holds for arbitrary closed manifolds with $\Z/2$-coefficients. 

The existence of these BV bialgebra structures has interesting consequences for $S^1$-equivariant loop homology. Given a closed orientable manifold $Q$, we define its \emph{reduced string homology} 
$$
\ol\H_*^{S^1}\Lambda=\H_*^{S^1}\Lambda/\chi(Q)[\mathrm{pt}],
$$ 
where $[\mathrm{pt}]$ is the class of a point in $S^1$-equivariant loop homology (with respect to the $S^1$-action given by reparametrization in the source). 

{\bf Theorem~D.} {\it (Theorem~\ref{thm:HS1-Lie-bialgebra}) 
Reduced string homology $\ol\H_*^{S^1}\Lambda$ of a closed orientable manifold of dimension $\ge 4$ carries the structure of a Lie bialgebra. 
} 

This Lie bialgebra structure is a simple formal consequence of the BV unital infinitesimal bialgebra structure from Theorem~C. The same formal reasoning implies a similar statement for reduced $S^1$-equivariant symplectic homology of strongly essential Weinstein domains (Theorem~\ref{thm:SHS1-Lie-bialgebra}). Theorem~D generalizes the Lie bialgebra structure constructed by Chas and Sullivan~\cite{CS-Lie} on $\H_*^{S^1}(\Lambda,\Lambda_0)$, with $\Lambda_0\subset \Lambda$ the constant loops.  

At the end of~\S\ref{sec:stringtop} we make some conjectures on $S^1$-equivariant Rabinowitz Floer homology, respectively Rabinowitz string homology, 
based on the Drinfel'd double construction (Conjectures~\ref{conj:H-doubles} and~\ref{conj:HS1-doubles})

We illustrate in~\S\ref{sec:spheres} the intricacy and nontriviality of these algebraic structures in the case of odd spheres of dimension $\ge 3$.

Appendix~\ref{app:BVui-BVTateFrob} contains the somewhat technical proof of part (ii) of Theorem~A (Proposition~\ref{prop:BVui-BVFrob} below), which due to its length would have disrupted the flow of the text if included in the main body. 

{\bf Relation to other work.} A number of recent constructions provide conjectural algebraic counterparts to Rabinowitz Floer homology or reduced symplectic homology. These constructions are flavors of Hochschild homology groups, and as such they carry Connes operators. We expect that the Connes operator corresponds to the BV operator that we study in this paper, and we expect the BV bialgebra structures that we  unveil 
in this paper to exist on the algebraic side as well. We also expect the corresponding cyclic homology groups to correspond  to the $S^1$-equivariant Rabinowitz Floer or reduced symplectic homology groups. This should apply to the singular Hochschild cohomology groups of Frobenius dg algebras of Rivera-Wang~\cite{Rivera-Wang} (Rabinowitz loop homology), to the Hochschild homology groups of a pre-Calabi-Yau smooth $A_\infty$-category of Rivera-Takeda-Wang~\cite{Rivera-Takeda-Wang}, and to the Hochschild homology groups arising from the Rabinowitz Fukaya category of Ganatra, Gao, Venkatesh~\cite{GGV} and Bae-Jeong-Kim~\cite{BJK}. Parts of the odd BV unital infinitesimal bialgebra structure are known for the Hochschild homology of Frobenius dg algebras (BV algebra by F\'elix-Thomas~\cite{Felix-Thomas} and Tradler~\cite{Tradler}, BV coalgebra by Chen-Gan~\cite{Chen-Gan} and Abbaspour~\cite{Abbaspour}).

The BV bialgebra structures from this paper should have counterparts in topological contexts such as those of Naef-Willwacher~\cite{Naef-Willwacher} or Hingston-Wahl~\cite{Hingston-Wahl}. 

In this paper we exhibit the Lie bialgebra structure on $S^1$-equivariant reduced symplectic homology as a formal consequence of the BV infinitesimal bialgebra structure on the non-equivariant theory. In this way, one can view the latter as a ``nonequivariant" refinement of the former. It appears reasonable to expect that a full nonequivariant chain level theory at all genera including BV-operators will in the same way refine the ${\rm IBL}_\infty$ structures of~\cite{Cieliebak-Fukaya-Latschev}. This might then give a blueprint for the construction of a nonequivariant version of (rational) symplectic field theory.

{\bf Acknowledgements.} The second author acknowledges partial funding from the ANR grant 21-CE40-0002 (COSY), and from a Fellowship of the University of Strasbourg Institute for Advanced Study (USIAS), within the French national programme ``Investment for the future" (IdEx-Unistra).

\section{BV unital infinitesimal and BV Frobenius bialgebras}\label{sec:BVuiBVFrob}

\subsection{Main definitions} \label{ssec:definitions}

In this section, we introduce the two main structures we consider.
We work with a field of coefficients $R$. The definitions in this section also make sense for $R$ a unital ring, and so we will allow ourselves to speak of $R$-modules rather than $R$-vector spaces. The construction of coproducts on Floer homology groups nevertheless requires field coefficients.\footnote{This is because we need the K\"unneth isomorphism to hold in order to induce a coproduct in homology from a coproduct at chain level.}

Given a graded $R$-module we denote $1$ its identity map. 
Given graded $R$-modules $A$ and $B$, the \emph{twist map} $\tau:A\otimes B\to B\otimes A$ acts by $\tau(a\otimes b)=(-1)^{|a||b|}b\otimes a$. We denote $\sigma= (\tau \otimes 1)(1 \otimes \tau):A^{\otimes 3} \to A^{\otimes 3}$, so $\sigma^2=(1 \otimes \tau)(\tau \otimes 1)$. As permutations we have 
$$
\sigma=\begin{pmatrix} 1 & 2 & 3 \\ 2 & 3 & 1 \end{pmatrix}
\qquad \mbox{and} \qquad 
\sigma^2=\begin{pmatrix} 1 & 2 & 3 \\ 3 & 1 & 2 \end{pmatrix}.
$$
We now describe the BV refinement of commutative and cocommutative unital infinitesimal anti-symmetric bialgebras in the sense of \cite{CO-algebra}.
Stated in words, this consists of a unital BV algebra structure and a non-unital BV coalgebra structure with the same BV operator, related by the unital infinitesimal relation (which involves the product, coproduct, and unit,  but not the BV operator) and a new ``11-term'' relation that involves the product, coproduct, unit, and BV operator.
\begin{definition} \label{defi:BVui}
An {\em odd BV unital infinitesimal bialgebra} 
is a tuple $(A, \mu, \eta, \lambda, \Delta)$,
where
\begin{itemize}
\item $A$ is a $\Z$-graded $R$-bimodule, and the following maps are maps of $R$-bimodules,
\item $\mu:A \otimes A \to A$ is an associative, commutative product of degree $0$, i.e.
$$
\mu(\mu \otimes 1)= \mu(1 \otimes \mu) \qquad\text{and} \qquad \mu \tau = \mu,
$$
\item $\eta:R \to A$ is a map such that
$$
\mu( \eta \otimes 1): R \otimes A \to A \qquad\text{and} \qquad  \mu (1\otimes \eta):A \otimes R \to A
$$
coincide with the left and right $R$-module structure on $A$ (in other words, $\eta(1)$ is a unit for $\mu$),
\item $\lambda:A \to A \otimes A$ is a coassociative, cocommutative product of odd degree, i.e.
$$
(\lambda \otimes 1)\lambda =-(1 \otimes \lambda)\lambda \qquad \text{and} \qquad \tau \lambda = -\lambda,
$$
and
\item $\Delta:A \to A$ is a map of degree 1 with $\Delta^2=0$,
\end{itemize}
satisfying the following additional relations:
\begin{itemize}
\item \emph{(unital infinitesimal relation)}
$$
\lambda \mu = (1 \otimes \mu)(\lambda \otimes 1) + (\mu \otimes 1)(1 \otimes \lambda) - (\mu \otimes \mu)(1 \otimes \lambda\eta \otimes 1),
$$
\item \emph{(7-term relation for $\Delta$ and $\mu$)} 
\begin{equation} \label{eq:7_term_for_Delta_and_mu}
\Delta \mu(\mu \otimes 1) = \left[ \mu (\Delta\mu \otimes 1) - \mu(\mu \otimes 1)(\Delta \otimes 1 \otimes 1)\right](1+\sigma + \sigma^2) ,
\end{equation}
\item \emph{(7-term relation for $\Delta$ and $\lambda$)} 
\begin{equation}\label{eq:7_term_for_Delta_and_lambda}
(\lambda \otimes 1)\lambda \Delta  = - (1+\sigma + \sigma^2) \left[ (\Delta \otimes 1 \otimes 1)(\lambda \otimes 1)\lambda + (\lambda\Delta \otimes 1)\lambda\right], 
\end{equation}
and
\item \emph{(11-term relation for $\Delta$, $\mu$, $\lambda$ and $\eta$)} 
\begin{align} \label{eq:11term}
\lambda \Delta \mu
&=\lambda \mu (\Delta \otimes 1) + \lambda \mu (1 \otimes \Delta)
- (\Delta \otimes 1)\lambda \mu  - (1 \otimes \Delta) \lambda \mu \\
&\phantom{=}\,\, + (\mu \otimes 1)(1 \otimes \Delta \otimes 1)(1 \otimes \lambda)(1+\tau) \nonumber\\
&\phantom{=}\,\,+ (1 \otimes \mu)(1 \otimes \Delta \otimes 1)(\lambda \otimes 1)(1+\tau)\nonumber\\
&\phantom{=}\,\, - (\mu\otimes\mu)(1 \otimes 1 \otimes \Delta \otimes 1)(1 \otimes \lambda\eta \otimes 1)(1 + \tau). \nonumber
\end{align}
\end{itemize}
\end{definition}
To represent the various structures and their relations, we introduce the following graphical language:
\begin{center}
\begin{tikzpicture}[scale=.25]

\draw (0,0) -- (0,-2.5);
\node at (0,-5.3) {$1$};

\pgftransformxshift{4cm}

\draw (0,0) -- (1,-1) -- (2,0);
\draw (1,-1) -- (1,-2.5);
\node at (1,-5.5) {$\mu$};

\pgftransformxshift{6cm}

\draw (0,0) -- (1,-1) -- (2,0);
\draw (1,-1) -- (1,-2.5);
\node at (0,1) {\tiny$2$};
\node at (2,1) {\tiny$1$};
\node at (1,-5.5) {$\mu\tau$};

\pgftransformxshift{6cm}

\draw (1,-1) -- (1,-2);
\node[circle,draw,inner sep=1pt] at (1,-1) {};
\node at (1,-5.5) {$\eta$};

\pgftransformxshift{6cm}
\draw (1,0) -- (1,-1.5);
\draw (0,-2.5) -- (1,-1.5) -- (2,-2.5);
\node at (1,-5.3) {$\lambda$};

\pgftransformxshift{6cm}
\draw (1,0) -- (1,-1.5);
\draw (0,-2.5) -- (1,-1.5) -- (2,-2.5);
\node at (0, -3.5) {\tiny$2$};
\node at (2, -3.5) {\tiny$1$};
\node at (1,-5.3) {$\tau\lambda$};


\end{tikzpicture}
\end{center}
The unital infinitesimal relation then takes the following form:
\begin{center}
\begin{tikzpicture}[scale=.25]
\draw (0,0) -- (1,-1) -- (2,0);
\draw (1,-1) -- (1,-3);
\draw (0,-4) -- (1,-3) -- (2,-4);
\node at (4,-2) {$=$};

\pgftransformxshift{6cm}

\draw (1,0) -- (1,-1) -- (0,-2) -- (0,-4);
\draw (1,-1) -- (3,-3);
\draw (4,0) -- (4,-2) -- (3,-3) -- (3,-4);
\node at (6,-2) {$+$};

\pgftransformxshift{8cm}

\draw (0,0) -- (0,-2) -- (1,-3) -- (1,-4);
\draw (3,-1) -- (1,-3);
\draw (3,0) -- (3,-1) -- (4,-2) -- (4,-4);
\node at (6,-2) {$-$};

\pgftransformxshift{8cm}

\draw (0,0) -- (0,-2) -- (1,-3) -- (1,-4);
\draw (1,-3) -- (2.5,-1.5) -- (4,-3);
\draw (2.5,-1.5) -- (2.5,-0.5);
\draw (5,0) -- (5,-2) -- (4,-3) -- (4,-4);
\node[circle,draw,inner sep=1pt] at (2.5,-0.5) {};

\end{tikzpicture}
\end{center}
The 7-term relation for $\Delta$ and $\mu$ is represented graphically as
\begin{center}
\begin{tikzpicture}[scale=.23]
\draw (0,0) -- (1,-1) -- (2,0);
\draw (1,-1) -- (1,-2);
\draw (1,-2) -- (2,-3) -- (4,-1);
\draw (2,-3) -- (2,-4);
\node at (2,-5) {\tiny$\Delta$};

\node at (6,-2) {$=$};
\pgftransformxshift{8cm}

\draw (0,0) -- (1,-1) -- (2,0);
\draw (1,-1) -- (1,-2);
\draw (1,-2) -- (2,-3) -- (4,-1);
\draw (2,-3) -- (2,-4);
\node at (0.4,-2.4) {\tiny$\Delta$};

\node at (6,-2) {$+$};
\pgftransformxshift{8cm}

\draw (0,0) -- (1,-1) -- (2,0);
\draw (1,-1) -- (1,-2);
\draw (1,-2) -- (2,-3) -- (4,-1);
\draw (2,-3) -- (2,-4);
\node at (0.4,-2.4) {\tiny$\Delta$};
\node at (0,1) {\tiny$2$};
\node at (2,1) {\tiny$3$};
\node at (4,0) {\tiny$1$};
\node at (6,-2) {$+$};
\pgftransformxshift{8cm}

\draw (0,0) -- (1,-1) -- (2,0);
\draw (1,-1) -- (1,-2);
\draw (1,-2) -- (2,-3) -- (4,-1);
\draw (2,-3) -- (2,-4);
\node at (0,1) {\tiny$3$};
\node at (2,1) {\tiny$1$};
\node at (4,0) {\tiny$2$};
\node at (0.4,-2.4) {\tiny$\Delta$};

\node at (6,-2) {$-$};
\pgftransformxshift{8cm}

\draw (0,0) -- (1,-1) -- (2,0);
\draw (1,-1) -- (1,-2);
\draw (1,-2) -- (2,-3) -- (4,-1);
\draw (2,-3) -- (2,-4);
\node at (0,1) {\tiny$\Delta$};

\node at (6,-2) {$-$};
\pgftransformxshift{8cm}

\draw (0,0) -- (1,-1) -- (2,0);
\draw (1,-1) -- (1,-2);
\draw (1,-2) -- (2,-3) -- (4,-1);
\draw (2,-3) -- (2,-4);
\node at (0,.8) {\tiny$\Delta$};
\node at (0,2) {\tiny$2$};
\node at (2,1) {\tiny$3$};
\node at (4,0) {\tiny$1$};

\node at (6,-2) {$-$};
\pgftransformxshift{8cm}

\draw (0,0) -- (1,-1) -- (2,0);
\draw (1,-1) -- (1,-2);
\draw (1,-2) -- (2,-3) -- (4,-1);
\draw (2,-3) -- (2,-4);
\node at (0,.8) {\tiny$\Delta$};
\node at (0,2) {\tiny$3$};
\node at (2,1) {\tiny$1$};
\node at (4,0) {\tiny$2$};

\node at (6,-2) {.};
\end{tikzpicture}
\end{center}

Similarly, the 7-term relation for $\Delta$ and $\lambda$ can be represented as
\begin{center}
\begin{tikzpicture}[scale=.23]
\draw (0,0) -- (0,-1) -- (-1,-2) -- (-1,-3) -- (-2,-4);
\draw (0,-1) -- (2,-3);
\draw (-1,-3) -- (0,-4);
\node at (0,.8) {\tiny$\Delta$};

\node at (4,-2) {$=$};
\node at (6,-2) {$-$};
\pgftransformxshift{9.5cm}

\draw (0,0) -- (0,-1) -- (-1,-2) -- (-1,-3) -- (-2,-4);
\draw (0,-1) -- (2,-3);
\draw (-1,-3) -- (0,-4);
\node at (-1.5,-1.4) {\tiny$\Delta$};

\node at (4,-2) {$-$};
\pgftransformxshift{8cm}
\draw (0,0) -- (0,-1) -- (-1,-2) -- (-1,-3) -- (-2,-4);
\draw (0,-1) -- (2,-3);
\draw (-1,-3) -- (0,-4);
\node at (-1.5,-1.4) {\tiny$\Delta$};
\node at (-2,-4.8) {\tiny $2$};
\node at (0,-4.8) {\tiny $3$};
\node at (2,-3.8) {\tiny $1$};

\node at (4,-2) {$-$};
\pgftransformxshift{8cm}
\draw (0,0) -- (0,-1) -- (-1,-2) -- (-1,-3) -- (-2,-4);
\draw (0,-1) -- (2,-3);
\draw (-1,-3) -- (0,-4);
\node at (-1.5,-1.4) {\tiny$\Delta$};
\node at (-2,-4.8) {\tiny $3$};
\node at (0,-4.8) {\tiny $1$};
\node at (2,-3.8) {\tiny $2$};

\node at (4,-2) {$-$};
\pgftransformxshift{7cm}
\draw (0,0) -- (0,-1) -- (-1,-2) -- (-1,-3) -- (-2,-4);
\draw (0,-1) -- (2,-3);
\draw (-1,-3) -- (0,-4);
\node at (-2,-4.8) {\tiny$\Delta$};

\node at (4,-2) {$-$};
\pgftransformxshift{7cm}
\draw (0,0) -- (0,-1) -- (-1,-2) -- (-1,-3) -- (-2,-4);
\draw (0,-1) -- (2,-3);
\draw (-1,-3) -- (0,-4);
\node at (-2,-4.8) {\tiny$\Delta$};
\node at (-2,-5.8) {\tiny $2$};
\node at (0,-4.8) {\tiny $3$};
\node at (2,-3.8) {\tiny $1$};

\node at (4,-2) {$-$};
\pgftransformxshift{7cm}
\draw (0,0) -- (0,-1) -- (-1,-2) -- (-1,-3) -- (-2,-4);
\draw (0,-1) -- (2,-3);
\draw (-1,-3) -- (0,-4);
\node at (-2,-4.8) {\tiny$\Delta$};
\node at (-2,-5.8) {\tiny $3$};
\node at (0,-4.8) {\tiny $1$};
\node at (2,-3.8) {\tiny $2$};

\node at (4,-2) {,};
\end{tikzpicture}
\end{center}
and the 11-term relation for $\Delta$, $\mu$ and $\lambda$ in graphical form reads
\begin{center}
\begin{tikzpicture}[scale=.25]

\draw (0,0) -- (1,-1) -- (2,0);
\draw (1,-1) -- (1,-3);
\node at (0,-2) {\tiny $\Delta$};
\draw (0,-4) -- (1,-3) -- (2,-4);
\node at (4,-2) {$=$};

\pgftransformxshift{6cm}
\draw (0,0) -- (1,-1) -- (2,0);
\draw (1,-1) -- (1,-3);
\draw (0,-4) -- (1,-3) -- (2,-4);
\node at (0,0.8) {\tiny$\Delta$};
\node at (4,-2) {$+$};

\pgftransformxshift{6cm}
\draw (0,0) -- (1,-1) -- (2,0);
\draw (1,-1) -- (1,-3);
\draw (0,-4) -- (1,-3) -- (2,-4);
\node at (2,0.8) {\tiny$\Delta$};
\node at (4,-2) {$-$};

\pgftransformxshift{6cm}
\draw (0,0) -- (1,-1) -- (2,0);
\draw (1,-1) -- (1,-3);
\draw (0,-4) -- (1,-3) -- (2,-4);
\node at (0,-4.8) {\tiny$\Delta$};
\node at (4,-2) {$-$};

\pgftransformxshift{6cm}
\draw (0,0) -- (1,-1) -- (2,0);
\draw (1,-1) -- (1,-3);
\draw (0,-4) -- (1,-3) -- (2,-4);
\node at (2,-4.8) {\tiny$\Delta$};

\pgftransformxshift{-18cm}
\pgftransformyshift{-8cm}

\node at (0,-2) {$+$};
\pgftransformxshift{2cm}

\draw (0,0) -- (0,-2) -- (1,-3) -- (1,-4);
\draw (1,-3) -- (3,-1);
\node at (1.5,-1.5) {\tiny$\Delta$};
\draw (3,0) -- (3,-1) -- (4,-2) -- (4,-4);
\node at (7,-2.5) {$+$};

\pgftransformxshift{9cm}
\node at (0,.8) {\tiny$2$};
\node at (3,.8) {\tiny$1$};
\draw (0,0) -- (0,-2) -- (1,-3) -- (1,-4);
\draw (1,-3) -- (3,-1);
\node at (1.5,-1.5) {\tiny$\Delta$};
\draw (3,0) -- (3,-1) -- (4,-2) -- (4,-4);
\node at (7,-2.5) {$+$};

\pgftransformxshift{9cm}

\draw (1,0)  -- (1,-1) -- (0,-2) -- (0,-4);
\draw (1,-1) -- (3,-3);
\node at (2.5,-1.5) {\tiny$\Delta$};
\draw (4,0) -- (4,-2) -- (3,-3) -- (3,-4);
\node at (7,-2.5) {$+$};

\pgftransformxshift{9cm}
\node at (1,.8) {\tiny$2$};
\node at (4,.8) {\tiny$1$};
\draw (1,0)  -- (1,-1) -- (0,-2) -- (0,-4);
\draw (1,-1) -- (3,-3);
\node at (2.5,-1.5) {\tiny$\Delta$};
\draw (4,0) -- (4,-2) -- (3,-3) -- (3,-4);
\pgftransformxshift{9cm}

\pgftransformxshift{-38cm}
\pgftransformyshift{-8cm}

\node at (0,-2) {$-$};
\pgftransformxshift{2cm}

\draw (0,0) -- (0,-2) -- (1,-3) -- (1,-4);
\draw (1,-3) -- (2.5,-1.5) -- (4,-3);
\draw (2.5,-1.5) -- (2.5,-0.5);
\node[circle,draw,inner sep=1pt] at (2.5,-.5) {};
\draw (5,0) -- (5,-2) -- (4,-3) -- (4,-4);
\node at (3.7,-1.5) {\tiny$\Delta$};
\node at (8,-2.5) {$-$};

\pgftransformxshift{11cm}
\node at (0,0.8) {\tiny$2$};
\node at (5,0.8) {\tiny$1$};
\draw (0,0) -- (0,-2) -- (1,-3) -- (1,-4);
\draw (1,-3) -- (2.5,-1.5) -- (4,-3);
\draw (2.5,-1.5) -- (2.5,-0.5);
\node[circle,draw,inner sep=1pt] at (2.5,-.5) {};
\draw (5,0) -- (5,-2) -- (4,-3) -- (4,-4);
\node at (3.7,-1.5) {\tiny$\Delta$};

\node at (8,-2.5) {.};
\end{tikzpicture}
\end{center}
\begin{remark}\label{rem:7_term_and_permutation}
Using (co)associativity and (co)commutativity of $\mu$ and $\lambda$, one checks that
$$
\mu (\mu \otimes 1) \sigma = \mu (\mu \otimes 1)
\quad \text{and} \quad
\sigma (\lambda \otimes 1)\lambda = (\lambda \otimes 1)\lambda.
$$
It follows that
\begin{align*}
\mu (\mu \otimes 1) (\Delta \otimes 1 \otimes 1) \sigma &=
\mu (\mu \otimes 1) (1 \otimes 1 \otimes \Delta),\\
\mu (\mu \otimes 1) (\Delta \otimes 1 \otimes 1) \sigma^2 &=
\mu (\mu \otimes 1) (1 \otimes \Delta \otimes 1),\\
\sigma (\Delta \otimes 1 \otimes 1)(\lambda \otimes 1)\lambda &= (1 \otimes \Delta \otimes 1)(\lambda \otimes 1)\lambda \quad \text{and}\\
\sigma^2 (\Delta \otimes 1 \otimes 1)(\lambda \otimes 1)\lambda &= (1 \otimes 1 \otimes \Delta)(\lambda \otimes 1)\lambda.
\end{align*}
Using these relations, one could reformulate the 7-term relations for $\Delta$ and $\mu$ (resp. $\Delta$ and $\lambda$), trading some uses of cyclic permutations for moving $\Delta$ to different inputs (respectively outputs). For example, the 7-term relation for $\Delta$ and $\mu$ is equivalent to the relation
\begin{center}
\begin{tikzpicture}[scale=.23]
\draw (0,0) -- (1,-1) -- (2,0);
\draw (1,-1) -- (1,-2);
\draw (1,-2) -- (2,-3) -- (4,-1);
\draw (2,-3) -- (2,-4);
\node at (2,-5) {\tiny$\Delta$};

\node at (6,-2) {$=$};
\pgftransformxshift{8cm}

\draw (0,0) -- (1,-1) -- (2,0);
\draw (1,-1) -- (1,-2);
\draw (1,-2) -- (2,-3) -- (4,-1);
\draw (2,-3) -- (2,-4);
\node at (0.4,-2.4) {\tiny$\Delta$};

\node at (6,-2) {$+$};
\pgftransformxshift{8cm}

\draw (0,0) -- (1,-1) -- (2,0);
\draw (1,-1) -- (1,-2);
\draw (1,-2) -- (2,-3) -- (4,-1);
\draw (2,-3) -- (2,-4);
\node at (0.4,-2.4) {\tiny$\Delta$};
\node at (0,1) {\tiny$2$};
\node at (2,1) {\tiny$3$};
\node at (4,0) {\tiny$1$};
\node at (6,-2) {$+$};
\pgftransformxshift{8cm}

\draw (0,0) -- (1,-1) -- (2,0);
\draw (1,-1) -- (1,-2);
\draw (1,-2) -- (2,-3) -- (4,-1);
\draw (2,-3) -- (2,-4);
\node at (0,1) {\tiny$3$};
\node at (2,1) {\tiny$1$};
\node at (4,0) {\tiny$2$};
\node at (0.4,-2.4) {\tiny$\Delta$};

\node at (6,-2) {$-$};
\pgftransformxshift{8cm}

\draw (0,0) -- (1,-1) -- (2,0);
\draw (1,-1) -- (1,-2);
\draw (1,-2) -- (2,-3) -- (4,-1);
\draw (2,-3) -- (2,-4);
\node at (0,0.8) {\tiny$\Delta$};

\node at (6,-2) {$-$};
\pgftransformxshift{8cm}

\draw (0,0) -- (1,-1) -- (2,0);
\draw (1,-1) -- (1,-2);
\draw (1,-2) -- (2,-3) -- (4,-1);
\draw (2,-3) -- (2,-4);
\node at (2,.8) {\tiny$\Delta$};

\node at (6,-2) {$-$};
\pgftransformxshift{8cm}

\draw (0,0) -- (1,-1) -- (2,0);
\draw (1,-1) -- (1,-2);
\draw (1,-2) -- (2,-3) -- (4,-1) -- (4,0);
\draw (2,-3) -- (2,-4);
\node at (4,.8) {\tiny$\Delta$};

\node at (6,-2) {.};
\end{tikzpicture}
\end{center}

\end{remark}
\begin{remark}
By applying the 7-term relation for $\Delta$ and $\mu$ to $\eta \otimes \eta \otimes \eta$, we find that
\begin{equation}\label{eq:Delta_and_eta}
\Delta \eta =0.
\end{equation}
By applying the 11-term relation to $\eta \otimes \eta$ and using \eqref{eq:Delta_and_eta} we get
\begin{equation}\label{eq:Delta_and_lambda_eta}
(\Delta \otimes 1) \lambda \eta - (1 \otimes \Delta) \lambda \eta =0.
\end{equation}
Applying the 7-term relation for $\Delta$ and $\lambda$ to $\eta$ does not yield any new identities.
\end{remark} 

\begin{remark}\label{rem:counit_Delta}
Suppose that the BV unital infinitesimal bialgebra has a counit $\eps$ (this is not part of the definition, but happens in the Frobenius case discussed below). Such a counit has odd degree and satisfies $(\eps\otimes 1)\lambda = 1 = -(1\otimes\eps)\lambda$. By postcomposing the 7-term relation for $\Delta$ and $\lambda$ with $\eps \otimes \eps \otimes \eps$, we find that
$$
\eps \Delta=0.
$$
Moreover, applying $\eps \otimes \eps$ to the 11-term relation and using the above identity yields
$$
\eps \mu (\Delta \otimes 1 + 1 \otimes\Delta) = (\eps\mu \otimes \eps \mu)((1 \otimes 1 \otimes \Delta \otimes 1)(1 \otimes \lambda \eta \otimes 1) (1+\tau).
$$
This equation simplifies to a tautology in the Frobenius case. 
\end{remark}
From the primary structures in the definition one can define two additional structures, a \emph{bracket} $\beta$ of degree 1 and a \emph{cobracket} $\gamma$ of even degree $|\gamma|=|\lambda|+1$ as
\begin{equation} \label{eq:beta-gamma}
\beta = [\Delta,\mu] \qquad \text{and} \qquad \gamma = [\Delta,\lambda],
\end{equation}
where $\Delta$ is extended to $A\otimes A$ in the usual way, i.e., as $\Delta \otimes 1 + 1 \otimes \Delta$. We represent them graphically as follows:

\begin{center}
\begin{tikzpicture}[scale=.25]

\draw (0,0) -- (1,-1) -- (2,0);
\draw (1,-1) -- (1,-2.5);
\node[circle,draw,fill ,inner sep=1pt] at (1,-1) {};
\node at (4,-1) {$=$};

\pgftransformxshift{6cm}

\draw (0,0) -- (1,-1) -- (2,0);
\draw (1,-1) -- (1,-2.5);
\node at (1,-3.2) {\tiny $\Delta$};
\node at (4,-1) {$-$};

\pgftransformxshift{6cm}

\draw (0,0) -- (1,-1) -- (2,0);
\draw (1,-1) -- (1,-2.5);
\node at (0,.7) {\tiny $\Delta$};
\node at (4,-1) {$-$};

\pgftransformxshift{6cm}

\draw (0,0) -- (1,-1) -- (2,0);
\draw (1,-1) -- (1,-2.5);
\node at (2,.7) {\tiny $\Delta$};
\node at (5.5,-1) {and};

\pgftransformxshift{10cm}

\draw (0,-2.5) -- (1,-1.5) -- (2,-2.5);
\draw (1,0) -- (1,-1.5);
\node[circle,draw,fill ,inner sep=1pt] at (1,-1.5) {};
\node at (4,-1) {$=$};

\pgftransformxshift{6cm}

\draw (0,-2.5) -- (1,-1.5) -- (2,-2.5);
\draw (1,0) -- (1,-1.5);
\node at (1,.7) {\tiny$\Delta$};
\node at (4,-1) {$+$};

\pgftransformxshift{6cm}

\draw (0,-2.5) -- (1,-1.5) -- (2,-2.5);
\draw (1,0) -- (1,-1.5);
\node at (0,-3.2) {\tiny$\Delta$};
\node at (4,-1) {$+$};

\pgftransformxshift{6cm}

\draw (0,-2.5) -- (1,-1.5) -- (2,-2.5);
\draw (1,0) -- (1,-1.5);
\node at (2,-3.2) {\tiny$\Delta$};

\end{tikzpicture}
\end{center}
From the symmetry properties of $\mu$ and $\lambda$ we conclude that $\beta$ and $\gamma$ are graded anti-symmetric, i.e.,
$$
\beta \tau = \beta \qquad \text{and} \qquad \tau \gamma = - \gamma.
$$
The graded Jacobi relation for $\beta$ is the same as in the ungraded case, i.e., 
\begin{equation} \label{eq:Jacobi-graded}
\beta(1\otimes\beta)(1+\sigma+\sigma^2)=0,
\end{equation}
and the coJacobi relation for $\gamma$ also takes the usual form
\begin{equation} \label{eq:coJacobi}
(1+\sigma+\sigma^2)(\gamma\otimes 1)\gamma=0.
\end{equation}

By a standard argument, the 7-term relation for $\Delta$ and $\mu$ implies the \emph{Poisson relation} 
\begin{equation} \label{eq:Poisson}
\beta (\mu \otimes 1) = \mu (1 \otimes \beta) + \mu(\beta\otimes 1)(1 \otimes \tau)
\end{equation}
for $\beta$ and $\mu$, graphically represented by
\begin{center}
\begin{tikzpicture}[scale=.25]
\draw (0,0) -- (1,-1) -- (2,0);
\draw (1,-1) -- (1,-2) -- (2,-3) -- (4,-1);
\draw (2,-3) -- (2,-4);
\node[circle,draw,fill ,inner sep=1pt] at (2,-3) {};

\node at (7,-2) {$=$};
\pgftransformxshift{10cm}

\draw (0,-1) -- (2,-3) -- (2,-4);
\draw (2,0) -- (3,-1) -- (4,0);
\draw (3,-1) -- (3,-2) -- (2,-3);
\node[circle,draw,fill ,inner sep=1pt] at (3,-1) {};

\node at (7,-2) {$+$};
\pgftransformxshift{10cm}

\draw (0,0) -- (1,-1) -- (2,0);
\draw (1,-1) -- (1,-2) -- (2,-3) -- (4,-1);
\draw (2,-3) -- (2,-4);
\node[circle,draw,fill ,inner sep=1pt] at (1,-1) {};
\node at (0,0.8) {\tiny$1$};
\node at (2,0.8) {\tiny$3$};
\node at (4,-0.2) {\tiny$2$};
\end{tikzpicture}
\end{center}
Similarly, the 7-term relation for $\Delta$ and $\lambda$ implies the \emph{coPoisson relation}
\begin{equation} \label{eq:coPoisson}
(\lambda \otimes 1)\gamma = (1 \otimes \gamma) \lambda + (1 \otimes \tau)(\gamma \otimes 1)\lambda 
\end{equation}
for $\gamma$ and $\lambda$, graphically represented as
\begin{center}
\begin{tikzpicture}[scale=.25]
\draw (1,0) -- (1,-1) -- (2,-2);
\node[circle,draw,fill ,inner sep=1pt] at (1,-1) {};
\draw (1,-1) -- (0,-2) -- (0,-3) -- (-1,-4) -- (0,-3) -- (1,-4);

\node at (7,-2) {$=$};
\pgftransformxshift{10cm}

\draw (1,0) -- (1,-1) -- (-1,-3) -- (1,-1) -- (2,-2) -- (2,-3) -- (1,-4) -- (2,-3) -- (3,-4);
\node[circle,draw,fill ,inner sep=1pt] at (2,-3) {};

\node at (7,-2) {$+$};
\pgftransformxshift{10cm}

\draw (1,0) -- (1,-1) -- (3,-3) -- (1,-1) -- (0,-2) -- (0,-3) -- (-1,-4) -- (0,-3) -- (1,-4);
\node[circle,draw,fill ,inner sep=1pt] at (0,-3) {};
\node at (-1,-4.5) {\tiny$1$};
\node at (1,-4.5) {\tiny$3$};
\node at (3,-3.5) {\tiny$2$};

\node at (6,-2) {$.$};

\end{tikzpicture}
\end{center}

For completeness, we include a proof here. We have
\begin{align*}
(\lambda \otimes 1) \gamma
&= (\lambda \otimes 1)\lambda \Delta + (\lambda \Delta \otimes 1) \lambda + (\lambda \otimes 1)(1 \otimes \Delta) \lambda\\
&= (\lambda \otimes 1)\lambda \Delta + (\lambda \Delta \otimes 1) \lambda -(1 \otimes 1 \otimes \Delta) (\lambda \otimes 1) \lambda.
\end{align*}
Similarly, we compute
\begin{align*}
(1 & \otimes \gamma) \lambda\\
&= (1 \otimes \lambda \Delta) \lambda + (1 \otimes \Delta \otimes 1)(1 \otimes \lambda) \lambda + (1 \otimes 1 \otimes \Delta)(1 \otimes \lambda) \lambda \\
&= (1 \otimes \lambda \Delta) \lambda + (1 \otimes \Delta \otimes 1)(1 \otimes \lambda) \lambda + (1 \otimes 1 \otimes \Delta)(1 \otimes \lambda) \lambda \\
&= \sigma^2 (\lambda\Delta \otimes 1)\tau\lambda + \sigma^2(\Delta \otimes 1 \otimes 1)(\lambda \otimes 1)\tau \lambda + (1 \otimes 1 \otimes \Delta)(1 \otimes \lambda) \lambda\\
&=- \sigma^2 (\lambda\Delta \otimes 1)\lambda - \sigma^2 (\Delta \otimes 1 \otimes 1)(\lambda \otimes 1)\lambda - (1 \otimes 1 \otimes \Delta)(\lambda \otimes 1) \lambda
\end{align*}
and
\begin{align*}
(1 & \otimes \tau)(\gamma \otimes 1)\lambda\\
&= (1 \otimes \tau)(\lambda\Delta \otimes 1)\lambda + (1 \otimes \tau)(\Delta \otimes 1 \otimes 1) (\lambda \otimes 1)\lambda\\
& \qquad \qquad \qquad \qquad \qquad + (1 \otimes \tau) (1 \otimes \Delta \otimes 1) (\lambda \otimes 1)\lambda\\
&=-\sigma (\lambda \Delta \otimes 1)\lambda - (\Delta \otimes 1 \otimes 1) (1 \otimes \tau)(1 \otimes \lambda)\lambda\\
& \qquad \qquad \qquad \qquad \qquad  - (1 \otimes \tau) (1 \otimes \Delta \otimes 1) (\tau\lambda \otimes 1)\lambda\\
&=-\sigma (\lambda \Delta \otimes 1)\lambda + (\Delta \otimes 1 \otimes 1) (1 \otimes \lambda)\lambda - \sigma (\Delta \otimes 1 \otimes 1) (\lambda \otimes 1)\lambda\\
&=-\sigma (\lambda \Delta \otimes 1)\lambda - (\Delta \otimes 1 \otimes 1) (\lambda \otimes 1)\lambda - \sigma (\Delta \otimes 1 \otimes 1) (\lambda \otimes 1)\lambda.
\end{align*}
So subtracting the last two terms from the first gives zero by the 7-term relation for $\Delta$ and $\lambda$.

Another consequence of the definition is
\begin{lemma}\label{lemma:rel_mu_lambda_beta_gamma}
Let $(A, \mu, \eta, \lambda, \Delta)$ be an odd BV unital infinitesimal bialgebra. 
The following equation holds:
\begin{eqnarray*} 
\lefteqn{\gamma \mu - \lambda \beta} \\
&=& (\beta \otimes 1)(1\otimes \lambda) + (\mu \otimes 1)(1\otimes \gamma) +(1 \otimes \mu)(\gamma \otimes 1) + (1 \otimes \beta)(\lambda \otimes 1) \\
& & -(\mu \otimes \beta)(1 \otimes \lambda \eta \otimes 1) - (\beta \otimes \mu)(1 \otimes \lambda \eta \otimes 1) -(\mu \otimes \mu)(1 \otimes \gamma\eta \otimes 1). 
\end{eqnarray*}
\end{lemma}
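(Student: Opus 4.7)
The plan is to recognize $\gamma\mu - \lambda\beta$ as the graded commutator $[\widetilde\Delta, \lambda\mu]$, where $\widetilde\Delta$ denotes the canonical extension of $\Delta$ to a derivation on all tensor powers of $A$, and then obtain the claimed identity by applying $[\widetilde\Delta, -]$ to the unital infinitesimal relation.

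First I would verify the two Leibniz-type rules
\[
[\widetilde\Delta, gf] = [\widetilde\Delta, g]\,f + (-1)^{|g|}\, g\,[\widetilde\Delta, f],
\qquad
[\widetilde\Delta, f\otimes g] = [\widetilde\Delta, f]\otimes g + (-1)^{|f|}\, f\otimes [\widetilde\Delta, g],
\]
which follow by direct computation keeping track of Koszul signs, together with the elementary identities $[\widetilde\Delta,\mu]=\beta$, $[\widetilde\Delta,\lambda]=\gamma$, $[\widetilde\Delta,1]=0$, and $[\widetilde\Delta,\eta]=\Delta\eta=0$ (the last one being \eqref{eq:Delta_and_eta}). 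Since $|\lambda|$ is odd, these rules immediately yield
\[
[\widetilde\Delta, \lambda\mu] = \gamma\mu + (-1)^{|\lambda|}\lambda\beta = \gamma\mu - \lambda\beta.
\]

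Next I would apply $[\widetilde\Delta, -]$ termwise to the right-hand side of the unital infinitesimal relation
\[
\lambda\mu = (1\otimes\mu)(\lambda\otimes 1) + (\mu\otimes 1)(1\otimes\lambda) - (\mu\otimes\mu)(1\otimes\lambda\eta\otimes 1).
\]
The first two compositions produce $(1\otimes\beta)(\lambda\otimes 1) + (1\otimes\mu)(\gamma\otimes 1)$ and $(\beta\otimes 1)(1\otimes\lambda) + (\mu\otimes 1)(1\otimes\gamma)$. For the third term, the relevant pieces are $[\widetilde\Delta,\mu\otimes\mu] = \beta\otimes\mu + \mu\otimes\beta$ and $[\widetilde\Delta, 1\otimes\lambda\eta\otimes 1] = 1\otimes\gamma\eta\otimes 1$ (using $\Delta\eta=0$), which together give $(\beta\otimes\mu + \mu\otimes\beta)(1\otimes\lambda\eta\otimes 1) + (\mu\otimes\mu)(1\otimes\gamma\eta\otimes 1)$. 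Assembling these contributions should reproduce the identity in the statement on the nose.

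The main obstacle is nothing more than the sign bookkeeping in the Koszul convention, and these signs are all absorbed once the two Leibniz rules are fixed. It is worth pointing out that the argument uses only the unital infinitesimal relation together with $\Delta\eta=0$; neither the 11-term relation nor the 7-term relations are invoked, so the lemma is in fact a purely formal consequence of the interaction between $\Delta$ and the unital infinitesimal axiom.
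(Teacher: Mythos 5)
Your argument is correct and is essentially the paper's own proof in more systematic clothing: the paper's graphical ``Picture proof'' expands each term of the unital infinitesimal relation with $\Delta$ inserted at every input and output and observes that the insertions on internal edges cancel in pairs, which is exactly the content of your Leibniz rule $[\widetilde\Delta,gf]=[\widetilde\Delta,g]f+(-1)^{|g|}g[\widetilde\Delta,f]$. Your closing observation that only the unital infinitesimal relation and $\Delta\eta=0$ are used also matches the paper's footnote to Remark~\ref{rem:no-intermediate1}.
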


\begin{proof}[Picture proof]
We present a graphical proof, from which a formal version can be extrapolated without difficulty. The graphical version of the identity is
\begin{center}
\begin{tikzpicture}[scale=.25]

\draw (0,0) -- (1,-1) -- (2,0);
\draw (1,-1) -- (1,-3);
\draw (0,-4) -- (1,-3) -- (2,-4);
\node[circle,draw,fill,inner sep=1pt] at (1,-3) {};
\node at (4,-2) {$-$};

\pgftransformxshift{6cm}
\draw (0,0) -- (1,-1) -- (2,0);
\draw (1,-1) -- (1,-3);
\draw (0,-4) -- (1,-3) -- (2,-4);
\node[circle,draw,fill,inner sep=1pt] at (1,-1) {};
\node at (4,-2) {$=$};

\pgftransformxshift{6cm}

\draw (0,0) -- (0,-2) -- (1,-3) -- (3,-1)--(3,0);
\draw (1,-3) -- (1,-4);
\draw (3,-1) -- (4,-2) -- (4,-4);
\node[circle,draw,fill,inner sep=1pt] at (1,-3) {};
\node at (6,-2.5) {$+$};

\pgftransformxshift{8cm}
\draw (0,0) -- (0,-2) -- (1,-3) -- (3,-1)--(3,0);
\draw (1,-3) -- (1,-4);
\draw (3,-1) -- (4,-2) -- (4,-4);
\node[circle,draw,fill,inner sep=1pt] at (3,-1) {};
\node at (6,-2.5) {$+$};

\pgftransformxshift{8cm}

\draw (1,0)  -- (1,-1) -- (0,-2) -- (0,-4);
\draw (1,-1) -- (3,-3) -- (3,-4);
\draw (4,0) -- (4,-2) -- (3,-3);
\node[circle,draw,fill,inner sep=1pt] at (1,-1) {};
\node at (6,-2.5) {$+$};

\pgftransformxshift{8cm}

\draw (1,0)  -- (1,-1) -- (0,-2) -- (0,-4);
\draw (1,-1) -- (3,-3) -- (3,-4);
\draw (4,0) -- (4,-2) -- (3,-3);
\node[circle,draw,fill,inner sep=1pt] at (3,-3) {};

\pgftransformxshift{-24cm}
\pgftransformyshift{-6cm}
\node at (0,-2.5) {$-$};

\pgftransformxshift{2cm}
\draw (0,0)  -- (0,-2) -- (1,-3) -- (2.5,-1.5) -- (4,-3) -- (5,-2) -- (5,0);
\draw (1,-3) -- (1,-4);
\draw (4,-3) -- (4,-4);
\draw (2.5,-1.5) -- (2.5,-.5);
\node[circle,draw,inner sep=1pt] at (2.5,-.5) {};
\node[circle,draw,fill,inner sep=1pt] at (4,-3) {};
\node at (7,-2.5) {$-$};

\pgftransformxshift{9cm}

\draw (0,0)  -- (0,-2) -- (1,-3) -- (2.5,-1.5) -- (4,-3) -- (5,-2) -- (5,0);
\draw (1,-3) -- (1,-4);
\draw (4,-3) -- (4,-4);
\draw (2.5,-1.5) -- (2.5,-.5);
\node[circle,draw,inner sep=1pt] at (2.5,-.5) {};
\node[circle,draw,fill,inner sep=1pt] at (1,-3) {};
\node at (7,-2.5) {$-$};

\pgftransformxshift{9cm}

\draw (0,0)  -- (0,-2) -- (1,-3) -- (2.5,-1.5) -- (4,-3) -- (5,-2) -- (5,0);
\draw (1,-3) -- (1,-4);
\draw (4,-3) -- (4,-4);
\draw (2.5,-1.5) -- (2.5,-.5);
\node[circle,draw,inner sep=1pt] at (2.5,-.5) {};
\node[circle,draw,fill,inner sep=1pt] at (2.5,-1.5) {};
\end{tikzpicture}
\end{center}
Next we draw the pictures for the expansions of the seven terms on the right hand side:
\begin{center}
\begin{tikzpicture}[scale=.25]
\draw (0,0) -- (0,-2) -- (1,-3) -- (3,-1)--(3,0);
\draw (1,-3) -- (1,-4);
\draw (3,-1) -- (4,-2) -- (4,-4);
\node[circle,draw,fill,inner sep=1pt] at (1,-3) {};
\node at (6,-2.5) {$=$};

\pgftransformxshift{8cm}

\draw (0,0) -- (0,-2) -- (1,-3) -- (3,-1)--(3,0);
\draw (1,-3) -- (1,-4);
\draw (3,-1) -- (4,-2) -- (4,-4);
\node at (1,-4.8) {\tiny$\Delta$};
\node at (6,-2.5) {$+$};

\pgftransformxshift{8cm}

\draw (0,0) -- (0,-2) -- (1,-3) -- (3,-1)--(3,0);
\draw (1,-3) -- (1,-4);
\draw (3,-1) -- (4,-2) -- (4,-4);
\node at (0,.8) {\tiny$\Delta$};
\node at (6,-2.5) {$-$};

\pgftransformxshift{8cm}

\draw (0,0) -- (0,-2) -- (1,-3) -- (3,-1)--(3,0);
\draw (1,-3) -- (1,-4);
\draw (3,-1) -- (4,-2) -- (4,-4);
\node at (1.5,-1.3) {\tiny$\Delta$};

\end{tikzpicture}
\end{center}
The plus in the second term comes from switching the order of $\Delta$ and $\lambda$ in the composition. Similarly, we get
\begin{center}
\begin{tikzpicture}[scale=.25]
\draw (0,0) -- (0,-2) -- (1,-3) -- (3,-1)--(3,0);
\draw (1,-3) -- (1,-4);
\draw (3,-1) -- (4,-2) -- (4,-4);
\node[circle,draw,fill,inner sep=1pt] at (3,-1) {};
\node at (6,-2.5) {$=$};

\pgftransformxshift{8cm}

\draw (0,0) -- (0,-2) -- (1,-3) -- (3,-1)--(3,0);
\draw (1,-3) -- (1,-4);
\draw (3,-1) -- (4,-2) -- (4,-4);
\node at (3,.8) {\tiny$\Delta$};
\node at (6,-2.5) {$+$};

\pgftransformxshift{8cm}

\draw (0,0) -- (0,-2) -- (1,-3) -- (3,-1)--(3,0);
\draw (1,-3) -- (1,-4);
\draw (3,-1) -- (4,-2) -- (4,-4);
\node at (4,-4.8) {\tiny$\Delta$};
\node at (6,-2.5) {$+$};

\pgftransformxshift{8cm}

\draw (0,0) -- (0,-2) -- (1,-3) -- (3,-1)--(3,0);
\draw (1,-3) -- (1,-4);
\draw (3,-1) -- (4,-2) -- (4,-4);
\node at (1.5,-1.3) {\tiny$\Delta$};

\end{tikzpicture}
\end{center}

\begin{center}
\begin{tikzpicture}[scale=.25]
\draw (1,0)  -- (1,-1) -- (0,-2) -- (0,-4);
\draw (1,-1) -- (3,-3) -- (3,-4);
\draw (4,0) -- (4,-2) -- (3,-3);
\node[circle,draw,fill,inner sep=1pt] at (1,-1) {};
\node at (6,-2.5) {$=$};

\pgftransformxshift{8cm}

\draw (1,0)  -- (1,-1) -- (0,-2) -- (0,-4);
\draw (1,-1) -- (3,-3) -- (3,-4);
\draw (4,0) -- (4,-2) -- (3,-3);
\node at (1,.8) {\tiny$\Delta$};
\node at (6,-2.5) {$+$};

\pgftransformxshift{8cm}

\draw (1,0)  -- (1,-1) -- (0,-2) -- (0,-4);
\draw (1,-1) -- (3,-3) -- (3,-4);
\draw (4,0) -- (4,-2) -- (3,-3);
\node at (0,-4.8) {\tiny$\Delta$};
\node at (6,-2.5) {$+$};

\pgftransformxshift{8cm}

\draw (1,0)  -- (1,-1) -- (0,-2) -- (0,-4);
\draw (1,-1) -- (3,-3) -- (3,-4);
\draw (4,0) -- (4,-2) -- (3,-3);
\node at (2.5,-1.3) {\tiny$\Delta$};
\end{tikzpicture}
\end{center}

\begin{center}
\begin{tikzpicture}[scale=.25]
\draw (1,0)  -- (1,-1) -- (0,-2) -- (0,-4);
\draw (1,-1) -- (3,-3) -- (3,-4);
\draw (4,0) -- (4,-2) -- (3,-3);
\node[circle,draw,fill,inner sep=1pt] at (3,-3) {};
\node at (6,-2.5) {$=$};

\pgftransformxshift{8cm}

\draw (1,0)  -- (1,-1) -- (0,-2) -- (0,-4);
\draw (1,-1) -- (3,-3) -- (3,-4);
\draw (4,0) -- (4,-2) -- (3,-3);
\node at (3,-4.8) {\tiny$\Delta$};
\node at (6,-2.5) {$+$};

\pgftransformxshift{8cm}

\draw (1,0)  -- (1,-1) -- (0,-2) -- (0,-4);
\draw (1,-1) -- (3,-3) -- (3,-4);
\draw (4,0) -- (4,-2) -- (3,-3);
\node at (4,.8) {\tiny$\Delta$};
\node at (6,-2.5) {$-$};

\pgftransformxshift{8cm}

\draw (1,0)  -- (1,-1) -- (0,-2) -- (0,-4);
\draw (1,-1) -- (3,-3) -- (3,-4);
\draw (4,0) -- (4,-2) -- (3,-3);
\node at (2.5,-1.3) {\tiny$\Delta$};
\end{tikzpicture}
\end{center}

\begin{center}
\begin{tikzpicture}[scale=.25]

\draw (0,0)  -- (0,-2) -- (1,-3) -- (2.5,-1.5) -- (4,-3) -- (5,-2) -- (5,0);
\draw (1,-3) -- (1,-4);
\draw (4,-3) -- (4,-4);
\draw (2.5,-1.5) -- (2.5,-.5);
\node[circle,draw,inner sep=1pt] at (2.5,-.5) {};
\node[circle,draw,fill,inner sep=1pt] at (4,-3) {};
\node at (7,-2.5) {$=$};

\pgftransformxshift{9cm}

\draw (0,0)  -- (0,-2) -- (1,-3) -- (2.5,-1.5) -- (4,-3) -- (5,-2) -- (5,0);
\draw (1,-3) -- (1,-4);
\draw (4,-3) -- (4,-4);
\draw (2.5,-1.5) -- (2.5,-.5);
\node[circle,draw,inner sep=1pt] at (2.5,-.5) {};
\node at (4,-4.8) {\tiny$\Delta$};
\node at (7,-2.5) {$+$};

\pgftransformxshift{9cm}

\draw (0,0)  -- (0,-2) -- (1,-3) -- (2.5,-1.5) -- (4,-3) -- (5,-2) -- (5,0);
\draw (1,-3) -- (1,-4);
\draw (4,-3) -- (4,-4);
\draw (2.5,-1.5) -- (2.5,-.5);
\node[circle,draw,inner sep=1pt] at (2.5,-.5) {};
\node at (5,0.8) {\tiny$\Delta$};
\node at (7,-2.5) {$-$};

\pgftransformxshift{9cm}

\draw (0,0)  -- (0,-2) -- (1,-3) -- (2.5,-1.5) -- (4,-3) -- (5,-2) -- (5,0);
\draw (1,-3) -- (1,-4);
\draw (4,-3) -- (4,-4);
\draw (2.5,-1.5) -- (2.5,-.5);
\node[circle,draw,inner sep=1pt] at (2.5,-.5) {};
\node at (3.7,-1.7) {\tiny$\Delta$};
\end{tikzpicture}
\end{center}

\begin{center}
\begin{tikzpicture}[scale=.25]

\draw (0,0)  -- (0,-2) -- (1,-3) -- (2.5,-1.5) -- (4,-3) -- (5,-2) -- (5,0);
\draw (1,-3) -- (1,-4);
\draw (4,-3) -- (4,-4);
\draw (2.5,-1.5) -- (2.5,-.5);
\node[circle,draw,inner sep=1pt] at (2.5,-.5) {};
\node[circle,draw,fill,inner sep=1pt] at (1,-3) {};
\node at (7,-2.5) {$=$};

\pgftransformxshift{9cm}

\draw (0,0)  -- (0,-2) -- (1,-3) -- (2.5,-1.5) -- (4,-3) -- (5,-2) -- (5,0);
\draw (1,-3) -- (1,-4);
\draw (4,-3) -- (4,-4);
\draw (2.5,-1.5) -- (2.5,-.5);
\node[circle,draw,inner sep=1pt] at (2.5,-.5) {};
\node at (1,-4.8) {\tiny$\Delta$};
\node at (7,-2.5) {$+$};

\pgftransformxshift{9cm}

\draw (0,0)  -- (0,-2) -- (1,-3) -- (2.5,-1.5) -- (4,-3) -- (5,-2) -- (5,0);
\draw (1,-3) -- (1,-4);
\draw (4,-3) -- (4,-4);
\draw (2.5,-1.5) -- (2.5,-.5);
\node[circle,draw,inner sep=1pt] at (2.5,-.5) {};
\node at (0,.8) {\tiny$\Delta$};
\node at (7,-2.5) {$-$};

\pgftransformxshift{9cm}

\draw (0,0)  -- (0,-2) -- (1,-3) -- (2.5,-1.5) -- (4,-3) -- (5,-2) -- (5,0);
\draw (1,-3) -- (1,-4);
\draw (4,-3) -- (4,-4);
\draw (2.5,-1.5) -- (2.5,-.5);
\node[circle,draw,inner sep=1pt] at (2.5,-.5) {};
\node at (1.3,-1.7) {\tiny$\Delta$};
\end{tikzpicture}
\end{center}

\begin{center}
\begin{tikzpicture}[scale=.25]

\draw (0,0)  -- (0,-2) -- (1,-3) -- (2.5,-1.5) -- (4,-3) -- (5,-2) -- (5,0);
\draw (1,-3) -- (1,-4);
\draw (4,-3) -- (4,-4);
\draw (2.5,-1.5) -- (2.5,-.5);
\node[circle,draw,inner sep=1pt] at (2.5,-.5) {};
\node[circle,draw,fill,inner sep=1pt] at (2.5,-1.5) {};
\node at (7,-2.5) {$=$};

\pgftransformxshift{9cm}

\draw (0,0)  -- (0,-2) -- (1,-3) -- (2.5,-1.5) -- (4,-3) -- (5,-2) -- (5,0);
\draw (1,-3) -- (1,-4);
\draw (4,-3) -- (4,-4);
\draw (2.5,-1.5) -- (2.5,-.5);
\node[circle,draw,inner sep=1pt] at (2.5,-.5) {};
\node at (1.3,-1.7) {\tiny$\Delta$};
\node at (7,-2.5) {$+$};

\pgftransformxshift{9cm}

\draw (0,0)  -- (0,-2) -- (1,-3) -- (2.5,-1.5) -- (4,-3) -- (5,-2) -- (5,0);
\draw (1,-3) -- (1,-4);
\draw (4,-3) -- (4,-4);
\draw (2.5,-1.5) -- (2.5,-.5);
\node[circle,draw,inner sep=1pt] at (2.5,-.5) {};
\node at (3.7,-1.7) {\tiny$\Delta$};
\end{tikzpicture}
\end{center}
In the last equation, we have used $\Delta \eta=0$.
We see that in summing up the first four terms, the third summands cancel in pairs. Similarly, the third summands of the fifth and sixth expression cancel with the last expression. Using the unital infinitesimal relation, the remaining summands combine to give
\begin{center}
\begin{tikzpicture}[scale=.25]

\draw (0,0) -- (1,-1) -- (2,0);
\draw (1,-1) -- (1,-3);
\draw (0,-4) -- (1,-3) -- (2,-4);
\node at (0,-5) {\tiny$\Delta$};
\node at (4,-2) {$+$};

\pgftransformxshift{6cm}
\draw (0,0) -- (1,-1) -- (2,0);
\draw (1,-1) -- (1,-3);
\draw (0,-4) -- (1,-3) -- (2,-4);
\node at (2,-5) {\tiny$\Delta$};
\node at (4,-2) {$+$};

\pgftransformxshift{6cm}
\draw (0,0) -- (1,-1) -- (2,0);
\draw (1,-1) -- (1,-3);
\draw (0,-4) -- (1,-3) -- (2,-4);
\node at (0,1) {\tiny$\Delta$};
\node at (4,-2) {$+$};

\pgftransformxshift{6cm}
\draw (0,0) -- (1,-1) -- (2,0);
\draw (1,-1) -- (1,-3);
\draw (0,-4) -- (1,-3) -- (2,-4);
\node at (2,1) {\tiny$\Delta$};
\node at (4,-2) {$=$};

\pgftransformxshift{6cm}
\draw (0,0) -- (1,-1) -- (2,0);
\draw (1,-1) -- (1,-3);
\draw (0,-4) -- (1,-3) -- (2,-4);
\node[circle,draw,fill,inner sep=1pt] at (1,-3) {};
\node at (4,-2) {$-$};

\pgftransformxshift{6cm}
\draw (0,0) -- (1,-1) -- (2,0);
\draw (1,-1) -- (1,-3);
\draw (0,-4) -- (1,-3) -- (2,-4);
\node[circle,draw,fill,inner sep=1pt] at (1,-1) {};
\end{tikzpicture}
\end{center}
as claimed.
\end{proof}

In odd BV unital infinitesimal bialgebras $(A, \mu, \eta, \lambda, \Delta)$ such that 
$$
\lambda\eta=0
$$
some of the algebraic identities simplify considerably:
\begin{itemize}
\item the 4-term unital infinitesimal relation reduces to the 3-term \emph{infinitesimal relation} $\lambda\mu=(1 \otimes \mu)(\lambda \otimes 1) + (\mu \otimes 1)(1 \otimes \lambda)$,
\item the 11-term relation for $\Delta$, $\mu$, $\lambda$ and $\eta$ reduces to a 9-term relation for $\Delta$, $\mu$ and $\lambda$, and
\item the identity in Lemma~\ref{lemma:rel_mu_lambda_beta_gamma} simplifies to a 6-term relation.
\end{itemize}

\begin{remark} Note that, although $\lambda\eta$ may not vanish for reduced symplectic homology or reduced loop homology, which are our main sources of examples from~\S\ref{sec:reduced} and~\S\ref{sec:stringtop}, the 11-term relation still reduces to the 9-term relation in that case (Remark~\ref{rmk:11-term-is-9-term-for-SH}). 
\end{remark} 

\begin{remark}
The notion of a \emph{BV unital infinitesimal bialgebra} from Definition~\ref{defi:BVui} also makes sense when the underlying $R$-module $A$
is a Tate vector space. We will not insist on this aspect because, in our context, the structure arises on reduced symplectic homology and the underlying linearly topologized vector space is discrete. See also Remark~\ref{rmk:TateBVui}. 
\end{remark}

\begin{remark}\label{rem:no-intermediate1}
BV algebras are refinements of Gerstenhaber algebras, which are associative algebras with an odd Lie bracket, satisfying a Poisson identity. Nevertheless, a corresponding intermediate algebraic structure between unital infinitesimal bialgebras and BV unital infinitesimal bialgebras does not seem to exist. In particular, as far as we are aware, the 11-term relation does not imply any relation just involving the product, coproduct, bracket and cobracket.\footnote{Note that the identity in Lemma~\ref{lemma:rel_mu_lambda_beta_gamma} is derived using only the 4-term unital infinitesimal relation, so it is independent of the 11-term relation.}  We give a geometric reason for this phenomenon in Remark~\ref{rem:no-intermediate2}.
\end{remark}

\begin{definition} \label{defi:odd-Frobenius} 
An {\em odd BV Frobenius algebra} is a tuple\break $(A, \mu, \lambda, \eta, \eps,\Delta)$, where
\begin{itemize}
\item $(A, \mu, \lambda, \eta, \eps)$ is a Frobenius algebra with $|\mu|=0$ and $|\lambda|$ odd,
\item $(A, \mu, \eta, \Delta)$ is a BV algebra, i.e., $\Delta:A \to A$ has degree $1$ and satisfies $\Delta^2=0$ and the 7-term relation with respect to $\mu$, and
\item equation \eqref{eq:Delta_and_lambda_eta}, which we call \emph{BV Frobenius relation} holds, i.e.,
\begin{flalign} \label{eq:BVFrobenius-relation}
\mbox{\sc (BV Frobenius relation)} \quad 
(\Delta\otimes 1) \lambda \eta = (1 \otimes \Delta) \lambda \eta. &&
\end{flalign}
\end{itemize}
\end{definition}

The Frobenius property for $\mu$, $\lambda$, $\eta$, $\eps$ means that $\mu$ is associative with unit $\eta$, $\lambda$ is coassociative with counit $\eps$, and the following \emph{Frobenius relation} is satisfied,  
\begin{flalign}\label{eq:Frobenius}
\mbox{\sc (Frobenius relation)} \  \lambda \mu = (\mu \otimes 1) (1 \otimes \lambda) = (1 \otimes \mu)(\lambda \otimes 1), &&
\end{flalign}
which in particular implies 
\begin{equation}\label{eq:Frobenius_with_eta}
\lambda=(\mu \otimes 1)(1 \otimes \lambda\eta) = (1 \otimes \mu)(\lambda\eta \otimes 1)
\end{equation}
and
\begin{equation}\label{eq:Frobenius_with_epsilon}
\mu=(\eps\mu \otimes 1)(1 \otimes \lambda) = -(1 \otimes \eps\mu)(\lambda \otimes 1).
\end{equation}
This definition of Frobenius algebra is equivalent to that of biunital coFrobenius bialgebra from~\cite{CO-algebra}. 

The BV Frobenius property \eqref{eq:BVFrobenius-relation} can be equivalently rewritten as
\begin{equation} \label{eq:BVFrobenius-counit}
\eps\mu(\Delta\otimes 1)=\eps\mu(1\otimes\Delta),
\end{equation}
as can be seen by a direct computation using \eqref{eq:Frobenius_with_epsilon}.

\begin{remark} Let $(A, \mu, \lambda, \eta, \eps, \Delta)$ be a BV Frobenius algebra. Then $(A,\lambda,\eps,\Delta)$ is a BV coalgebra, i.e., the 7-term relation for $\Delta$ and $\lambda$ is satisfied. See Lemma~\ref{lem:BVFrob-BVui} below. Also, the notion is self-dual in the sense that $(A^\vee[-|\lambda|], \lambda^\vee, \mu^\vee, \eps^\vee, \eta^\vee, \Delta^\vee)$ is again a BV Frobenius algebra. See also~\cite[Lemma~4.8(i)]{CO-algebra}.
\end{remark}

\begin{remark}
When $A$ is infinite dimensional, the notion of a Frobenius algebra has to be understood in the context of Tate vector spaces as in~\cite{CO-algebra,CO-Tate}. See Appendix~\ref{sec:Tate}. 
\end{remark}

\subsection{Relations between the two structures}
 
The next result is the BV analogue of~\cite[Proposition~4.2]{CO-algebra}.

\begin{lemma} \label{lem:BVFrob-BVui}
Every odd BV Frobenius algebra is an odd BV unital infinitesimal bialgebra.
\end{lemma}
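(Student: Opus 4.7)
My plan is to verify each axiom of an odd BV unital infinitesimal bialgebra in turn, exploiting the duality built into a Frobenius structure. Most axioms that do not mix $\Delta$ with $\lambda$ come for free: associativity and commutativity of $\mu$ together with the unit property of $\eta$ are part of the BV algebra structure; coassociativity and (graded) cocommutativity of $\lambda$ are part of the odd Frobenius structure; and $\Delta^2=0$ and the 7-term relation for $\Delta$ and $\mu$ are part of the BV algebra assumption. The unital infinitesimal relation involves no $\Delta$ at all and is the content of the non-BV analogue [CO-algebra, Proposition~4.2], so it is inherited here. Hence the only axioms requiring genuine work are the 7-term relation for $\Delta$ and $\lambda$ and the 11-term relation.

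For both, the main tool is the pair of Frobenius identities $\lambda = (\mu \otimes 1)(1 \otimes \lambda\eta) = (1 \otimes \mu)(\lambda\eta \otimes 1)$, combined with the BV Frobenius relation $(\Delta \otimes 1)\lambda\eta = (1 \otimes \Delta)\lambda\eta$. Conceptually, the Frobenius copairing $\lambda\eta$ identifies $A$ with $A^\vee[|\lambda|]$ (in the Tate sense if $A$ is infinite-dimensional), sending $\mu \leftrightarrow \lambda^\vee$ and $\eta \leftrightarrow \eps^\vee$, and the BV Frobenius relation asserts precisely that $\Delta$ is self-adjoint with respect to the Frobenius pairing $\eps\mu$. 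Thus any identity involving $\mu$ and $\Delta$ has a dual counterpart involving $\lambda$ and $\Delta$.

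For the 7-term relation for $\Delta$ and $\lambda$, I would expand $(\lambda \otimes 1)\lambda\Delta$ using the Frobenius formula, converting it into a composition of $\mu$'s and $\Delta$ decorated by copies of $\lambda\eta$. The BV Frobenius relation lets me freely slide the $\Delta$ across each $\lambda\eta$, and the 7-term relation for $\Delta$ and $\mu$ then produces the required rearrangement; repackaging via Frobenius back into $\lambda$'s yields the 7-term relation for $\Delta$ and $\lambda$. Equivalently and more conceptually, one can view this as the dual of the 7-term for $\Delta$ and $\mu$ under the Frobenius pairing, which is valid precisely because $\Delta$ is self-adjoint.

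For the 11-term relation I would apply the same strategy: expand the $\lambda$ on the left-hand side via Frobenius into a tree built from $\mu$'s, $\Delta$, and $\lambda\eta$; then use the 7-term relation for $\Delta$ and $\mu$, together with the BV Frobenius relation to move $\Delta$'s through $\lambda\eta$, to match the right-hand side term by term. The first four terms of the RHS correspond to the cyclic rearrangements of $\Delta$ through $\mu$ supplied by the 7-term relation, while the following six terms and the final $\lambda\eta$-correction arise from combining this with the unital infinitesimal correction that is already present in the non-BV analogue. I expect the main obstacle to be the careful sign bookkeeping needed to collect the eleven terms in their precise form; drawing tree diagrams in the style used throughout this section should keep the computation transparent and manageable.
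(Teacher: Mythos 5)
Your proposal follows essentially the same route as the paper: the only axioms requiring work are the two relations mixing $\Delta$ with $\lambda$, and both are obtained by transporting the 7-term relation for $\Delta$ and $\mu$ through the Frobenius structure, i.e.\ precomposing it with maps built from the copairing $\lambda\eta$ and using the self-adjointness of $\Delta$ expressed by the BV Frobenius relation. The one minor inaccuracy is your bookkeeping for the 11-term relation: in the Frobenius case its last four terms cancel in pairs via $\lambda=(\mu\otimes 1)(1\otimes\lambda\eta)$, so the identity reduces to seven terms that match exactly the seven terms of the $\Delta$--$\mu$ relation precomposed with $1\otimes 1\otimes\lambda\eta$, rather than those terms arising from an extra unital infinitesimal correction.
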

\begin{proof}
The basic properties of the operations are the same in both cases, and the unital infinitesimal relation follows directly from the Frobenius relation \eqref{eq:Frobenius} above. This means that it only remains to deduce the 7-term relation for $\Delta$ and $\lambda$ and the 11-term relation for $\Delta$, $\mu$, $\lambda$ and $\eta$ from the given properties.
Recall that the 7-term identity for $\Delta$ and $\mu$ has the form
\begin{equation}\label{eq:7_term_in_proof}
\Delta \mu(\mu \otimes 1) = \left[ \mu (\Delta\mu \otimes 1) - \mu(\mu \otimes 1)(\Delta \otimes 1 \otimes 1)\right](1+\sigma + \sigma^2).
\end{equation}

For the proof of the 7-term identity for $\Delta$ and $\lambda$, we precompose this identity with $\phi:=(1 \otimes 1 \otimes \lambda \eta \otimes 1)(1 \otimes \lambda \eta)$. Relatively straightforward computations using \eqref{eq:Frobenius} and \eqref{eq:Frobenius_with_eta} show that
\begin{align*}
([\Delta \mu (\mu \otimes 1)] \otimes 1 \otimes 1)\phi
&=(\Delta \otimes 1 \otimes 1) (\lambda \otimes 1) \lambda\\
([\mu (\Delta\mu \otimes 1)] \otimes 1 \otimes 1)\phi
&= -(\lambda \Delta \otimes 1) \lambda\\
([\mu (\Delta\mu \otimes 1) \sigma] \otimes 1 \otimes 1) \phi
&= - \sigma^2(\lambda \Delta \otimes 1)\lambda\\
([\mu  (\Delta\mu \otimes 1) \sigma^2] \otimes 1 \otimes 1)\phi
&= -  \sigma(\lambda \Delta \otimes 1)\lambda\\
([\mu (\mu \otimes 1) (\Delta \otimes 1 \otimes 1)] \otimes 1 \otimes 1) \phi
&= (\lambda \otimes 1) \lambda \Delta\\
([\mu (\mu \otimes 1) (\Delta \otimes 1 \otimes 1)\sigma] \otimes 1 \otimes 1) \phi
&= \sigma(\Delta \otimes 1 \otimes 1)(\lambda \otimes 1) \lambda \quad\text{and}\\
([\mu (\mu \otimes 1) (\Delta \otimes 1 \otimes 1) \sigma^2]  \otimes 1 \otimes 1) \phi
&= \sigma^2(\Delta \otimes 1 \otimes 1)(\lambda \otimes 1) \lambda.
\end{align*}
Combining these terms with the signs according to \eqref{eq:7_term_in_proof} yields
\begin{align*}
(\Delta \otimes 1 \otimes 1) (\lambda \otimes 1)\lambda
&= -(1+\sigma+\sigma^2)[(\lambda \Delta \otimes 1)\lambda]\\
&\phantom{==} - (\lambda \otimes 1)\lambda \Delta - (\sigma+\sigma^2)(\Delta \otimes 1 \otimes 1)(\lambda \otimes 1)\lambda,
\end{align*}
which is a rearrangement of the 7-term relation for $\Delta$ and $\lambda$ as stated in \eqref{eq:7_term_for_Delta_and_lambda}.

For the proof of the 11-term identity, we observe that by \eqref{eq:Frobenius_with_eta} the last 4 terms in the 11-term identity cancel in pairs, and so the identity to be proven reduces to
\begin{align*}
\lambda \Delta \mu
&=\lambda \mu (\Delta \otimes 1) + \lambda \mu (1 \otimes \Delta)
- (\Delta \otimes 1)\lambda \mu  - (1 \otimes \Delta) \lambda \mu \\
&\phantom{=}\,\, + (\mu \otimes 1)(1 \otimes \Delta \otimes 1)(1 \otimes \lambda)(1+\tau)
\end{align*}
To deduce this version, we precompose the 7-term identity \eqref{eq:7_term_in_proof}
with  $\psi:=(1 \otimes 1 \otimes \lambda \eta)$. This time, straightforward computations give
\begin{align*}
(\Delta \mu \otimes 1) (\mu \otimes 1 \otimes 1) \psi
&= (\Delta \otimes 1)\lambda  \mu\\
(\mu \otimes 1) (\Delta\mu \otimes 1 \otimes 1) \psi
&= - \lambda \Delta \mu\\
(\mu \otimes 1) (\Delta\mu \otimes 1 \otimes 1) (\sigma \otimes 1) \psi
=  (\mu \otimes 1) &(1 \otimes \Delta \otimes 1) (1 \otimes \lambda) \tau\\
(\mu \otimes 1) (\Delta\mu \otimes 1 \otimes 1) (\sigma^2 \otimes 1) \psi
=  (\mu \otimes 1) &(1 \otimes \Delta \otimes 1) (1 \otimes \lambda)\\
(\mu \otimes 1) (\mu \otimes 1 \otimes 1) (\Delta \otimes 1 \otimes 1 \otimes 1) \psi
&= - \lambda \mu (\Delta \otimes 1)\\
(\mu \otimes 1) (\mu \otimes 1 \otimes 1) (\Delta \otimes 1 \otimes 1 \otimes 1) (\sigma \otimes 1) \psi
&= (1 \otimes \Delta) \lambda \mu \quad\text{and}\\
(\mu \otimes 1) (\mu \otimes 1 \otimes 1) (\Delta \otimes 1 \otimes 1 \otimes 1)(\sigma^2 \otimes 1) \psi
&= - \lambda \mu (1 \otimes \Delta).
\end{align*}
Combining these terms according to the signs in \eqref{eq:7_term_in_proof}, we get
\begin{align*}
(\Delta \otimes 1) \lambda \mu
&= -\lambda \Delta \mu + (\mu \otimes 1) (1 \otimes \Delta \otimes 1) (1 \otimes \lambda) \tau \\
&\phantom{=+}  + (\mu \otimes 1) (1 \otimes \Delta \otimes 1) (1 \otimes \lambda) \\
&\phantom{=+} + \lambda\mu(\Delta \otimes 1) + \lambda \mu (1 \otimes \Delta) -(1 \otimes \Delta) \lambda \mu.
\end{align*}
Again, this is a rearrangement of the relation we wanted to prove. 
\end{proof}

The following result will be proved in Appendix~\ref{app:BVui-BVTateFrob}.

\begin{proposition} \label{prop:BVui-BVFrob}
If  $(A, \mu, \eta, \lambda, \Delta)$ is an odd BV unital infinitesimal bialgebra satisfying $\lambda \eta =0$, there is a canonical construction of an odd BV Frobenius algebra structure on $D(A):=A \oplus A^\vee[-|\lambda|]$ such that $(A,\mu, \eta)$ is a unital subalgebra and $(A^\vee[-|\lambda|], \lambda^\vee)$ is also a subalgebra, and such that the pairing is the canonical one.
\end{proposition}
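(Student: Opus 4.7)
The plan is to build on the classical (non-BV) Drinfel'd double construction, which, under the hypothesis $\lambda\eta = 0$, produces a Frobenius algebra structure $(D(A), \mu_D, \lambda_D, \eta_D, \varepsilon_D)$ on $D(A) = A \oplus A^\vee[-|\lambda|]$. This part, generalizing the results of Zhelyabin, Aguiar and Bai cited in the introduction, puts $A$ and $A^\vee[-|\lambda|]$ as subalgebras of $D(A)$, with the canonical evaluation pairing. To upgrade the output to an odd BV Frobenius algebra, I only need to extend $\Delta$ to an operator $\widetilde\Delta$ of degree $1$ on $D(A)$ and verify the three conditions in Definition~\ref{defi:odd-Frobenius} that involve $\widetilde\Delta$: namely $\widetilde\Delta^2 = 0$, the $7$-term relation for $\widetilde\Delta$ and $\mu_D$, and the BV Frobenius relation~\eqref{eq:BVFrobenius-relation}.

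The canonical candidate is $\widetilde\Delta := \Delta \oplus \Delta^\vee$, where $\Delta^\vee: A^\vee \to A^\vee$ is the transpose of $\Delta$; the degree shift $[-|\lambda|]$ on the second summand does not affect operator degrees, so $\widetilde\Delta$ is of degree $1$. The identity $\widetilde\Delta^2 = 0$ is immediate from $\Delta^2 = 0$, and by construction $\widetilde\Delta$ is self-adjoint for the canonical pairing $A \otimes A^\vee[-|\lambda|] \to R$. The BV Frobenius relation~\eqref{eq:BVFrobenius-relation} is precisely this self-adjointness, rewritten via~\eqref{eq:Frobenius_with_eta}, and so holds for free.

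The heart of the argument is the $7$-term relation for $\widetilde\Delta$ and $\mu_D$. I would split this into four cases according to how the three inputs are distributed between $A$ and $A^\vee[-|\lambda|]$. In the all-$A$ case, the relation is the given~\eqref{eq:7_term_for_Delta_and_mu}. In the all-$A^\vee$ case, $\mu_D$ restricts to $\lambda^\vee$ and $\widetilde\Delta$ to $\Delta^\vee$, so the relation is the transpose of the $7$-term relation~\eqref{eq:7_term_for_Delta_and_lambda} for $\Delta$ and $\lambda$ on $A$. The two mixed cases (with two $A$-inputs and one $A^\vee$-input, or its mirror, which are dual to each other under the self-duality of the structure) involve the mixed components of $\mu_D$, which are assembled from $\mu$, $\lambda$ and the pairing; expanding both sides of the $7$-term relation and feeding the $A^\vee$-input through the pairing converts the statement into the $11$-term relation~\eqref{eq:11term} applied to appropriate arguments on $A$. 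Under the hypothesis $\lambda\eta = 0$ the last three terms of~\eqref{eq:11term} vanish, leaving exactly the $9$-term identity that matches the mixed $7$-term relations on $D(A)$.

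The main obstacle is the bookkeeping in the two mixed cases: tracking Koszul signs when transposing across the pairing, accounting for the degree shift $[-|\lambda|]$ on the dual summand, and matching each term of the expanded $7$-term relation with a specific term of~\eqref{eq:11term}. This sign-and-duality accounting is lengthy but essentially mechanical, which explains why the detailed verification is postponed to Appendix~\ref{app:BVui-BVTateFrob}.
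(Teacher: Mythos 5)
Your overall strategy coincides with the paper's: construct the double with $A$ and $A^\vee[-|\lambda|]$ as subalgebras, extend $\Delta$ diagonally by (a version of) its adjoint, and verify the $7$-term relation componentwise, with the pure components reducing to \eqref{eq:7_term_for_Delta_and_mu} and the transpose of \eqref{eq:7_term_for_Delta_and_lambda}, and the mixed components reducing to the $11$-term relation \eqref{eq:11term}, which under $\lambda\eta=0$ loses its last terms. That part of the plan is sound and is exactly Step~5 of the paper's Appendix~\ref{app:BVui-BVTateFrob}.

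There are, however, two gaps. The first is a concrete sign issue that would make a step fail: you set $\widetilde\Delta=\Delta\oplus\Delta^\vee$ and assert that the BV Frobenius relation \eqref{eq:BVFrobenius-relation} ``holds for free'' by self-adjointness. It does not. The paper defines the dual-summand component as $-s\Delta^\vee\omega$, where $s$ and $\omega$ are the odd-degree shift isomorphisms, and the verification of $(\boldDelta\otimes 1)\boldlambda\boldeta=(1\otimes\boldDelta)\boldlambda\boldeta$ is an actual computation using $\boldlambda\boldeta=-(1\otimes s)\coev+\tau(1\otimes s)\coev$ together with the adjunction $(1\otimes\Delta^\vee)\coev=(\Delta\otimes 1)\coev$ applied twice; the paper's footnote records that with the opposite sign $+s\Delta^\vee\omega$ one obtains $(\boldDelta\otimes1)\boldlambda\boldeta=-(1\otimes\boldDelta)\boldlambda\boldeta$, i.e., the relation fails. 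Since $|\lambda|$ is odd, transporting the degree-$1$ operator $\Delta^\vee$ through the odd shift is precisely where this sign hides, so your remark that the shift ``does not affect'' the operator is the step that breaks. The second gap is that you treat the graded non-BV Frobenius double as available off the shelf. The results of Zhelyabin, Aguiar and Bai are ungraded; the graded version (commutativity and associativity of the six-component product, coassociativity of the coproduct, and the Frobenius relation, all with Koszul signs and the odd shift) is itself part of what this proposition establishes --- the paper states that it answers a question left open in \cite{CO-algebra} --- and it occupies Steps~1, 2 and~4 of the appendix. A complete proof must carry out those verifications rather than cite them.
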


\begin{remark}
This is a graded version of the Drinfel'd double construction, first discussed in~\cite{Drinfeld-quantum-groups}, see also Zhelyabin~\cite{Zhelyabin1997},  Aguiar~\cite{Aguiar2001} and Bai~\cite{Bai2010}. In the absence of the BV structure, Proposition~\ref{prop:BVui-BVFrob} answers a question left open in~\cite{CO-algebra} and echoes the constructions from~\cite[\S7]{CO-cones}.
\end{remark}

\begin{remark}
When $A$ is infinite dimensional, the statement of Proposition~\ref{prop:BVui-BVFrob} makes sense in the context of Tate vector spaces, see Appendix~\ref{sec:Tate}. The same is true for the statement of Lemma~\ref{lem:BVFrob-BVui}.
\end{remark}

\begin{remark}
We expect the definition of the Drinfel'd double to generalize to BV unital infinitesimal bialgebras such that $\lambda\eta$ is not necessarily zero. The formulas should be akin to those of~\cite[Theorem~7.8]{CO-reduced}. See also Conjecture~\ref{conj:H-doubles}(iii) and its discussion.
\end{remark} 

\begin{remark} The data of a BV algebra is equivalent to that of a Gerstenhaber algebra in $H_*(S^1)$-modules~\cite[Theorem~6.1]{Salvatore-Wahl}. This characterization goes through the description of the framed little 2-disc operad as a semi-direct product of the little 2-disc operad and the circle~\cite{Getzler-BV}. An anonymous referee has suggested that the data of a BV infinitesimal bialgebra might be equivalent to the data of (some kind of) an infinitesimal bialgebra in $H_*(S^1)$-modules. This question is related to Remark~\ref{rem:no-intermediate1} 
and an answer would involve a finer understanding of the corresponding dioperads. At the time of writing, we do not know whether such a description exists.
\end{remark}

\section{Moduli spaces of spheres}\label{sec:moduli}

The fact that moduli spaces of spheres with marked points and marked tangent directions play a fundamental role in constructing certain algebraic structures has been known for at least 30 years, as can be seen from the work of Kimura, Stasheff and Voronov~\cite{Kimura_Stasheff_Voronov_1995} and the references therein, 
in particular Getzler~\cite{Getzler-BV}.
These moduli spaces and their higher genus analogues underlie real enumerative geometry~\cite{LiuMelissa}, symplectic field theory~\cite{EGH}, Floer theory~\cite{Seidel07,Seidel-disjoinable,Ekholm-Oancea,Abouzaid-Groman-Varolgunes} and string topology~\cite{Sullivan-background}.

In the next two subsections, we will review some of the relevant work, mostly for motivation and to set up notation for what will come later. The third subsection contains a first (fairly benign) extension obtained by adding weights. Finally, in the last subsection, we discuss the special case of two inputs and two weighted outputs, which marks a departure from the strictly operadic world. It is clear that our discussion can be embedded into the more general framework of dioperads, but this is beyond the scope of this paper. 

\subsection{Moduli spaces with several inputs and one output} \label{ssec:moduli_kto1}

We denote by $\Ccal_{k,1}$ the moduli space of spheres with $k+1$ marked points, called punctures in the following, with one of them designated as ``output'' and the other $k$ considered as ``inputs'' and labelled by $1$, \dots, $k$, with its {\em real} compactification. This means that we add all stable configurations of nodal spheres where at each node we include a complex anti-linear identification of the tangent spaces 
at the two corresponding points, considered up to positive homothety. 
Readers more used to the Deligne-Mumford compactification should think of a real blow up of all boundary divisors. In particular, $\Ccal_{k,1}$ is a compact manifold with boundary and corners of real dimension $2(k-2)$.

Next, we denote by $\Ncal_{k,1}$ the moduli space of stable configurations as above with the additional data of a tangent direction at the output, called \emph{marker}. Thus $\Ncal_{k,1}$ is a principal $S^1$-bundle over $\Ccal_{k,1}$.

\begin{remark}
We will repeatedly make use of the operation of ``transporting'' a marker from an output puncture to some input puncture or vice versa, which we define as follows: Two points on the Riemann sphere together with a tangent direction at one of them determine a unique oriented circle, and the induced tangent direction at the other point is defined to be tangent to that circle, giving it the opposite orientation from the original one. 

In a model which conformally identifies the twice punctured sphere with the cylinder $\R \times S^1$, the given marker at one end picks out a line $\R \times \{pt\}$, and the marker at the other end is simply chosen tangent to this same line. The angular information at the nodes allows transfer between components and so makes this well-defined even for stable curves in the compactification.
\end{remark}

The bundle $\Ncal_{k,1} \to \Ccal_{k,1}$ admits sections, and for what follows we make the (non-canonical) choice to define $\zeta:\Ccal_{k,1} \to \Ncal_{k,1}$ 
by letting the tangent marker at the output of a given stable configuration be the unique direction tangent to the circle passing through the output and the first two inputs and pointing towards the first input. 
Again, the angular information at the nodes in our compactification make this section well-defined. In any case, it allows us to identify $\Ncal_{k,1} \cong \Ccal_{k,1} \times S^1$.

In contrast to the spaces $\Ccal_{k,1}$, the spaces $\Ncal_{k,1}$ form a topological operad, simply by formal composition creating a new node. Indeed, the required complex anti-linear identification of tangent spaces at the new node is obtained as follows: on the configuration for which the nodal point is an input, we induce a marker there by transporting the marker from the output as described above, and, up to positive homothety, there is a unique complex anti-linear identification of tangent spaces that matches this marker with the output marker on the other side.

Finally, we denote by $\Mcal_{k,1}$ the moduli spaces of stable configurations as above with (independent) marked tangent directions {\em at all points}. 
The space $\Mcal_{k,1}$ forms a principal $(S^1)^k$-bundle over $\Ncal_{k,1}$. This bundle admits a ``canonical" section $\xi:\Ncal_{k,1} \to \Mcal_{k,1}$ by transporting the marker from the output to all of the inputs as above, giving rise to a trivialization $\Mcal_{k,1}\cong \Ncal_{k,1}\times (S^1)^k$.

To summarize, we have the following tower of trivial fibrations with structure groups $S^1$, resp. $(S^1)^k$. The section $\xi$ is canonical and the section $\zeta$ depends on the choice of two distinct ordered input punctures. 

$$
\xymatrix{
\Mcal_{k,1}\ar[d]_{(S^1)^k} \\ \Ncal_{k,1} \ar[d]_{S^1} \ar@/_2pc/[u]_\xi \\ \Ccal_{k,1} \ar@/_2pc/[u]_\zeta
}
$$

\begin{remark}
The spaces $\Ncal_{k,1}$ form a model for the topological operad 
of (unframed) little 2-disks, and the spaces $\Mcal_{k,1}$ form a model for the topological operad of framed little 2-disks. 
Moreover, the maps $\xi:\Ncal_{k,1} \to \Mcal_{k,1}$ assemble into a map of operads.
\end{remark}

\subsection{Moduli spaces with 2 or 3 inputs and one output} \label{ssec:moduli_2or3to1}

We now discuss in detail the cases $k=2$ and $k=3$, which are used in this paper. 
When the various (chains in) moduli spaces appearing below are used to construct operations (as we will do in section \ref{sec:reduced} and \ref{sec:RFH} for Floer theory), the story that we tell in this subsection has the following interpretation: 
\begin{itemize}
\item The generators $P$ in degree 0 and $Q$ in degree 1 for $H_*(\Ncal_{2,1})$ give rise to the product $\mu$ and the bracket $\beta$.
\item The homology of each of the boundary components of $\Ncal_{3,1}$ can be described in terms of products of these cycles. In the implementation, such products parametrize compositions of the corresponding operations.
\item The Poisson relation~\eqref{eq:Poisson} is a consequence of the (obvious) fact that $\zeta(\p\Ccal_{3,1})$ is null-homologous in $\Ncal_{3,1}$ (Corollary~\ref{cor:Poisson_moduli}). 
\item The Jacobi relation~\eqref{eq:Jacobi-graded} is a consequence of the (obvious) fact that the sum of the three boundary components of $\Ncal_{3,1}$ is nullhomologous (Corollary~\ref{cor:Bi_as_products}).
\item The 7-term relation~\eqref{eq:7_term_for_Delta_and_mu} is a consequence of the (obvious) fact that $\xi\zeta(\p\Ccal_{3,1})$ is null-homologous in $\Mcal_{3,1}$ (Corollary~\ref{cor:cycles_7term}).
\end{itemize}

The space $\Ccal_{2,1}$ consists of a single point, the space $\Ncal_{2,1}$ is a circle and $\Mcal_{2,1}$ is a 3-torus. Note that the $\Z_2$-action of switching the labels at the inputs is free on $\Ncal_{2,1}$ and $\Mcal_{2,1}$, corresponding to the antipodal map on $S^1$ and its lift to $(S^1)^3$, respectively.

The homology of $\Ncal_{2,1}$ is generated by the class of a point 
$$
P := \zeta(\Ccal_{2,1})
$$ 
in degree $0$ and the class of the fundamental chain 
$$
Q := \Ncal_{2,1}
$$ 
in degree $1$. For definiteness, we orient $\Ncal_{2,1}$ by rotating the marker at the output counterclockwise. Note that the action of $\Z_2$ on $\Ncal_{2,1}$ by switching the input labels is trivial on homology, i.e., $[\tau P]=[P]$ and $[\tau Q] = [Q]$.

Similarly, the homology of $\Mcal_{2,1}$ in degree $0$ is generated by the class of $\hat P:=\xi(P)$, and in degree 1 there are three chains $D_1$, $D_2$ and $D_{\out}$ which correspond to rotation of the marker at the corresponding puncture, again counterclockwise. With these conventions and our convention of how markers are coupled, we have the relation
\begin{equation}\label{eq:lift_of_Q}
\xi_*[Q] = [D_\out] -[D_1] -[D_2]
\end{equation}
in $H_1(\Mcal_{2,1})$. In degree $2$ the homology is generated by pairwise products of the 1-dimensional cycles, and in degree $3$ we have the fundamental chain $D_\out \times D_1 \times D_2$.

Next we turn to $k=3$. The space $\Ccal_{3,1}$ is the real blow up of a 2-sphere at three points (corresponding to the three stable configurations consisting of two components each carrying two of the marked points), so we identify it with a 2-sphere with three disjoint open disks removed (i.e., a pair of pants with boundary). We label the boundary components of $\Ccal_{3,1}$ by $b_1$, $b_2$ and $b_3$, respectively, according to which input is on the same component as the output. Being a complex manifold, the interior of $\Ccal_{3,1}$ carries a preferred orientation, and we orient the boundary components accordingly. 

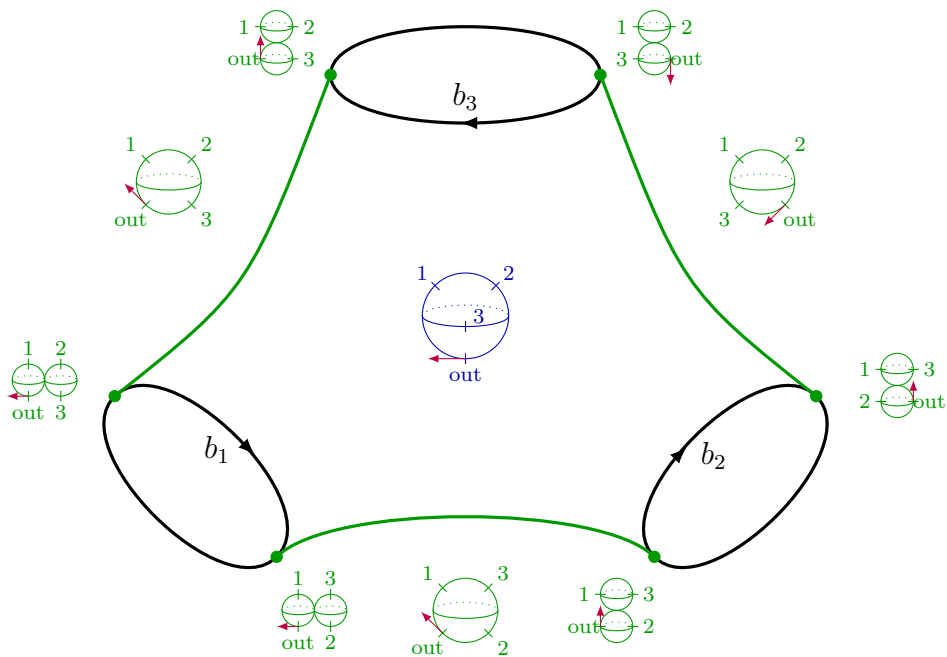
\begin{figure}
\begin{center}
\begin{tikzpicture}[scale=.71]
\definecolor{darkgreen}{rgb}{0,0.6,0};
\definecolor{darkblue}{rgb}{0,0,0.7};

\draw[very thick] (0,0) .. controls (0,1.2) and (5,1.2) .. (5,0) .. controls (5,-1.2) and (0,-1.2) .. (0,0);
\draw[very thick] (-4,-6) .. controls (-3,-5) and (0,-8) .. (-1,-9) .. controls (-2,-10) and (-5,-7) .. (-4,-6);
\draw[very thick] (6,-9) .. controls (5,-8) and (8,-5) .. (9,-6) .. controls (10,-7) and (7,-10) .. (6,-9);


\draw[very thick, -latex] (2.5,-.9) -- (2.4,-.9);
\draw[very thick, -latex] (-1.5,-7) -- (-1.4,-7.1);
\draw[very thick, -latex] (6.5,-7.05) -- (6.6,-6.95);


\node at (2.5,-0.4) {$b_3$};
\node at (-2.1,-7) {$b_1$};
\node at (7.1,-7.1) {$b_2$};

\draw[darkgreen, very thick] (0,0) .. controls (-1.5,-4) .. (-4,-6);
\draw[darkgreen, very thick] (5,0) .. controls (6.5,-4) .. (9,-6);
\draw[darkgreen, very thick] (-1,-9) .. controls (0,-8) and (5,-8) .. (6,-9);


\draw[darkgreen] (-3,-2) circle (0.6);
\draw[darkgreen, dotted] (-3.6,-2) arc (180:0: 0.6 and 0.15);
\draw[darkgreen] (-3.6,-2) arc (180:360: 0.6 and 0.15);
\draw[darkgreen] (-3.5,-1.5) -- (-3.35,-1.65);
\node[darkgreen] at (-3.7, -1.3) {\tiny$1$};
\draw[darkgreen] (-2.5,-1.5) -- (-2.65,-1.65);
\node[darkgreen] at (-2.3, -1.3) {\tiny$2$};
\draw[darkgreen] (-3.5,-2.5) -- (-3.35,-2.35);
\node[darkgreen] at (-3.7, -2.7) {\tiny$\out$};
\draw[purple, -latex] (-3.425,-2.425) -- (-3.825,-2.025);
\draw[darkgreen] (-2.5,-2.5) -- (-2.65,-2.35);
\node[darkgreen] at (-2.3, -2.7) {\tiny$3$};

\draw[darkgreen] (8,-2) circle (0.6);
\draw[darkgreen, dotted] (7.4,-2) arc (180:0: 0.6 and 0.15);
\draw[darkgreen] (7.4,-2) arc (180:360: 0.6 and 0.15);
\draw[darkgreen] (7.5,-1.5) -- (7.65,-1.65);
\node[darkgreen] at (7.3, -1.3) {\tiny$1$};
\draw[darkgreen] (8.5,-1.5) -- (8.35,-1.65);
\node[darkgreen] at (8.7, -1.3) {\tiny$2$};
\draw[darkgreen] (8.5,-2.5) -- (8.35,-2.35);
\node[darkgreen] at (8.7, -2.7) {\tiny$\out$};
\draw[purple, -latex] (8.425,-2.425) -- (8.025,-2.825);
\draw[darkgreen] (7.5,-2.5) -- (7.65,-2.35);
\node[darkgreen] at (7.3, -2.7) {\tiny$3$};

\draw[darkgreen] (2.5,-10) circle (0.6);
\draw[darkgreen, dotted] (1.9,-10) arc (180:0: 0.6 and 0.15);
\draw[darkgreen] (1.9,-10) arc (180:360: 0.6 and 0.15);
\draw[darkgreen] (2,-9.5) -- (2.15,-9.65);
\node[darkgreen] at (1.8, -9.3) {\tiny$1$};
\draw[darkgreen] (2,-10.5) -- (2.15,-10.35);
\node[darkgreen] at (1.8, -10.7) {\tiny$\out$};
\draw[purple, -latex] (2.075,-10.425) -- (1.675,-10.025);
\draw[darkgreen] (3,-10.5) -- (2.85,-10.35);
\node[darkgreen] at (3.2, -10.7) {\tiny$2$};
\draw[darkgreen] (3,-9.5) -- (2.85,-9.65);
\node[darkgreen] at (3.2, -9.3) {\tiny$3$};


\draw[darkblue] (2.5,-4.5) circle (0.8);
\draw[darkblue, dotted] (1.7,-4.5) arc (180:0: 0.8 and 0.2);
\draw[darkblue] (1.7,-4.5) arc (180:360: 0.8 and 0.2);
\draw[darkblue] (1.87,-3.87) -- (2.02,-4.02);
\node[darkblue] at (1.7, -3.7) {\tiny$1$};
\draw[darkblue] (2.98,-4.02) -- (3.13,-3.87);
\node[darkblue] at (3.3, -3.7) {\tiny$2$};
\draw[darkblue] (2.5, -4.6) -- (2.5, -4.8);
\node[darkblue] at (2.75, -4.5) {\tiny$3$};
\draw[darkblue] (2.5,-5.2) -- (2.5,-5.4);
\node[darkblue] at (2.5, -5.6) {\tiny$\out$};
\draw[purple, -latex] (2.5,-5.3) -- (1.8,-5.3);

\node[circle,darkgreen,draw,fill,inner sep=1.5pt] at (0,0) {};
\node[circle,darkgreen,draw,fill,inner sep=1.5pt] at (5,0) {};
\node[circle,darkgreen,draw,fill,inner sep=1.5pt] at (9,-6) {};
\node[circle,darkgreen,draw,fill,inner sep=1.5pt] at (6,-9) {};
\node[circle,darkgreen,draw,fill,inner sep=1.5pt] at (-1,-9) {};
\node[circle,darkgreen,draw,fill,inner sep=1.5pt] at (-4,-6) {};


\draw[darkgreen] (-1,.9) circle (0.3);
\draw[darkgreen, dotted] (-1.3,0.9) arc (180:0: 0.3 and 0.08);
\draw[darkgreen] (-1.3,0.9) arc (180:360: 0.3 and 0.08);
\draw[darkgreen] (-1,.3) circle (0.3);
\draw[darkgreen, dotted] (-1.3,0.3) arc (180:0: 0.3 and 0.08);
\draw[darkgreen] (-1.3,0.3) arc (180:360: 0.3 and 0.08);
\draw[darkgreen] (-1.4,.9 ) -- (-1.2,.9);
\node[darkgreen] at (-1.6,.9) {\tiny$1$};
\draw[darkgreen] (-.8,.9 ) -- (-.6,.9);
\node[darkgreen] at (-.4,.9) {\tiny$2$};
\draw[darkgreen] (-1.4,.3 ) -- (-1.2,.3);
\node[darkgreen] at (-1.6,.3) {\tiny$\out$};
\draw[purple, -latex] (-1.3,0.3) -- (-1.3,0.75);
\draw[darkgreen] (-.8,.3 ) -- (-.6,.3);
\node[darkgreen] at (-.4,.3) {\tiny$3$};

\draw[darkgreen] (-5.6,-5.7) circle (0.3);
\draw[darkgreen, dotted] (-5.9,-5.7) arc (180:0: 0.3 and 0.08);
\draw[darkgreen] (-5.9,-5.7) arc (180:360: 0.3 and 0.08);
\draw[darkgreen] (-5,-5.7) circle (0.3);
\draw[darkgreen, dotted] (-5.3,-5.7) arc (180:0: 0.3 and 0.08);
\draw[darkgreen] (-5.3,-5.7) arc (180:360: 0.3 and 0.08);
\draw[darkgreen] (-5.6,-5.5 ) -- (-5.6,-5.3);
\node[darkgreen] at (-5.6,-5.1) {\tiny$1$};
\draw[darkgreen] (-5.6,-5.9 ) -- (-5.6,-6.1);
\node[darkgreen] at (-5.6,-6.3) {\tiny$\out$};
\draw[purple, -latex] (-5.6,-6) -- (-6,-6);
\draw[darkgreen] (-5,-5.5 ) -- (-5,-5.3);
\node[darkgreen] at (-5,-5.1) {\tiny$2$};
\draw[darkgreen] (-5,-5.9 ) -- (-5,-6.1);
\node[darkgreen] at (-5,-6.3) {\tiny$3$};

\draw[darkgreen] (-.6,-10) circle (0.3);
\draw[darkgreen, dotted] (-.9,-10) arc (180:0: 0.3 and 0.08);
\draw[darkgreen] (-.9,-10) arc (180:360: 0.3 and 0.08);
\draw[darkgreen] (0,-10) circle (0.3);
\draw[darkgreen, dotted] (-.3,-10) arc (180:0: 0.3 and 0.08);
\draw[darkgreen] (-.3,-10) arc (180:360: 0.3 and 0.08);
\draw[darkgreen] (-.6,-9.8 ) -- (-.6,-9.6);
\node[darkgreen] at (-.6,-9.4) {\tiny$1$};
\draw[darkgreen] (-.6,-10.2 ) -- (-.6,-10.4);
\node[darkgreen] at (-.6,-10.6) {\tiny$\out$};
\draw[purple, -latex] (-.6,-10.3) -- (-1,-10.3);
\draw[darkgreen] (0,-9.8 ) -- (0,-9.6);
\node[darkgreen] at (0,-9.4) {\tiny$3$};
\draw[darkgreen] (0,-10.2 ) -- (0,-10.4);
\node[darkgreen] at (0,-10.6) {\tiny$2$};

\draw[darkgreen] (5.3,-9.7) circle (0.3);
\draw[darkgreen, dotted] (5,-9.7) arc (180:0: 0.3 and 0.08);
\draw[darkgreen] (5,-9.7) arc (180:360: 0.3 and 0.08);
\draw[darkgreen] (5.3,-10.3) circle (0.3);
\draw[darkgreen, dotted] (5,-10.3) arc (180:0: 0.3 and 0.08);
\draw[darkgreen] (5,-10.3) arc (180:360: 0.3 and 0.08);
\draw[darkgreen] (4.9,-9.7) -- (5.1,-9.7);
\node[darkgreen] at (4.7,-9.7) {\tiny$1$};
\draw[darkgreen] (5.5,-9.7) -- (5.7,-9.7);
\node[darkgreen] at (5.9,-9.7) {\tiny$3$};
\draw[darkgreen] (4.9,-10.3 ) -- (5.1,-10.3);
\node[darkgreen] at (4.7,-10.3) {\tiny$\out$};
\draw[purple, -latex] (5,-10.3) -- (5,-9.9);
\draw[darkgreen] (5.5,-10.3 ) -- (5.7,-10.3);
\node[darkgreen] at (5.9,-10.3) {\tiny$2$};

\draw[darkgreen] (10.5,-5.5) circle (0.3);
\draw[darkgreen, dotted] (10.2,-5.5) arc (180:0: 0.3 and 0.08);
\draw[darkgreen] (10.2,-5.5) arc (180:360: 0.3 and 0.08);
\draw[darkgreen] (10.5,-6.1) circle (0.3);
\draw[darkgreen, dotted] (10.2,-6.1) arc (180:0: 0.3 and 0.08);
\draw[darkgreen] (10.2,-6.1) arc (180:360: 0.3 and 0.08);
\draw[darkgreen] (10.1,-5.5) -- (10.3,-5.5);
\node[darkgreen] at (9.9,-5.5) {\tiny$1$};
\draw[darkgreen] (10.7,-5.5) -- (10.9,-5.5);
\node[darkgreen] at (11.1,-5.5) {\tiny$3$};
\draw[darkgreen] (10.1,-6.1 ) -- (10.3,-6.1);
\node[darkgreen] at (9.9,-6.1) {\tiny$2$};
\draw[darkgreen] (10.7,-6.1 ) -- (10.9,-6.1);
\node[darkgreen] at (11.1,-6.1) {\tiny$\out$};
\draw[purple, -latex] (10.8,-6.1) -- (10.8,-5.7);

\draw[darkgreen] (6,.9) circle (0.3);
\draw[darkgreen, dotted] (5.7,0.9) arc (180:0: 0.3 and 0.08);
\draw[darkgreen] (5.7,0.9) arc (180:360: 0.3 and 0.08);
\draw[darkgreen] (6,.3) circle (0.3);
\draw[darkgreen, dotted] (5.7,0.3) arc (180:0: 0.3 and 0.08);
\draw[darkgreen] (5.7,0.3) arc (180:360: 0.3 and 0.08);
\draw[darkgreen] (5.6,.9 ) -- (5.8,.9);
\node[darkgreen] at (5.4,.9) {\tiny$1$};
\draw[darkgreen] (6.2,.9 ) -- (6.4,.9);
\node[darkgreen] at (6.6,.9) {\tiny$2$};
\draw[darkgreen] (5.6,.3 ) -- (5.8,.3);
\node[darkgreen] at (5.4,.3) {\tiny$3$};
\draw[darkgreen] (6.2,.3 ) -- (6.4,.3);
\node[darkgreen] at (6.6,.3) {\tiny$\out$};
\draw[purple, -latex] (6.3,0.3) -- (6.3,-0.2);

\end{tikzpicture}
\caption{{\bf The space $\Ccal_{3,1}$.} The arrow at the output puncture signifies the image of our chosen section $\zeta:\Ccal_{3,1} \to \Ncal_{3,1}$. The three green edges represent the configurations where all four points lie on a common circle. \label{fig:C_3_1}
}
\end{center}
\end{figure}

Note that, upon moving in the positive direction along a boundary component, 
the circles defined by the three special points on each component turn clockwise relative to each other at the node. An argument is explained in Figure~\ref{fig:BV-upper}.
\begin{figure}
\begin{center}
\begin{tikzpicture}[scale=.45]
\definecolor{darkgreen}{rgb}{0,0.6,0};
\definecolor{darkblue}{rgb}{0,0,0.7};
\definecolor{darkred}{rgb}{0.7,0,0};

\draw[very thick] (0,0) .. controls (0,1.2) and (5,1.2) .. (5,0) .. controls (5,-1.2) and (0,-1.2) .. (0,0);
\draw[very thick] (-4,-6) .. controls (-3,-5) and (0,-8) .. (-1,-9) .. controls (-2,-10) and (-5,-7) .. (-4,-6);
\draw[very thick] (6,-9) .. controls (5,-8) and (8,-5) .. (9,-6) .. controls (10,-7) and (7,-10) .. (6,-9);


\draw[very thick, -latex] (2.5,-.9) -- (2.4,-.9);
\draw[very thick, -latex] (-1.5,-7) -- (-1.4,-7.1);
\draw[very thick, -latex] (6.5,-7.05) -- (6.6,-6.95);


\node at (2.5,-0.2) {$b_3$};
\node at (-2.1,-7.2) {$b_1$};
\node at (7.1,-7.2) {$b_2$};

\draw[very thick] (0,0) .. controls (-1.5,-4) .. (-4,-6);
\draw[very thick] (5,0) .. controls (6.5,-4) .. (9,-6);
\draw[very thick] (-1,-9) .. controls (0,-8) and (5,-8) .. (6,-9);

\draw[red, very thick] (0.2,-0.7) .. controls (-0.4,-2) .. (-1,-3.3);
\draw[red, very thick] (4.8,-0.7) .. controls (5.4,-2) .. (6,-3.3);
\draw[red, very thick] (4.8,-0.7) .. controls (3.8,-1.5) and (1.2,-1.5) .. (0.2,-0.7);
\draw[red, very thick, -latex] (2.4,-1.315) -- (2.3,-1.315);


\draw[darkblue] (2.5,-4.5) circle (0.8);
\draw[darkblue, dotted] (1.7,-4.5) arc (180:0: 0.8 and 0.2);
\draw[darkblue] (1.7,-4.5) arc (180:360: 0.8 and 0.2);
\draw[darkblue] (1.87,-3.87) -- (2.02,-4.02);
\node[darkblue] at (1.6, -3.6) {\tiny$1$};
\draw[darkblue] (2.98,-4.02) -- (3.13,-3.87);
\node[darkblue] at (3.4, -3.6) {\tiny$2$};
\draw[darkblue] (2.5, -4.6) -- (2.5, -4.8);
\node[darkblue] at (2.75, -4.4) {\tiny$3$};
\draw[darkblue] (2.5,-5.2) -- (2.5,-5.4);
\node[darkblue] at (2.5, -5.6) {\tiny$\out$};
\draw[blue, -latex] (2.5,-5.3) -- (1.8,-5.3);

\node at (-3,-2) {$\Ccal_{3,1}$};


\pgftransformxshift{11cm}

\draw[very thick] (4.5,-4.5) circle (4);
\draw[thick, dotted] (0.5,-4.5) arc (180:0: 4 and 1);
\draw[thick] (0.5,-4.5) arc (180:360: 4 and 1);


\draw[thick] (4.5,-3.7) -- (4.5,-3.3);
\node at (4.5,-4) {\tiny $1$};
\draw[thick] (8,-4.8) -- (8,-5.2);
\node at (8,-5.5) {\tiny $2$};
\draw[thick] (4.5,-5.3) -- (4.5,-5.7);
\node at (4.5,-6) {\tiny $\out$};
\draw[blue, very thick, -latex] (4.5,-5.5) -- (3,-5.5);


\draw[darkred, very thick] (2.5,-5) .. controls (4.5,-5.2)  .. (6.5,-5);
\draw[darkred, very thick, -latex] (4.4,-5.15) -- (4.6,-5.15);

\draw[darkgreen,  thick, dotted] (4.5,-5.5) .. controls (2.6,0.5)  .. (4.5,-3.5);
\draw[darkgreen,  thick, dotted] (4.5,-5.5) .. controls (6.4,0.5)  .. (4.5,-3.5);
\draw[darkgreen, -latex] (3.37,-0.31) arc (105:75: 4.3);

\end{tikzpicture}

\caption{As one moves along the red curve in $\Ccal_{3,1}$, the input $3$ moves along the dark red path in the sphere with the other three points fixed. So the circle through the output and inputs 3 and (say) 1 rotates clockwise with respect to the given marker at the output.} \label{fig:BV-upper}
\end{center}
\end{figure}
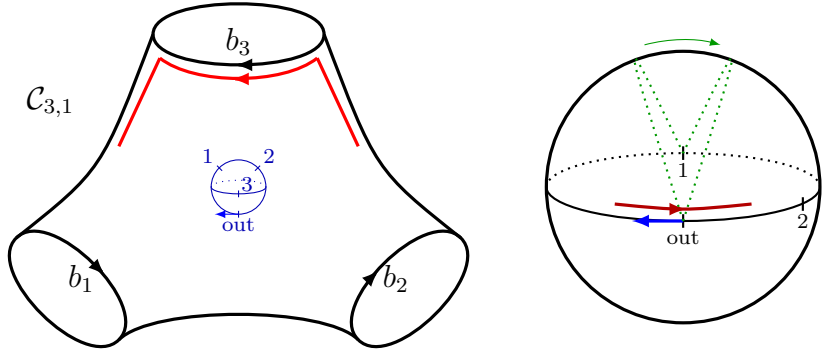

As we saw above, the sections $\zeta$ and $\xi$ give rise to identifications $\Ncal_{3,1}\cong \Ccal_{3,1} \times S^1$ and $\Mcal_{3,1} \cong \Ccal_{3,1} \times (S^1)^4$. We denote the boundary components of $\Ncal_{3,1}$ by $B_1$, $B_2$ and $B_3$, where $B_i=\pi^{-1}(b_i)\cong b_i \times S^1$. The homology of $\Ncal_{3,1}$ can be represented completely by using cycles in the boundary. Because each of the boundary components of $\Ncal_{3,1}$ can be described as a suitable product of two copies of $\Ncal_{2,1}$, any cycle in the boundary of $\Ncal_{3,1}$ admits a description as the image of a product of cycles from $\Ncal_{2,1}$. Below, we will use the notation $P$ and $Q$ introduced before for $0$- and $1$-cycles in $\Ncal_{2,1}$, but decorated with the labels of the input and output punctures of the corresponding sphere component (we use $*$ in place of the puncture corresponding to the node, and ``out'' for the final output).
For example, $P^{23}_*$ corresponds to the point class in the moduli space $\Ncal_{2,1}$ with input labels $2$ and $3$ (in that order), and output at the node. Because of the symmetries $[P^{ij}_k]=[P^{ji}_k]$ and $[Q^{ij}_k]=[Q^{ji}_k]$, the precise choice of order for the labels will not be important at this stage.
This will change for the discussion of chains in the moduli spaces with several outputs in \S\ref{ssec:moduli_1toell}.

The degree $0$ homology of the three boundary components is generated by the homology classes of the following three 0-cycles:
\begin{itemize}
\item $P^{1*}_\out \times P^{23}_*$ in $B_1$, 
\item $P^{*2}_\out \times P^{13}_*$ in $B_2$, and 
\item $P^{*3}_\out \times P^{12}_*$ in $B_3$. 
\end{itemize}
They are all homologous in $\Ncal_{3,1}$.
The homology in degree $1$ of each boundary component has rank $2$, and is generated by the homology classes of the following 1-cycles:
\begin{itemize}
\item $P^{1*}_\out \times Q^{23}_*$ and $Q^{1*}_\out \times P^{23}_*$ for $B_1$,
\item $P^{*2}_\out \times Q^{13}_*$ and $Q^{*2}_\out \times P^{13}_*$ for $B_2$, and
\item $P^{*3}_\out \times Q^{12}_*$ and $Q^{*3}_\out \times P^{12}_*$ for $B_3$.
\end{itemize}
Alternatively, the homology in degree 1 of the boundary components is also generated by the homology classes of the cycle $\zeta(b_i)$ and the circle fiber $F$ (corresponding to fixing a point in one of the boundary components $b_i$ of $\Ccal_{3,1}$ and moving the output marker counterclockwise) in terms of our chosen homology bases.

For later reference, the next couple of lemmas describe the relations between these two choices of generators.
\begin{lemma}\label{lem:bi_as_products}
With the notation introduced above, we have
\begin{align*}
\zeta_*[b_1] &= -[P^{1*}_\out \times Q^{23}_*], \\ 
\zeta_*[b_2] &= -[P^{*2}_\out \times Q^{13}_*], \qquad \text{and}\\ 
\zeta_*[b_3] &= [Q^{*3}_\out \times P^{12}_*]. 
\end{align*}
\end{lemma}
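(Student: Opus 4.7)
The plan is to prove each identity by identifying the relevant boundary component $B_i$ of $\Ncal_{3,1}$ with a product of two copies of $\Ncal_{2,1}$ via the operadic composition, then tracing the chosen section $\zeta$ through that identification. For each $i$, a point of $B_i$ consists of a nodal configuration (whose moduli is trivial up to Möbius), together with a marker at the true output and an anti-linear identification of tangent spaces at the node, the latter two pieces giving a total of $S^1 \times S^1$ of freedom. The operadic convention in \S\ref{ssec:moduli_kto1} identifies this $T^2$ with $\Ncal_{2,1}^{\bullet\bullet}_{\out} \times \Ncal_{2,1}^{\bullet\bullet}_{*}$: the marker at the output of the ``top'' factor equals the marker at the true output, and the marker at the output $*$ of the ``bottom'' factor is determined by the transport of the output marker across the first component, coupled with the anti-linear identification at the node.

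For $b_1$ and $b_2$, the two inputs specifying the circle through $(\out, 1, 2)$ used in the definition of $\zeta$ lie on distinct components. I would show that $\zeta$ therefore restricts on the top component to precisely the corresponding section of $\Ncal_{2,1}$ (for $b_1$, this is $\zeta$ of $\Ncal_{2,1}^{1*}_{\out}$ with the marker pointing toward input $1$; for $b_2$, it is $\zeta$ of $\Ncal_{2,1}^{*2}_{\out}$, where the marker ``toward $1$'' becomes ``toward the node $*$'' after $1$ migrates to the other component). Hence the top factor is pointwise fixed, contributing the class of a point $P^{1*}_{\out}$ or $P^{*2}_{\out}$, while as the anti-linear identification varies over its circle the induced marker at $*$ on the bottom component sweeps out all of $\Ncal_{2,1}^{23}_*$ or $\Ncal_{2,1}^{13}_*$, giving $\pm[Q^{23}_*]$ or $\pm[Q^{13}_*]$. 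For $b_3$, the situation reverses: both inputs $1$ and $2$ now lie on the bottom component, so the circle through $(\out, 1, 2)$ becomes the circle through $(1, 2, *)$ on the bottom component with tangent at $*$ pointing toward $1$, which matches the section $\zeta$ of $\Ncal_{2,1}^{12}_*$; the bottom factor is thus fixed at $P^{12}_*$, while the top factor marker at the true output rotates freely, contributing $\pm[Q^{*3}_{\out}]$.

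The nontrivial part is the sign determination. I would compute it using a local Deligne--Mumford coordinate: in the normalization placing the output at $\infty$, input $1$ at $0$, input $2$ at $1$, the boundary $b_1$ (resp.\ $b_2$, $b_3$) sits at $z = 1$ (resp.\ $0$, $\infty$) for $z$ the position of input $3$. The complex orientation of $\Ccal_{3,1}$ induces on each $b_i$ a boundary orientation that, in the local smoothing coordinate $xy = \epsilon$, is read from $\arg\epsilon$ in a specific direction; correspondingly this determines the rotation direction of the anti-linear identification at the node. Comparing this with our conventional choice of orientation for $\Ncal_{2,1}$ (counterclockwise rotation of the output marker) and taking into account the transport conventions, one obtains the signs as stated. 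The main obstacle in writing this up carefully is precisely this bookkeeping: one needs to check that for $b_1$ and $b_2$, where the $S^1$ fiber carrying the nontrivial homology class is the marker at $*$ on the bottom component, the induced orientation is opposite to our chosen one for $\Ncal_{2,1}$, while for $b_3$, where the nontrivial cycle lies in the top factor (the marker at the true output), the induced orientation agrees with our choice. This asymmetry is the source of the sign discrepancy between $b_3$ and the other two boundary components.
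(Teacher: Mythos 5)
Your setup is exactly the paper's: identify each boundary component $B_i$ with a product of two copies of $\Ncal_{2,1}$ via the operadic gluing, observe that along $\zeta(b_i)$ one factor is frozen at a point class while the other sweeps out the fiber circle, and locate the sign asymmetry in whether the sweeping marker sits at the true output ($b_3$) or at the node ($b_1$, $b_2$). All of the qualitative identifications you make (which factor contributes $P$ and which contributes $Q$, and why $b_3$ behaves differently from $b_1,b_2$) are correct, as are the three signs you announce.

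The gap is that the signs are announced rather than derived. This lemma has no content beyond those three signs, and at the decisive point your argument reads ``one obtains the signs as stated'' and ``one needs to check that \dots the induced orientation is opposite \dots while for $b_3$ \dots it agrees'' --- these are restatements of the conclusion, not computations. Two separate facts must actually be established: (i) the direction in which the angular identification at the node rotates as one traverses $b_i$ in the boundary orientation induced by the complex orientation of $\Ccal_{3,1}$, and (ii) how the marker carrying the nontrivial cycle responds to that rotation, given that $\zeta$ couples the output marker to the circle through the output and the first two inputs. The paper settles (i) once and for all by tracking the actual motion of the third input on the sphere (Figure~\ref{fig:BV-upper}): along the positive direction of each $b_i$ the two special circles rotate \emph{clockwise} relative to each other at the node. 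It then reads off (ii) case by case: for $b_1$ the output marker is rigid and its transport to the node rotates clockwise against the bottom component's special circle, giving $-[Q^{23}_*]$; for $b_3$ the bottom circle is rigid and the output marker rotates counterclockwise against the top component's special circle, giving $+[Q^{*3}_\out]$. Your proposed alternative for (i) --- reading the boundary orientation from $\arg\epsilon$ in a plumbing coordinate $xy=\epsilon$ --- is viable, but you would still need to carry out the translation from $\arg\epsilon$ to the rotation of the anti-linear identification and then perform step (ii); as written, neither is done. (A minor wording point: in the $b_3$ case the output marker does not ``rotate freely''; it rotates because $\zeta$ couples it, through the node, to the fixed special circle of the bottom component, and it is precisely this coupling that determines the direction and hence the sign.)
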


\begin{proof} 
We start with the description of $\zeta_*[b_1]$. Consider first the component containing the output and the first input. From the point of view of this component, the second input puncture sits at the node, and so the circle through the three special points does not change along $b_1$. As along $\zeta(b_1)$ the marker at the output is determined from this circle, it also does not move along this cycle. At the same time, since at the node the two circles determined by the three special points on the two components move clockwise with respect to each other, from the point of view of the other component the marker at its output (which is transported from the total output) moves clockwise relative to its special circle. Therefore $\zeta_*[b_1] = -[P^{1*}_\out \times Q^{23}_*]$ as claimed.

The second case is treated quite similarly, so we next discuss the description of $\zeta_*[b_3]$. This time, from the point of view of the component containing the first two input punctures, the output sits at the (outgoing) node and does not move, so the markers of these two input punctures remain tangent to the circle on this component along the whole cycle. In contrast, the marker at the (incoming) node of the other component moves clockwise in the positive direction of $\zeta(b_3)$, so the marker at the output moves counterclockwise relative to the special circle on this component. This explains the absence of a sign in $\zeta_*[b_3]= [Q^{*3}_\out \times P^{12}_*]$.
\end{proof}

\begin{corollary} \label{cor:Poisson_moduli}
In $H_1(\Ncal_{3,1})$, we have the relation
\begin{equation}\label{eq:cycles_poisson}
[Q^{*3}_\out \times P^{12}_*] = [P^{1*}_\out \times Q^{23}_*] + [P^{*2}_\out \times Q^{13}_*].
\end{equation}
\end{corollary}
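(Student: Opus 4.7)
The plan is to exploit the fact that $\Ccal_{3,1}$ is a compact pair of pants whose boundary is $b_1 + b_2 + b_3$ (oriented as explained before Lemma~\ref{lem:bi_as_products}), so $[b_1]+[b_2]+[b_3] = 0$ in $H_1(\Ccal_{3,1})$ since this cycle bounds. Applying the continuous section $\zeta \colon \Ccal_{3,1} \to \Ncal_{3,1}$, the 2-chain $\zeta(\Ccal_{3,1})$ provides a null-homology for $\zeta(b_1) + \zeta(b_2) + \zeta(b_3)$ in $\Ncal_{3,1}$. Hence
$$
\zeta_*[b_1] + \zeta_*[b_2] + \zeta_*[b_3] = 0 \quad \text{in } H_1(\Ncal_{3,1}).
$$

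Now I would simply substitute the three identifications provided by Lemma~\ref{lem:bi_as_products}, which express each $\zeta_*[b_i]$ in terms of the product generators $P^{\bullet}_{\bullet} \times Q^{\bullet}_{\bullet}$. The first two come with a minus sign and the third with a plus, so the equation above reads
$$
-[P^{1*}_\out \times Q^{23}_*] - [P^{*2}_\out \times Q^{13}_*] + [Q^{*3}_\out \times P^{12}_*] = 0,
$$
which rearranges into \eqref{eq:cycles_poisson}.

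There is no genuine obstacle here; the content of the corollary is the combinatorics of signs encoded in Lemma~\ref{lem:bi_as_products}. The only point that requires some care is checking that the orientation conventions on the boundary components $b_i$ used in that lemma (induced from the complex orientation of $\Ccal_{3,1}$) are consistent with the orientation used to declare $b_1 + b_2 + b_3$ null-homologous. Once these are aligned, the proof is a one-line substitution.
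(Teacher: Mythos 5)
Your proof is correct and follows exactly the paper's argument: the 2-chain $\zeta(\Ccal_{3,1})$ exhibits $\zeta_*[b_1]+\zeta_*[b_2]+\zeta_*[b_3]=0$ in $H_1(\Ncal_{3,1})$, and substituting the sign-decorated identifications of Lemma~\ref{lem:bi_as_products} yields \eqref{eq:cycles_poisson}. Your remark on aligning the boundary orientation with the complex orientation of $\Ccal_{3,1}$ is precisely the convention already fixed in the paper before that lemma, so nothing further is needed.
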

\begin{proof}
The 2-chain $\zeta(\Ccal_{3,1})$ gives a homology between the left hand side and the right side. 
\end{proof}

\begin{lemma}
With the notation above, we have the following identity in $H_1(\Ncal_{3,1})$:
\begin{align*}
[F] &= [P^{1*}_\out \times Q^{23}_*] + [Q^{1*}_\out \times P^{23}_*] \\ 
&= [P^{*2}_\out \times Q^{13}_*] + [Q^{*2}_\out \times P^{13}_*]\\ 
&= [P^{*3}_\out \times Q^{12}_*] + [Q^{*3}_\out \times P^{12}_*]. 
\end{align*}
\end{lemma}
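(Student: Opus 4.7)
The plan is to carry out the computation inside each boundary component $B_i$ separately, exploiting the fact that $B_i \cong T^2$ admits two natural coordinate systems in which the cycles in question are easy to identify. The argument closely parallels that of Lemma~\ref{lem:bi_as_products}, and I focus on $B_1$; the other two cases are entirely analogous.

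The first step is to put two sets of coordinates on $B_1 \cong T^2$. The ``base'' coordinates $(\theta, \phi)$ come from the description $B_1 = b_1 \times S^1$: here $\theta$ is the angular position of the marker at the output, parametrizing the $S^1$-fiber of $\Ncal_{3,1} \to \Ccal_{3,1}$, while $\phi$ is the angular parameter of the complex antilinear identification at the node, parametrizing $b_1$. The fiber class is by definition $[F] = (1,0)$ in these coordinates. The ``product'' coordinates $(\theta, \psi)$ come from the splitting $B_1 \cong \Ncal_{2,1}^{1*} \times \Ncal_{2,1}^{23}$: here $\psi$ is the marker at the node on the bottom component (of which $*$ is the output). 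In the product coordinates, the relevant cycles are axis-parallel with counterclockwise orientations, so $[P^{1*}_\out \times Q^{23}_*] = (0,1)$ and $[Q^{1*}_\out \times P^{23}_*] = (1,0)$.

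The second step is to write down the change of coordinates. The bottom marker $\psi$ is obtained from $\theta$ by first transporting $\theta$ to a marker $\theta_*^{\mathrm{top}}$ at the node on the top (the ``opposite orientation'' rule contributing a factor $-1$) and then applying the complex antilinear identification, so that $\psi = \phi - \theta_*^{\mathrm{top}}$ (a second sign flip from antilinearity). Combining these yields a shear $\psi = \theta + \phi + \mathrm{const}$, so translating the product cycles back into the base coordinates gives $[P^{1*}_\out \times Q^{23}_*] = (0,1)$ and $[Q^{1*}_\out \times P^{23}_*] = (1,-1)$, whose sum is $(1,0) = [F]$. This establishes the identity in $H_1(B_1)$, and hence, via the inclusion $B_1 \hookrightarrow \Ncal_{3,1}$, in $H_1(\Ncal_{3,1})$.

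The same argument, with the labels permuted and with the roles of top and bottom components interchanged in the case $b_3$ (where the component containing the output carries input $3$ and the node), gives the analogous relations for $B_2$ and $B_3$. The main obstacle is simply the careful bookkeeping of orientations---the transport rule, the antilinearity of the nodal identification, and the boundary orientation on each $b_i$---and a useful sanity check is that the shear derived above also reproduces the signs of Lemma~\ref{lem:bi_as_products}: for $b_1$, the section $\zeta$ fixes $\theta$, so $\zeta(b_1)$ is a $\phi$-cycle at fixed $\theta$; the positive orientation of $b_1$ corresponds to clockwise rotation at the node (Figure~\ref{fig:BV-upper}), and thus $\zeta_*[b_1] = (0,-1) = -[P^{1*}_\out \times Q^{23}_*]$, in agreement with what was already proved.
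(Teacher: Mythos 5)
Your proof is correct, and its geometric content coincides with the paper's: both arguments reduce to the fact that transporting a marker across a component and applying the antilinear identification at the node each reverse the sense of rotation, so that the product coordinates on $B_i\cong T^2$ differ from the (fiber marker, nodal angle) coordinates by the unimodular shear you write down; your classes $(0,1)$ and $(1,-1)$ are exactly the paper's statements that $P^{1*}_\out\times Q^{23}_*$ lies over $-\zeta(b_1)$ with fiber degree $0$ while $Q^{1*}_\out\times P^{23}_*$ lies over $\zeta(b_1)$ with fiber degree $+1$. The organization differs slightly: the paper computes base and fiber degrees of each product cycle separately and then substitutes $\zeta_*[b_i]=\pm[P\times Q]$ from Lemma~\ref{lem:bi_as_products}, which forces a separate argument for $b_3$ where $\zeta$ is not constant in the top marker coordinate; your version writes the transition function once, is logically independent of that lemma (recovering it instead as a consistency check), and treats the three boundary components uniformly. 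Indeed, contrary to your closing remark, no interchange of roles is needed for $b_3$: in every $B_i$ the output-carrying component receives the node as an input, so the shear is literally the same, and the only thing that distinguishes $b_3$ is the behaviour of $\zeta$, which your main computation never uses.
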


\begin{proof}
Along the cycle $Q^{1*}_{\out} \times P^{23}_*$, 
the marker at the output puncture turns counterclockwise, but at the same time the marker at the node, being coupled, turns {\em clockwise}. It follows that under the projection $\Ncal_{3,1} \to \Ccal_{3,1}$ this cycle projects to the boundary component $b_1$ traversed in the {\em positive} direction, and the projection to the fiber $S^1$ has degree $+1$. On the other hand, in the previous lemma we proved that
$$
\zeta_*[b_1] = -[P^{1*}_\out \times Q^{23}_*].
$$
Putting these two observations together, we obtain the first equality
$$
[F] = [P^{1*}_\out \times Q^{23}_*] + [Q^{1*}_\out \times P^{23}_*].
$$
The argument for the second presentation of $[F]$ is completely analogous, so we next consider the third presentation. Along the cycle $P^{*3}_\out \times Q^{12}_*$, the special circles of the two components move counterclockwise with respect to each other (because the identification is determined by the marker at the node of the component containing the first two input punctures, which makes this movement along $Q^{12}_*$). So this cycle projects to $b_3$ with its {\em negative} orientation. At the same time, the marker at the output section moves once counterclockwise relative to our chosen section $\zeta$, so
$$
[P^{*3}_\out \times Q^{12}_*] = [F] - \zeta_*[b_3].
$$
Combining this with the observation $\zeta_*(b_3)=[Q^{*3}_\out \times P^{12}_*]$
from the previous lemma, the claim again follows.
\end{proof}

\begin{corollary}\label{cor:Bi_as_products}
For the homology classes of the boundary components $[B_i] \in H_2(\Ncal_{3,1})$ we have
\begin{align*}
[B_1] &=  [Q^{1*}_\out \times Q^{23}_*],\\
[B_2] &=  [Q^{*2}_\out \times Q^{13}_*], \quad \text{and}\\
[B_3] &=  [Q^{*3}_\out \times Q^{12}_*].
\end{align*}
In particular, we have
\begin{equation} \label{eq:Jacobi-domains}
[Q^{1*}_\out \times Q^{23}_*] + [Q^{*2}_\out \times Q^{13}_*] + [Q^{*3}_\out \times Q^{12}_*] =0.
\end{equation}

\end{corollary}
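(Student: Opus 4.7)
My plan is to identify each boundary component $B_i\subset \p\Ncal_{3,1}$ with a product moduli space $\Ncal_{2,1}\times\Ncal_{2,1}$ via operadic gluing at the node, read off its fundamental class from this product structure, and then conclude the ``in particular'' statement using that $\Ncal_{3,1}$ is a compact oriented $3$-manifold with boundary.

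Consider $B_1$ (the cases $B_2, B_3$ being analogous). A stable configuration in $B_1$ consists of two sphere components glued at a node, where input $1$ and the output lie on one component and inputs $2,3$ lie on the other, together with an output marker. The remaining free parameters are the complex anti-linear identification of tangent spaces at the node (an $S^1$) and the position of the output marker (another $S^1$). This gives a natural homeomorphism $B_1 \cong \Ncal_{2,1}^{1*}_\out\times\Ncal_{2,1}^{23}_*$, in which the first factor records the global output marker and the second factor records the marker at the node-output of the second component (equivalently, the gluing angle). Since each $\Ncal_{2,1}$ is an oriented circle with fundamental class $[Q]$, the fundamental class of this oriented torus is the cross product $[Q^{1*}_\out]\times[Q^{23}_*]$, whose image in $H_2(\Ncal_{3,1})$ is $[Q^{1*}_\out\times Q^{23}_*]$. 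To verify that the product orientation matches the boundary orientation of $B_1$ in $\Ncal_{3,1}$, I compute the intersection pairing on the torus $B_1$ of the two 1-cycles $\zeta_*[b_1]$ and $[F]$ whose classes were already determined, namely $\zeta_*[b_1]=-[P^{1*}_\out\times Q^{23}_*]$ and $[F]=[P^{1*}_\out\times Q^{23}_*]+[Q^{1*}_\out\times P^{23}_*]$; these generate $H_1(B_1)$, and their intersection product fixes the sign of $[B_1]$ uniquely. The same reasoning applies to $B_2$ and $B_3$, the relative minus sign in $\zeta_*[b_3]$ from Lemma~\ref{lem:bi_as_products} being balanced by the symmetric rearrangement of the labels, yielding the three uniform formulas displayed in the corollary.

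For the ``in particular'' part, observe that $\Ncal_{3,1}$ is a compact oriented $3$-manifold whose boundary decomposes as $\p\Ncal_{3,1} = B_1 \sqcup B_2 \sqcup B_3$. Consequently the sum $[B_1]+[B_2]+[B_3]$ is the image under $H_2(\p\Ncal_{3,1}) \to H_2(\Ncal_{3,1})$ of the class represented by $\p[\Ncal_{3,1}]$, and hence vanishes in $H_2(\Ncal_{3,1})$.

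The main obstacle is the orientation bookkeeping for the three boundary components: the complex orientation of the pair of pants $\Ccal_{3,1}$, the counterclockwise convention for rotating markers, and the clockwise coupling of markers at the node all have to be tracked consistently, and one has to check that all three $B_i$ acquire the same sign so that the sum (rather than an alternating sum) is the one that vanishes. Most of this work is however already contained in the explicit computations of $\zeta_*[b_i]$ and $[F]$ performed in the preceding lemmas, so the remaining task reduces to assembling these into the torus fundamental class via the intersection pairing.
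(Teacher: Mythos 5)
Your argument is correct and follows essentially the same route as the paper: the paper's proof also reads off $[B_i]$ from the product parametrization by two copies of $\Ncal_{2,1}$, fixes the sign via the convention $[B_i]=(\zeta_*[b_i])\times[F]$ together with the two preceding lemmas, and obtains \eqref{eq:Jacobi-domains} from $B_1+B_2+B_3=\p\Ncal_{3,1}$. Your intersection-pairing step is just a repackaging of that orientation convention (note it presupposes the boundary orientation of $B_1$ rather than deriving it), so it adds no independent content but introduces no gap either.
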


\begin{proof}
By our conventions on the orientation of $\partial \Ncal_{3,1}$ we have
$$
[B_i] = (\zeta_*[b_i]) \times [F].
$$
Now the three equations follow from the description of $[b_i]$ and $[F]$ in the above two lemmas. The last assertion is a trivial consequence of the fact that $B_1+B_2+B_3$ is the oriented boundary of $\Ncal_{3,1}$.
\end{proof}

Using the section $\xi:\Ncal_{3,1} \to \Mcal_{3,1}$, we can lift the various homology classes and relations between them from $\Ncal_{3,1}$ to $\Mcal_{3,1}$. We only record one such observation which we will need later on.
\begin{corollary}\label{cor:cycles_7term}
In $H_1(\Mcal_{3,1})$, we have the relation
\begin{align*}
\lefteqn{
[D_\out \times \hat P^{*3}_\out \times \hat P^{12}_*] =}  \\
& [\hat P^{1*}_\out \times D_* \times \hat P^{23}_*]
+ [\hat P^{2*}_\out \times D_* \times \hat P^{31}_*]
+ [\hat P^{3*}_\out \times D_* \times \hat P^{12}_*]\\
&
- [\hat P^{1*}_\out \times \hat P^{23}_* \times D_1] 
- [\hat P^{1*}_\out \times \hat P^{23}_* \times D_2]
- [\hat P^{1*}_\out \times \hat P^{23}_* \times D_3].
\end{align*}
\end{corollary}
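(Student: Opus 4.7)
The plan is to apply $\xi_* : H_1(\Ncal_{3,1}) \to H_1(\Mcal_{3,1})$ to the identity from Corollary~\ref{cor:Poisson_moduli}, namely
\[
[Q^{*3}_\out \times P^{12}_*] = [P^{1*}_\out \times Q^{23}_*] + [P^{*2}_\out \times Q^{13}_*] \quad \text{in } H_1(\Ncal_{3,1}),
\]
and to expand each lifted $Q$-cycle using \eqref{eq:lift_of_Q}. Since $\xi$ respects the product decomposition of each boundary stratum $B_i \subset \partial \Ncal_{3,1}$, the lifting can be performed factor by factor, replacing $\xi_*[Q]$ on a sphere component with output $o$ and inputs $i_1, i_2$ by $[D_o] - [D_{i_1}] - [D_{i_2}]$.

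The key interpretive step will be to identify each resulting marker-rotation cycle as a class in $H_1(\Mcal_{3,1})$. The trivialization $\Mcal_{3,1} \cong \Ccal_{3,1} \times (S^1)^4$ afforded by the section $\xi \circ \zeta$ shows that the cycles $D_\out, D_1, D_2, D_3$ at the four genuine punctures are globally defined in $H_1(\Mcal_{3,1})$, independent of the boundary stratum in which they are represented. When the output of a sphere factor sits at a node, the corresponding marker-rotation cycle coincides with the coupling cycle of the relevant stratum, i.e.\ the cycle $D_*$ appearing in the statement; a direct computation in local coordinates at the node shows that, because the anti-linear identification is parameterized by the sum of the two node-marker angles, rotating the marker on either side by $+2\pi$ represents the same class, with the same sign.

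Applying $\xi_*$ and expanding will then produce nine terms in $H_1(\Mcal_{3,1})$: three coupling cycles $D_*$ (one from each stratum $B_1, B_2, B_3$), one $D_\out$ contribution, and five input-marker contributions. Using stratum-independence to represent all $D_i$ in the common base stratum $B_1$, the three $D_3$ contributions combine to coefficient $-1$, while $D_1$ and $D_2$ each appear with coefficient $-1$. Isolating $D_\out$ on the left and reordering factors via the symmetry $\hat P^{ij}_k = \hat P^{ji}_k$ to match the ordering of indices in the statement yields the claimed identity.

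The main obstacle will be bookkeeping rather than conceptual: correctly matching each node-marker rotation cycle with the coupling cycle $D_*$ of its stratum, including its sign (which turns out to be positive), and verifying that the input-marker cycles $D_i$ are indeed well-defined globally in $H_1(\Mcal_{3,1})$. Once these identifications are in hand, the result follows by direct substitution and rearrangement.
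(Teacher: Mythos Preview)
Your proposal is correct and follows essentially the same route as the paper: apply $\xi_*$ to the Poisson identity \eqref{eq:cycles_poisson}, expand each $Q$ via \eqref{eq:lift_of_Q}, and then use that the input-marker cycles $D_i$ are globally defined (equivalently, that the various $0$-cycles $\hat P^{i*}_\out \times \hat P^{jk}_*$ are all homologous in the connected space $\Mcal_{3,1}$) to combine terms. The paper phrases the combinatorial endgame slightly differently---it singles out one explicit cancellation between a $D_3$ term on the left and a $D_3$ term on the right to pass from nine terms to seven---whereas you sum all three $D_3$ contributions to net coefficient $-1$; these are the same computation.
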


\begin{proof}
To distinguish instances of identity  \eqref{eq:lift_of_Q} for various input labels, we will write them as
$$
\xi_*[Q^{ij}_k] = [D_k \times \hat P^{ij}_k] - [\hat P^{ij}_k \times D_i] - [\hat P^{ij}_k \times D_j].
$$
With this notation, we can write the lift of the identity \eqref{eq:cycles_poisson} to $H_1(\Mcal_{3,1})$ as
\begin{align*}
\lefteqn{
[D_\out \times \hat P^{*3}_\out \times \hat P^{12}_*]
- [\hat P^{*3}_\out \times D_* \times \hat P^{12}_*]
- [\hat P^{*3}_\out \times D_3 \times \hat P^{12}_*] =}  \\
&
[\hat P^{1*}_\out \times D_* \times \hat P^{23}_*]
- [\hat P^{1*}_\out \times \hat P^{23}_* \times D_2]
- [\hat P^{1*}_\out \times \hat P^{23}_* \times D_3]\\
& + [\hat P^{*2}_\out \times D_* \times \hat P^{13}_*]
- [\hat P^{*2}_\out \times \hat P^{13}_* \times D_1] 
- [\hat P^{*2}_\out \times \hat P^{13}_* \times D_3]
\end{align*}
Now we have
$$
[\hat P^{*3}_\out \times D_3 \times \hat P^{12}_*]= [\hat P^{*3}_\out \times \hat P^{12}_* \times D_3]= [\hat P^{*2}_\out \times \hat P^{13}_* \times D_3],
$$
where the first two terms describe the same cycle in the boundary of $\Mcal_{3,1}$, and the second equality comes from the fact that the $0$-cycles $\hat P^{*3}_\out \times \hat P^{12}_*$ and $\hat P^{*2}_\out \times \hat P^{13}_*$ are homologous in the connected space $\Mcal_{3,1}$. So in the above 9-term relation, the last term on the left hand side cancels with the last term on the right hand side. Rearranging the remaining terms, we obtain the relation
\begin{align*}
\lefteqn{[D_\out \times \hat P^{*3}_\out \times \hat P^{12}_*]
=}  \\
&
[\hat P^{1*}_\out \times D_* \times \hat P^{23}_*]
+ [\hat P^{*3}_\out \times D_* \times \hat P^{12}_*]
+ [\hat P^{*2}_\out \times D_* \times \hat P^{13}_*]\\
& - [\hat P^{1*}_\out \times \hat P^{23}_* \times D_2]
- [\hat P^{1*}_\out \times \hat P^{23}_* \times D_3]
- [\hat P^{*2}_\out \times \hat P^{13}_* \times D_1] 
\end{align*}
Finally, using the symmetries $[\hat P^{ij}_k]=[\hat P^{ji}_k]$ and $[\hat P^{i*}_\out \times \hat P^{jk}_*] = [\hat P^{j*}_\out \times \hat P^{ki}_*]$, we find that this is equivalent to the 7-term relation as stated.
\end{proof}

\subsection{Moduli spaces with one input and several weighted outputs} \label{ssec:moduli_1toell}

Next we consider the analogous spaces with one input and several output punctures, where the output punctures carry non-negative weights adding up to $1$. This gives rise to a largely parallel story, with some important modifications.

We start from the three sequences of spaces $\Ccal^0_{1,\ell}$, $\Ncal^0_{1,\ell}$ and $\Mcal^0_{1,\ell}$ which are the exact analogues of the spaces we considered in the previous section, simply with the roles of inputs and outputs reversed.
In particular the space $\Ncal^0_{1,\ell}$ differs from $\Ccal^0_{1,\ell}$  by including a marker at the unique input puncture.
As before, we have a (non-canonical) section $\zeta:\Ccal^0_{1,\ell} \to \Ncal^0_{1,\ell}$ which is determined (quite similarly as before) by letting the marker at the input be the tangent direction of the circle passing through the input and the first two outputs and pointing to the first output. In this way, we get a trivialization $\Ncal^0_{1,\ell} \cong \Ccal^0_{1,\ell} \times S^1$.

Similarly, each space $\Mcal^0_{1,\ell}$ is a principal $(S^1)^\ell$-bundle over $\Ncal^0_{1,\ell}$. As before, this bundle admits a ``canonical'' section $\xi:\Ncal^0_{1,\ell} \to \Mcal^0_{1,\ell}$ by transporting the marker from the input to each of the output punctures, and so we get a trivialization $\Mcal^0_{1,\ell} \cong \Ncal^0_{1,\ell} \times (S^1)^\ell$.

What we are actually interested in are the spaces $\Ccal_{1,\ell}$, $\Ncal_{1,\ell}$ and $\Mcal_{1,\ell}$, which are obtained from $\Ccal^0_{1,\ell}$, $\Ncal^0_{1,\ell}$ and $\Mcal^0_{1,\ell}$ by adding weights at the outputs which add up to 1. More precisely, we let
$$
\simplex_{\ell-1} := \{ (s_1,\dots,s_\ell) \in \R_{\ge 0}^\ell \,|\, \sum s_j = 1 \}
$$
and set
$$
\Ccal'_{1,\ell} := \simplex_{\ell-1} \times \Ccal^0_{1,\ell}, \quad
\Ncal'_{1,\ell} := \simplex_{\ell-1} \times \Ncal^0_{1,\ell} \quad \text{and} \quad
\Mcal'_{1,\ell} := \simplex_{\ell-1} \times \Mcal^0_{1,\ell},
$$
where we consider the coordinate $s_i$ of a point in $\simplex_{\ell-1}$ as the weight at the $i$th output puncture. 
We orient the simplex $\simplex_{\ell-1}$ by the convention that the ordered sequence of vectors $(e_2-e_1,\dots, e_\ell-e_1)$ forms a positively oriented basis of $T_{e_1}\simplex_{\ell-1}$, where $e_i=(0,\dots,1,\dots,0)$ are the standard basis vectors in $\R^\ell$.

For $\ell=2$, we will have $\Ccal_{1,\ell} =\Ccal'_{1,\ell} $, $\Ncal_{1,2}=\Ncal'_{1,2}$ and $\Mcal_{1,2}=\Mcal'_{1,2}$. For $\ell \ge 3$, the spaces $\Ccal_{1,\ell}$ are obtained from $\Ccal'_{1,\ell}$ by taking real blow-ups of certain cycles in $\pi^{-1}(\p \Ccal_{1,\ell})$, and $\Ncal_{1,\ell}$ and $\Mcal_{1,\ell}$ are the corresponding (pull-back) bundles. Below we will give the details for the case $\ell=3$, which is the only one needed in this work.

In what follows, we will be interested in relations between certain homology classes in the relative homologies
\begin{align*}
H(\Ncal_{1,\ell},(\partial \simplex_{\ell-1}) \times \Ncal^0_{1,\ell})&\cong H(\simplex_{\ell-1},\partial \simplex_{\ell-1}) \otimes H(\Ncal^0_{1,\ell}), \quad \text{and} \\
H(\Mcal_{1,\ell},(\partial \simplex_{\ell-1}) \times \Mcal^0_{1,\ell})&\cong H(\simplex_{\ell-1},\partial \simplex_{\ell-1}) \otimes H(\Mcal^0_{1,\ell})
\end{align*}
for $\ell=2$ and $\ell=3$. 
Our computations will combine arguments from the previous section for homology classes coming from the second factors with considerations involving the weights. When the various (chains in) moduli spaces appearing in this subsection are used to construct operations, they have the following interpretations:
\begin{itemize}
\item The generators $R$ in degree 1 and $S$ in degree 2 for $H_*(\Ncal_{2,1}, (\p \simplex_1) \times \Ncal^0_{2,1})$ give rise to the coproduct $\lambda$ and the cobracket $\gamma$.
\item The boundary of $\Ncal_{3,1}$ will have three components $B_1$, $B_2$ and $B_3$ which are described in terms of products of these cycles. There will also be three additional faces $T_1$, $T_2$ and $T_3$ whose contribution to identities in implementations we will suppress by a dimensional restriction.
\item The coPoisson relation \eqref{eq:coPoisson} is a consequence of the (obvious) fact that $\zeta(\p \Ccal_{1,3})$ is zero in relative homology (Corollary~\ref{cor:cycles_copoisson}).
\item The coJacobi relation \eqref{eq:coJacobi} is a consequence of the (obvious) fact that the sum $B_1+B_2+B_3$ is nullhomologous in relative homology (Lemma~\ref{lem:cycles_coJacobi}).
\item The 7-term relation \eqref{eq:7_term_for_Delta_and_lambda} is a consequence of the (obvious) fact that the lift $\xi(B_1 +B_2 +B_3)$ to $\Mcal_{1,3}$ is still nullhomologous in relative homology (Corollary~\ref{cor:7-term-coBV}).
\end{itemize}  
As before, the space $\Ccal^0_{1,2}$ consists of a single point, the space $\Ncal^0_{1,2}$ is a circle and $\Mcal^0_{1,2}$ is a 3-torus. The homology of $\Ncal^0_{1,2}$ is generated by the class of a point  $C:=\zeta(\Ccal^0_{1,2})$ in degree 0 and the class of the fundamental chain $\Ncal^0_{1,2} \cong S^1$ in degree 1. It follows that
$H(\Ncal_{1,2}, (\partial \simplex_{1}) \times \Ncal^0_{1,2})$ is generated by the classes of the two relative cycles
$$
R:= \simplex_1 \times C \quad \text{and} \quad
S:= \simplex_1 \times \Ncal^0_{1,2}
$$
in degrees 1 and 2, respectively. Note that the action of $S_2\cong \Z_2$ by switching the labels of the outputs reverses the orientation of $\simplex_1$, so we have $[\tau R]=-[R]$ and $[\tau S]=-[S]$.

Similarly, the homology of $\Mcal^0_{1,2}$ in degree 0 is generated by $\xi_*(C)$, and in degree 1 there are three cycles $D_\inn$, $D_1$ and $D_2$ which correspond to rotation of the marker at the corresponding puncture in the counterclockwise direction while fixing all others. Then, analogously to the previous subsection, we have
$$
\xi_*[\Ncal^0_{1,2}] = [D_\inn] - [D_1] - [D_2]
$$
in $H_1(\Mcal^0_{1,2})$. Moreover, $H_1(\Mcal_{1,2}, (\partial \simplex_1) \times \Mcal^0_{1,2})$ is generated by the class of $\hat R:=\xi_*(R)$, and in degree 2 we have the relation
\begin{eqnarray}
\xi_*[S] &= [\hat R \times D_\inn] - [\hat R \times D_1] - [\hat R \times D_2] \notag\\
&= [\hat R \times D_\inn] + [D_1 \times \hat R] + [D_2 \times \hat R],\label{eq:lift_of_S}
\end{eqnarray}
where the signs in the second line correct for the switch in the order of the two 1-dimensional chains $R$ and $D_i$.

We next discuss $\ell=3$. The spaces $\Ccal^0_{1,3}$, $\Ncal^0_{1,3}$ and $\Mcal^0_{1,3}$ are diffeomorphic to the corresponding spaces $\Ccal_{3,1}$, $\Ncal_{3,1}$ and $\Mcal_{3,1}$ from the previous subsection, and so the relations in their homology follow the same pattern. 

Writing $b_i$ for the boundary circle of $\Ccal^0_{1,3}$ consisting of nodal configurations with the $i$th output puncture on the same sphere component as the input puncture, we have
$$
\partial \Ccal'_{1,3} = (\partial \simplex_2) \times \Ccal^0_{1,3} \cup \simplex_2 \times b_1 \cup \simplex_2 \times b_2 \cup \simplex_2 \times b_3.
$$
For reasons that will become clear later, we want to work with the modification $\Ccal_{1,3}$ obtained from blowing up the 1-dimensional submanifolds
\begin{align*}
V_1:=\{(1,0,0)\} \times b_1 &\subseteq \simplex_2 \times b_1, \\
V_2:=\{(0,1,0)\} \times b_2 &\subseteq \simplex_2 \times b_2, \quad \text{and} \\
V_3:=\{(0,0,1)\} \times b_3 &\subseteq \simplex_2 \times b_3
\end{align*}
of $\p \Ccal'_{1,3}$. 
As the blow-up occurs in the boundary, the space $\Ccal_{1,3}$ is a 4-dimensional manifold with boundary and (more) corners still homotopy equivalent to $\Ccal'_{1,3}$.

Neighborhoods of the submanifolds $V_i$ can be modelled by products of a quadrant in $\R^3$ with $b_i$, and so in the blow-up each of these 1-dimensional submanifolds of the boundary is replaced by a new boundary stratum $t_i$ which is a product of a triangle with $b_i$.

We now define $\Ncal_{1,3}$ to be the total space of the pull-back of the bundle $\Ncal'_{1,3} \to \Ccal'_{1,3}$ to the blow up. Set $T_i:= \pi^{-1}(t_i)$ and $T:=T_1 \cup T_2 \cup T_3$. Since the blow-up happened at submanifolds of $(\p \simplex_2) \times \Ccal^0_{1,3}$,  it is clear that we still have
$$
H(\Ncal_{1,3}, (\p \simplex_2) \times \Ncal^0_{1,3} \cup T) \cong H(\simplex_2, \p \simplex_2) \otimes H(\Ncal^0_{1,3}).
$$
The preimage of a boundary component $B'_i$ of $\Ncal'_{1,3}$ in the blow-up is the union of $T_i$ with a stratum $B_i$ of the form $\simplex_1 \times \simplex_1 \times b_i \times S^1$, meeting $T_i$ along a common face of the form $\simplex_1 \times b_i \times S^1$. 
Each of the $B_i$ is parametrized by a product of two copies of $\Ncal_{1,2}$. Also, it is still true that the relative homology $H(\Ncal_{1,3}, (\partial \simplex_2) \times \Ncal^0_{1,3} \cup T)$ is generated by the images of $H(B_i,\partial B_i)$ under the inclusion maps. These groups in turn are generated by products of cycles $R$ and/or $S$ from corresponding copies of $\Ncal_{1,2}$. As in the previous subsection, we will use superscripts and subscripts for the chains $R$ and $S$ in $\Ncal_{1,2}$ to specify the labels of the corresponding inputs and outputs.

The relative homology $H_2(\Ncal_{1,3}, (\partial \simplex_2) \times \Ncal^0_{1,3} \cup T)$ is clearly generated by the homology class of the cycle $\simplex_2 \times \{pt\} \subseteq \Ncal_{1,3}$, where $pt\in \Ncal^0_{1,3}$ is any point. The following lemma describes its relation to the relevant product cycles in the three boundary components $B_i$.
\begin{lemma}\label{lem:cycles_coassociativity}
In $H_2(\Ncal_{1,3}, (\partial \simplex_2) \times \Ncal^0_{1,3} \cup T)$ we have the relation
$$
[\simplex_2 \times \{pt\}]= - [R^*_{23} \times R^\inn_{1*}] = - [R^*_{31} \times R^\inn_{2*}] = [R^*_{12} \times R^\inn_{*3}].
$$
\end{lemma}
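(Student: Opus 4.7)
The plan is to first identify a generator of this relative $H_2$ group via the Künneth decomposition recorded just before the lemma:
$$H_2(\Ncal_{1,3},(\partial\simplex_2)\times\Ncal^0_{1,3}\cup T)\cong H_2(\simplex_2,\partial\simplex_2)\otimes H_0(\Ncal^0_{1,3})\cong\Z,$$
with generator $[\simplex_2\times\{pt\}]$ (the other Künneth summands vanish because $H_i(\simplex_2,\partial\simplex_2)=0$ for $i<2$). The weight projection $\Ncal_{1,3}\to\simplex_2$ sends $(\partial\simplex_2)\times\Ncal^0_{1,3}\cup T$ into $\partial\simplex_2$, so each of the product cycles $R^*_{ij}\times R^\inn_{k*}$, which sits inside the corresponding boundary stratum $B_i$, has a well-defined class in the relative $H_2$, determined by the degree of its image in $H_2(\simplex_2,\partial\simplex_2)$.

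The core computation is to pin down this degree in one case, say $R^*_{23}\times R^\inn_{1*}$ in $B_1$. Parametrize $R^\inn_{1*}=\simplex_1\times\{\zeta(\Ccal^0_{1,2})\}$ by the weight $s_1\in[0,1]$ at its first output (so $s_*=1-s_1$), and analogously $R^*_{23}$ by $u_2\in[0,1]$ (so $u_3=1-u_2$). The gluing at the node produces a configuration in $B_1$ whose weight vector is $(s_1,(1-s_1)u_2,(1-s_1)(1-u_2))\in\simplex_2$. Using $(s_2,s_3)$ as oriented coordinates on $\simplex_2$ (consistent with the orientation convention fixed in the paper), a direct Jacobian computation gives $\partial_{s_1}\wedge\partial_{u_2}\mapsto(1-s_1)\,\partial_{s_2}\wedge\partial_{s_3}$. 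However, the orientation convention on $\simplex_1$ makes the positive tangent along $R^\inn_{1*}$ equal to $-\partial_{s_1}$ and along $R^*_{23}$ equal to $-\partial_{u_2}$, so the product cycle $R^*_{23}\times R^\inn_{1*}$, ordered with its first factor first, carries the orientation $(-\partial_{u_2})\wedge(-\partial_{s_1})=-\partial_{s_1}\wedge\partial_{u_2}$. The weight projection therefore has degree $-1$, giving $[R^*_{23}\times R^\inn_{1*}]=-[\simplex_2\times\{pt\}]$.

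The remaining equalities follow by symmetry. The cyclic permutation $(1,2,3)\mapsto(2,3,1)$ of output labels is an even permutation and hence induces orientation-preserving self-homeomorphisms of both $\simplex_2$ and $\Ncal_{1,3}$; applying it once and twice sends $R^*_{23}\times R^\inn_{1*}$ to $R^*_{31}\times R^\inn_{2*}$ and then to $R^*_{12}\times R^\inn_{3*}$, which therefore all represent the class $-[\simplex_2\times\{pt\}]$. Finally, the anti-symmetry $[\tau R]=-[R]$ established in the previous subsection for $R\in\Ncal_{1,2}$ gives $[R^\inn_{*3}]=-[R^\inn_{3*}]$, and hence $[R^*_{12}\times R^\inn_{*3}]=+[\simplex_2\times\{pt\}]$, which is the last claimed identity.

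The main obstacle is careful orientation bookkeeping: one must simultaneously track the sign arising from the orientation convention on $\simplex_1$ (which controls how $R$ transforms under swapping its two outputs, including the key case where the node $*$ appears first rather than second), the lexicographic product orientation of two-factor cycles, and the Jacobian of the weight-gluing map. The conceptual content of the lemma is then the geometric incarnation of coassociativity: the three boundary strata contribute the same generator of the top relative homology of $(\simplex_2,\partial\simplex_2)$, up to an overall sign governed by the position of the node.
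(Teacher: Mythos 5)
Your proof is correct and follows essentially the same route as the paper's: both reduce the lemma to determining whether the blow-down map on weights $\simplex_1\times\simplex_1\to\simplex_2$ is orientation-preserving or orientation-reversing over each boundary stratum, using the fixed orientation conventions on the simplices. The only difference is that you carry out one explicit Jacobian computation (for $B_1$, where your sign bookkeeping checks out) and obtain the other two strata from the cyclic relabeling symmetry together with the antisymmetry $[R^\inn_{*3}]=-[R^\inn_{3*}]$, whereas the paper verifies all three orientation signs directly from the explicit weight maps; the resulting signs agree.
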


\begin{proof}
Up to sign, we can pick the image of $R^*_{23} \times R^\inn_{1*}$ under the product parametrization of this boundary component as a representative of the class $[\simplex_2 \times \{pt\}]$ in the boundary component $B_1$. Similar statements hold for the other two cases. To determine the signs, we need to understand the effect on orientations of the corresponding maps on weights.

For the boundary component $B_1$, the relevant blow-down map on the weights is given by
\begin{align}
\simplex_1 \times \simplex_1 &\to \simplex_2 \notag\\
((s_2,s_3),(s_1,s_*)) & \mapsto (s_1,s_*s_2,s_*s_3), \label{eq:weights_b1}
\end{align}
which is easily seen to be orientation reversing, proving the first equality.
For the boundary component $B_2$, the blow-down map on weights is given by
\begin{align}
\simplex_1 \times \simplex_1 &\to \simplex_2 \notag\\
((s_3,s_1),(s_2,s_*)) & \mapsto (s_*s_1,s_2,s_*s_3), \label{eq:weights_b2}
\end{align}
which is also orientation reversing, proving the second equality.
Finally, for the boundary component $B_3$, the blow-down map on weights is given by
\begin{align}
\simplex_1 \times \simplex_1 &\to \simplex_2 \notag\\
((s_1,s_2),(s_*,s_3)) & \mapsto (s_*s_1,s_*s_2,s_3), \label{eq:weights_b3}
\end{align}
which is orientation preserving, proving the last equality.
\end{proof}

The relative homologies $H_3(B_i,\partial B_i)$ have rank 2, and are generated by the homology classes of the following cycles:
\begin{itemize}
\item $R^*_{23} \times S^\inn_{1*}$ and $S^*_{23} \times R^\inn_{1*}$ for $B_1$,
\item $R^*_{31} \times S^\inn_{2*}$ and $S^*_{31} \times R^\inn_{2*}$ for $B_2$, and
\item $R^*_{12} \times S^\inn_{*3}$ and $S^*_{12} \times R^\inn_{*3}$ for $B_3$.
\end{itemize}
Our next goal is to describe the relative homology classes of the images of the relative cycles $\simplex_2 \times b_i \subseteq \Ccal_{1,3}$ 
under the section $\zeta:\Ccal_{1,3} \to \Ncal_{1,3}$.
\begin{lemma}
With the above notation, in $H_3(\Ncal_{1,3}, (\p \simplex_2 \times \Ncal^0_{1,3} \cup T)$ we have
\begin{align*}
\zeta_*[\simplex_2 \times b_1] &= - [S^*_{23} \times R^\inn_{1*}],\\
\zeta_*[\simplex_2 \times b_2] &= - [S^*_{31} \times R^\inn_{2*}], \quad\text{and}\\
\zeta_*[\simplex_2 \times b_3] &= [R^*_{12} \times S^\inn_{*3}].\\
\end{align*}
\end{lemma}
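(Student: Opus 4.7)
The plan is to adapt the strategy from Lemma~\ref{lem:bi_as_products} to the current setting with outputs and weights. For each $i$, I would (i) describe the $1$-chain $\zeta(b_i)$ inside $\Ncal^0_{1,3}|_{b_i}$ using the product parametrization $\Ncal^0_{1,3}|_{b_i} \cong (\Ncal^0_{1,2})^*_{\cdot\cdot} \times (\Ncal^0_{1,2})^\inn_{\cdot\cdot}$; (ii) multiply by $\simplex_2$ to form the desired $3$-chain; and (iii) rewrite in the standard form $S \times R$ or $R \times S$ using Koszul signs, invoking the orientation signs of the weight blow-down maps established in the proof of Lemma~\ref{lem:cycles_coassociativity}.

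The main geometric input is the analysis of the $\zeta$-marker at the input, which I would split into cases according to where the first two output labels $1, 2$ sit on the nodal configuration. In cases $b_1, b_2$, exactly one of outputs $1, 2$ lies on the sphere component containing the input, so the limiting circle defining $\zeta$ passes through three distinct points on this component (the input, the nearby output, and the node) and is therefore uniquely determined on the first component, independently of the angular identification at the node. Hence the input marker is fixed along $b_i$, and the variation of $\theta$ manifests purely as a rotation of the marker at the node on the second component. By the ``relative clockwise rotation at the node'' convention recorded in Figure~\ref{fig:BV-upper}, this rotation is clockwise with respect to the fixed reference on the first component, i.e., opposite to the positive orientation of the $(\Ncal^0_{1,2})^*_{\cdot\cdot}$ fiber, which should give $\zeta(b_i) = -[(\Ncal^0_{1,2})^*_{\cdot\cdot}] \times [C^\inn_{i*}]$ as a $1$-chain in $\Ncal^0_{1,3}|_{b_i}$.

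For $b_3$, both outputs $1$ and $2$ lie on the second component, so on the first component only the input and the node are fixed and the tangent at the node of the relevant arc is inherited via the node coupling from the tangent at the node of the (fixed) circle on the second component through the node and outputs $1, 2$. As $\theta$ varies positively, the relative rotation at the node is again clockwise, rotating this tangent on the first component clockwise. I would then invoke the elementary geometric fact that, for circles through two fixed points, a clockwise rotation of the tangent at one point induces a counterclockwise rotation of the tangent at the other. Because the $\zeta$-marker at the input points along this rotating tangent, it then rotates counterclockwise, which is the positive direction of $(\Ncal^0_{1,2})^\inn_{*3}$, giving $\zeta(b_3) = +[C^*_{12}] \times [(\Ncal^0_{1,2})^\inn_{*3}]$.

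Finally, combining these three descriptions with the weight factor $\simplex_2$, rewriting the $3$-chains in the standard product form, and invoking the orientation signs of the weight blow-down maps from Lemma~\ref{lem:cycles_coassociativity} (orientation-reversing for $b_1, b_2$ and orientation-preserving for $b_3$) should yield the asserted equalities. I expect the main obstacle to be the consistent bookkeeping of orientation signs rather than any conceptual difficulty: for $b_1, b_2$ the negative geometric sign should combine with the orientation-reversing weight blow-down and the Koszul sign of $-1$ coming from swapping the two degree-$1$ factors $(\Ncal^0_{1,2})^*_{\cdot\cdot}$ and $(\simplex_1)_{i*}$ past each other, producing the asserted $-1$; for $b_3$ the positive geometric sign should combine with the orientation-preserving weight blow-down and a trivial Koszul sign (the relevant swap now involves the degree-$0$ factor $C^*_{12}$) to yield the asserted $+1$.
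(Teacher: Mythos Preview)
Your proposal is correct and follows essentially the same approach as the paper: first identify $\zeta_*[b_i]$ in $H_1(\Ncal^0_{1,3})$ by the marker analysis of Lemma~\ref{lem:bi_as_products} (yielding $-[K^*_{23}\times C^\inn_{1*}]$, $-[K^*_{31}\times C^\inn_{2*}]$, $+[C^*_{12}\times K^\inn_{*3}]$), then take the product with $\simplex_2$ and track the two orientation effects (the Koszul swap of the odd-dimensional factors and the weight blow-down from Lemma~\ref{lem:cycles_coassociativity}) to obtain the stated signs. Your bookkeeping of the three sign contributions in each case agrees with the paper's, which phrases the same count as ``two reversals cancel, leaving the geometric sign'' for $b_1,b_2$ and ``no reversals'' for $b_3$.
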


\begin{proof}
Recall that $R^k_{ij}= \simplex_1 \times C^k_{ij}$ and $S^k_{ij}= \simplex_1 \times K^k_{ij}$, where we use $C^k_{ij}$ and $K^k_{ij}$ as a notation for $\zeta(\Ccal^0_{1,2}) \in \Ncal^0_{1,2}$ and the fundamental chain of $\Ncal^0_{1,2}$ with input label $k$ and output labels $i$ and $j$, respectively. Now the argument in the proof of Lemma~\ref{lem:bi_as_products} shows that the relations
$$
\zeta_*[b_1] =-[K^*_{23} \times C^\inn_{1*}], \ 
\zeta_*[b_2] =-[K^*_{13} \times C^\inn_{*2}] \ \text{and} \ 
\zeta_*[b_3] = [C^*_{12} \times K^\inn_{*3}]
$$
hold in $H(\Ncal^0_{1,3})$. 
So in order to prove the lemma, it again remains to understand the effect that the parametrizations have on orientations. 

For the first two relations, note that the domain of a chain of type $S \times R$ is $\simplex_1 \times S^1 \times \simplex_1$ so there are two sources of signs: switching the order of the last two factors, and the effect of the map $\simplex_1 \times \simplex_1 \to \simplex_2$ from \eqref{eq:weights_b1} and \eqref{eq:weights_b2}, respectively. Since these are both orientation reversing, we obtain the first two relations as claimed.

For the third relation, the domain of $R^*_{12} \times S^\inn_{*3}$ is $\simplex_1 \times \simplex_1 \times S^1$, so the only source of a sign would be the effect of the map on weights in \eqref{eq:weights_b3}. As this map is orientation preserving, we conclude the third relation.
\end{proof}

\begin{corollary}\label{cor:cycles_copoisson}
In $H_3(\Ncal_{1,3}, (\partial \simplex_2) \times \Ncal^0_{1,3} \cup T)$, we have the relation
\begin{equation}\label{eq:cycles_copoisson}
[R^*_{12} \times S^\inn_{*3}] = [S^*_{23} \times R^\inn_{1*}] + [S^*_{31} \times R^\inn_{2*}].
\end{equation}
\end{corollary}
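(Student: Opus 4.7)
The plan is to imitate the proof of Corollary~\ref{cor:Poisson_moduli}: exhibit a 4-chain in $\Ncal_{1,3}$ whose boundary, read modulo the subspace $(\partial\simplex_2)\times\Ncal^0_{1,3}\cup T$, expresses the relation we want. The natural candidate is the image of the section, namely $\zeta(\Ccal_{1,3})\subseteq \Ncal_{1,3}$, which is a compact 4-dimensional chain.

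First I would analyze the boundary of $\Ccal_{1,3}$ as a manifold with corners. It decomposes into four types of pieces: the ``weight boundary'' $(\partial\simplex_2)\times\Ccal^0_{1,3}$ (excluding the blown-up loci $V_i$), the blow-up faces $t_i$ (one for each $i=1,2,3$), and the three ``nodal faces'' whose interiors are parametrized by the product of $\simplex_2$ with the boundary circles $b_i$. The union of the last three pieces represents $\sum_{i=1}^3[\simplex_2\times b_i]$ in the appropriate oriented sense, while the first type maps under $\zeta$ into $(\partial\simplex_2)\times \Ncal^0_{1,3}$ and the blow-up faces $t_i$ map into $T$.

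Consequently, applying $\zeta_*$ and reducing in the relative homology group $H_3(\Ncal_{1,3},(\partial\simplex_2)\times\Ncal^0_{1,3}\cup T)$, the only surviving terms from $\partial \zeta(\Ccal_{1,3})$ come from the nodal faces, yielding
\begin{equation*}
\zeta_*[\simplex_2\times b_1]+\zeta_*[\simplex_2\times b_2]+\zeta_*[\simplex_2\times b_3]=0.
\end{equation*}
Substituting the three explicit expressions from the previous lemma,
\begin{equation*}
-[S^*_{23}\times R^\inn_{1*}]-[S^*_{31}\times R^\inn_{2*}]+[R^*_{12}\times S^\inn_{*3}]=0,
\end{equation*}
which rearranges to the claimed identity.

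The main thing to be careful about is bookkeeping of orientations and the effect of the blow-ups: one must check that the orientations on the three nodal faces induced from the complex (hence canonical) orientation on the interior of $\Ccal_{1,3}$ match those used implicitly in stating the previous lemma, and that the contributions of the blow-up corners $t_i$ really do lie in $T$ rather than leaking into the interior. Both points follow from the construction of $\Ccal_{1,3}$ (the blow-ups take place entirely inside $\partial\Ccal'_{1,3}$) and from the sign conventions already fixed when orienting $\simplex_2$ and the boundary components $b_i$, so no new geometric input is needed beyond the preceding lemma.
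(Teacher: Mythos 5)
Your proposal is correct and is essentially the paper's own argument: the paper's proof consists of the single sentence that the 4-chain $\zeta(\Ccal_{1,3})$ provides the homology, and your expansion (the weight-boundary and blow-up faces land in the relative subspace, so the sum of the three $\zeta_*[\simplex_2\times b_i]$ vanishes, after which the preceding lemma gives the signs) is exactly the intended reasoning.
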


\begin{proof}
The 4-chain $\zeta(\Ccal_{1,3})$ gives a homology between the left hand side and the right hand side.
\end{proof}
Finally, we express the homology classes of the boundary components $B_i$ as products.
\begin{lemma}\label{lem:cycles_coJacobi}
We have
\begin{align*}
[B_1] &= [S^*_{23}\times S^\inn_{1*}],\\
[B_2] &= [S^*_{31} \times S^\inn_{2*}], \quad\text{and}\\
[B_3] &= - [S^*_{12} \times S^\inn_{*3}].
\end{align*}
In particular, the sum of these three classes vanishes in\break  $H_4(\Ncal_{1,3},(\partial \simplex_2)\times \Ncal^0_{1,3} \cup T)$.
\end{lemma}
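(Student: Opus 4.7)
The plan is to parallel the strategy used in Corollary~\ref{cor:Bi_as_products} from the no-weights case, where the identity $[B_i]=\zeta_*[b_i]\times[F]$ was used together with the expression for $\zeta_*[b_i]$ as a product. Here, by the discussion preceding the lemma, each stratum $B_i$ is (after the blow-up) a product $\simplex_1\times\simplex_1\times b_i\times S^1$, where the $S^1$ is the marker fiber $F$ at the input puncture, the two 1-simplices record the weights on the two sphere components, and $b_i$ is the circle parametrizing the position of the node on one component. In particular, $B_i$ is naturally parametrized as a product of two copies of $\Ncal_{1,2}$, joined at the node, with the weight-splitting maps \eqref{eq:weights_b1}, \eqref{eq:weights_b2}, \eqref{eq:weights_b3} relating the two factors to the ambient weight simplex $\simplex_2$.

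First I would write $[B_i]=\zeta_*[\simplex_2\times b_i]\times[F]$ by exactly the same argument as in the no-weights case: the section $\zeta$ trivializes $B_i$ over its base. Then I would substitute the expressions for $\zeta_*[\simplex_2\times b_i]$ from the preceding lemma. Since $S=R\wedge F$ in each copy of $\Ncal_{1,2}$ (i.e., extending the section $R=\simplex_1\times\{\zeta\}$ by the marker fiber recovers the fundamental chain $S=\simplex_1\times \Ncal^0_{1,2}$ of $\Ncal_{1,2}$), crossing with $[F]$ converts the $R$-factor on the component containing the input puncture into an $S$-factor. Keeping careful track of the order in which the fiber $F$ is inserted and of the graded commutation signs, this gives the claimed formulas for $[B_1]$, $[B_2]$ and $[B_3]$: for $B_1,B_2$ the additional sign from the fiber-placement cancels the sign from the weight map, while for $B_3$ only one of these two signs is present, yielding the opposite sign.

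For the final assertion, I would argue as follows. The oriented boundary of the compact manifold with corners $\Ncal_{1,3}$ consists of the three strata $B_1, B_2, B_3$ together with strata contained in $(\p\simplex_2)\times\Ncal^0_{1,3}$ (coming from the boundary of the weight simplex) and the blow-up strata $T=T_1\cup T_2\cup T_3$. Since $\p[\Ncal_{1,3}]=0$ as an absolute chain, passing to the relative homology $H_*(\Ncal_{1,3},(\p\simplex_2)\times\Ncal^0_{1,3}\cup T)$ kills all the extra faces, and we obtain $[B_1]+[B_2]+[B_3]=0$.

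The main obstacle is bookkeeping of signs: there are contributions from the orientation of the weight simplex (which depends on the ordering of outputs and determines whether the weight-splitting maps \eqref{eq:weights_b1}--\eqref{eq:weights_b3} are orientation-preserving or -reversing), from the placement of the fiber factor $F$ in the product decomposition, from the graded commutation of chains of mixed degrees, and from the coupling of markers at the node (which was already analyzed in Figure~\ref{fig:BV-upper} and in the proof of Lemma~\ref{lem:bi_as_products}). The internal consistency check that the three signs sum to zero in the last assertion provides a useful sanity test on these computations.
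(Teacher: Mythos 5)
Your strategy is essentially the paper's: both arguments reduce to the unweighted statement of Corollary~\ref{cor:Bi_as_products} and then track the orientations of the product parametrizations, and your argument for the final assertion (the three faces exhaust $\partial\Ncal_{1,3}$ modulo $(\partial\simplex_2)\times\Ncal^0_{1,3}\cup T$) is identical to the paper's. The packaging differs slightly: the paper writes $[B_i]$ as $[\simplex_2]$ times the corresponding boundary face of $\Ncal^0_{1,3}$, imports the unweighted identity for that face, and then collects exactly two signs (the transposition of the middle factors of $\simplex_1\times S^1\times\simplex_1\times S^1$, and the weight maps \eqref{eq:weights_b1}--\eqref{eq:weights_b3}); you instead multiply $\zeta_*[\simplex_2\times b_i]$ by the fiber class $[F]$.

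One step of your recipe is misstated and would derail the $B_3$ case: you say that crossing with $[F]$ converts ``the $R$-factor on the component containing the input puncture'' into an $S$-factor, but for $b_3$ the preceding lemma gives $\zeta_*[\simplex_2\times b_3]=[R^*_{12}\times S^\inn_{*3}]$, where the input component already carries the $S$-factor and it is the outer component's $R^*_{12}$ that must be promoted to $S^*_{12}$. The correct mechanism is that $[F]$ is the diagonal class in the torus of the two markers --- rotating the input marker drags the node marker along via the coupling, exactly as in the unweighted lemma expressing $[F]$ as a sum of two product cycles --- so the Pontryagin product with $[F]$ completes whichever circle direction is missing. The resulting sign is then a graded-commutation sign depending on which factor is being completed, rather than a cancellation between ``fiber placement'' and ``weight map'' (the weight-map sign is already absorbed into $\zeta_*[\simplex_2\times b_i]$). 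With that correction your computation goes through and reproduces the stated signs; the paper's formulation simply sidesteps the decomposition of $[F]$, which is why its sign bookkeeping is shorter.
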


\begin{remark}
We could make the statement more symmetric by writing the last equation as
$$
[B_3] = [S^*_{12} \times S^\inn_{3*}],
$$
using that $[S^\inn_{*3}]=-[S^\inn_{3*}]$.
\end{remark}
\begin{proof}
Denote by $\tilde B_i$ the boundary components of $\Ncal^0_{1,3}$, so that
$$
B_i = \simplex_2 \times \tilde B_i.
$$
Now an argument analogous to the one giving Corollary~\ref{cor:Bi_as_products} gives the relations
$$
[\tilde B_1] = [K^*_{23} \times K^\inn_{1*}], \quad
[\tilde B_2] = [K^*_{13} \times K^\inn_{*2}] \quad\text{and} \quad
[\tilde B_3] = [K^*_{12} \times K^\inn_{*3}].
$$
So in view of the relation $S^k_{ij}= \simplex_1 \times K^k_{ij}$, the proof of the lemma again comes down to analysing the effect of the parametrizations on orientations.

This time, the domains of the product chains of type $S \times S$ are $\simplex_1 \times S^1 \times \simplex_1 \times S^1$, so we always get a sign from switching the order of the middle two factors. From the maps on weights \eqref{eq:weights_b1}, \eqref{eq:weights_b2} and \eqref{eq:weights_b3} we get an additional sign in the first two cases, but not in the third. This proves the three relations stated in the lemma.

The final claim follows from the fact that $[B_1] + [B_2] + [B_3]= [\partial \Ncal_{1,3}]$ in the homology relative to $(\partial \simplex_2) \times \Ncal^0_{1,3} \cup T$.
\end{proof}

As was the case for $\Ncal_{1,3}$, the space $\Mcal_{1,3}$ is the total space of the pull-back of the bundle $\Mcal'_{1,3} \to \Ccal'_{1,3}$ to the blow-up. We call the union of new boundary faces of $\Mcal_{1,3}$ created by the blow-up $\mathbf T$. Then we can use the section $\xi:\Ncal_{1,3} \to \Mcal_{1,3}$ to lift relations in homology. 

\begin{corollary}\label{cor:7-term-coBV}
In $H_3(\Mcal_{1,3}, (\partial \simplex_2) \times \Mcal^0_{1,3} \cup \mathbf T)$ we have the relation
\begin{align*}
\lefteqn{[\hat R^*_{12} \times \hat R^\inn_{*3} \times D_\inn] =}&  \\
& - [\hat R^*_{12} \times D_* \times \hat R^\inn_{*3}]  - [\hat R^*_{23} \times D_* \times \hat R^\inn_{*1}]  - [\hat R^*_{31} \times D_* \times \hat R^\inn_{*2}] \\
& - [D_1 \times \hat R^*_{12} \times \hat R^\inn_{*3}] - [D_2 \times \hat R^*_{23} \times \hat R^\inn_{*1}] - [D_3 \times \hat R^*_{31} \times \hat R^\inn_{*2}].
\end{align*}
\end{corollary}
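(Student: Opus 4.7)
I would mirror the proof of Corollary~\ref{cor:cycles_7term}: apply $\xi_*$ to the coPoisson relation~\eqref{eq:cycles_copoisson} and expand each 2-cycle $S^k_{ij}$ using the label-decorated version of~\eqref{eq:lift_of_S},
\[
\xi_*[S^k_{ij}] = [\hat R^k_{ij} \times D_k] - [\hat R^k_{ij} \times D_i] - [\hat R^k_{ij} \times D_j],
\]
where $k$ labels the input and $i,j$ the two outputs of the sphere carrying the $S$-cycle. This yields a 9-term identity in $H_3(\Mcal_{1,3}, (\p \simplex_2) \times \Mcal^0_{1,3} \cup \mathbf T)$, distributed as one $D_\inn$ contribution from the left-hand side, three $D_*$ contributions (one per boundary component, representing rotation at the node), one $D_1$ contribution, one $D_2$ contribution, and three $D_3$ contributions.

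The key step identifies two of the three $D_3$-terms so that they cancel, leaving a 7-term identity. Using the commutation $[\hat R \times D] = -[D \times \hat R]$ for 1-chains in odd degree and the $\xi_*$-lift of Lemma~\ref{lem:cycles_coassociativity},
\[
[\hat R^*_{12} \times \hat R^\inn_{*3}] = -[\hat R^*_{23} \times \hat R^\inn_{1*}]
\]
(valid in relative homology since both sides represent $\pm\xi_*[\simplex_2 \times \{\mathrm{pt}\}]$), one verifies that the term $[\hat R^*_{12} \times D_3 \times \hat R^\inn_{*3}]$ from the expansion of the left-hand side equals $[D_3 \times \hat R^*_{23} \times \hat R^\inn_{1*}]$ from the expansion of the right-hand side. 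After this cancellation, the remaining seven terms are rearranged into the form of the claim by invoking the antisymmetry $[\hat R^\inn_{i*}] = -[\hat R^\inn_{*i}]$ (from orientation reversal of $\simplex_1$ under an output swap) and the analogous identity $[\hat R^*_{12} \times \hat R^\inn_{*3}] = -[\hat R^*_{31} \times \hat R^\inn_{2*}]$ from Lemma~\ref{lem:cycles_coassociativity} to standardize labels.

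The main obstacle is careful sign bookkeeping, compounded by the need to verify that the homological identification of the two $D_3$-terms is genuinely valid in the relative theory: the 3-chain realizing this homology is obtained by multiplying $D_3$ with the 2-chain witnessing Lemma~\ref{lem:cycles_coassociativity}, and one must check that its boundary sits in $(\p \simplex_2) \times \Mcal^0_{1,3} \cup \mathbf T$. The formula~\eqref{eq:lift_of_S} already carries signs, and the antisymmetries of $\hat R$ together with commutations of odd chains introduce further sign contributions which must all match up. Once these technicalities are handled, the rearrangement from the 9-term identity to the claimed 7-term one follows the same mechanical template as in Corollary~\ref{cor:cycles_7term}.
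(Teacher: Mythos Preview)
Your proposal is correct and follows essentially the same approach as the paper: lift the coPoisson relation~\eqref{eq:cycles_copoisson} via $\xi_*$, expand each $S$-cycle using the labelled form of~\eqref{eq:lift_of_S}, and then use Lemma~\ref{lem:cycles_coassociativity} to identify two of the three $D_3$-terms so that they cancel. The only cosmetic difference is that the paper cancels the left-hand $D_3$-term against $[D_3 \times \hat R^*_{31} \times \hat R^\inn_{2*}]$ (via $[\hat R^*_{12} \times \hat R^\inn_{*3}] = -[\hat R^*_{31} \times \hat R^\inn_{2*}]$) rather than against $[D_3 \times \hat R^*_{23} \times \hat R^\inn_{1*}]$ as you do, but both identifications come from the same lemma and yield the same final 7-term relation after the label-standardizing antisymmetries.
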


\begin{proof}
Similarly to the proof of Corollary~\ref{cor:cycles_7term}, we use \eqref{eq:lift_of_S} to rewrite the identity \eqref{eq:cycles_copoisson} as
\begin{align*}
\lefteqn{[\hat R^*_{12} \times \hat R^\inn_{*3} \times D_\inn] + [\hat R^*_{12} \times D_* \times \hat R^\inn_{*3}] + [\hat R^*_{12} \times D_3 \times \hat R^\inn_{*3}] =}& \\
& [\hat R^*_{23} \times D_* \times \hat R^\inn_{1*}] + [D_2 \times \hat R^*_{23} \times \hat R^\inn_{1*}] + [D_3 \times \hat R^*_{23} \times \hat R^\inn_{1*}]\\
& + [\hat R^*_{31} \times D_* \times \hat R^\inn_{2*}] + [D_3 \times \hat R^*_{31} \times \hat R^\inn_{2*}] + [D_1 \times \hat R^*_{31} \times \hat R^\inn_{2*}].
\end{align*}
Note that
$$
[\hat R^*_{12} \times D_3 \times \hat R^\inn_{*3}] = -[D_3 \times \hat R^*_{12} \times \hat R^\inn_{*3}] = [D_3 \times \hat R^*_{31} \times \hat R^\inn_{2*}],
$$
where in the second equality we have used the (lift of the) relation $[R^*_{12} \times R^\inn_{*3}]=-[R^*_{31} \times R^\inn_{2*}]$ from Lemma~\ref{lem:cycles_coassociativity}. It follows that the last term on the left hand side and the second to last term on the right hand side of the above relation cancel. The statement in the lemma is now obtained from rearranging terms and using $[R^k_{ij}]=-[R^k_{ji}]$ where necessary.
\end{proof}

\subsection{The moduli space with two inputs and two weighted outputs} \label{ssec:moduli_2to2}

In the previous cases of either only one output or only one input, we saw that there is a natural hierarchy formed by the spaces $\Ncal_{k,1}$ or $\Ncal_{1,\ell}$, respectively, which incorporate a (freely varying) marker at that preferred puncture, or, alternatively, consistently coupled markers at all the punctures. We do not see an analogous hierarchy in the case of simultaneously multiple inputs and multiple outputs, as now there seems to be no preferred way to consistently couple markers.

\begin{remark}\label{rem:no-intermediate2}
We believe that this absence of an intermediate hierarchy explains the absence of a corresponding intermediate algebraic structure between BV unital infinitesimal algebras and unital infinitesimal algebras mentioned in Remark~\ref{rem:no-intermediate1}.
\end{remark}
In this subsection, we will concentrate on the case $(k,\ell)=(2,2)$ of two inputs and two outputs, and we will only discuss relations immediately relevant to our goals.

The space $\Ccal_{2,2}^0$ is still naturally diffeomorphic to $\Ccal_{3,1}$, by simply interpreting the first two punctures as inputs and the forth and third punctures as the first and second outputs, respectively. We next define $\Ccal'_{2,2} := \simplex_1 \times \Ccal_{2,2}^0$, so that the space $\Ccal'_{2,2}$ is diffeomorphic to a thickening of the pair of pants depicted in Figure~\ref{fig:C_3_1}. In particular, the boundary of $\Ccal'_{2,2}$ is a genus two surface and can be written as
$$
\partial \Ccal'_{2,2} = (\partial \simplex_1) \times \Ccal_{2,2}^0 \cup \simplex_1 \times b_1 \cup \simplex_1 \times b_2 \cup \simplex_1 \times b_3,
$$
where the $b_i$ are the three boundary components of $\Ccal_{2,2}^0$.
To be more specific, we choose our notation so that
\begin{itemize}
\item on the boundary component $b_1$ the node separates the domain into two sphere components, one carrying the first input and first output, and the other carrying the second input and second output,
\item on the boundary component $b_2$ the node separates the domain into two sphere components, one carrying the first input and second output, and the other carrying the second input and first output, and
\item on the boundary component $b_3$, the node separates the component with the two inputs from the component with the two outputs.
\end{itemize}
At this stage, we confront a new phenomenon. In the cases with only one input or only one output, the trees underlying any nodal configuration come with a preferred direction, to the unique output or away from the unique input, respectively, so for each node it was always clear for which adjacent component this was to be considered as an input and for which component it was to be considered as an output, respectively. Now, on the boundary components $b_1$ and $b_2$, we have deformations of the conformal structure which separate the inputs and outputs into two input/output pairs, and the additional node could and should be interpreted in two ways.

\begin{figure}[h]
\begin{tikzpicture}[scale=0.6]

\draw[thick] (-1,0) circle (1);
\draw[thick] (1,0) circle (1);
\draw[thick, dotted] (-2,0) arc (180:0: 1 and 0.2);
\draw[thick] (-2,0) arc (180:360: 1 and 0.2);
\draw[thick, dotted] (0,0) arc (180:0: 1 and 0.2);
\draw[thick] (0,0) arc (180:360: 1 and 0.2);
\draw[thick] (-1,0.9) to (-1,1.1);
\node at (-1,1.4) {\tiny $1^+$};
\draw[thick] (-1,-0.9) to (-1,-1.1);
\node at (-1,-1.4) {\tiny $1^-$};
\draw[thick] (1,0.9) to (1,1.1);
\node at (1,1.4) {\tiny $2^+$};
\draw[thick] (1,-0.9) to (1,-1.1);
\node at (1,-1.4) {\tiny $2^-$};

\draw[thick] (-5,-0.7071) circle (1);
\draw[thick] (-6.4142,0.7071) circle (1);
\draw[thick, dotted] (-6,-0.7071) arc (180:0: 1 and 0.2);
\draw[thick] (-6,-0.7071,0) arc (180:360: 1 and 0.2);
\draw[thick, dotted] (-7.4142,0.7071) arc (180:0: 1 and 0.2);
\draw[thick] (-7.4142,0.7071) arc (180:360: 1 and 0.2);
\draw[thick] (-6.4142,1.6071) to (-6.4142,1.8071);
\node at (-6.4142,2.1071) {\tiny $1^+$};
\draw[thick] (-7.0213, 0.1) to (-7.2213,-0.1);
\node at (-7.4213,-0.4) {\tiny $1^-$};
\draw[thick] (-4.1929,0.1) to (-4.3929,-0.1);
\node at (-3.9929, 0.4) {\tiny $2^+$};
\draw[thick] (-5, -1.6071) to (-5,-1.8071);
\node at (-5,-2.1071) {\tiny $2^-$};
\draw[thick] (-5.8071,0.1) to (-5.6071,-0.1);
\node at (-5.4071,-0.3) {$*$};

\draw[thick] (5,-0.7071) circle (1);
\draw[thick] (6.4142,0.7071) circle (1);
\draw[thick, dotted] (4,-0.7071) arc (180:0: 1 and 0.2);
\draw[thick] (4,-0.7071,0) arc (180:360: 1 and 0.2);
\draw[thick, dotted] (5.4142,0.7071) arc (180:0: 1 and 0.2);
\draw[thick] (5.4142,0.7071) arc (180:360: 1 and 0.2);
\draw[thick] (6.4142,1.6071) to (6.4142,1.8071);
\node at (6.4142,2.1071) {\tiny $2^+$};
\draw[thick] (4.1929,0.1) to (4.3929,-0.1);
\node at (3.9929, 0.4) {\tiny $1^+$};
\draw[thick] (7.0213, 0.1) to (7.2213,-0.1);
\node at (7.4213,-0.4) {\tiny $2^-$};
\draw[thick] (5, -1.6071) to (5,-1.8071);
\node at (5,-2.1071) {\tiny $1^-$};
\draw[thick] (5.8071,0.1) to (5.6071,-0.1);
\node at (5.4071,-0.3) {$*$};

\end{tikzpicture}

\caption{All nodal configurations in $b_1$ split the inputs and outputs as described by the middle configuration, which a priori can be interpreted in at least the two ways described on the left and on the right.} 
\end{figure}
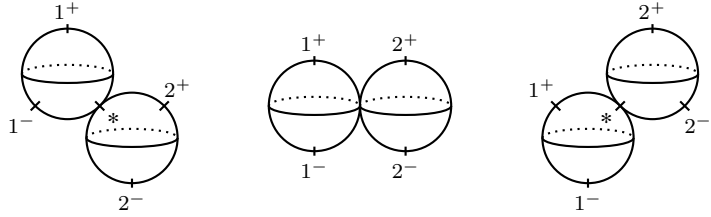
We solve this apparent puzzle by using the weights to make a consistent choice.
The idea, already proposed in \cite{Sullivan-background}, is to interpret the inputs as having equal weight, and the output weights as the fraction of the total received at the corresponding puncture. The effect is that for the component with the output whose weight is less than $\frac 12$ the node functions as an output, and for the component whose output has weight bigger than $\frac 12$, it functions as an input.

There is still one special case, namely the midpoint $(\frac 12, \frac 12) \in \simplex_1$ of the weight simplex, where the two components are balanced and the node is still indifferent. For reasons which will become clear in \S\ref{ssec:11-term-implemented}, we define $\Ccal_{2,2}$ as the blow-up of $\Ccal'_{2,2}$ along the two submanifolds $\{(\frac 12, \frac 12)\} \times b_1$ and $\{(\frac 12, \frac 12)\} \times b_2$ in its boundary. This blow-up has the effect of thickening the central circle of each of the annuli $\simplex_1 \times b_1$ and $\simplex_1 \times b_2$ into a new annulus $A_i$, which we identify with $[0,1] \times b_i$, respectively. When implemented in Floer theory, the additional parameter provided by the blow-up will allow to interpolate between Floer data coming from the left half, respectively from the right half of the simplex $\simplex_1$.
For convenience, we introduce the notation 
\begin{align*}
\simplex_\le&=\{(s_1,s_2) \in \simplex_1 \,:\, s_1 \le s_2\}
\quad \text{and} \quad\\
\simplex_\ge&=\{(s_1,s_2) \in \simplex_1 \,:\, s_1 \ge s_2\}
\end{align*}
to denote the two halves of the simplex $\simplex_1$. Using this notation, we schematically illustrate the boundary of $\Ccal_{2,2}$ in Figure~\ref{fig:boundary_C_2_2}.

\begin{figure}[h]
\begin{tikzpicture}
\definecolor{darkgreen}{rgb}{0,0.6,0};
\definecolor{darkblue}{rgb}{0,0,0.7};

\draw[darkgreen,very thick] (-5,0.6) to [out=80, in=170] (-2,2.1)
to [out=350, in=180] (0,1.8)
to [out=0, in=190] (2,2.1)
to [out=10, in=100] (5,0.6);
\draw[darkgreen,very thick] (5,-0.6)
to [out=260, in=350] (2,-2.1)
to [out=170, in=0] (0,-1.8)
to [out=180, in=10] (-2,-2.1)
to [out=190, in=280] (-5,-0.6);

\draw[darkgreen, very thick] (-3.8,0.6) to [out=70, in=110] (-0.6,0.3)
to [out=290, in=70] (-0.6,-0.3) to [out=250, in=290] (-3.8,-0.6);

\draw[darkgreen, very thick] (3.8,0.6) to [out=110, in=70] (0.6,0.3)
to [out=250, in=110] (0.6,-0.3) to [out=290, in=250] (3.8,-0.6);

\draw[thick, dashed, black, fill=yellow, fill opacity=0.1]
(-3.8,0.6) arc (0:180: 0.6 and 0.15)
to [out=260, in=100] (-5,-0.6)
arc (180:0: 0.6 and 0.15)
to [out=110, in=250] (-3.8,0.6);
\draw[very thick, black, , fill=yellow, fill opacity=0.2]
(-3.8,-0.6) arc (0:-180:0.6 and 0.15)
to [out=100, in=260] (-5,0.6)
arc (180:360: 0.6 and 0.15)
to [out=250, in=110] (-3.8,-0.6);
\draw[thick, black, , fill=red, fill opacity=0.1]
(-3.9,-0.15) arc (0:-180:0.57 and 0.15)
to [out=95, in=265] (-5.04,0.15)
arc (180:360: 0.57 and 0.15);
\draw[dashed, black, , fill=red, fill opacity=0.05]
(-3.9,0.15) arc (0:180:0.57 and 0.15)
to [out=95, in=265] (-5.04,-0.15)
arc (180:360: 0.57 and 0.15);

\draw[thick, dashed, black, fill=yellow, fill opacity=0.1]
(-0.6,0.3) arc (180:0:0.6 and 0.15)
to [out=250, in=110] (0.6,-0.3)
arc (0:180:0.6 and 0.15)
to [out=70, in=290] (-0.6,0.3);
\draw[very thick, black, fill=yellow, fill opacity=0.2]
(-0.6,0.3) arc (180:360: 0.6 and 0.15)
to [out=250, in=110] (0.6,-0.3)
arc (0:-180:0.6 and 0.15)
to [out=70, in=290] (-0.6,0.3);

\draw[thick, dashed, black, fill=yellow, fill opacity=0.1]
(5,0.6) arc (0:180:0.6 and 0.15)
to [out=290, in=70] (3.8,-0.6)
arc (180:0: 0.6 and 0.15)
to [out=80, in=280] (5, 0.6);
\draw[very thick, black, fill=yellow, fill opacity=0.2]
(5,0.6) arc (0:-180:0.6 and 0.15)
to [out=290, in=70] (3.8,-0.6)
arc (180:360:0.6 and 0.15)
to [out=80, in=280] (5, 0.6);
\draw[thick, black, , fill=red, fill opacity=0.1]
(5.04,-0.15) arc (0:-180:0.57 and 0.15)
to [out=85, in=275] (3.9,0.15)
arc (180:360: 0.57 and 0.15);
\draw[dashed, black, , fill=red, fill opacity=0.05]
(5.04,0.15) arc (0:180:0.57 and 0.15)
to [out=85, in=275] (3.9,-0.15)
arc (180:360: 0.57 and 0.15);

\node[black] at (-5.8,0.5) {\tiny$\simplex_\le \times b_1$};
\node[black] at (-5.5,0) {\tiny$A_1$};
\node[black] at (-5.8,-0.5) {\tiny$\simplex_\ge \times b_1$};
\node[black] at (-1.2,0) {\tiny$\simplex_1 \times b_3$};
\node[black] at (5.8,0.5) {\tiny$\simplex_\le \times b_2$};
\node[black] at (5.5,0) {\tiny$A_2$};
\node[black] at (5.8,-0.5) {\tiny$\simplex_\ge \times b_2$};

\node[darkgreen] at (0,1.3) {\tiny$\{(0,1)\} \times \Ccal_{2,2}^0$};
\node[darkgreen] at (0,-1.3) {\tiny$\{(1,0)\} \times \Ccal_{2,2}^0$};
\end{tikzpicture}

\caption{A schematic picture of $\partial \Ccal_{2,2}$. The space $\Ccal_{2,2}$ itself would correspond to the ``outside'' handlebody, in which the three green circles all bound disks.}\label{fig:boundary_C_2_2}
\end{figure}
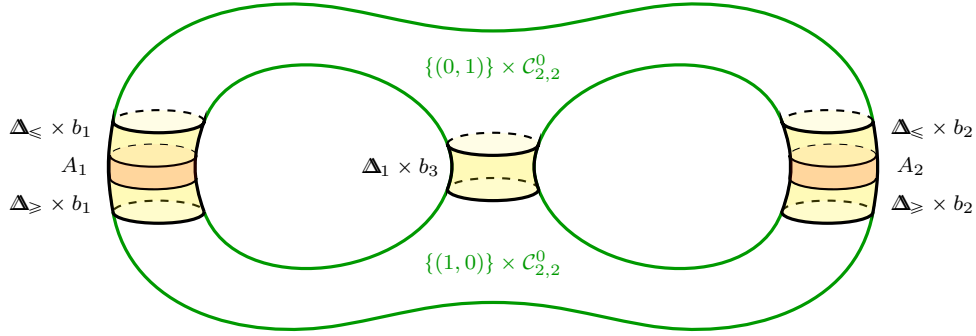

The space $\Mcal_{2,2}^0$ of configurations without weights but with markers at all punctures is a principal $(S^1)^4$-bundle over $\Ccal_{2,2}^0$, and the space $\Mcal_{2,2}$ is a principal $(S^1)^4$-bundle over $\Ccal_{2,2}$. 
Further below we will introduce and use a (noncanonical) section of this bundle in our arguments.

For $i\in \{1,2\}$, we set $\widetilde{b_i}:=\simplex_\le \times b_i \cup A_i \cup \simplex_\ge \times b_i \subseteq \Ccal_{2,2}$ and $\hat B_i:= \pi^{-1}(\widetilde{b_i}) \subseteq \Mcal_{2,2}$. We also set $\widetilde{b_3}:=\simplex_1 \times b_3\subseteq \Ccal_{2,2}$ and $\hat B_3:= \pi^{-1}(\widetilde{b_3}) \subseteq \Mcal_{2,2}$.
With this notation, the boundary of $\Mcal_{2,2}$ can be written as
$$
\partial \Mcal_{2,2} = (\partial \simplex_1) \times \Mcal_{2,2}^0 \cup \hat B_1 \cup \hat B_2 \cup \hat B_3.
$$
We are interested in relations in the homology $H_*(\Mcal_{2,2}, (\partial \simplex_1) \times \Mcal_{2,2}^0)$, which come from representing certain relative cycles in the $\hat B_i$ as (sums of) products of cycles from $\Mcal_{2,1}$ and $\Mcal_{1,2}$.

To keep track of labels, in what follows we write $\Mcal(i;j,k)$ for the space $\Mcal_{1,2}$ with input label $i$ and ouput labels $j$ and $k$ (in that order), and similarly $\Mcal(i,j;k)$ for the space $\Mcal_{2,1}$ with input labels $i$ and $j$ (in that order) and output label $k$. 

Recall that along $\hat B_1$ the domain splits into two components, with one carrying the first input and the first output, and the other carrying the second input and the second output. We now define a surjective map
$$
\Mcal(*,2;2) \times \Mcal(1;1,*) \to \pi^{-1}(\simplex_\le \times b_1) \subseteq \hat B_1
$$
by mapping a pair $(\Sigma_2,((s_1,s_2),\Sigma_1))$ to $((\frac {s_1} 2,\frac {1+s_2}2), \Sigma_2 \# \Sigma_1)$, where\break $\Sigma_2 \# \Sigma_1$ is the nodal configuration whose identification of tangent spaces at the node is given by matching the markers. The map factors as
$$
\Mcal(*,2;2) \times \Mcal(1;1,*) \to (\Mcal(*,2;2) \times \Mcal(1;1,*))/_{S^1} \to \pi^{-1}(\simplex_\le \times b_1), 
$$
where the circle action is given by simultaneously moving the marker at the node $*$ in both domains, and the second map is injective.
Similarly, there is a map
$$
\Mcal(1,*;1) \times \Mcal(2;*,2) \to \pi^{-1}(\simplex_\ge \times b_1) \subseteq \hat B_1
$$
by mapping a pair $(\Sigma_2,((s_1,s_2),\Sigma_1))$ to $((\frac {1+s_1} 2,\frac {s_2}2), \Sigma_2 \# \Sigma_1)$.
Finally, there is a parametrization of the ``middle part" created by the blow-up via a map
$$
\Mcal(1,*;1) \times \Mcal(*,2;2) \times [0,1] \to \pi^{-1}(A_1) \subseteq \hat B_1,
$$
which also factors through the diagonal action of $S^1$.
\begin{remark}
It might seem strange to parametrize $\pi^{-1}(A_1)$ by the fiber product of two copies of $\Mcal_{2,1}$. However, we will see in \S\ref{sec:reduced} below that, in the context of Floer homology, it is reasonable to reinterpret an output of weight zero as an input, and so this choice will be quite convenient.
\end{remark}
As a direct consequence of these parametrizations of $\hat B_1$, we see that the sum
$$
\hat P^{*2}_2 \times \hat R^1_{1*} + \hat P^{1*}_1 \times \hat P^{*2}_2\times [0,1] + \hat P^{1*}_1 \times \hat R^2_{*2}
$$
describes a generator of $H_1(\hat B_1, \partial \hat B_1)$.

In a completely analogous way, there are maps
\begin{align*}
\Mcal(*,1;2) \times \Mcal(2;1,*) &\to \pi^{-1}(\simplex_\ge \times b_2) \subseteq \hat B_2,\\
\Mcal(2,*;1) \times \Mcal(1;*,2) &\to \pi^{-1}(\simplex_\le \times b_2) \subseteq \hat B_2 \qquad \text{and}\\
\Mcal(2,*;1) \times \Mcal(*,1;2)\times [0,1] &\to \pi^{-1}(A_2) \subseteq \hat B_2.
\end{align*}

Just as before, they give rise to the description of a generator of $H_1(\hat B_2,\partial \hat B_2)$ as
$$
\hat P^{*1}_2 \times \hat R^2_{1*} + \hat P^{2*}_1 \times \hat P^{*1}_2 \times [0,1]+ \hat P^{2*}_1 \times \hat R^1_{*2}.
$$
In contrast, at the boundary component $\hat B_3$ the two inputs are separated from the two outputs by the node, so the whole boundary component is the image of a single map
$$
\Mcal(*;1,2) \times \Mcal(1,2;*) \to \hat B_3,
$$
so that the generator of $H_1(\hat B_3, \partial \hat B_3)$ can be represented by the chain $\hat R^*_{12} \times \hat P^{12}_*$.

\begin{lemma}\label{lem:rel_in_h1_of_m22}
In $H_1(\Mcal_{2,2}, (\partial \simplex_1) \times \Mcal_{2,2}^0)$, we have the relations
\begin{align*}
[\hat R^*_{12} \times \hat P^{12}_*]&=[\hat P^{*2}_2 \times \hat R^1_{1*} + \hat P^{1*}_1\times \hat P^{*2}_2 \times [0,1] + \hat P^{1*}_1 \times \hat R^2_{*2}]\\
&= [\hat P^{*1}_2 \times \hat R^2_{1*} + \hat P^{2*}_1\times \hat P^{*1}_2 \times [0,1] + \hat P^{2*}_1 \times \hat R^1_{*2}].
\end{align*}
\end{lemma}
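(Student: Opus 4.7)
The plan is to reduce the lemma to the fact that
\[
H_1\bigl(\Mcal_{2,2},\, (\partial \simplex_1)\times \Mcal_{2,2}^0\bigr)\cong \Z
\]
is infinite cyclic, with a generator given by a fiber of the weight projection, and then to verify that each of the three relative $1$-cycles in the statement represents the positive generator. This reduces the identities to a degree count over $\simplex_1$, avoiding the need to exhibit explicit $2$-chains.

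First I would establish the homology computation. The blow-ups modifying $\Ccal'_{2,2}$ to $\Ccal_{2,2}$ take place along submanifolds of the faces $\simplex_1 \times b_1$ and $\simplex_1 \times b_2$, and do not touch $(\partial \simplex_1) \times \Ccal_{2,2}^0$; hence the inclusion
\[
(\simplex_1 \times \Mcal_{2,2}^0,\, (\partial \simplex_1) \times \Mcal_{2,2}^0) \hookrightarrow (\Mcal_{2,2},\, (\partial \simplex_1) \times \Mcal_{2,2}^0)
\]
is a homotopy equivalence of pairs. The relative K\"unneth formula then yields
\[
H_1(\Mcal_{2,2},\, (\partial \simplex_1) \times \Mcal_{2,2}^0) \cong H_1(\simplex_1, \partial \simplex_1) \otimes H_0(\Mcal_{2,2}^0) \cong \Z,
\]
generated by $[\simplex_1 \times \{x\}]$ for any $x \in \Mcal_{2,2}^0$. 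Consequently, the class of any relative $1$-cycle is detected by its signed degree under the composition $\Mcal_{2,2} \to \Ccal_{2,2} \to \simplex_1$, the last map being the weight projection (collapsing the intervals $A_1$ and $A_2$ to the midpoint $(\tfrac12,\tfrac12)$).

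Next I would compute these degrees. For the $\hat B_3$-chain $\hat R^*_{12} \times \hat P^{12}_*$ the answer is immediate: $\hat R^*_{12}$ lifts $\simplex_1 \times C^*_{12}$, and the weight projection is the identity on the first factor, so the cycle maps onto the fundamental class of $(\simplex_1,\partial \simplex_1)$ with the stated orientation. For the $\hat B_1$-chain the three summands project respectively to $\simplex_\le$, the degenerate image of $A_1$ (a single point), and $\simplex_\ge$, each meeting the next at the midpoint. Using the explicit weight maps $(s_1,s_*)\mapsto (s_1/2,(1+s_*)/2)$ for the $\simplex_\le$-piece and $(s_*,s_2)\mapsto ((1+s_*)/2,s_2/2)$ for the $\simplex_\ge$-piece, I would check that the positively oriented $R^1_{1*} \cong \simplex_1$ maps onto $\simplex_\le$ running from the midpoint to $(0,1)$, and that the positively oriented $R^2_{*2}\cong \simplex_1$ maps onto $\simplex_\ge$ running from $(1,0)$ to the midpoint; together with the middle term (which contributes trivially to the $\simplex_1$-projection) this assembles into the positively oriented fundamental class of $\simplex_1$. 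The $\hat B_2$-chain is treated by the identical argument with the roles of outputs $1$ and $2$ interchanged.

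The main obstacle is sign-tracking. I would need to combine consistently (a) the orientations of the factors $\simplex_1$, $S^1$, and $[0,1]$ appearing in each summand, (b) the signs arising from reordering factors in products of $\hat P$'s and $\hat R$'s, and (c) the orientation effects of the three weight maps from $\simplex_1$ onto $\simplex_\le$, $A_1$, and $\simplex_\ge$. A priori the three summands of the $\hat B_1$-cycle could cancel in pairs or assemble with a wrong overall sign; the substance of the verification is that the summation written in the statement — and in particular the combination of $+$ signs on all three summands and the placement of the $[0,1]$-factor on the right of the middle term — is precisely what is needed for them to concatenate into a single positively oriented copy of $\simplex_1$, and likewise for the $\hat B_2$-cycle.
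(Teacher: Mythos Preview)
Your approach is correct and is essentially the same as the paper's, just more detailed. The paper's proof is a single sentence: each of the three relative cycles represents $[\simplex_1 \times \{pt\}]$ for some point $pt \in \Mcal_{2,2}^0$, and since $\Mcal_{2,2}^0$ is connected these are all homologous via explicit $2$-chains of the form $\simplex_1 \times c$ for connecting paths $c$. Your K\"unneth computation of $H_1 \cong \Z$ and your degree count under the weight projection are exactly the content behind the paper's assertion that each cycle ``represents $[\simplex_1 \times \{pt\}]$''; you simply unpack what the paper leaves implicit, and the sign-tracking you flag as the main obstacle is likewise glossed over in the paper.
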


\begin{proof}
All the relative cycles above represent the class $[\simplex_1 \times \{pt\}]$ for a suitable point in $\Mcal_{2,2}^0$, and as this later space is connected, they are homologous by chains of the form $\simplex \times c$ for suitable connecting paths $c \subseteq \Mcal_{2,2}^0$.
\end{proof}

Our next goal is to describe the boundary of a relative 3-chain that arises from a section  
\begin{equation} \label{eq:section-xi}
\xi:\Ccal_{2,2} \to \Mcal_{2,2}
\end{equation}
of the bundle $\Mcal_{2,2} \to \Ccal_{2,2}$ in terms of sums of product cycles. To specify $\xi$, we make the following choices:\footnote{While the argument below depends on our specific choice for $\xi$, other choices in fact lead to an equivalent outcome.}
\begin{itemize}
\item the marker at the first output is chosen tangent to the circle through the two inputs, pointing towards the first
\item the markers at the two inputs are coupled to the marker at the first output
\item the marker at the second output is coupled to the marker at the first input.
\end{itemize}
To describe $\xi(\partial \Ccal_{2,2})$, we will proceed in two steps.
First, by incorporating rotations at the node $*$, we obtain 2-cycles which project with degree $+1$ onto each of the boundary components of $\Ccal_{2,2}$. Then, we carefully study the corrections necessary to obtain the actual behavior of markers as in $\xi(\partial \Ccal_{2,2})$.

Above, we introduced the notation $\widetilde{b_i}$ for the three boundary components of $\Ccal_{2,2}$, each corresponding to a boundary component $b_i$ of $\Ccal^0_{2,2}$.
Recall that the $b_i$ are oriented in such a way that in the positive direction the two circles determined by the special points rotate \emph{clockwise} with respect to each other at the node. At the same time, along the cycle $D_*$ the marker moves \emph{counterclockwise}. This results in the following observations:
\begin{itemize}
\item In $\hat B_3$, a relative cycle projecting with degree 1 to $\widetilde{b_3} \subseteq \Ccal_{2,2}$ is given by $\hat R^*_{12} \times (-D_*) \times \hat P^{12}_*$.
\item In $\hat B_1$, a relative cycle projecting with degree 1 to $\widetilde{b_1} \subseteq \Ccal_{2,2}$ is given by
$$
\hat P^{*2}_2 \times D_* \times \hat R^1_{1*} + \hat P^{1*}_1 \times D_* \times \hat P^{*2}_2 \times [0,1] + \hat P^{1*}_1 \times D_* \times \hat R^2_{*2},
$$
because there are two sources of signs: one from the direction of rotation, and one from the fact that in $\simplex_1 \times b_1$ the interval comes before the circle factor in the domain, whereas in the above chain their order is reversed.
\item By the same argument, in $\hat B_2$, a relative cycle projecting with degree 1 to $\widetilde{b_2} \subseteq \Ccal_{2,2}$ is given by
$$
\hat P^{*1}_2 \times D_* \times \hat R^2_{1*} + \hat P^{2*}_1 \times D_* \times \hat P^{*1}_2 \times [0,1] + \hat P^{2*}_1 \times D_* \times \hat R^1_{*2}.
$$
\end{itemize}
In order to compare these chains to the corresponding pieces of the boundary of $\xi(\Ccal_{2,2})$, it remains to understand the behavior of the markers at the various punctures:
\begin{itemize}
\item Along $\hat B_3$, from the point of view of the two inputs, the first output is at the node of their component, so for the section $\xi$ their markers stay fixed and tangent to the special circle on their component (which is also what happens for the above cycle). In contrast, for the image of $\xi$  the markers at the outputs should both be coupled to the first input, whereas for the above cycle they are fixed to be tangent to their own special circle, which rotates with the marker at the node. This means that in order to describe the corresponding part of $\xi(\Ccal_{2,2})$, we need to correct the markers at the two outputs by turning them in the opposite direction. So
$\xi(\widetilde{b_3})$ is homologous to
\begin{equation}\label{eq:11-term-geom-first}
\hat R^*_{12} \times (-D_* +D_{1^-}+D_{2^-}) \times \hat P^{12}_*.
\end{equation}
\item Along $\hat B_1$, from the point of view of the component containing the first input and first output puncture, the second input sits at the node, and so for the image of $\xi$ their markers should always be tangent to the special circle on that component (which is what happens in the above relative cycle). In contrast, for the image of $\xi$ the other two markers should be coupled to these fixed markers, but instead for the above relative cycle they rotate with the marker at the node. We correct this by undoing this rotation, and so
$\xi(\widetilde{b_1})$ is homologous to
\begin{align}
&\hat P^{*2}_2 \times (D_*-D_{2^+}-D_{2^-}) \times \hat R^1_{1*}\notag\\
& +\hat P^{1*}_1 \times (D_*-D_{2^+}-D_{2^-}) \times \hat P^{*2}_{2}\times [0,1]\notag\\
&+ \hat P^{1*}_1 \times (D_*-D_{2^+}-D_{2^-}) \times \hat R^2_{*2}.
\label{eq:11-term-geom-second}
\end{align}
\item Along $\hat B_2$, from the point of view of the first output and the second input, the first input sits at the node of their component, and so for the image of $\xi$ their markers should always be tangent to the special circle on that component (which is again what happens in the above relative cycle). In contrast, for the image of $\xi$ the marker at the first input
should be coupled to the marker at the first output, whereas for the above relative cycle it moves with the marker at the node. We correct this by undoing that rotation. Because for the section $\xi$ the marker at the second output is coupled to the marker at the first input, this now in turn forces a correction of the marker at the second output in the opposite direction. We conclude that
$\xi(\widetilde{b_2})$ is homologous to
\begin{align}
&\hat P^{*1}_2 \times (D_*-D_{1^+}+D_{2^-}) \times \hat R^2_{1*}\notag\\
&+\hat P^{2*}_1 \times (D_* -D_{1^+}+D_{2^-}) \times \hat P^{*1}_{2}\times [0,1]\notag\\
&+ \hat P^{2*}_1 \times (D_*-D_{1^+}+D_{2^-}) \times \hat R^1_{*2}.
\label{eq:11-term-geom-third}
\end{align}
\begin{remark}\label{rem:11-term-geom}
The formulas \eqref{eq:11-term-geom-first}, \eqref{eq:11-term-geom-second}, and \eqref{eq:11-term-geom-third} are convenient for the following proof of Lemma~\ref{lem:simplified_11_term}. However, for the construction of Floer data in \S\ref{ssec:11-term-implemented} it is useful to observe that in each of the three cases, the markers that are rotated all belong to the same component. So we can actually give the following alternative product descriptions:
\begin{align*}
\xi(\widetilde{b_3}) &= - \hat S^{*}_{12} \times \hat P^{12}_* \qquad \text{(compare \eqref{eq:11-term-geom-first} with \eqref{eq:lift_of_S})}\\
\xi(\widetilde{b_1}) &= \hat N^{*2}_2 \times \hat R^1_{1*}
+ \hat P^{1*} \times \hat N^{*2}_2 \times [0,1]
+ \hat P^{1*}_1 \times \hat U^2_{*2}\\
\xi(\widetilde{b_2}) &= \hat M^{*1}_2 \times \hat R^2_{1*}
+ \hat P^{2*}_1 \times \hat L^{*1}_2 \times [0,1]
+ \hat P^{2*}_1 \times \hat V^1_{*2}
\end{align*}
with suitable 1-chains $M,N,L \subseteq \Mcal_{2,1}$ and 2-chains $U,V \subseteq \Mcal_{1,2}$.
\end{remark}
\end{itemize}
\begin{lemma} \label{lem:simplified_11_term}
In $H_2(\Mcal_{2,2}, (\partial \simplex_1) \times \Mcal_{2,2}^0)$ we have the relation
\begin{align*}
\lefteqn{[\hat R^*_{12} \times D_* \times \hat P^{12}_*] =}\\
&-[D_{1^-} \times \hat R^*_{12} \times \hat P^{12}_*] -[D_{2^-} \times \hat R^*_{12}  \times \hat P^{12}_*]\\
&+[\hat R^*_{12} \times \hat P^{12}_* \times D_{1^+}] +[\hat R^*_{12}  \times \hat P^{12}_* \times D_{2^+}]\notag\\
&+ [\hat P^{*2}_2 \times D_* \times \hat R^1_{1*} +\hat P^{1*}_1 \times D_* \times \hat P^{*2}_{2}\times [0,1]
+ \hat P^{1*}_1 \times D_* \times \hat R^2_{*2}]\notag\\
&+ [\hat P^{*1}_2 \times D_* \times \hat R^2_{1*} +\hat P^{2*}_1 \times D_* \times \hat P^{*1}_{2}\times [0,1]
+ \hat P^{2*}_1 \times D_* \times \hat R^1_{*2}].
\notag
\end{align*}
\end{lemma}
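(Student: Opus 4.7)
The plan is to exploit the fact that $\xi(\Ccal_{2,2})$ is a 3-chain in $\Mcal_{2,2}$ whose boundary, modulo the obvious contributions from $(\partial\simplex_1)\times\Mcal_{2,2}^0$, is precisely $\xi(\widetilde{b_1})+\xi(\widetilde{b_2})+\xi(\widetilde{b_3})$. Hence, in the relative homology group $H_2(\Mcal_{2,2},(\partial\simplex_1)\times\Mcal_{2,2}^0)$, the class $[\xi(\widetilde{b_1})]+[\xi(\widetilde{b_2})]+[\xi(\widetilde{b_3})]$ vanishes. The claimed identity will be extracted by substituting the explicit descriptions \eqref{eq:11-term-geom-first}, \eqref{eq:11-term-geom-second}, \eqref{eq:11-term-geom-third} into this vanishing relation and simplifying.

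First, I would isolate the ``main'' contribution. The term $-\hat R^*_{12}\times D_*\times\hat P^{12}_*$ arises from the $-D_*$ in \eqref{eq:11-term-geom-first}; moving it to the left-hand side produces the single term on the left of the lemma. The $D_*$-pieces of \eqref{eq:11-term-geom-second} and \eqref{eq:11-term-geom-third} then give, directly, the last two lines of the claimed formula.

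Next I would handle the correction terms. From \eqref{eq:11-term-geom-first} we receive the corrections $+D_{1^-}$ and $+D_{2^-}$ placed in the middle of $\hat R^*_{12}\times\cdot\times\hat P^{12}_*$; from \eqref{eq:11-term-geom-second} the corrections $-D_{2^+}$ and $-D_{2^-}$ placed in the middle of each of the three summands of the $\widetilde{b_1}$-parametrization; and from \eqref{eq:11-term-geom-third} the corrections $-D_{1^+}$ and $+D_{2^-}$ placed in the middle of each of the three summands of the $\widetilde{b_2}$-parametrization. In each case, the factor $D_{i^\pm}$ rotates a marker at one of the four original (non-nodal) punctures, so it is independent of the product decomposition of the boundary component and may be shuffled to the far left or far right by graded commutativity. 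Once separated, Lemma~\ref{lem:rel_in_h1_of_m22} identifies the resulting sum of three product chains (in each of \eqref{eq:11-term-geom-second} and \eqref{eq:11-term-geom-third}) with $[\hat R^*_{12}\times\hat P^{12}_*]$, converting each correction into a multiple of $D_{i^\pm}\times\hat R^*_{12}\times\hat P^{12}_*$ or $\hat R^*_{12}\times\hat P^{12}_*\times D_{i^\pm}$.

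The final step is a sign audit. After these reductions, I expect the three $D_{2^-}$-corrections coming from the three boundary components to combine so that two of them cancel and one survives as $-D_{2^-}\times\hat R^*_{12}\times\hat P^{12}_*$; similarly $D_{1^-}$ from \eqref{eq:11-term-geom-first} should give $-D_{1^-}\times\hat R^*_{12}\times\hat P^{12}_*$, and the $-D_{i^+}$ corrections from \eqref{eq:11-term-geom-second}, \eqref{eq:11-term-geom-third} should yield the $+\hat R^*_{12}\times\hat P^{12}_*\times D_{i^+}$ terms. The main obstacle is precisely this bookkeeping: the signs depend simultaneously on (i) the graded commutativity used to move $D_{i^\pm}$ past a $1$-chain, (ii) the orientation of the boundary component $\widetilde{b_i}$ of $\Ccal_{2,2}$ (encoded already in the signs of \eqref{eq:11-term-geom-first}--\eqref{eq:11-term-geom-third}), and (iii) the choice of parametrization of the blown-up annuli $A_1,A_2$. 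Verifying that these ingredients conspire to reproduce exactly the four correction terms of the lemma is the core computational content of the proof.
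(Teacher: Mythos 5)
Your proposal is correct and follows essentially the same route as the paper: the relative boundary of the 3-chain $\xi(\Ccal_{2,2})$ forces the classes of \eqref{eq:11-term-geom-first}--\eqref{eq:11-term-geom-third} to sum to zero, the $D$-factors are shuffled out by Koszul signs (picking up a sign only when passing a 1-dimensional $R$-chain), Lemma~\ref{lem:rel_in_h1_of_m22} collapses each triple of product chains to $[\hat R^*_{12}\times\hat P^{12}_*]$, and the terms are collected. Your anticipated sign outcome is exactly what the paper's computation confirms, including the cancellation of the two $D_{2^-}$-corrections coming from $\widetilde{b_1}$ and $\widetilde{b_2}$ against each other, leaving only the one from $\widetilde{b_3}$.
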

\begin{proof}
It follows from the above discussion that the homology classes of the relative cycles described in equations \eqref{eq:11-term-geom-first}, \eqref{eq:11-term-geom-second} and \eqref{eq:11-term-geom-third} sum to zero,
as these three relative cycles form the relative boundary of the 3-chain $\xi(\Ccal_{2,2})$. The cycle \eqref{eq:11-term-geom-first} can be rewritten as
\begin{align*}
\lefteqn{[\hat R^*_{12} \times (-D_* +D_{1^-}+D_{2^-}) \times \hat P^{12}_*]}&\phantom{blah}\\
&=-[\hat R^*_{12} \times D_* \times \hat P^{12}_*]
-[D_{1^-} \times \hat R^*_{12} \times \hat P^{12}_*]
-[D_{2^-} \times \hat R^*_{12} \times \hat P^{12}_*],
\end{align*}
where the signs in the second and third term come from switching the 1-dimensional domains of $D$ and $R$. Similarly, we have
\begin{align*}
\hat P^{*2}_2 \times (D_*&- D_{2^+}-D_{2^-}) \times \hat R^1_{1*}\\
&= \hat P^{*2}_2 \times D_* \times \hat R^1_{1*}
+ \hat P^{*2}_2 \times \hat R^1_{1*} \times D_{2^+}
- D_{2^-} \times \hat P^{*2}_2 \times \hat R^1_{1*},\\
\hat P^{1*}_1 \times (D_*&- D_{2^+}-D_{2^-}) \times \hat P^{*1}_{1}\times [0,1]\\
&= \hat P^{1*}_1 \times D_* \times \hat P^{*1}_{1}\times [0,1]
+ \hat P^{1*}_1 \times \hat P^{*1}_{1}\times [0,1] \times D_{2^+}\\
& \qquad \qquad \qquad \qquad \qquad \quad \ \  - D_{2^-} \times \hat P^{1*}_1 \times \hat P^{*1}_{1}\times [0,1],\\
\hat P^{1*}_1 \times (D_*&- D_{2^+}-D_{2^-}) \times \hat R^2_{*2}\\
&= \hat P^{1*}_1 \times D_* \times \hat R^2_{*2}
+ \hat P^{1*}_1 \times \hat R^2_{*2} \times D_{2^+}
- D_{2^-} \times \hat P^{1*}_1 \times \hat R^2_{*2},\\
\hat P^{*1}_2 \times (D_*&-D_{1^+}+D_{2^-}) \times \hat R^2_{1*}\\
&= \hat P^{*1}_2 \times D_* \times \hat R^2_{1*}
+ \hat P^{*1}_2 \times \hat R^2_{1*} \times D_{1^+}
+ D_{2^-} \times \hat P^{*1}_2 \times \hat R^2_{1*}, \\
\hat P^{2*}_1 \times (D_*&- D_{1^+}+D_{2^-}) \times \hat P^{*1}_{2}\times [0,1]\\
&= \hat P^{2*}_1 \times D_* \times \hat P^{*1}_{2}\times [0,1]
+ \hat P^{2*}_1 \times \hat P^{*1}_{2}\times [0,1] \times D_{1^+}\\
& \qquad \qquad \qquad \qquad \qquad \quad \ \ + D_{2^-} \times \hat P^{2*}_1 \times \hat P^{*1}_{2}\times [0,1], \\
\hat P^{2*}_1 \times (D_*&- D_{1^+}+D_{2^-}) \times \hat R^1_{*2}\\
&= \hat P^{2*}_1 \times D_* \times \hat R^1_{*2}
+ \hat P^{2*}_1 \times \hat R^1_{*2} \times D_{1^+}
+ D_{2^-} \times \hat P^{2*}_1 \times \hat R^1_{*2}.
\end{align*}
By using the relations from Lemma~\ref{lem:rel_in_h1_of_m22}, we deduce that
the class of the relative cycle \eqref{eq:11-term-geom-second} can be rewritten as
\begin{eqnarray*}
&[\hat P^{*2}_2 \times D_* \times \hat R^1_{1*} + \hat P^{1*}_1 \times D_* \times \hat P^{*1}_{1}\times [0,1] + \hat P^{1*}_1 \times D_* \times \hat R^2_{*2}]\\
&\phantom{333} + [\hat R^*_{12}  \times \hat P^{12}_* \times D_{2^+}] - [D_{2^-} \times \hat R^*_{12}  \times \hat P^{12}_*],
\end{eqnarray*}
and the class of the relative cycle \eqref{eq:11-term-geom-third} can be rewritten as
\begin{eqnarray*}
& [\hat P^{*1}_2 \times D_* \times \hat R^2_{1*} + \hat P^{2*}_1 \times D_* \times \hat P^{*1}_{2}\times [0,1] + \hat P^{2*}_1 \times D_* \times \hat R^1_{*2}]\\
&\phantom{333} + [\hat R^*_{12}  \times \hat P^{21}_* \times D_{1^+}] + [D_{2^-} \times \hat R^*_{12}  \times \hat P^{21}_*] \\
&= [\hat P^{*1}_2 \times D_* \times \hat R^2_{1*} + \hat P^{2*}_1 \times D_* \times \hat P^{*1}_{2}\times [0,1] + \hat P^{2*}_1 \times D_* \times \hat R^1_{*2}]\\
&\phantom{333} + [\hat R^*_{12}  \times \hat P^{12}_* \times D_{1^+}] + [D_{2^-} \times \hat R^*_{12}  \times \hat P^{12}_*].
\end{eqnarray*}
Here the last equality uses the symmetry $[\hat P^{ij}_k]=[\hat P^{ji}_k]$. Now collecting terms yields the desired result.
\end{proof}

\section{BV bialgebra structure on reduced symplectic homology}\label{sec:reduced}

We fix a field of coefficients $R$ and work with a Weinstein domain $W$ of dimension $2n$ that is \emph{strongly $R$-essential} in the sense of~\cite{CO-reduced}, as explained below. We use notation and terminology from~\cite{CO} and~\cite{CO-reduced}.

\begin{definition}[{\cite[Definition~2.1]{CO-reduced}}] \label{defi:R-essential} 
An \emph{$R$-essential Morse function} on $W$ is a defining Morse function $K:W\to\R$ without critical points of index $>n$ and such that the number of its index $n$ critical points is equal to the rank of $H_n(W;R)$. 
We say that $W$ is {\em $R$-essential} if it admits an $R$-essential Morse function $K$.
\end{definition}

To the Weinstein domain $W$ are associated the following two symplectic homology groups. The \emph{reduced symplectic homology} is $\ol{SH}_*(W)=\coker\,\eps$, where $\eps:SH^{-*}(W)\to SH_*(W)$ is the canonical continuation map from symplectic cohomology to symplectic homology. The \emph{symplectic homology relative to the continuation map} is $SH_*(W;\im c)=\colim_{\!\!H}  H_*(\coker\, c_H)$, where $c_H:FC_*(-H)\to FC_*(H)$ is the chain level continuation map associated to a choice of continuation data from $-H$ to $H$, where $H$ is a Hamiltonian that is admissible for symplectic homology and that restricts to an $R$-essential Morse function on $W$. There is a canonical map $\ol{SH}_*(W)\to SH_*(W;\im c)$.

\begin{definition}[{\cite[Definition~5.1]{CO-reduced}}] \label{defi:strongly-R-essential}
A \emph{strongly $R$-essential Weinstein domain} is an $R$-essential Weinstein domain $W$
such that the canonical map $\ol{SH}_*(W)\to SH_*(W;\im c)$ is an isomorphism. 
\end{definition}

Let \emph{shifted reduced symplectic homology} be $\ol{S\H}_*(W)=\ol{SH}_{*+n}(W)$. It was proved in~\cite{CO-reduced} that the pair of pants product on $SH_*(W)$ descends to a unital product $\mu$ of degree $0$ on $\ol{S\H}_*(W)$, with unit denoted $\eta$. (This does not use the assumption of $R$-essentiality and holds for any Liouville domain.) It was also proved in~\cite{CO-reduced} that, given a choice of continuation data $\cD$ from $-K$ to $K$, where $K$ is an $R$-essential Morse function on the strongly $R$-essential Weinstein domain $W$, the coproduct on $SH_*^{>0}(W)$ extends (non-canonically) to a coproduct $\lambda_\cD$ of odd degree $1-2n$ on $\ol{S\H}_*(W)$. Moreover: 

\begin{theorem}[{\cite[Theorem~6.4]{CO-reduced}}] \label{thm:unital_infinitesimal_reduced} Let $W$ be a strongly $R$-essential Weinstein domain of dimension $2n\ge 6$. Shifted reduced symplectic homology 
$$
(\ol{S\H}_*(W),\mu,\lambda_\cD,\eta)
$$ 
is a commutative and cocommutative unital infinitesimal bialgebra.  
\end{theorem}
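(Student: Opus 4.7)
The plan is to interpret each algebraic identity as the image in homology of a null-homology at the Floer chain level, obtained by choosing compatible Floer data on the moduli spaces of Section~\ref{sec:moduli} and analyzing codimension-one boundary strata.

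First, for the commutativity, associativity and unitality of $(\mu,\eta)$, I would use the standard pair-of-pants construction: the class $P\in\Ncal_{2,1}$ parametrizes the Floer data giving $\mu$ at chain level, the free $\Z_2$-action on $\Ncal_{2,1}$ yields commutativity, and the boundary analysis of $\Ccal_{3,1}$ from~\S\ref{ssec:moduli_2or3to1} yields associativity. Unitality is built in through the continuation map $SH^{-*}(W)\to SH_*(W)$ whose image gives $\eta(1)$. These identities descend to $\ol{S\H}_*(W)$ because the reduction by the image of $\eps$ is compatible with $\mu$ and $\eta$ (which was the content of the cohomological product being well-defined on the reduced theory).

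Next, for the coassociativity and cocommutativity of $\lambda_\cD$, I would parametrize Floer data over the weighted moduli space $\Ncal_{1,3}$ (together with the blow-ups from~\S\ref{ssec:moduli_1toell}). The output weights prescribe the asymptotic slopes of the Hamiltonian, and the continuation data $\cD$ enters on the strata $(\partial\simplex_2)\times\Ncal_{1,3}^0$ where at least one weight vanishes. Lemma~\ref{lem:cycles_coJacobi} gives $[B_1]+[B_2]+[B_3]=0$ in the appropriate relative homology, and, combined with the strong $R$-essentiality hypothesis (which ensures that the resulting chain-level map descends to the continuation cokernel, i.e., to $\ol{S\H}_*(W)\cong S H_*(W;\im c)$), this yields coassociativity. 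Cocommutativity follows from the $S_2$-action on $\Ncal_{1,2}$ reversing the orientation of $\simplex_1$, producing the odd sign required by a coproduct of odd degree.

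The main obstacle is the unital infinitesimal relation
$$\lambda_\cD\,\mu = (1\otimes\mu)(\lambda_\cD\otimes 1) + (\mu\otimes 1)(1\otimes\lambda_\cD) - (\mu\otimes\mu)(1\otimes\lambda_\cD\eta\otimes 1).$$
For this, I would exploit the 2-to-2 weighted moduli space $\Ccal_{2,2}$ of~\S\ref{ssec:moduli_2to2} together with compatible Floer data. The boundary component $\widetilde{b_3}$, where a single node separates inputs from outputs, contributes $\lambda_\cD\mu$; the components $\widetilde{b_1}$ and $\widetilde{b_2}$, where the node splits inputs/outputs into pairs, contribute $(1\otimes\mu)(\lambda_\cD\otimes 1)$ and $(\mu\otimes 1)(1\otimes\lambda_\cD)$ respectively, with the weight-based interpretation of the node matched at the Floer level by comparing asymptotic slopes on the two components. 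The correction term $-(\mu\otimes\mu)(1\otimes\lambda_\cD\eta\otimes 1)$ arises from the annular strata $A_1,A_2$ created by blowing up along $\{(\tfrac12,\tfrac12)\}\times b_i$: these strata parametrize the interpolation between configurations in which the node is interpreted as an input versus as an output, and the interpolating Floer data pass through configurations that effectively insert the continuation of the unit, yielding the factor $\lambda_\cD\eta$ sandwiched between two products. The hard part will be building coherent Floer data on $\Ccal_{2,2}$ and its blow-up so that every codimension-one boundary stratum contributes exactly one of the terms in the identity, using the dimension hypothesis $2n\ge 6$ to exclude spurious lower-stratum contributions and to avoid the BV-type corrections that would enter in lower dimensions.
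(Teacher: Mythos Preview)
Your overall strategy is the right one and matches the paper's re-exposition of this cited result, but two concrete points need correction.

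First, you invoke Lemma~\ref{lem:cycles_coJacobi} for coassociativity of $\lambda_\cD$. That lemma concerns the 4-dimensional relative cycles $[B_i]$ and encodes the coJacobi relation for the cobracket $\gamma$, not coassociativity. Coassociativity comes from the 2-dimensional relative cycles of Lemma~\ref{lem:cycles_coassociativity} together with the ``House'' argument recalled in~\S\ref{ssec:prod_and_coprod}: one takes a path $c$ in $\Ccal^0_{1,3}$ between two boundary circles and studies the Floer problem over its preimage in $\Ccal_{1,3}$, using the triangle faces $t_i$ and the dimension bound $2n\ge 6$ to kill their contributions.

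Second, and more substantially, your account of the unital infinitesimal relation is one dimension too high and misattributes the terms. The full $\Ccal_{2,2}$ is a 3-chain; its boundary yields relations among 2-cycles, which is the setting for the 11-term relation, not the unital infinitesimal one. The paper instead takes a path $c$ in $\Ccal^0_{2,2}$ from a point of $b_3$ to a point of $b_1$, and works with the 2-chain $\simplex_1\times c$, which after blow-up becomes a hexagon. Over the $b_3$-endpoint the single edge $\hat R^*_{12}\times\hat P^{12}_*$ gives $\lambda_\cD\mu$. Over the $b_1$-endpoint there are three edges: the two halves $\simplex_{\le}$ and $\simplex_{\ge}$ give $(1\otimes\mu)(\lambda_\cD\otimes 1)$ and $(\mu\otimes 1)(1\otimes\lambda_\cD)$ \emph{both from $\widetilde{b_1}$} (not one each from $\widetilde{b_1}$ and $\widetilde{b_2}$), while the middle edge coming from the blow-up annulus $A_1$ gives the correction term $-(\mu\otimes\mu)(1\otimes\lambda_\cD\eta\otimes 1)$ via the homotopy of continuation data from $\cD$ to $\cD^{\mathrm{op}}$. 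The two remaining edges $(\partial\simplex_1)\times c$ vanish in the reduced theory. The component $\widetilde{b_2}$ plays no role here; it would produce the $\tau$-twisted version of the same identity, which is equivalent by (co)commutativity.
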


In this section we prove the BV generalization of this result. 

\begin{theorem} \label{thm:BV_unital_infinitesimal_reduced} 
Let $W$ be a strongly $R$-essential Weinstein domain of dimension $2n\ge 8$. Shifted reduced symplectic homology carries a natural BV operator $\Delta$ of degree $1$ such that
$$
(\ol{S\H}_*(W),\mu,\lambda_\cD,\eta,\Delta)
$$ 
is a (commutative and cocommutative) BV unital infinitesimal bialgebra.  
\end{theorem}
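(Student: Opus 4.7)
The overall strategy is to realize $\Delta$ as the chain-level operator counting Floer cylinders with one additional free marker on the domain, and then to translate each algebraic identity required by Definition~\ref{defi:BVui} into a chain-level identity coming from a relation between relative cycles in one of the moduli spaces $\Mcal_{k,\ell}$ studied in~\S\ref{sec:moduli}. The commutative and cocommutative unital infinitesimal bialgebra axioms are already supplied by Theorem~\ref{thm:unital_infinitesimal_reduced}, so only four statements remain to be established: (i) existence and well-definedness of $\Delta$ on $\ol{S\H}_*(W)$ with $\Delta^2=0$, (ii) the 7-term relation~\eqref{eq:7_term_for_Delta_and_mu} for $\Delta$ and $\mu$, (iii) the 7-term relation~\eqref{eq:7_term_for_Delta_and_lambda} for $\Delta$ and $\lambda_\cD$, and (iv) the 11-term relation~\eqref{eq:11term}.

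For (i), I would define $\Delta$ at chain level by counting pairs $(u,\theta)$ consisting of a Floer cylinder $u$ asymptotic to Hamiltonian orbits $x,y$ together with a marked point $\theta\in S^1$ on the domain such that $u(0,\theta)$ lies on a fixed hypersurface (a divisor representing a point class). This is the standard construction of the BV operator on $SH_*(W)$, which is well known to square to zero and to commute with continuation maps. Compatibility with the continuation map $\eps:SH^{-*}(W)\to SH_*(W)$ used to define the reduced theory then forces $\Delta$ to descend to $\ol{SH}_*(W)$, and after shift to $\ol{S\H}_*(W)$. The 7-term identity (ii) is the standard Getzler BV relation on $SH_*(W)$, and is the chain-level echo of the identity in $H_1(\Mcal_{3,1})$ packaged by Corollary~\ref{cor:cycles_7term}: the moduli spaces $\Mcal_{k,1}$ do not involve weights, so this argument takes place in the operadic framework reviewed in~\S\ref{ssec:moduli_kto1}--\ref{ssec:moduli_2or3to1} and descends to the quotient $\ol{S\H}_*(W)$ without additional input.

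For (iii), the coproduct $\lambda_\cD$ is built from a choice $\cD$ of continuation data parametrized over $\simplex_1\times \Mcal^0_{1,2}$, following~\cite{CO-reduced}. I would iterate the construction to the $3$-output moduli space $\Mcal_{1,3}$ of~\S\ref{ssec:moduli_1toell} to produce auxiliary operations whose boundary structure mirrors Corollary~\ref{cor:7-term-coBV}, using $\xi(B_1+B_2+B_3)$ as the relevant relative cycle in $H_*(\Mcal_{1,3},\partial)$. The product descriptions of the $B_i$ from Lemma~\ref{lem:cycles_coJacobi} and the lifts coming from~\eqref{eq:lift_of_S} furnish the seven terms of~\eqref{eq:7_term_for_Delta_and_lambda}. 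The delicate point is that $\Ncal_{1,3}$ now has the additional faces $T_1,T_2,T_3$ coming from vertices of $\partial\simplex_2$ where one output weight becomes $1$; on the Floer side these correspond to degenerations where an output cap collapses, and their contributions must be excluded from the relation. This is precisely where the dimension bound $2n\ge 8$ enters: a standard index/dimension count shows that the corresponding Floer moduli spaces have virtual dimension too low to contribute to operations of the degree of $(\lambda\otimes 1)\lambda\Delta$, once $2n\ge 8$. The map to $SH_*(W;\im c)$ then allows the identity to be pushed to $\ol{S\H}_*(W)$ via the strong $R$-essentiality hypothesis.

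For (iv), I would implement the 11-term relation using the two-input/two-weighted-output moduli space $\Mcal_{2,2}$ of~\S\ref{ssec:moduli_2to2} together with Lemma~\ref{lem:simplified_11_term}. The key new ingredient is a coherent choice of Floer data extending the previously chosen data on $\Mcal_{2,1}$ and $\Mcal_{1,2}$ over the blow-up $\Ccal_{2,2}$ of $\simplex_1\times \Ccal^0_{2,2}$: on the annular strata $A_1,A_2$ created by blowing up the balanced points $(1/2,1/2)\times b_i$, the interpolating data must reconcile the ``node-as-output'' convention (used when the node sits on the low-weight side) with the ``node-as-input'' convention (used on the high-weight side). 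With such data in hand, each of the seven products appearing on the right of Lemma~\ref{lem:simplified_11_term} corresponds to one of the seven last terms of~\eqref{eq:11term}, the four cycles $D_{1^\pm},D_{2^\pm}$ produce the four terms $\pm(\Delta\otimes 1)\lambda\mu,\pm(1\otimes\Delta)\lambda\mu,\lambda\mu(\Delta\otimes 1),\lambda\mu(1\otimes\Delta)$, and the left-hand side $[\hat R^*_{12}\times D_*\times \hat P^{12}_*]$ produces $\lambda\Delta\mu$. The equation in the lemma then yields~\eqref{eq:11term} up to terms arising from faces $(\partial\simplex_1)\times \Mcal^0_{2,2}$, which on the Floer side correspond to configurations with a collapsed output cap and are again killed by the $2n\ge 8$ hypothesis.

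The main obstacle is step (iv): building the coherent Floer data on $\Mcal_{2,2}$ and, in particular, on the annular strata $A_i$, together with a careful bookkeeping of the signs in the identifications of $\xi(\widetilde{b_i})$ from Remark~\ref{rem:11-term-geom} as counts of Floer solutions. The input/output role switch across the balanced line $\{s_1=s_2\}$ is the new geometric phenomenon with no analogue in the operadic setting of~\S\ref{ssec:moduli_kto1}, and the proof that the resulting correction chains $U^2_{*2},V^1_{*2},L^{*1}_2,M^{*1}_2,N^{*2}_2$ of Remark~\ref{rem:11-term-geom} implement the correct BV-twisted versions of the continuation data $\cD$ is the genuinely new content underlying Theorem~\ref{thm:BV_unital_infinitesimal_reduced}.
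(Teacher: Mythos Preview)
Your overall architecture matches the paper's: reduce to the underlying unital infinitesimal bialgebra via Theorem~\ref{thm:unital_infinitesimal_reduced}, then establish the two 7-term relations and the 11-term relation by implementing the homology relations of~\S\ref{sec:moduli} in Floer theory. However, several of your specific claims are incorrect and would derail the argument.

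First, your description of $\Delta$ is not the BV operator on symplectic homology. You propose counting Floer cylinders with a marked point $\theta$ constrained by $u(0,\theta)$ lying on a fixed hypersurface. The actual operator (\S\ref{sec:BVoperator}) is defined by a parametrized Floer problem over $\Mcal_{1,1}\simeq S^1$, where the parameter is the relative angle between the asymptotic markers at input and output; there is no incidence condition in the target. Your descent argument is also not the right one: $\Delta$ descends to $\ol{S\H}_*(W)$ because it vanishes on the energy-zero sector, which one sees by choosing time-independent regular $J$ for a small Morse Hamiltonian so that the $S^1$-parametrized moduli problem acquires a free $S^1$-symmetry (\S\ref{sss:BV_on_limit}).

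Second, you misattribute the role of the hypothesis $2n\ge 8$. For the 11-term relation, the faces $(\partial\simplex_1)\times\Mcal^0_{2,2}$ are not killed by a dimension count; they vanish because by construction the Floer data there factors through the continuation map $c_\cD$, hence through $\im c$, and we work in the reduced theory. Accordingly Proposition~\ref{prop:11-term} only needs $2n\ge 6$. The bound $2n\ge 8$ enters exclusively in Proposition~\ref{prop:coBV}, to suppress the contributions of the triangular faces $T_i\subset\partial\Ncal_{1,3}$ via the index argument extending~\cite[Lemma~4.1]{CO-reduced}.

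Third, your account of step~(iv) omits the mechanism producing the two $\lambda_\cD\eta$ terms. The middle pieces $\hat P^{1*}_1\times D_*\times \hat P^{*2}_2\times[0,1]$ and its $\tau$-partner in Lemma~\ref{lem:simplified_11_term} do not directly read off as operations; the paper's construction (\S\ref{ssec:11-term-implemented}, Figures~\ref{fig:transition}--\ref{fig:upside_down_2}) uses the blow-up interval $[0,1]$ to interpolate between the continuation data $\cD$ and its upside-down version $\cD^{\mathrm{op}}$, and it is precisely the reinterpretation of this homotopy as in~\cite[\S4.2]{CO-reduced} that yields $-(\mu\otimes\mu)(1\otimes 1\otimes\Delta\otimes 1)(1\otimes\lambda_\cD\eta\otimes 1)$. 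Without this identification your boundary analysis does not close up to the 11-term relation.
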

As we discuss in Remark~\ref{rmk:11-term-is-9-term-for-SH} below, for reduced symplectic homology the 11 term relation in the definition of a BV unital infinitesimal bialgebra always reduces to a 9 term relation.

\subsection{The BV operator} \label{sec:BVoperator}
Symplectic homology carries a canonical BV operator, which turns $S\H_*(W)=SH_{*+n}(W)$ into a BV algebra (see~\cite[Chapter~10]{Abouzaid-cotangent} for a proof 
in the case of unit cotangent bundles, which goes through verbatim to arbitrary Liouville domains, and also~\cite{Seidel07}). Here we briefly review the construction, mainly in preparation of its use in our proofs.

\subsubsection{The moduli space $\cM_{1,1}$} The existence of the BV operator is a consequence of the topology of the moduli space $\cM_{1,1}$ of Riemann surfaces with 2 marked points, one input and one output, and with independent markers at the marked points. This moduli space is diffeomorphic to $S^1$, as we now explain. A Riemann surface $\Sigma$ as above is unstable: its group of automorphisms is infinite, canonically isomorphic to $\R$, and its elements are described as translations once we pick a conformal identification of $\Sigma$ minus the two marked points with the cylinder $\R\times S^1$, $S^1=\R/\Z$.  
In the cylinder description, the choice of a marker at, say, the positive puncture, determines a line $\R\times \{\theta\}$ on the cylinder and therefore a marker at the negative puncture.
Measuring the angle at the negative puncture from the marker transported from the positive puncture to the given marker gives a parametrization
$$
\cM_{1,1}\simeq S^1.
$$
Indeed, the automorphism group of the cylinder is $\R\times S^1$, and rotations preserve the relative position of the markers. Note that the construction of this parametrization is symmetric, i.e., one can alternatively measure the angle from  the marker transported from the negative puncture to the given marker at the positive puncture to describe the same map to $S^1$.

The moduli space $\cM_{1,1}$ carries a distinguished point, represented by a surface for which the markers at the positive and negative punctures are aligned. It also carries a canonical $S^1$-action given by rotating the marker at one of the punctures counterclockwise with respect to the orientation in that tangent space, while leaving the other one fixed. 

\subsubsection{Description of the BV operator at chain level in Hamiltonian Floer homology} \label{sss:chainlevel_BV}
Let $\wh W=W\sqcup [1,\infty)\times \p W$ be the symplectic completion of $W$ and $S^1=\R/\Z$. Let $H:S^1\times \wh W\to\R$ be a nondegenerate admissible Hamiltonian, also denoted $H=(H^t)$, $t\in S^1$. Let $J=(J^t)$, $t\in S^1$ be an admissible almost complex structure that is regular for $H$. 

Given a Riemann surface $\Sigma$ with positive punctures $\{z_i^+\}$, negative punctures $\{z_j^-\}$ and markers at the punctures, a collection of \emph{cylindrical ends compatible with the markers}, or a collection of \emph{compatible cylindrical ends}, is a choice of mutually disjoint neighborhoods of the punctures together with biholomorphic identifications of those neighborhoods with $[0,\infty)\times S^1$ for the positive punctures, resp. $(-\infty,0]\times S^1$ for the negative punctures, such that the directions $[0,\infty)\times\{0\}$ and $(-\infty,0]\times \{0\}$ are tangent to the markers.  

We set up a Floer problem parametrized by $\cM_{1,1}$ as follows. Let $\Sigma_\theta$, $\theta\in \cM_{1,1}$ be a family of representatives for the elements of $\cM_{1,1}$, decorated with a smoothly varying in $\theta$ family of compatible cylindrical ends at the punctures, and a smoothly varying in $\theta$ family of $1$-forms $\beta_\theta\in\Omega^1(\Sigma_\theta)$ such that $d\beta_\theta=0$ and $\beta_\theta=dt$ in the cylindrical ends.
\footnote{If we identify $\Sigma_\theta$ with the cylinder $\R\times S^1$,  
one possible such choice is $\beta_\theta=dt$.} 
Choose a smoothly varying family of Hamiltonians $H_\theta^z:\wh W\to \R$, $\theta\in\cM_{1,1}$, $z\in\Sigma_\theta$ with constant slope on $[1,\infty)\times\p W$ and such that $H_\theta^z=H^t$ in the cylindrical ends of $\Sigma_\theta$, with respect to the canonical coordinate $z=(s,t)$. Choose a smoothly varying family of admissible almost complex structures $J_\theta^z$, $\theta\in\cM_{1,1}$, $z\in\Sigma_\theta$ such that $J_\theta^z=J^t$ in the cylindrical ends of $\Sigma_\theta$ for $z=(s,t)$. 

Given $x^\pm\in \cP(H)$ we consider the moduli space of solutions to the Floer problem parametrized by $\cM_{1,1}$ given by 
\begin{align*}
\cM^\Delta(x^+,x^-) = \big\{ (\theta,u)\, : \, \ & \theta\in \cM_{1,1}, \, u:\Sigma_\theta\to \wh W,\\
& (du-\beta_\theta\otimes X_{H_\theta^z})^{0,1} = 0,\\
& u(\pm\infty,\cdot)=x^\pm\big\},
\end{align*}
where the anti-holomorphic part at a point $z\in\Sigma_\theta$ is computed with respect to $J_\theta^z$. For a generic choice of the pair $(H_\theta^z,J_\theta^z)$ this moduli space is a smooth manifold of dimension $\CZ(x^+)-\CZ(x^-)+1$. Its Floer compactification $\ol\cM^\Delta(x^+,x^-)$ is a manifold with boundary with corners such that 
$$
\p \ol\cM^\Delta(x^+,x^-)=\bigcup_{y} \ol\cM(x^+,y)\times \ol\cM^\Delta(y,x^-) \cup \bigcup_{y} \ol\cM^\Delta(x^+,y)\times \ol\cM(y,x^-).
$$
Here $\ol\cM(x^+,y)$ and $\ol\cM(y,x^-)$ are Floer compactifications of moduli spaces of Floer trajectories.
A choice of coherent orientations of the Floer moduli spaces allows for algebraic counts of their elements in the $0$-dimensional case, and these counts define the map 
$$
\Delta:FC_*(H,J)\to FC_{*+1}(H,J),
$$
$$
\Delta x^+ = \sum_{\CZ(x^-)=\CZ(x^+)+1} \#\cM^\Delta(x^+,x^-)x^-.
$$
This is a degree $1$ chain map, i.e., $\p \Delta + \Delta\p =0$, 
where $\p$ is the Floer differential. Moreover $\Delta^2$ is homotopic to zero and therefore $\Delta^2=0$ in homology. (As explained in~\cite{Abouzaid-cotangent}, the map $\Delta^2$ is described by a Floer problem parametrized by a 2-torus $\cM_{1,1}\times\cM_{1,1}\simeq S^1\times S^1$, and the Floer data can be extended to a solid torus with boundary $S^1\times S^1$.)

\subsubsection{The BV operator on (reduced) symplectic homology}
\label{sss:BV_on_limit}
The BV operator defined for a single admissible Hamiltonian $H$ commutes with the continuation maps defined by monotone homotopies~\cite[Chapter~10, \S2.3]{Abouzaid-cotangent} and induces in the direct limit the BV operator $\Delta$ on $SH_*(W)$. That $\Delta$ vanishes on the energy 0 sector is a consequence of the fact that, given an admissible Hamiltonian that is a $C^2$-small Morse function on $W$ and has small slope, there exist regular admissible almost complex structures $J$ that are time-independent~\cite[Theorem~7.3]{SZ92}, and the corresponding Floer moduli problem parametrized by $\cM_{1,1}\simeq S^1$, with asymptotes necessarily given by constant orbits, can be chosen to be regular and to have an $S^1$-symmetry. As a result there are no $0$-dimensional nonempty moduli spaces and, for such a choice of Floer data, the operator $\Delta$ is zero at chain level. The outcome of this discussion is that the BV operator on $SH_*(W)$ descends to $\ol{SH}_*(W)$.

\subsection{Product and coproduct}\label{ssec:prod_and_coprod}

The construction of the product in symplectic homology is standard, see e.g. \cite{Seidel07} or \cite[Chapter 10]{Abouzaid-cotangent}, and the construction of the coproduct was given in detail in \cite{CO-reduced}.
Here we briefly relate these constructions to the discussion in~\S\ref{sec:moduli}. In particular, we discuss the proof of the 7-term relation between $\lambda_\cD$ and $\Delta$, which has not appeared in the literature before.

\subsubsection{The product}
For $i\in \{0,1,2\}$, let nondegenerate admissible\break Hamiltonians $H_i=(H_i^t)$, $t\in S^1$ with positive slopes $b_i$ at infinity satisfying $b_1+b_2 \le b_0$ and admissible almost complex structures $J_i=(J_i^t)$ on $\widehat W$ be given. To define the product
$$
\mu:FC_*(H_1,J_1) \otimes FC_*(H_2,J_2) \to FC_*(H_0,J_0),
$$
we choose the following data:
\begin{itemize}
\item a sphere $\Sigma$ with two ordered input punctures $z_1$ and $z_2$ and one output puncture $z_\out$, with fixed asymptotic markers at the punctures and compatible cylindrical ends,
\item a 1-form $\alpha$ on $\Sigma$ which equals $dt$ in the standard coordinates $(s,t) \in \R \times S^1$ on the cylindrical ends,
\item a family of admissible Hamiltonians $H^z$ with slopes $b(z)$ at infinity parametrized by $z\in \Sigma$ which in the coordinates on the cylindrical ends agree with $H_1^t$, $H_2^t$ and $H_0^t$ near $z_1$, $z_2$ and $z_\out$, respectively, satisfying
$$
d(b(z) \alpha) \le 0,
$$
and
\item a family of admissible almost complex structures $J^z$ parametrized by $z\in \Sigma$ which in the coordinates on the cylindrical ends agree with $J_1^t$, $J_2^t$ and $J_0^t$ near $z_1$, $z_2$ and $z_\out$, respectively.
\end{itemize}
Then the product $\mu$ is defined from the count of finite energy solutions $u: \Sigma \to \widehat W$ to the Floer equation
$$
(du- \alpha \otimes X_{H^z})^{0,1}=0.
$$
To fit with the discussion of~\S\ref{ssec:moduli_2or3to1}, it is convenient to choose the surface $\Sigma$ with its asymptotic markers used in the definition of $\mu$ to correspond to the point $\hat P\in \xi(\Ncal_{2,1})\subseteq \Mcal_{2,1}$. Extending the choices of compatible cylindrical ends, 1-forms, admissible Hamiltonians and admissible almost complex structures smoothly over all domains parametrized by $\xi(\Ncal_{2,1})$, the resulting count of rigid solutions to the Floer equation for domains varying in the 1-dimensional parameter space $\xi(\Ncal_{2,1})$ now give rise to the bracket operation
$$
\beta: FC_*(H_1,J_1) \otimes FC_*(H_2,J_2) \to FC_*(H_0,J_0).
$$
The choices can be further extended to all of $\Mcal_{2,1}$. Notice that we can view the chains $D_1$, $D_2$ and $D_\out$ in $\Mcal_{2,1}$ as suitable products of $\hat P^{12}_\out$ with $\Mcal_{1,1}$. It follows that the solutions of Floer's equation with domains in these chains can be interpreted as suitable compositions of $\mu$ and $\Delta$, and so equation~\eqref{eq:lift_of_Q} translates to the well-known relation
$$
\beta = \Delta \mu - \mu (\Delta \otimes 1 + 1 \otimes \Delta)
$$
between the bracket and the BV-operator.

For the remaining properties of $\mu$, $\beta$ and $\Delta$, we use the relations in the homology of $\Ncal_{3,1}$ and $\Mcal_{3,1}$ obtained in~\S\ref{ssec:moduli_2or3to1}. Now we assume that we are given Hamiltonians $H_1$, $H_2$, $H_3$, $H_{12}$, $H_{23}$, $H_{13}$ and $H_0$ with corresponding slopes at infinity satisfying
$$
b_i+b_j \le b_{ij} \quad\text{and} \quad \max_{\{i,j,k\}=\{1,2,3\}}( b_{ij}+b_{jk}) \le b_0.
$$
As $\p \Ncal_{3,1}$ is identified with a union of products of copies of $\Ncal_{2,1}$, all the choices from before give choices of compatible cylindrical ends, 1-forms, admissibile Hamiltonians and admissible almost complex structures on the components of the nodal surfaces in $\xi(\p \Ncal_{3,1})$, which can be extended to all of $\Mcal_{3,1}$ compatibly with the gluing description of the compactification. Now
\begin{itemize}
\item the count of rigid solutions of Floer's equation for maps $u: \Sigma \to \widehat W$ whose domains $\Sigma$ vary in a 1-dimensional chain in $\Mcal_{3,1}$ with boundary $\hat P^{1*}_\out \times \hat P^{23}_* - \hat P^{*3}_\out \times \hat P^{12}_*$ realizes the chain homotopy for associativity of $\mu$, and
\item the count of rigid solutions of Floer's equation for maps $u: \Sigma \to \widehat W$ whose domains $\Sigma$ vary in the 2-dimensional chain $\xi(\zeta(\Ccal_{3,1}))$ realizes the chain homotopy for the Poisson identity for $\beta$ and $\mu$ (compare equation \eqref{eq:cycles_poisson} of Corollary~\ref{cor:Poisson_moduli}).
\item Reinterpreting the resulting relation as in Corollary~\ref{cor:cycles_7term} yields the 7-term relation between the product $\mu$ and the BV operator $\Delta$.
\end{itemize}
The constructions just outlined give the above relations for fixed choices of Hamiltonians $H_i$. By standard arguments, the products and brackets\break $HF(H_1,J_1) \otimes HF(H_2,J_2) \to HF(H_0,J_0)$ for various choices of the $H_i$ are compatible with continuation maps in a way that preserves the relations, and so eventually one obtains the corresponding relations on the limit, i.e., on (reduced) symplectic homology.
More details are found in \cite[Chapter~10]{Abouzaid-cotangent}, where the detailed construction of the BV structure first appeared. See also \cite{Abouzaid-Groman-Varolgunes} for the construction of a full chain level framed $E_2$ structure.

\subsubsection{The coproduct}

The coproduct $\lambda=\lambda_\cD$ will depend on a choice of a Hamiltonian $K:\widehat W \to \R$ with slope $\varepsilon>0$ smaller than half the smallest action of $\p W$, which restricts to an $R$-essential $C^2$-small Morse function on $W$, and a choice of regular continuation data $\cD$ from $-K$ to $K$ (for a discussion of this dependence, see~\cite[\S4]{CO-reduced}). We fix $\cD$ for the rest of this section, and denote the resulting continuation map by $c_\cD:FC(-K) \to FC(K)$. Below, we will use the generic notation $\im c \subseteq FC(H,J)$ for the image of any continuation map that factors through $c_\cD$.

For $i\in\{0,1,2\}$, let nondegenerate admissible Hamiltonians $H_i=(H_i^t)$, $t\in S^1$ with slopes $b_i$ satisfying $\min(b_1,b_2) \ge b_0+\eps$ and admissible almost complex structures $J_i=J_i^t$ on $\widehat W$ be given. To define the coproduct
$$
\lambda = \lambda_\cD:FC(H_0,J_0) \to FC(H_1,J_1) \otimes FC(H_2,J_2)
$$
we need the following additional data:
\begin{itemize}
\item a sphere $\Sigma$ with one input puncture $z_\inn$ and two ordered output punctures $z_1$ and $z_2$, with fixed asymptotic markers at the punctures and compatible cylindrical ends,
\item a 1-form $\alpha$ on $\Sigma$ which equals $dt$ in the standard coordinates $(s,t)\in \R \times S^1$ on the cylindrical ends,
\item a family of admissible Hamiltonians $H^{(s_1,s_2),z}$ that are parametrized by $((s_1,s_2),z)\in \simplex_1 \times \Sigma$ with slopes $b(s_1,s_2,z)$. As $(s_1,s_2) \in \simplex_1$ varies from $(0,1)$ to $(1,0)$, they interpolate between the following two limiting configurations:
\begin{itemize}
\item at $(0,1)$, we consider a Hamiltonian $H^{(0,1),z}$ parame\-trized by $z\in \Sigma$ which agrees with $H_0^t$ near $z_\inn$ and with the Hamiltonian $H_2^t$ near $z_2$, and agrees with $-K$ in the standard coordinates at $z_1$. This is formally glued to a continuation cylinder from $-K$ to $K$ using the data $\cD$ above, and another continuation cylinder from $K$ to $H_1^t$.
\item at $(1,0)$,  we consider a Hamiltonian $H^{(1,0),z}$ parame\-trized by $z\in \Sigma$ which agrees with $H_0^t$ near $z_\inn$ and with the Hamiltonian $H_1^t$ near $z_1$, and agrees with $-K$ in the standard coordinates at $z_2$. Again, the latter is formally glued to a continuation cylinder from $-K$ to $K$ using the data $\cD$ above, and a continuation cylinder from $K$ to $H_2^t$.
\end{itemize}
For $(s_1,s_2) \in \mathrm{int}(\simplex_1)$, the Hamiltonian $H^{s_1,z}$ is defined on $\Sigma$ and agrees with $H_i^t$ sufficiently far out in the standard coordinates near the puncture $z_i$ for $i\in \{1,2\}$. Moreover, for all $(s_1,s_2) \in \simplex_1$ we assume that
$$
d(b(s_1,s_2,z)\alpha) \le 0.
$$

\begin{figure}[h]
\begin{tikzpicture}[scale=0.65]

\draw[thick, fill = blue, fill opacity=0.2] (-3.08,.92) circle (1.5);
\draw[thick, dotted] (-4.58,0.92) arc (180:0: 1.5 and 0.3);
\draw[thick] (-4.58,0.92) arc (180:360: 1.5 and 0.3);

\draw[thick, fill = yellow, fill opacity=0.2] (-5,-0.7071) circle (1);
\draw[thick, dotted] (-6,-0.7071) arc (180:0: 1 and 0.2);
\draw[thick] (-6,-0.7071) arc (180:360: 1 and 0.2);

\draw[thick, fill = red, fill opacity=0.7] (-5,-2.7071) circle (1);
\draw[thick, dotted] (-6,-2.7071) arc (180:0: 1 and 0.2);
\draw[thick] (-6,-2.7071) arc (180:360: 1 and 0.2);

\draw[thick] (-3.08,2.32) to (-3.08,2.52);
\node at (-3.08,2.82) {\tiny $\inn$};
\node at (-2.35, 2.6) {\tiny $H_0$};

\draw[thick] (-4.1929,0.1) to (-4.3929,-0.1);
\node at (-5.1,0.6) {\tiny $-K$};

\draw[thick] (-2, 0.06) to (-1.8,-0.1);
\node at (-1.6,-0.3) {\tiny $2$};
\node at (-1.3, 0.2) {\tiny $H_2$};

\draw[thick] (-5, -1.6071) to (-5,-1.8071);
\node at (-5.8,-1.7) {\tiny $K$};

\draw[thick] (-5, -3.6071) to (-5,-3.8071);
\node at (-5,-4.1071) {\tiny $1$};
\node at (-5.7,-3.9) {\tiny $H_1$};

\draw[thick, fill=blue, fill opacity=0.2] (1,-2) circle (1.5);
\draw[thick, dotted] (-0.5,-2) arc (180:0: 1.5 and 0.3);
\draw[thick] (-0.5,-2) arc (180:360: 1.5 and 0.3);

\draw[thick] (1,-0.62) to (1,-0.38);
\node at (1,-0.1) {\tiny $\inn$};
\node at (0.3,-0.3) {\tiny $H_0$};

\draw[thick] (-0.08, -2.82) to (-0.3,-3.02);
\node at (-0.5,-3.3) {\tiny $1$};
\node at (-0.7, -2.82) {\tiny $H_1$};

\draw[thick] (2.08, -2.82) to (2.3,-3.02);
\node at (2.5,-3.3) {\tiny $2$};
\node at (2.7, -2.82) {\tiny $H_2$};

\pgftransformxshift{2cm}

\draw[thick, fill=blue, fill opacity=0.2] (3.08,0.92) circle (1.5);
\draw[thick, dotted] (1.58,0.92) arc (180:0: 1.5 and 0.3);
\draw[thick] (1.58,0.92) arc (180:360: 1.5 and 0.3);

\draw[thick, fill=yellow, fill opacity=0.2] (5,-0.7071) circle (1);
\draw[thick, dotted] (4,-0.7071) arc (180:0: 1 and 0.2);
\draw[thick] (4,-0.7071) arc (180:360: 1 and 0.2);

\draw[thick, fill=red, fill opacity=0.7] (5,-2.7071) circle (1);
\draw[thick, dotted] (4,-2.7071) arc (180:0: 1 and 0.2);
\draw[thick] (4,-2.7071) arc (180:360: 1 and 0.2);

\draw[thick] (3.08,2.32) to (3.08,2.52);
\node at (3.08,2.82) {\tiny $\inn$};
\node at (3.78,2.6) {\tiny $H_0$};

\draw[thick] (2., 0.06) to (1.8,-0.1);
\node at (1.6,-0.3) {\tiny $1$};
\node at (1.3, 0.2) {\tiny $H_1$};

\draw[thick] (4.1929,0.1) to (4.3929,-0.1);
\node at (5.1,0.6) {\tiny $-K$};

\draw[thick] (5, -1.6071) to (5,-1.8071);
\node at (5.8,-1.7) {\tiny $K$};

\draw[thick] (5, -3.6071) to (5,-3.8071);
\node at (5,-4.1071) {\tiny $2$};
\node at (5.7,-3.9) {\tiny $H_2$};

\pgftransformxshift{-2cm}
\draw[thick] (-5,-6) to (7,-6);
\draw[thick] (-5,-5.9) to (-5,-6.1);
\draw[thick] (7,-5.9) to (7,-6.1);

\node at (-5,-5.5) {\tiny $(s_1,s_2)=(0,1)$};
\node at (7,-5.5) {\tiny $(s_1,s_2)=(1,0)$};
\node at (1,-5) {\tiny $(s_1,s_2) \in \mathrm{int}(\simplex_1)$};
\end{tikzpicture}

\caption{A schematic picture of the Floer data for $(s_1,s_2) \in \simplex_1$ in the definition of $\lambda_\cD$. The punctures and nodal points are labelled with the Hamiltonians used as asymptotic data. \label{fig:coproduct}} 
\end{figure}
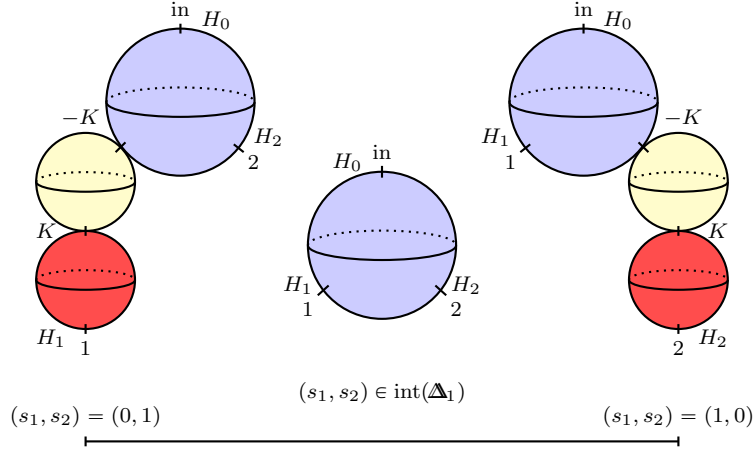

\noindent

\item a family of admissible almost complex structures $J^{(s_1,s_2),z}$ parame\-trized
by $((s_1,s_2),z) \in \simplex_1 \times \Sigma$ which in the standard coordinates on the cylindrical ends agree with $J^t_i$ near $z_i$. In particular, for $(s_1,s_2)=(0,1)$, we use $J_1^t$ on the continuation cylinders near $z_1$, and for $(s_1,s_2)=(1,0)$ we use $J_2^t$ on the continuation cylinders near $z_2$.
\end{itemize}
Given this data, the coproduct $\lambda_\cD$ is defined from the count of rigid finite energy solutions $u: \Sigma \to \widehat W$ of the Floer problem parametrized by $(s_1,s_2) \in \simplex_1$. To conclude that under the dimensional restriction $2n \ge 6$ we in fact get a well-defined map
$$
\lambda_\cD: FC(H_0,J_0)/(\im c) \to FC(H_1,J_1)/(\im c) \otimes FC(H_2,J_2)/(\im c),
$$
we use the index argument of \cite[Lemma~4.1]{CO-reduced} to see that $\im c \subseteq FC(H_0,J_0)$ is mapped to $\im c \otimes FC(H_2,J_2) + FC(H_1,J_1) \otimes \im c$.
\begin{remark}
As we work in Floer theory relative to $\im c$, the contributions of $\p \simplex_1$ in the definition of $\lambda_\cD$ are zero. This is why we worked with homology relative to the boundary of the simplex-factor in~\S\ref{ssec:moduli_1toell}.
\end{remark}
\begin{remark} \label{rmk:11-term-is-9-term-for-SH}
In~\S\ref{sss:BV_on_limit} we have seen that the BV operator vanishes on low energy classes. As the unit $\eta$ comes from $FC_*(K)$, a standard argument using Stokes' theorem shows that its coproduct $\lambda_\cD\eta$ is supported at total energy $\le \eps$. We can therefore conclude that $(1\otimes\Delta)\lambda\eta=0$, so that on reduced symplectic homology the 11-term relation reduces to a 9-term relation.  
\end{remark}

To fit with the discussion of~\S\ref{ssec:moduli_1toell}, it is convenient to choose the surface with its asymptotic markers used in the definition of $\lambda$ to correspond to the point $\xi(\zeta(\Ccal^0_{1,2})) \in \Mcal^0_{1,2}$, so that the parameter space of our parametrized Floer problem is identified with $\hat R= \xi_*(\simplex_1 \times \zeta(\Ccal^0_{1,2}))$. As before in the product case, we extend all the choices consistently to $\xi(\Ncal_{1,2})$. Then the relation $[R]=-[\tau R]$ in the relative homology of $\Ncal_{1,2}$ yields the cocommutativity of $\lambda_\cD$. Moreover, the count of rigid solutions to the Floer problem parametrized by $\hat S=\xi(\Ncal_{1,2}) \in \Mcal_{1,2}$ gives rise to the cobracket operation
$$
\gamma_\cD:FC(H_0,J_0)/(\im c) \to FC(H_1,J_1)/(\im c) \otimes FC(H_2,J_2)/(\im c).
$$
After further extending our choices of data to all of $\Mcal_{1,2}$, the identity \eqref{eq:lift_of_S} gives rise to the algebraic relation
$$
\gamma_\cD = (\Delta \otimes 1 + 1 \otimes \Delta) \lambda_\cD + \lambda_\cD\Delta.
$$

For the remaining properties of $\lambda_\cD$, $\gamma_\cD$ and $\Delta$ we use the relations in the homology of $\Ncal_{1,3}$ and $\Mcal_{1,3}$ obtained in~\S\ref{ssec:moduli_1toell}. Now we fix Hamiltonians $H_0$, $H_1$, $H_2$, $H_3$, $H_{12}=H_{21}$, $H_{13}=H_{31}$ and $H_{23}=H_{32}$ with corresponding slopes satisfying 
\begin{align}
b_i &\ge b_0+2 \eps \qquad \text{for }i\in \{1,2,3\},\label{eq:conditions_on_slopes}\\
\min(b_i,b_j) &\ge b_{ij}+\eps \qquad \text{and} \notag\\
\min(b_i,b_{jk}) &\ge b_0+\eps \qquad \text{for any bijection } \{i,j,k\} \cong \{1,2,3\}. \notag
\end{align}
We also fix generic almost complex structures $J_i$ and $J_{ij}$.

Recall that the boundary of $\Ncal_{1,3}$ splits as
$$
\p \Ncal_{1,3} = (\p \simplex_2) \times \Ncal_{1,3}^0 \cup\bigcup_{i=1}^3 \left(B_i \cup T_i\right),
$$
and each of the $B_i$ is in bijective correspondence with a suitable product of two copies of $\Ncal_{1,2}$, so we have choices of compatible cylindrical ends, 1-forms, admissible Hamiltonians and admissible almost-complex structures on the components of the nodal surfaces corresponding to points in $\xi(B_i)$. There is no issue with extending cylindrical ends and almost-complex structures over the remaining boundary parts and into the interior. We claim that the blow-up also makes it possible to extend the Hamiltonians and 1-forms first to all of $\xi(\p \Ncal_{1,3})$, in such a way that whenever the weight at one of the output punctures is $0$, the corresponding Floer data near that puncture carries two extra cylindrical components, with one a continuation cylinder from $-K$ to $K$ using the fixed data $\cD$, and the other a continuation cylinder from $K$ to the relevant Hamiltonian $H_i$ corresponding to that puncture. 

\begin{figure}[ht]
\begin{center}
\begin{tikzpicture}[scale=0.35]

\draw[very thick] (5,5) -- (10,5) -- (10,10) -- (5, 10) -- (5,5);
\draw[very thick] (5,10) -- (7.5,14) -- (10,10);

\node at (4,4) {\tiny $((0,1),(0,1))$};
\node at (11,4) {\tiny $((0,1),(1,0))$};
\node at (2.7,11) {\tiny $((1,0),(0,1))$};
\node at (12.5,11) {\tiny $((1,0),(1,0))$};
\node at (7.5,15) {\tiny $(1,0,0)$};

\pgftransformxshift{21cm}
\draw[very thick] (0,2.5) -- (10,2.5) -- (10,12.5) -- (0,12.5) -- cycle;


\draw[thick, fill = blue, fill opacity=0.2] (-1,18.5) circle (0.8);
\draw[thick, fill = yellow, fill opacity=0.4] (-0.15,17.65) circle (0.4);
\draw[thick, fill = red, fill opacity=0.7] (-0.15,16.85) circle (0.4);
\draw[thick, fill = blue, fill opacity=0.2] (-0.15,15.65) circle (0.8);
\draw[thick, fill = yellow, fill opacity=0.4] (-1,14.80) circle (0.4);
\draw[thick, fill = red, fill opacity=0.7] (-1,14) circle (0.4);

\draw[thick] (-1,19.2) -- (-1,19.4);
\node at (-1,19.8) {\tiny $\inn$};
\draw[thick] (-1.47,18.04) -- (-1.7,17.84);
\node at (-1.8,17.5) {\tiny $1$};
\draw[thick] (0.31,15.09) -- (0.51,14.99);
\node at (0.6, 14.6) {\tiny $3$};
\draw[thick] (-1,13.7) -- (-1,13.5);
\node at (-1,13.1) {\tiny $2$};

\draw[thick, fill = blue, fill opacity=0.2] (5,17.5) circle (0.8);
\draw[thick, fill = yellow, fill opacity=0.4] (5.85,16.65) circle (0.4);
\draw[thick, fill = red, fill opacity=0.7] (5.85,15.85) circle (0.4);
\draw[thick, fill = blue, fill opacity=0.2] (5.85,14.65) circle (0.8);

\draw[thick] (5,18.2) -- (5,18.4);
\node at (5,18.8) {\tiny $\inn$};
\draw[thick] (4.53,17.04) -- (4.33,16.84);
\node at (4.2,16.5) {\tiny $1$};
\draw[thick] (6.32,14.18) -- (6.52,13.98);
\node at (6.6, 13.6) {\tiny $3$};
\draw[thick] (5.43,14.18) -- (5.23,13.98);
\node at (5,13.6) {\tiny $2$};

\draw[thick, fill = blue, fill opacity=0.2] (11,18.5) circle (0.8);
\draw[thick, fill = yellow, fill opacity=0.4] (11.85,17.65) circle (0.4);
\draw[thick, fill = red, fill opacity=0.7] (11.85,16.85) circle (0.4);
\draw[thick, fill = blue, fill opacity=0.2] (11.85,15.65) circle (0.8);
\draw[thick, fill = yellow, fill opacity=0.4] (12.7,14.8) circle (0.4);
\draw[thick, fill = red, fill opacity=0.7] (12.697,14) circle (0.4);

\draw[thick] (11,19.2) -- (11,19.4);
\node at (11,19.8) {\tiny $\inn$};
\draw[thick] (10.53,18.04) -- (10.33,17.84);
\node at (10.2,17.5) {\tiny $1$};
\draw[thick] (11.35,15.19) -- (11.15,14.99);
\node at (11, 14.6) {\tiny $2$};
\draw[thick] (12.7,13.7) -- (12.7,13.5);
\node at (12.7,13.1) {\tiny $3$};


\draw[thick, fill = blue, fill opacity=0.2] (-2.7,8.7) circle (0.8);
\draw[thick, fill = blue, fill opacity=0.2] (-1.85,7.35) circle (0.8);
\draw[thick, fill = yellow, fill opacity=0.4] (-2.7,6.5) circle (0.4);
\draw[thick, fill = red, fill opacity=0.7] (-2.7,5.7) circle (0.4);

\draw[thick] (-2.7,9.4) -- (-2.7,9.6);
\node at (-2.7,10) {\tiny $\inn$};
\draw[thick] (-3.16,8.23) -- (-3.36,8.03);
\node at (-3.5,7.65) {\tiny $1$};
\draw[thick] (-1.38,6.88) -- (-1.19,6.68);
\node at (-0.9, 6.4) {\tiny $3$};
\draw[thick] (-2.7,5.4) -- (-2.7,5.2);
\node at (-2.7,4.8) {\tiny $2$};


\draw[thick, fill = blue, fill opacity=0.2] (-1,0.7) circle (0.8);
\draw[thick, fill = yellow, fill opacity=0.4] (-1.85,-0.15) circle (0.4);
\draw[thick, fill = red, fill opacity=0.7] (-1.85,-0.95) circle (0.4);
\draw[thick, fill = blue, fill opacity=0.2] (-0.15,-0.65) circle (0.8);
\draw[thick, fill = yellow, fill opacity=0.4] (-1,-1.5) circle (0.4);
\draw[thick, fill = red, fill opacity=0.7] (-1,-2.3) circle (0.4);

\draw[thick] (-1,1.4) -- (-1,1.6);
\node at (-1,2) {\tiny $\inn$};
\draw[thick] (-1.85,-1.25) -- (-1.85,-1.45);
\node at (-1.85,-1.85) {\tiny $1$};
\draw[thick] (-1,-2.6) -- (-1,-2.8);
\node at (-1, -3.2) {\tiny $2$};
\draw[thick] (0.3,-1.1) -- (0.5,-1.3);
\node at (0.7,-1.7) {\tiny $3$};


\draw[thick, fill = blue, fill opacity=0.2] (4.5,-0.3) circle (0.8);
\draw[thick, fill = yellow, fill opacity=0.4] (3.65,-1.15) circle (0.4);
\draw[thick, fill = red, fill opacity=0.7] (3.65,-1.95) circle (0.4);
\draw[thick, fill = blue, fill opacity=0.2] (5.35,-1.65) circle (0.8);

\draw[thick] (4.5,0.4) -- (4.5,0.6);
\node at (4.5,1) {\tiny $\inn$};
\draw[thick] (3.65,-2.25) -- (3.65,-2.45);
\node at (3.65,-2.85) {\tiny $1$};
\draw[thick] (4.9,-2.1) -- (4.7,-2.3);
\node at (4.5, -2.7) {\tiny $2$};
\draw[thick] (5.8,-2.1) -- (6,-2.3);
\node at (6.2,-2.7) {\tiny $3$};


\draw[thick, fill = blue, fill opacity=0.2] (11,0.7) circle (0.8);
\draw[thick, fill = yellow, fill opacity=0.4] (10.15,-0.15) circle (0.4);
\draw[thick, fill = red, fill opacity=0.7] (10.15,-0.95) circle (0.4);
\draw[thick, fill = blue, fill opacity=0.2] (11.85,-0.65) circle (0.8);
\draw[thick, fill = yellow, fill opacity=0.4] (12.7,-1.5) circle (0.4);
\draw[thick, fill = red, fill opacity=0.7] (12.7,-2.3) circle (0.4);

\draw[thick] (11,1.4) -- (11,1.6);
\node at (11,2) {\tiny $\inn$};
\draw[thick] (10.15,-1.25) -- (10.15,-1.45);
\node at (10.15,-1.85) {\tiny $1$};
\draw[thick] (11.2,-1.3) -- (11.4,-1.1);
\node at (11, -1.7) {\tiny $2$};
\draw[thick] (12.7,-2.6) -- (12.7,-2.8);
\node at (12.7,-3.2) {\tiny $3$};


\draw[thick, fill = blue, fill opacity=0.2] (11.7,8.7) circle (0.8);
\draw[thick, fill = blue, fill opacity=0.2] (12.65,7.35) circle (0.8);
\draw[thick, fill = yellow, fill opacity=0.4] (13.5,6.5) circle (0.4);
\draw[thick, fill = red, fill opacity=0.7] (13.5,5.7) circle (0.4);

\draw[thick] (11.7,9.4) -- (11.7,9.6);
\node at (11.7,10) {\tiny $\inn$};
\draw[thick] (11.24,8.23) -- (11.04,8.03);
\node at (10.9,7.65) {\tiny $1$};
\draw[thick] (12.2,6.9) -- (12,6.7);
\node at (11.8, 6.3) {\tiny $2$};
\draw[thick] (13.5,5.4) -- (13.5,5.2);
\node at (13.5,4.8) {\tiny $3$};


\draw[thick, fill = blue, fill opacity=0.2] (4.5,8) circle (0.8);
\draw[thick, fill = blue, fill opacity=0.2] (5.45,6.65) circle (0.8);

\draw[thick] (4.5,8.7) -- (4.5,8.9);
\node at (4.5,9.3) {\tiny $\inn$};
\draw[thick] (4.05,7.55) -- (3.85,7.35);
\node at (3.7,7) {\tiny $1$};
\draw[thick] (5,6.2) -- (4.8,6);
\node at (4.6, 5.6) {\tiny $2$};
\draw[thick] (5.9,6.2) -- (6.1,6);
\node at (6.3,5.6) {\tiny $3$};

\end{tikzpicture}
\caption{According to \S\ref{ssec:moduli_1toell}, the blow up of the boundary component $B'_1 \subset \Ncal'_{1,3}$ is a union $B_1 \cup T_1$, where $B_1$ is the product of a 2-torus with a square $\simplex_1 \times \simplex_1$ and $T_1$ is the product of a 2-torus with a triangle. The square and triangle fit together as depicted on the left. On the right, we show the Floer data on the square, which comes from its parametrization by products of copies of $\Ncal_{1,2}$. The colors agree with Figure~\ref{fig:coproduct}. \label{fig:square_labelled}} 
\end{center}
\end{figure}

To explain how this extension works near the blown up boundary component $B_1$, we use Figures~\ref{fig:square_labelled} and \ref{fig:triangle_labelled}. Consider first the left hand side of Figure~\ref{fig:square_labelled}, where we have labelled the vertices of the square by their weights $((s_1,s_*),(s_2,s_3))$, and the top vertex of the triangle by its weights $(s_1,s_2,s_3)$. Note that without the blow-up, the weights $s_2$ and $s_3$ from the parametrization of $B_1$ via the product of two copies of $\Ncal_{1,2}$ would not extend continuously to the rest of $\Ncal_{1,3}$. On the right hand side, we have drawn the data that comes from the product parametrization of the square. Here we use the same color conventions as in Figure~\ref{fig:coproduct}, i.e., blue for ``main'' components, yellow for continuation cylinders from $-K$ to $K$ using the fixed choice of data $\cD$, and red for continuation cylinders from $K$ to the Hamiltonian at the relevant puncture or nodal point. In particular, at the input puncture of the second main component we always use the Hamiltonian $H_{23}$ as asymptotic data.

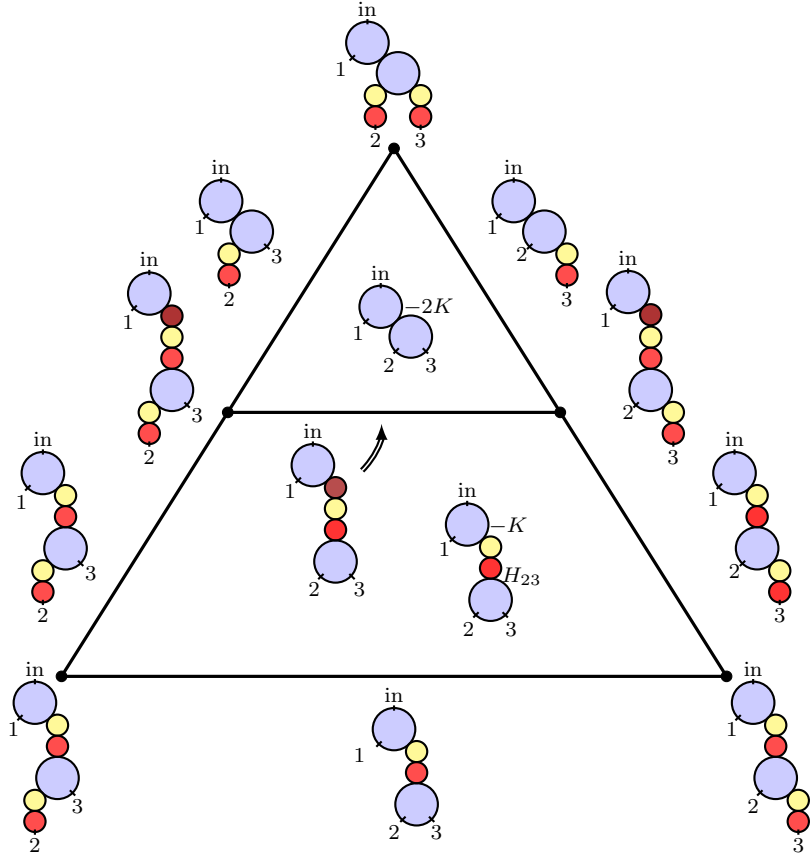
\begin{figure}[h]
\begin{center}
\begin{tikzpicture}[scale=0.35]
\definecolor{darkred}{rgb}{0.6,0,0};

\draw[very thick] (0,5) -- (25,5) -- (12.5,25) -- (0,5);
\draw[very thick] (6.25,15) -- (18.75,15);

\draw[fill] (0,5) circle (0.2);
\draw[fill] (25,5) circle (0.2);
\draw[fill] (12.5,25) circle (0.2);

\draw[fill] (6.25,15) circle (0.2);
\draw[fill] (18.75,15) circle (0.2);

\draw[thick, fill = blue, fill opacity=0.2] (-1,4) circle (0.8);
\draw[thick, fill = yellow, fill opacity=0.4] (-0.1515,3.1515) circle (0.4);
\draw[thick, fill = red, fill opacity=0.7] (-0.1515,2.3515) circle (0.4);
\draw[thick, fill = blue, fill opacity=0.2] (-0.1515,1.1515) circle (0.8);
\draw[thick, fill = yellow, fill opacity=0.4] (-1,0.303) circle (0.4);
\draw[thick, fill = red, fill opacity=0.7] (-1,-0.497) circle (0.4);

\draw[thick] (-1,4.7) -- (-1,4.9);
\node at (-1,5.3) {\tiny $\inn$};
\draw[thick] (-1.4657,3.5344) -- (-1.657,3.3344);
\node at (-1.8,3) {\tiny $1$};
\draw[thick] (0.3142,0.6858) -- (0.5142,0.4858);
\node at (0.6, 0.1) {\tiny $3$};
\draw[thick] (-1,-.8) -- (-1,-1);
\node at (-1,-1.4) {\tiny $2$};

\draw[thick, fill = blue, fill opacity=0.2] (12.5,3) circle (0.8);
\draw[thick, fill = yellow, fill opacity=0.4] (13.3485,2.1515) circle (0.4);
\draw[thick, fill = red, fill opacity=0.7] (13.3485,1.3515) circle (0.4);
\draw[thick, fill = blue, fill opacity=0.2] (13.3485,0.1515) circle (0.8);

\draw[thick] (12.5,3.7) -- (12.5,3.9);
\node at (12.5,4.3) {\tiny $\inn$};
\draw[thick] (12.0343,2.5344) -- (11.8343,2.3344);
\node at (11.2,2) {\tiny $1$};
\draw[thick] (13.8142,-0.3142) -- (14.0142,-0.5142);
\node at (14.1, -.9) {\tiny $3$};
\draw[thick] (12.9343,-0.3142) -- (12.7343,-0.5142);
\node at (12.5,-0.9) {\tiny $2$};

\draw[thick, fill = blue, fill opacity=0.2] (26,4) circle (0.8);
\draw[thick, fill = yellow, fill opacity=0.4] (26.8485,3.1515) circle (0.4);
\draw[thick, fill = red, fill opacity=0.7] (26.8485,2.3515) circle (0.4);
\draw[thick, fill = blue, fill opacity=0.2] (26.8485,1.1515) circle (0.8);
\draw[thick, fill = yellow, fill opacity=0.4] (27.697,0.303) circle (0.4);
\draw[thick, fill = red, fill opacity=0.7] (27.697,-0.497) circle (0.4);

\draw[thick] (26,4.7) -- (26,4.9);
\node at (26,5.3) {\tiny $\inn$};
\draw[thick] (25.5343,3.5344) -- (25.3343,3.3344);
\node at (25.2,3) {\tiny $1$};
\draw[thick] (26.3518,0.6858) -- (26.1518,0.4858);
\node at (26, 0.1) {\tiny $2$};
\draw[thick] (27.7,-.8) -- (27.7,-1);
\node at (27.7,-1.4) {\tiny $3$};


\draw[thick, fill = blue, fill opacity=0.2] (-0.7,12.7) circle (0.8);
\draw[thick, fill = yellow, fill opacity=0.4] (0.1515,11.85) circle (0.4);
\draw[thick, fill = red, fill opacity=0.7] (0.1515,11.05) circle (0.4);
\draw[thick, fill = blue, fill opacity=0.2] (0.1515,9.8485) circle (0.8);
\draw[thick, fill = yellow, fill opacity=0.4] (-0.7,9) circle (0.4);
\draw[thick, fill = red, fill opacity=0.7] (-0.7,8.2) circle (0.4);

\draw[thick] (-0.7,13.4) -- (-0.7,13.6);
\node at (-0.7,14) {\tiny $\inn$};
\draw[thick] (-1.166,12.24) -- (-1.366,12.04);
\node at (-1.5,11.6) {\tiny $1$};
\draw[thick] (0.6192,9.3828) -- (0.8192,9.1828);
\node at (1.1, 8.9) {\tiny $3$};
\draw[thick] (-0.7,7.9) -- (-0.7,7.7);
\node at (-0.7,7.3) {\tiny $2$};


\draw[thick, fill = blue, fill opacity=0.2] (3.3,19.5) circle (0.8);
\draw[thick, fill = darkred, fill opacity=0.8] (4.15,18.65) circle (0.4);
\draw[thick, fill = yellow, fill opacity=0.4] (4.15,17.85) circle (0.4);
\draw[thick, fill = red, fill opacity=0.7] (4.15,17.05) circle (0.4);
\draw[thick, fill = blue, fill opacity=0.2] (4.15,15.85) circle (0.8);
\draw[thick, fill = yellow, fill opacity=0.4] (3.3,15) circle (0.4);
\draw[thick, fill = red, fill opacity=0.7] (3.3,14.2) circle (0.4);

\draw[thick] (3.3,20.2) -- (3.3,20.4);
\node at (3.3,20.8) {\tiny $\inn$};
\draw[thick] (2.83,19.05) -- (2.63,18.85);
\node at (2.5,18.4) {\tiny $1$};
\draw[thick] (4.62,15.38) -- (4.82,15.18);
\node at (5.1, 14.9) {\tiny $3$};
\draw[thick] (3.3,13.9) -- (3.3,13.7);
\node at (3.3,13.3) {\tiny $2$};


\draw[thick, fill = blue, fill opacity=0.2] (6,23) circle (0.8);
\draw[thick, fill = blue, fill opacity=0.2] (7.1515,21.8485) circle (0.8);
\draw[thick, fill = yellow, fill opacity=0.4] (6.3,21) circle (0.4);
\draw[thick, fill = red, fill opacity=0.7] (6.3,20.2) circle (0.4);

\draw[thick] (6,23.7) -- (6,23.9);
\node at (6,24.3) {\tiny $\inn$};
\draw[thick] (5.534,22.5344) -- (5.334,22.3344);
\node at (5.2,22) {\tiny $1$};
\draw[thick] (7.6192,21.3828) -- (7.8192,21.1828);
\node at (8.1, 20.9) {\tiny $3$};
\draw[thick] (6.3,19.9) -- (6.3,19.7);
\node at (6.3,19.3) {\tiny $2$};


\draw[thick, fill = blue, fill opacity=0.2] (11.5,29) circle (0.8);
\draw[thick, fill = blue, fill opacity=0.2] (12.6515,27.8485) circle (0.8);
\draw[thick, fill = yellow, fill opacity=0.4] (11.8,27) circle (0.4);
\draw[thick, fill = red, fill opacity=0.7] (11.8,26.2) circle (0.4);
\draw[thick, fill = yellow, fill opacity=0.4] (13.5,27) circle (0.4);
\draw[thick, fill = red, fill opacity=0.7] (13.5,26.2) circle (0.4);

\draw[thick] (11.5,29.7) -- (11.5,29.9);
\node at (11.5,30.3) {\tiny $\inn$};
\draw[thick] (11.03,28.53) -- (10.83,28.33);
\node at (10.5,28) {\tiny $1$};
\draw[thick] (11.8,25.9) -- (11.8,25.7);
\node at (11.8, 25.3) {\tiny $2$};
\draw[thick] (13.5,25.9) -- (13.5,25.7);
\node at (13.5,25.3) {\tiny $3$};


\draw[thick, fill = blue, fill opacity=0.2] (17,23) circle (0.8);
\draw[thick, fill = blue, fill opacity=0.2] (18.1515,21.8485) circle (0.8);
\draw[thick, fill = yellow, fill opacity=0.4] (19,21) circle (0.4);
\draw[thick, fill = red, fill opacity=0.7] (19,20.2) circle (0.4);

\draw[thick] (17,23.7) -- (17,23.9);
\node at (17,24.3) {\tiny $\inn$};
\draw[thick] (16.5344,22.5344) -- (16.3344,22.3344);
\node at (16.2,22) {\tiny $1$};
\draw[thick] (17.7,21.3828) -- (17.5,21.1828);
\node at (17.3, 21) {\tiny $2$};
\draw[thick] (19,19.9) -- (19,19.7);
\node at (19,19.3) {\tiny $3$};


\draw[thick, fill = blue, fill opacity=0.2] (21.3,19.56) circle (0.8);
\draw[thick, fill = darkred, fill opacity=0.8] (22.15,18.7) circle (0.4);
\draw[thick, fill = yellow, fill opacity=0.4] (22.15,17.85) circle (0.4);
\draw[thick, fill = red, fill opacity=0.7] (22.15,17.05) circle (0.4);
\draw[thick, fill = blue, fill opacity=0.2] (22.15,15.85) circle (0.8);
\draw[thick, fill = yellow, fill opacity=0.4] (23,15) circle (0.4);
\draw[thick, fill = red, fill opacity=0.7] (23,14.2) circle (0.4);

\draw[thick] (21.3,20.26) -- (21.3,20.46);
\node at (21.3,20.85) {\tiny $\inn$};
\draw[thick] (20.83,19.09) -- (20.63,18.89);
\node at (20.45,18.5) {\tiny $1$};
\draw[thick] (21.7,15.3828) -- (21.5,15.1828);
\node at (21.3, 15) {\tiny $2$};
\draw[thick] (23,13.9) -- (23,13.7);
\node at (23,13.3) {\tiny $3$};


\draw[thick, fill = blue, fill opacity=0.2] (25.3,12.7) circle (0.8);
\draw[thick, fill = yellow, fill opacity=0.4] (26.15,11.85) circle (0.4);
\draw[thick, fill = red, fill opacity=0.8] (26.15,11.05) circle (0.4);
\draw[thick, fill = blue, fill opacity=0.2] (26.15,9.85) circle (0.8);
\draw[thick, fill = yellow, fill opacity=0.4] (27,9) circle (0.4);
\draw[thick, fill = red, fill opacity=0.8] (27,8.2) circle (0.4);

\draw[thick] (25.3,13.4) -- (25.3,13.6);
\node at (25.3,14) {\tiny $\inn$};
\draw[thick] (24.83,12.3) -- (24.63,12.1);
\node at (24.5,11.7) {\tiny $1$};
\draw[thick] (25.7,9.38) -- (25.5,9.18);
\node at (25.3, 9) {\tiny $2$};
\draw[thick] (27,7.9) -- (27,7.7);
\node at (27,7.3) {\tiny $3$};


\draw[thick, double, -latex] (11.3, 12.8) arc (-45:0: 2.5);

\draw[thick, fill = blue, fill opacity=0.2] (9.45,13) circle (0.8);
\draw[thick, fill = darkred, fill opacity=0.7] (10.3,12.15) circle (0.4);
\draw[thick, fill = yellow, fill opacity=0.4] (10.3,11.35) circle (0.4);
\draw[thick, fill = red, fill opacity=0.8] (10.3,10.55) circle (0.4);
\draw[thick, fill = blue, fill opacity=0.2] (10.3,9.35) circle (0.8);

\draw[thick] (9.45,13.7) -- (9.45,13.9);
\node at (9.45,14.3) {\tiny $\inn$};
\draw[thick] (9,12.53) -- (8.8,12.33);
\node at (8.6,12) {\tiny $1$};
\draw[thick] (9.85,8.88) -- (9.65,8.68);
\node at (9.5, 8.3) {\tiny $2$};
\draw[thick] (10.82,8.88) -- (11.02,8.68);
\node at (11.2,8.3) {\tiny $3$};


\draw[thick, fill = blue, fill opacity=0.2] (15.25,10.75) circle (0.8);
\draw[thick, fill = yellow, fill opacity=0.4] (16.13,9.9) circle (0.4);
\draw[thick, fill = red, fill opacity=0.8] (16.13,9.1) circle (0.4);
\draw[thick, fill = blue, fill opacity=0.2] (16.13,7.9) circle (0.8);

\draw[thick] (15.25,11.45) -- (15.25,11.65);
\node at (15.25,12.05) {\tiny $\inn$};
\draw[thick] (14.8,10.3) -- (14.6,10.1);
\node at (14.4,9.8) {\tiny $1$};
\draw[thick] (15.68, 7.43) -- (15.48,7.23);
\node at (15.3, 6.8) {\tiny $2$};
\draw[thick] (16.58, 7.43) -- (16.78,7.23);
\node at (16.9,6.8) {\tiny $3$};
\node at (16.8,10.7) {\tiny $-K$};
\node at (17.3,8.8) {\tiny $H_{23}$};


\draw[thick, fill = blue, fill opacity=0.2] (12,19) circle (0.8);
\draw[thick, fill = blue, fill opacity=0.2] (13.13,17.87) circle (0.8);

\draw[thick] (12,19.7) -- (12,19.9);
\node at (12,20.3) {\tiny $\inn$};
\node at (13.8,19) {\tiny $-2K$};
\draw[thick] (11.55,18.55) -- (11.35,18.35);
\node at (11.15, 18) {\tiny $1$};
\draw[thick] (12.68, 17.43) -- (12.48,17.23);
\node at (12.3, 16.8) {\tiny $2$};
\draw[thick] (13.58, 17.43) -- (13.78, 17.23);
\node at (13.9,16.8) {\tiny $3$};


\end{tikzpicture}
\caption{The Floer data selection along the triangle. \label{fig:triangle_labelled}} 
\end{center}
\end{figure}
Figure~\ref{fig:triangle_labelled} shows the extension of this Floer data to the triangle. On the bottom edge of the triangle, we (necessarily) start off with a copy of the top edge of the square from Figure~\ref{fig:square_labelled}. The lower half of the triangle is another product chain: in the vertical direction, only the data on the top main component varies, splitting off a continuation cylinder from $-2K$ to $-K$ at its second output puncture. In the horizontal direction, only the lower component varies, in the same way as for the coproduct $\lambda_\cD$. On the top half of the triangle, the data on the top component stays constant (with asymptotics $-2K$ at the node), and the data on the lower main component splits off continuation cylinders at both outputs as we approach the top corner. Notice that the conditions in \eqref{eq:conditions_on_slopes} ensure that on all components we can find the appropriate families of Hamiltonians and 1-forms $\alpha$ satisfying the condition
$$
d(b(s,z)\alpha)) \le 0,
$$
which is needed to ensure the relevant maximum principle for solutions to Floer's equation.
These choices can be continued towards other points in $\xi(\p \Ncal_{1,3})$ with weights $(1,0,0)$, by gluing the two main components into a single component with 2 inputs and 2 outputs as we move away from the boundary component $B_1$, but keeping the continuation cylinders at the outputs with weight $0$.
Once we have extended the data to all of $\xi(\p \Ncal_{1,3})$, there is no difficulty extending it further into all of $\xi(\Ncal_{1,3})$.

Consider now a curve $c$ in $\Ccal_{1,3}^0$ connecting a point in the boundary component $b_1$ with a point in the boundary component $b_3$. Its preimage $\widetilde c$ under the weight-forgetting projection $\Ccal_{1,3} \to \Ccal^0_{1,3}$ 
consists of a triangular prism $\simplex_2 \times c$, where at each end one of the vertices of the simplex has been blow up, producing a square and a new triangle in the boundary. The polytope $\widetilde c$ already appeared in \cite[\S6.3]{CO-reduced} (and was named ``The House'' there). As explained in {\em loc. cit.}, it gives rise to the homotopy for coassociativity of $\lambda_\cD$, i.e., commutativity of the diagram
$$
{\scriptsize 
\xymatrix
@C=40pt
{
\overline{HF}(H_0,J_0) \ar[r]^-{\lambda_\cD}\ar[d]_{\lambda_\cD}  & \overline{HF}(H_1,J_1) \otimes \overline{HF}(H_{23},J_{23}) \ar[d]^{-1 \otimes \lambda_\cD}\\
\overline{HF}(H_{12},J_{12}) \otimes \overline{HF}(H_3,J_3) \ar[r]^-{\lambda_\cD \otimes 1} & \overline{HF}(H_1,J_1) \otimes \overline{HF}(H_2,J_2) \otimes \overline{HF}(H_3,J_3)
}
}
$$
Indeed, from the point of view adopted in the present work, one studies the Floer problems corresponding to the chain $\zeta(\widetilde c) \subseteq \Ncal_{1,3}$. The two compositions in the diagram correspond to the square faces at the two ends. The boundary terms that come from the sides of the prism factor through at least one continuation map from $-K$ to $K$, and so they vanish for the reduced theory. To finish the proof of coassociativity, we need to prove that the triangle parts do not contribute. For this argument, we refer back to Figure~\ref{fig:triangle_labelled}. In the bottom half of the triangle, the input of the second main component comes (via continuation) from $FC(-K)$, so for action reasons it has to be supported by critical points in the interior of $W$. The same is also true for the output of that operation, and so the index argument from the proof of \cite[Lemma~4.1]{CO-reduced} shows that under the condition $2n\ge 6$ there are no solutions to be counted here. On the top half of the triangle, the argument is completely analogous, as the input of the second main component being from $FC(-2K)$ means that both it and the outputs also have to be supported by critical points in the interior of $W$. In summary, this shows that the triangular components do not contribute and so coassociativity holds as stated, if we assume $2n \ge 6$.
From the point of view of \S\ref{ssec:moduli_1toell}, this is a translation of one of the relations of Lemma~\ref{lem:cycles_coassociativity}. 

There is no issue extending the choices of Floer data from $\xi(\Ncal_{1,3})$ to all of $\Mcal_{1,3}$. Then the relation in Corollary~\ref{cor:7-term-coBV} translates into the 7-term relation between $\lambda_\cD$ and $\Delta$. At this point we need that Floer problems parametrized by families of triangles as in Figure~\ref{fig:triangle_labelled} over the boundary components of $\cC^0_{1,3}$ do not admit solutions, and hence do not contribute to the boundary. An index argument similar to that of~\cite[Lemma~4.1]{CO-reduced} shows that this is indeed the case under the dimensional assumption $2n\ge 8$.
 
As was the case for the product, all the constructions here are compatible with continuation maps, and so the identities for finite Hamiltonians eventually pass to the limit to yield the same identities in reduced symplectic homology. We have proven
\begin{proposition}\label{prop:coBV}
Let $W$ be a strongly R-essential Weinstein domain of dimension $2n\ge 8$. Then the coproduct $\lambda_\cD$ and the BV operator $\Delta$ on reduced symplectic homology satisfy the 7-term relation \eqref{eq:7_term_for_Delta_and_lambda}. \hfill $\qed$
\end{proposition}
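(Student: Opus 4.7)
The plan is to combine the Floer-theoretic setup for $\lambda_\cD$ and $\Delta$ developed in \S\ref{ssec:prod_and_coprod} with the chain-level identity of Corollary~\ref{cor:7-term-coBV}, which encodes the 7-term relation geometrically as a relation in $H_3(\Mcal_{1,3},(\p\simplex_2)\times\Mcal_{1,3}^0\cup\mathbf T)$. The strategy is to realize each of the seven relative cycles by a parametrized Floer problem, then transcribe the geometric boundary relation into an algebraic chain homotopy, modulo controlling the contributions of the strata $\mathbf T$ created by the blow-up.

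Concretely, I would fix admissible Hamiltonians $H_0,H_1,H_2,H_3,H_{12},H_{13},H_{23}$ with slopes satisfying \eqref{eq:conditions_on_slopes} and smoothly extend the Floer data from $\xi(\Ncal_{1,3})$ to all of $\Mcal_{1,3}$, as sketched in \S\ref{ssec:prod_and_coprod}. Each of the seven relative cycles in Corollary~\ref{cor:7-term-coBV} then parametrizes a Floer problem whose count of rigid solutions realizes a specific composition at chain level: the cycle $\hat R^*_{12}\times\hat R^\inn_{*3}\times D_\inn$ realizes $(\lambda_\cD\otimes 1)\lambda_\cD\Delta$ (with $D_\inn$ inserting a $\Delta$ at the unique input); the three cycles $\hat R^*_{ij}\times D_*\times\hat R^\inn_{*k}$ together realize $(1+\sigma+\sigma^2)(\lambda_\cD\Delta\otimes 1)\lambda_\cD$ after summing over the three labellings, since the rotation $D_*$ at the intermediate node inserts a $\Delta$ on the output factor of the first $\lambda_\cD$; and the three cycles $D_i\times\hat R^*_{jk}\times\hat R^\inn_{*l}$ similarly combine into $(1+\sigma+\sigma^2)(\Delta\otimes 1\otimes 1)(\lambda_\cD\otimes 1)\lambda_\cD$.

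The sum of these seven cycles bounds a $4$-chain in $\Mcal_{1,3}$, so the parametrized Floer problem over such a bounding chain provides the desired chain homotopy, provided no rigid solutions appear along the new strata $\mathbf T$ coming from the blow-up. This exclusion is the main obstacle. On a triangular stratum $T_i$ (compare Figure~\ref{fig:triangle_labelled}) the Floer data features continuation cylinders asymptotic to $-K$ and, toward the apex, to $-2K$, so any solution factors through the low-energy subcomplex generated by critical points of $K$ in the interior of $W$. An index argument analogous to the one in~\cite[Lemma~4.1]{CO-reduced}, applied to the extra internal continuation punctures arising from the blow-up, bounds the expected dimension of the corresponding moduli spaces from below in terms of $n$; the hypothesis $2n\ge 8$ is exactly what is needed to exclude rigid contributions, paralleling the use of this bound for coassociativity in \S\ref{ssec:prod_and_coprod}.

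All the parametrized Floer problems above are compatible with continuation maps in the usual way, and each of the seven operations factors through the data $\cD$ in a manner that kills $\im c$. Passing to the direct limit over admissible Hamiltonians, the chain homotopy descends to reduced symplectic homology and yields the 7-term identity \eqref{eq:7_term_for_Delta_and_lambda} on $\ol{S\H}_*(W)$.
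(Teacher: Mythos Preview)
Your proposal is correct and follows essentially the same approach as the paper: extend the Floer data over $\Mcal_{1,3}$, translate the relation of Corollary~\ref{cor:7-term-coBV} into a chain homotopy between the seven compositions, and use an index argument in the spirit of~\cite[Lemma~4.1]{CO-reduced} to exclude contributions from the blow-up faces $\mathbf T$ under the hypothesis $2n\ge 8$, before passing to the direct limit. One minor imprecision: it is not the seven operations that ``factor through $\cD$'' but rather the extra boundary pieces lying over $(\p\simplex_2)\times\Mcal^0_{1,3}$ (where one output weight is zero) that factor through $\im c$ and hence vanish in the reduced theory; also, the bound used for coassociativity in \S\ref{ssec:prod_and_coprod} is $2n\ge 6$, and the jump to $2n\ge 8$ reflects the extra circle parameter in the $4$-chain bounding the degree-$3$ relation.
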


\subsection{BV unital infinitesimal bialgebra structure}\label{ssec:11-term-implemented}

In this section we prove Theorem~\ref{thm:BV_unital_infinitesimal_reduced}. The key step is the following proposition, proven in \S\ref{ssec:alg_consequences}.

\begin{proposition}\label{prop:11-term}
Let $W$ be a strongly $R$-essential Weinstein domain of dimension $2n \ge 6$. The product $\mu$, the unit $\eta$, the coproduct $\lambda_\cD$ and the BV operator $\Delta$ on reduced symplectic homology satisfy the 11-term relation~\eqref{eq:11term}. 
\end{proposition}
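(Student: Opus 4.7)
The plan is to realize the algebraic 11-term relation as a chain-level consequence of the geometric relation in Lemma~\ref{lem:simplified_11_term}, following the template set by the treatment of the Poisson, coPoisson, and 7-term relations in \S\ref{ssec:prod_and_coprod}. Concretely, I would set up a parametrized Floer problem over $\xi(\Ccal_{2,2})\subseteq\Mcal_{2,2}$, with admissible Hamiltonians $H_1,H_2$ of high slope at the inputs and $H_{1'},H_{2'}$ of low slope at the outputs (plus auxiliary Hamiltonians for the intermediate punctures, with slopes chosen to satisfy the usual action/monotonicity inequalities). On the three boundary strata $\xi(\widetilde{b_i})$ the Floer data is prescribed via the product descriptions of Remark~\ref{rem:11-term-geom}, which are adapted to this purpose precisely because in each case the markers that must be rotated lie on a single component. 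On the strata $\xi(\{(0,1)\}\times\Ccal^0_{2,2})$ and $\xi(\{(1,0)\}\times\Ccal^0_{2,2})$ the Floer data mimics that used in the construction of $\lambda_\cD$, with a continuation cylinder through the fixed data $\cD$ inserted at the output of weight zero.

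The crucial new ingredient is the treatment of the annuli $A_i$ created by the blow-up of $\{(\tfrac12,\tfrac12)\}\times b_i$: the extra $[0,1]$-parameter is used to interpolate between the Floer data coming from the two halves $\simplex_\le$ and $\simplex_\ge$, both of which formally carry a continuation cylinder through $\cD$ at the weight-zero output. After extending all Floer data smoothly into the interior of $\xi(\Ccal_{2,2})$ subject to the integrated maximum principle $d(b(s,z)\alpha)\le 0$, the algebraic count of rigid solutions over the resulting $3$-dimensional chain gives a chain homotopy whose boundary realizes Lemma~\ref{lem:simplified_11_term} operation by operation. Under the dictionary, the cycle $[\hat R^*_{12}\times D_*\times\hat P^{12}_*]$ represents $\lambda_\cD\Delta\mu$; the four cycles $[D_{i^\pm}\times\hat R^*_{12}\times\hat P^{12}_*]$ and $[\hat R^*_{12}\times\hat P^{12}_*\times D_{i^\pm}]$ represent the four single-$\Delta$ terms $(\Delta\otimes 1)\lambda_\cD\mu$, $(1\otimes\Delta)\lambda_\cD\mu$, $\lambda_\cD\mu(\Delta\otimes 1)$, $\lambda_\cD\mu(1\otimes\Delta)$; the six mixed cycles without $[0,1]$-factors assemble into the two symmetrized expressions $(\mu\otimes 1)(1\otimes\Delta\otimes 1)(1\otimes\lambda_\cD)(1+\tau)$ and $(1\otimes\mu)(1\otimes\Delta\otimes 1)(\lambda_\cD\otimes 1)(1+\tau)$; and the two cycles carrying the $[0,1]$-factor produce the correction $(\mu\otimes\mu)(1\otimes 1\otimes\Delta\otimes 1)(1\otimes\lambda_\cD\eta\otimes 1)(1+\tau)$, because at the weight-zero endpoints of $A_i$ the inserted continuation cylinder through $\cD$ is exactly what produces the $\lambda_\cD\eta$ pattern.

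The main obstacle is organizing the Hamiltonians and $1$-forms consistently across all pieces of $\partial\xi(\Ccal_{2,2})$ while respecting both the weighted input/output convention from \S\ref{ssec:moduli_2to2} and the monotonicity $d(b(s,z)\alpha)\le 0$; this requires carefully tuning slopes across the two halves $\simplex_\le$ and $\simplex_\ge$ and interpolating them across the annuli $A_i$, using the fact that as soon as an output weight drops to zero it can be reinterpreted as an input. A secondary technical point, parallel to the proof of coassociativity of $\lambda_\cD$ in~\cite[\S6.3]{CO-reduced} and to Proposition~\ref{prop:coBV}, is ruling out spurious corner contributions modulo $\im c$; these are controlled by the index argument of~\cite[Lemma~4.1]{CO-reduced}, which is what forces the dimensional hypothesis $2n\ge 6$ but does not require the sharper bound $2n\ge 8$ of the 7-term relation, because no triangle-parametrized family with weight-zero outputs of the coproduct type appears here. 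Once the relation is established for each fixed tuple of Hamiltonians, its compatibility with continuation maps allows passage to the direct limit, producing the 11-term relation on reduced symplectic homology.
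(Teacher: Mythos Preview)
Your overall strategy is the paper's: set up a parametrized Floer problem over $\xi(\Ccal_{2,2})$, prescribe boundary data via the product descriptions of Remark~\ref{rem:11-term-geom}, and translate Lemma~\ref{lem:simplified_11_term} into the 11-term relation. Your identification of the first nine terms is right in spirit (though there are four mixed cycles without $[0,1]$-factors, not six).

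The genuine gap is in your treatment of the annuli $A_i$ and the $\lambda_\cD\eta$ correction. You write that the $[0,1]$-terms produce this correction ``because at the weight-zero endpoints of $A_i$ the inserted continuation cylinder through $\cD$ is exactly what produces the $\lambda_\cD\eta$ pattern,'' and that the two halves ``formally carry a continuation cylinder through $\cD$ at the weight-zero output.'' This mis-locates the mechanism: the annulus $A_i$ sits over the \emph{midpoint} $(\tfrac12,\tfrac12)$ of $\simplex_1$, where neither output has weight zero and no continuation cylinder is present. Rather, the parametrizations of $\simplex_\le\times b_i$ and $\simplex_\ge\times b_i$ impose \emph{incompatible} Floer data at their common boundary (the node is interpreted as output for one component on one side, and for the other component on the other side), and the $[0,1]$-parameter must interpolate between these. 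The paper's construction (Figures~\ref{fig:transition}--\ref{fig:upside_down_2}) does this by a nontrivial trick: one first deforms each side so that a component can be turned upside down and reinterpreted, after which the mismatch in the middle third of $[0,1]$ reduces to a homotopy between the continuation data $\cD$ and its reversal $\cD^{\mathrm{op}}$. Following \cite[\S4.2]{CO-reduced}, it is precisely this $\cD$-vs-$\cD^{\mathrm{op}}$ homotopy that algebraically produces $-(\mu\otimes\mu)(1\otimes\lambda_\cD\eta\otimes 1)$; combined with the $D_*$ factor one obtains the last two terms of \eqref{eq:11term}. Your proposal does not engage with this interpolation problem, which is the heart of the argument and the reason the blow-up at $\{(\tfrac12,\tfrac12)\}\times b_i$ was introduced in the first place.
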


\subsubsection{Construction of the Floer data}
As in the previous subsection, the main point in the proof is to construct the Floer data on $\Mcal_{2,2}$ in a way which is compatible with the description of the lift $\xi(\p\Ccal_{2,2})$ in terms of products of cycles, so that Lemma~\ref{lem:simplified_11_term} from \S\ref{ssec:moduli_2to2} becomes applicable. 

We fix Hamiltonians $H_{1^\pm}$ and $H_{2^\pm}$ with slopes $b_{1^\pm}$ and $b_{2^\pm}$ for use at the corresponding punctures and, for $i\in \{1,2,3\}$, we also fix Hamiltonians $H_i$ with slopes $b_i$ for use at the nodes in the nodal configurations at the boundary component $\hat B_i$ of $\Mcal_{2,2}$. To give admissible data for the various product and coproduct constructions, the slopes are required to satisfy the following inequalities:
\begin{align*}
b_{1^-}+b_{2^-} &\ge b_3 \ge b_{1^+}+b_{2^+},\\
b_1 \ge \max(b_{1^+},b_{2^+}) +\eps, \quad b_{1^-} &\ge b_1+b_{1^+}, \quad b_{2^-} \ge b_1+b_{2^+}, \quad \text{and}\\
b_2 \ge \max(b_{1^+},b_{2^+}) +\eps, \quad b_{1^-} &\ge b_2+b_{2^+}, \quad b_{2^-} \ge b_2+b_{1^+}.
\end{align*}
We also choose corresponding admissible almost complex structure $J_{1^\pm}$, $J_{2^\pm}$ and $J_i$ for $i \in \{1,2,3\}$.

Note that the fiber product description of $\hat B:=\hat B_1 \cup \hat B_2 \cup \hat B_3$ given in \S\ref{ssec:moduli_2to2} does not interact well with our choice of Floer data. Indeed, as the choice of Hamiltonian at the node is sensitive to the position of the marker, simultaneously rotating the markers
produces an equivalent nodal domain with {\em different} (namely rotated) data at the node.

To rectify this, we will first construct the Floer data on $\xi(\widetilde{b_1}\cup \widetilde{b_2} \cup \widetilde{b_3}) \subseteq \p \Mcal_{2,2}$, where the $\widetilde{b_i}$ are the pieces of the boundary of $\Ccal_{2,2}$ corresponding to the boundary components $b_i$ of $\Ccal^0_{2,2}$, and $\xi:\Ccal_{2,2} \to \Mcal_{2,2}$ is the section considered in \S\ref{ssec:moduli_2to2}. This can be extended to all of $\hat B$ by appropriately adapting to the rotating markers near the inputs and outputs. It will be clear that the resulting data can be extended further to $\p \simplex_1 \times \Mcal^0_{2,2}$ in such a way that at the output with zero weight we always keep the Hamiltonian $-K$ at the corresponding puncture of the main component, followed by continuation cylinders to $K$ and to the corresponding Hamiltonian $H_i^-$. With the Floer data set up on $\p \Mcal_{2,2}$, there is  then no difficulty extending it into the interior of $\Mcal_{2,2}$.

We have given parametrizations of $\xi(\p \Ccal_{2,2})$ as products of chains from the moduli space $\Mcal_{2,1}$ and $\Mcal_{1,2}$ in Remark~\ref{rem:11-term-geom}.
In particular, these parametrizations fix the choice of Floer data on $\xi(\widetilde{b_3})$, and also on the images of the outer thirds $\simplex_\le \times b_i$ and $\simplex_\ge \times b_i$ for the other two components.

It remains to explain how to extend these choices to the two middle annuli $A_1\subseteq \widetilde{b_1}$ and $A_2 \subseteq \widetilde{b_2}$. We give the details for $A_1$, with the other case being similar.

According to our choices in \S\ref{ssec:prod_and_coprod}, the above parametrizations result in the two parametrized Floer problems over $\simplex_\le \times b_1$ and $\simplex_\ge \times b_1$ depicted in Figure~\ref{fig:data_for_halves}.
\begin{figure}[h]
\begin{tikzpicture}[scale=0.25]
\definecolor{darkblue}{rgb}{0,0,0.7};

%
%

\draw[thick, fill = blue, fill opacity=0.2] (-3.1,.9) circle (1.5);
\draw[thick, dotted] (-4.6,0.9) arc (180:0: 1.5 and 0.3);
\draw[thick] (-4.6,0.9) arc (180:360: 1.5 and 0.3);

\draw[thick, fill = blue, fill opacity=0.2] (-0.9,-1.2) circle (1.5);
\draw[thick, dotted] (-2.4,-1.2) arc (180:0: 1.5 and 0.3);
\draw[thick] (-2.4,-1.2) arc (180:360: 1.5 and 0.3);

\draw[thick, fill = yellow, fill opacity=0.3] (-5,-0.7071) circle (1);
\draw[thick, dotted] (-6,-0.7071) arc (180:0: 1 and 0.2);
\draw[thick] (-6,-0.7071,0) arc (180:360: 1 and 0.2);

\draw[thick, fill = red, fill opacity=0.7] (-5,-2.7071) circle (1);
\draw[thick, dotted] (-6,-2.7071) arc (180:0: 1 and 0.2);
\draw[thick] (-6,-2.7071,0) arc (180:360: 1 and 0.2);

\draw[thick] (-3.1,2.3) to (-3.1,2.5);
\node at (-3.0,3) {\tiny $1$};

\draw[thick] (-4.2,0.1) to (-4.4,-0.1);
\node at (-5.7,0.8) {\tiny $-K$};

\draw[thick] (-2.1, 0) to (-1.9,-0.2);
\node at (-0.8 , 0.9) {\tiny $H_1$};

\draw[thick] (0.3, -0.1) to (0.1,-0.3);
\node at (0.5, 0.3) {\tiny $2$};

\draw[thick] (-5, -1.60) to (-5,-1.80);
\node at (-6.4,-1.7) {\tiny $K$};

\draw[thick] (-5, -3.6) to (-5,-3.80);
\node at (-5,-4.3) {\tiny $1$};

\draw[thick] (-0.9, -2.6) to (-0.9,-2.8);
\node at (-0.9,-3.3) {\tiny $2$};

\draw[thick, fill=blue, fill opacity=0.2] (4,-2) circle (1.5);
\draw[thick, dotted] (2.5,-2) arc (180:0: 1.5 and 0.3);
\draw[thick] (2.5,-2) arc (180:360: 1.5 and 0.3);

\draw[thick, fill=blue, fill opacity=0.2] (6.2,-4.1) circle (1.5);
\draw[thick, dotted] (4.7,-4.1) arc (180:0: 1.5 and 0.3);
\draw[thick] (4.7,-4.1) arc (180:360: 1.5 and 0.3);

\draw[thick] (4,-0.62) to (4,-0.38);
\node at (4,0.1) {\tiny $1$};

\draw[thick] (3.1, -3) to (2.9,-3.2);
\node at (2.7,-3.7) {\tiny $1$};

\draw[thick] (5.,-3) to (5.2,-3.2); 
\node at (6.3, -2) {\tiny $H_1$};

\draw[thick] (7.4, -3) to (7.2,-3.2);
\node at (7.6,-2.5) {\tiny $2$};

\draw[thick] (6.2, -5.5) to (6.2,-5.7);
\node at (6.2 ,-6.2) {\tiny $2$};


\draw[thick, fill=blue, fill opacity=0.2] (10.1,0.9) circle (1.5);
\draw[thick, dotted] (8.6,0.9) arc (180:0: 1.5 and 0.3);
\draw[thick] (8.6,0.9) arc (180:360: 1.5 and 0.3);

\draw[thick, fill=yellow, fill opacity=0.3] (12,-0.72) circle (1);
\draw[thick, dotted] (11,-0.72) arc (180:0: 1 and 0.2);
\draw[thick] (11,-0.72,0) arc (180:360: 1 and 0.2);

\draw[thick, fill=red, fill opacity=0.7] (12,-2.72) circle (1);
\draw[thick, dotted] (11,-2.72) arc (180:0: 1 and 0.2);
\draw[thick] (11,-2.72,0) arc (180:360: 1 and 0.2);

\draw[thick, fill=blue, fill opacity=0.2] (13.9,-4.4) circle (1.5);
\draw[thick, dotted] (12.4,-4.4) arc (180:0: 1.5 and 0.3);
\draw[thick] (12.4,-4.4) arc (180:360: 1.5 and 0.3);

\draw[thick] (10.1,2.3) to (10.1,2.5);
\node at (10.1,3) {\tiny $1$};

\draw[thick] (9, 0.1) to (8.8,-0.1);
\node at (8.5,-0.5) {\tiny $1$};

\draw[thick] (11.2,0.1) -- (11.4,-0.1);
\node at (12.6,0.7) {\tiny $-K$};

\draw[thick] (12,-1.6) -- (12,-1.85);
\node at (10.7,-1.7) {\tiny $K$};

\draw[thick] (12.6,-3.34) -- (12.8,-3.54);
\node at (13.8, -2.2) {\tiny $H_1$};

\draw[thick] (14.9, -3.5) to (15.1,-3.3);
\node at (15.4, -2.8) {\tiny $2$};

\draw[thick] (13.9, -5.8) to (13.9,-6);
\node at (13.9,-6.5) {\tiny $2$};


\draw[thick] (-3,-8) to (12,-8);
\draw[thick] (-3,-7.9) to (-3,-8.1);
\draw[thick] (12,-7.9) to (12,-8.1);

\node at (-3,-9) {\tiny $(0,1)$};
\node at (12,-9) {\tiny $(\frac 12, \frac 12)$};

\pgftransformxshift{24cm}

%
%

\draw[thick, fill = darkblue, fill opacity=0.7] (-1.1,.9) circle (1.5);
\draw[thick, dotted] (-2.6,0.9) arc (180:0: 1.5 and 0.3);
\draw[thick] (-2.6,0.9) arc (180:360: 1.5 and 0.3);

\draw[thick, fill = yellow, fill opacity=0.3] (-3,-0.71) circle (1);
\draw[thick, dotted] (-4,-0.71) arc (180:0: 1 and 0.2);
\draw[thick] (-4,-0.71,0) arc (180:360: 1 and 0.2);

\draw[thick, fill = red, fill opacity=0.7] (-3,-2.71) circle (1);
\draw[thick, dotted] (-4,-2.71) arc (180:0: 1 and 0.2);
\draw[thick] (-4,-2.71,0) arc (180:360: 1 and 0.2);

\draw[thick, fill = darkblue, fill opacity=0.7] (-4.9,-4.4) circle (1.5);
\draw[thick, dotted] (-6.4,-4.4) arc (180:0: 1.5 and 0.3);
\draw[thick] (-6.4,-4.4) arc (180:360: 1.5 and 0.3);

\draw[thick] (-1.1,2.3) to (-1.1,2.5);
\node at (-1.1,3) {\tiny $2$};

\draw[thick] (-2.2,0.1) to (-2.4,-0.1);
\node at (-3.6,0.8) {\tiny $-K$};

\draw[thick] (0, 0.1) to (0.2,-0.1);
\node at (0.5, -0.4) {\tiny $2$};

\draw[thick] (-3, -1.6) to (-3,-1.8);
\node at (-4.4,-1.7) {\tiny $K$};

\draw[thick] (-5.9, -3.5) to (-6.1,-3.3);
\node at (-6.4,-2.8) {\tiny $1$};

\draw[thick] (-3.9, -3.5) to (-3.65,-3.25);
\node at (-2.6,-4.4) {\tiny $H_1$};

\draw[thick] (-4.9, -5.8) to (-4.9,-6);
\node at (-4.9,-6.5) {\tiny $1$};

\draw[thick, fill=darkblue, fill opacity=0.7] (6.2,-2) circle (1.5);
\draw[thick, dotted] (4.7,-2) arc (180:0: 1.5 and 0.3);
\draw[thick] (4.7,-2) arc (180:360: 1.5 and 0.3);

\draw[thick, fill=darkblue, fill opacity=0.7] (4,-4.1) circle (1.5);
\draw[thick, dotted] (2.5,-4.1) arc (180:0: 1.5 and 0.3);
\draw[thick] (2.5,-4.1) arc (180:360: 1.5 and 0.3);

\draw[thick] (6.2,-0.62) to (6.2,-0.38);
\node at (6.2,0.1) {\tiny $2$};

\draw[thick] (3.1, -3.1) to (2.9,-2.9);
\node at (2.6,-2.4) {\tiny $1$};

\draw[thick] (5,-3.15) to (5.2,-2.95); 
\node at (3.9, -2) {\tiny $H_1$};

\draw[thick] (7.4, -3.1) to (7.2,-2.9);
\node at (7.6,-3.6) {\tiny $2$};

\draw[thick] (4, -5.5) to (4,-5.7);
\node at (4 ,-6.3) {\tiny $1$};


\draw[thick, fill=darkblue, fill opacity=0.7] (13.1,0.9) circle (1.5);
\draw[thick, dotted] (11.6,0.9) arc (180:0: 1.5 and 0.3);
\draw[thick] (11.6,0.9) arc (180:360: 1.5 and 0.3);

\draw[thick, fill=darkblue, fill opacity=0.7] (11,-1.3) circle (1.5);
\draw[thick, dotted] (9.5,-1.3) arc (180:0: 1.5 and 0.3);
\draw[thick] (9.5,-1.3) arc (180:360: 1.5 and 0.3);

\draw[thick, fill=yellow, fill opacity=0.2] (15,-0.72) circle (1);
\draw[thick, dotted] (14,-0.72) arc (180:0: 1 and 0.2);
\draw[thick] (14,-0.72,0) arc (180:360: 1 and 0.2);

\draw[thick, fill=red, fill opacity=0.7] (15,-2.72) circle (1);
\draw[thick, dotted] (14,-2.72) arc (180:0: 1 and 0.2);
\draw[thick] (14,-2.72,0) arc (180:360: 1 and 0.2);

\draw[thick] (13.1,2.3) to (13.1,2.5);
\node at (13.1,3) {\tiny $2$};

\draw[thick] (10, 0) to (10.2,-0.2);
\node at (9.8, 0.5) {\tiny $1$};

\draw[thick] (12.1,0) -- (11.9,-0.2);
\node at (10.8, 0.8) {\tiny $H_1$};

\draw[thick] (14.2,0.1) -- (14.4,-0.1);
\node at (15.6,0.7) {\tiny $-K$};

\draw[thick] (15,-1.6) -- (15,-1.85);
\node at (16.3,-1.7) {\tiny $K$};

\draw[thick] (15, -3.62) to (15,-3.82);
\node at (15, -4.4) {\tiny $2$};

\draw[thick] (11, -2.7) to (11,-2.9);
\node at (11,-3.5) {\tiny $1$};


\draw[thick] (-3,-8) to (12,-8);
\draw[thick] (-3,-7.9) to (-3,-8.1);
\draw[thick] (12,-7.9) to (12,-8.1);

\node at (-3,-9) {\tiny $(\frac 12, \frac 12)$};
\node at (12,-9) {\tiny $(1,0)$};

\end{tikzpicture}

\caption{A schematic picture of the Floer data on the boundary component $\hat B_1$. \label{fig:data_for_halves}}
\end{figure}
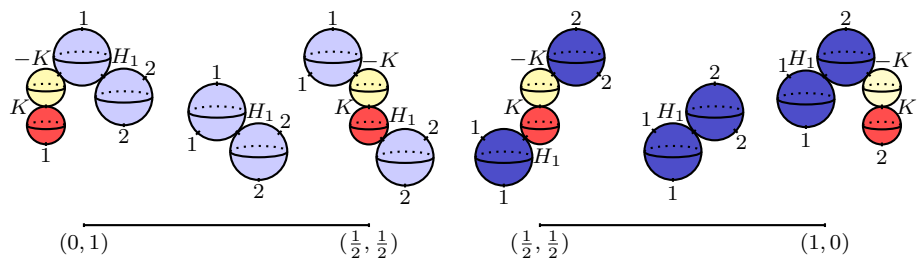
We use different shades of blue for the main components on the two halves because they have different asymptotics, and so their Floer data looks rather different. In any case, the data clearly does not match at the end points corresponding to $(\frac 12, \frac 12) \in \simplex_1$.
We use the annulus $A_1$ created by blowing up this midpoint of the simplex to interpolate between these choices.
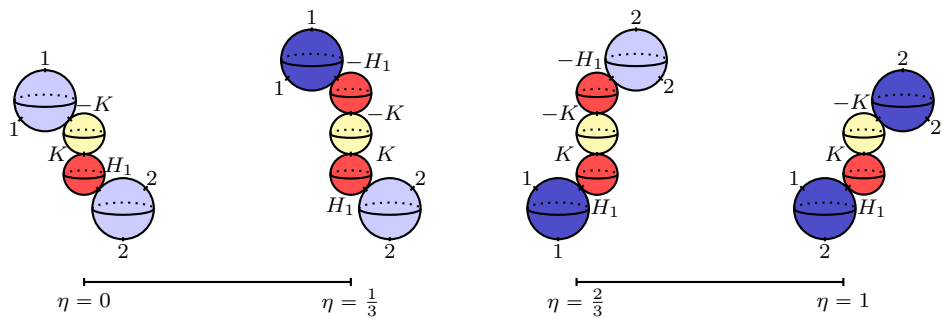
\begin{figure}[h]

\begin{tikzpicture}[scale=0.27]
\definecolor{darkblue}{rgb}{0,0,0.7};


\draw[thick, fill=blue, fill opacity=0.2] (2.1,0.9) circle (1.5);
\draw[thick, dotted] (0.6,0.9) arc (180:0: 1.5 and 0.3);
\draw[thick] (0.6,0.9) arc (180:360: 1.5 and 0.3);

\draw[thick, fill=yellow, fill opacity=0.3] (4,-0.72) circle (1);
\draw[thick, dotted] (3,-0.72) arc (180:0: 1 and 0.2);
\draw[thick] (3,-0.72) arc (180:360: 1 and 0.2);

\draw[thick, fill=red, fill opacity=0.7] (4,-2.72) circle (1);
\draw[thick, dotted] (3,-2.72) arc (180:0: 1 and 0.2);
\draw[thick] (3,-2.72) arc (180:360: 1 and 0.2);

\draw[thick, fill=blue, fill opacity=0.2] (5.9,-4.4) circle (1.5);
\draw[thick, dotted] (4.4,-4.4) arc (180:0: 1.5 and 0.3);
\draw[thick] (4.4,-4.4) arc (180:360: 1.5 and 0.3);

\draw[thick] (2.1,2.3) to (2.1,2.5);
\node at (2.1,3) {\tiny $1$};

\draw[thick] (1, 0.1) to (0.8,-0.1);
\node at (.6,-0.5) {\tiny $1$};

\draw[thick] (3.2,0.1) -- (3.4,-0.1);
\node at (4.5,0.7) {\tiny $-K$};

\draw[thick] (4,-1.6) -- (4,-1.85);
\node at (2.7,-1.7) {\tiny $K$};

\draw[thick] (4.6,-3.34) -- (4.8,-3.54);
\node at (5.7, -2.3) {\tiny $H_1$};

\draw[thick] (6.9, -3.5) to (7.1,-3.3);
\node at (7.3, -2.9) {\tiny $2$};

\draw[thick] (5.9, -5.8) to (5.9,-6);
\node at (5.9,-6.5) {\tiny $2$};


\draw[thick, fill=darkblue, fill opacity=0.7] (15.1,2.9) circle (1.5);
\draw[thick, dotted] (13.6,2.9) arc (180:0: 1.5 and 0.3);
\draw[thick] (13.6,2.9) arc (180:360: 1.5 and 0.3);

\draw[thick, fill=red, fill opacity=0.7] (17,1.28) circle (1);
\draw[thick, dotted] (16,1.28) arc (180:0: 1 and 0.2);
\draw[thick] (16,1.28) arc (180:360: 1 and 0.2);

\draw[thick, fill=yellow, fill opacity=0.3] (17,-0.72) circle (1);
\draw[thick, dotted] (16,-0.72) arc (180:0: 1 and 0.2);
\draw[thick] (16,-0.72,0) arc (180:360: 1 and 0.2);

\draw[thick, fill=red, fill opacity=0.7] (17,-2.72) circle (1);
\draw[thick, dotted] (16,-2.72) arc (180:0: 1 and 0.2);
\draw[thick] (16,-2.72,0) arc (180:360: 1 and 0.2);

\draw[thick, fill=blue, fill opacity=0.2] (18.9,-4.4) circle (1.5);
\draw[thick, dotted] (17.4,-4.4) arc (180:0: 1.5 and 0.3);
\draw[thick] (17.4,-4.4) arc (180:360: 1.5 and 0.3);

\draw[thick] (15.1,4.3) to (15.1,4.5);
\node at (15.1,5) {\tiny $1$};

\draw[thick] (14, 2.1) to (13.8,1.9);
\node at (13.5,1.5) {\tiny $1$};

\draw[thick] (16.2,2.1) -- (16.4,1.9);
\node at (17.9,2.8) {\tiny $-H_1$};

\draw[thick] (17,0.16) -- (17,0.4);
\node at (18.7,0.28) {\tiny $-K$};

\draw[thick] (17,-1.84) -- (17,-1.6);
\node at (18.7, -1.72) {\tiny $K$};

\draw[thick] (17.6,-3.34) -- (17.8,-3.54);
\node at (16.5, -4.3) {\tiny $H_1$};
\draw[thick] (19.9, -3.5) to (20.1,-3.3);
\node at (20.4, -2.9) {\tiny $2$};

\draw[thick] (18.9, -5.8) to (18.9,-6);
\node at (18.9,-6.5) {\tiny $2$};


\draw[thick, fill = blue, fill opacity=0.2] (30.9,2.9) circle (1.5);
\draw[thick, dotted] (29.4,2.9) arc (180:0: 1.5 and 0.3);
\draw[thick] (29.4,2.9) arc (180:360: 1.5 and 0.3);

\draw[thick, fill=red, fill opacity=0.7] (29,1.28) circle (1);
\draw[thick, dotted] (28,1.28) arc (180:0: 1 and 0.2);
\draw[thick] (28,1.28) arc (180:360: 1 and 0.2);

\draw[thick, fill = yellow, fill opacity=0.3] (29,-0.71) circle (1);
\draw[thick, dotted] (28,-0.71) arc (180:0: 1 and 0.2);
\draw[thick] (28,-0.71) arc (180:360: 1 and 0.2);

\draw[thick, fill=red, fill opacity=0.7] (29,-2.72) circle (1);
\draw[thick, dotted] (28,-2.72) arc (180:0: 1 and 0.2);
\draw[thick] (28,-2.72) arc (180:360: 1 and 0.2);

\draw[thick, fill = darkblue, fill opacity=0.7] (27.1,-4.4) circle (1.5);
\draw[thick, dotted] (25.6,-4.4) arc (180:0: 1.5 and 0.3);
\draw[thick] (25.6,-4.4) arc (180:360: 1.5 and 0.3);

\draw[thick] (30.9,4.3) to (30.9,4.5);
\node at (30.9,5) {\tiny $2$};

\draw[thick] (29.8,2.1) to (29.6,1.9);
\node at (28.2,2.9) {\tiny $-H_1$};

\draw[thick] (32, 2.1) to (32.2,1.9);
\node at (32.5, 1.6) {\tiny $2$};

\draw[thick] (29, 0.4) to (29,0.15);
\node at (27.2,0.28) {\tiny $-K$};

\draw[thick] (29, -1.62) to (29,-1.82);
\node at (27.4,-1.72) {\tiny $K$};

\draw[thick] (28.1, -3.5) to (28.35,-3.25);
\node at (29.4,-4.4) {\tiny $H_1$};

\draw[thick] (26.1, -3.5) to (25.9,-3.3);
\node at (25.6,-2.9) {\tiny $1$};

\draw[thick] (27.1, -5.8) to (27.1,-6);
\node at (27.1,-6.5) {\tiny $1$};

\draw[thick, fill = darkblue, fill opacity=0.7] (43.9,.9) circle (1.5);
\draw[thick, dotted] (42.4,0.9) arc (180:0: 1.5 and 0.3);
\draw[thick] (42.4,0.9) arc (180:360: 1.5 and 0.3);

\draw[thick, fill = yellow, fill opacity=0.3] (42,-0.71) circle (1);
\draw[thick, dotted] (41,-0.71) arc (180:0: 1 and 0.2);
\draw[thick] (41,-0.71,0) arc (180:360: 1 and 0.2);

\draw[thick, fill = red, fill opacity=0.7] (42,-2.71) circle (1);
\draw[thick, dotted] (41,-2.71) arc (180:0: 1 and 0.2);
\draw[thick] (41,-2.71,0) arc (180:360: 1 and 0.2);

\draw[thick, fill = darkblue, fill opacity=0.7] (40.1,-4.4) circle (1.5);
\draw[thick, dotted] (38.6,-4.4) arc (180:0: 1.5 and 0.3);
\draw[thick] (38.6,-4.4) arc (180:360: 1.5 and 0.3);

\draw[thick] (43.9,2.3) to (43.9,2.5);
\node at (43.9,3) {\tiny $2$};

\draw[thick] (42.8,0.1) to (42.6,-0.1);
\node at (41.4,0.8) {\tiny $-K$};

\draw[thick] (45, 0.1) to (45.2,-0.1);
\node at (45.5, -0.4) {\tiny $2$};

\draw[thick] (42, -1.6) to (42,-1.8);
\node at (40.6,-1.7) {\tiny $K$};

\draw[thick] (39.1, -3.5) to (38.9,-3.3);
\node at (38.6,-2.9) {\tiny $1$};

\draw[thick] (41.1, -3.5) to (41.35,-3.25);
\node at (42.4,-4.4) {\tiny $H_1$};

\draw[thick] (40.1, -5.8) to (40.1,-6);
\node at (40.1,-6.5) {\tiny $1$};


\draw[thick] (4,-8) to (17,-8);
\draw[thick] (28,-8) to (41,-8);

\draw[thick] (4,-7.8) to (4,-8.2);
\node at (4,-9) {\tiny $\eta=0$};

\draw[thick] (17,-7.8) to (17,-8.2);
\node at (17,-9) {\tiny $\eta=\frac 13$};

\draw[thick] (28,-7.8) to (28,-8.2);
\node at (28,-9) {\tiny $\eta=\frac 23$};

\draw[thick] (41,-7.8) to (41,-8.2);
\node at (41,-9) {\tiny $\eta=1$};

\end{tikzpicture}
\caption{Floer data in the outer two thirds of the transition region.\label{fig:transition}}
\end{figure}

The following observation is crucial for our construction: Given Hamiltonians $F$, $G$ and $H$ with slopes $b_H \ge b_F+b_G$, admissible Floer data on a pair of pants with two inputs using $F$ and $G$ and one output using $H$ can be reinterpreted, by turning the $G$-puncture into an output, as admissible Floer data on a pair of pants with one input using $F$ and two outputs using $-G$ and $H$, and vice versa.

Keeping this in mind, as the parameter $\eta\in [0,1]$ in the middle interval increases from $0$ to $\frac 13$, we keep the lower main component unchanged but deform the data on the upper main component to the composition of the reinterpreted data from the lower dark blue component with its adjacent continuation cylinder turned upside down. Similarly, as the parameter $\eta$ decreases from $1$ to $\frac 23$, we deform the data on the upper main component to match the composition of the reinterpreted data from the lower light blue component with the adjacent continuation cylinder turned upside down. These choices are illustrated in Figure~\ref{fig:transition}.

We can now reinterpret the configuration at $\eta=\frac 23$ by turning it completely upside down, as in Figure~\ref{fig:upside_down}.
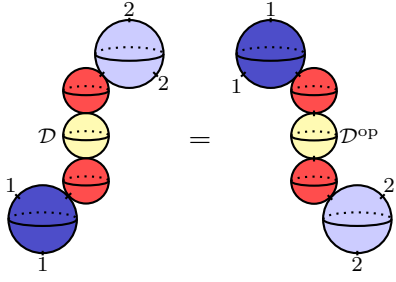
\begin{figure}[h]

\begin{tikzpicture}[scale=0.30]
\definecolor{darkblue}{rgb}{0,0,0.7};

\draw[thick, fill = blue, fill opacity=0.2] (19.9,2.9) circle (1.5);
\draw[thick, dotted] (18.4,2.9) arc (180:0: 1.5 and 0.3);
\draw[thick] (18.4,2.9) arc (180:360: 1.5 and 0.3);

\draw[thick, fill = red, fill opacity=0.7] (18,1.28) circle (1);
\draw[thick, dotted] (17,1.28) arc (180:0: 1 and 0.2);
\draw[thick] (17,1.28) arc (180:360: 1 and 0.2);

\draw[thick, fill = yellow, fill opacity=0.3] (18,-0.71) circle (1);
\draw[thick, dotted] (17,-0.71) arc (180:0: 1 and 0.2);
\draw[thick] (17,-0.71) arc (180:360: 1 and 0.2);

\draw[thick, fill = red, fill opacity=0.7] (18,-2.71) circle (1);
\draw[thick, dotted] (17,-2.71) arc (180:0: 1 and 0.2);
\draw[thick] (17,-2.71) arc (180:360: 1 and 0.2);

\draw[thick, fill = darkblue, fill opacity=0.7] (16.1,-4.4) circle (1.5);
\draw[thick, dotted] (14.6,-4.4) arc (180:0: 1.5 and 0.3);
\draw[thick] (14.6,-4.4) arc (180:360: 1.5 and 0.3);

\draw[thick] (19.9,4.3) to (19.9,4.5);
\node at (19.9,4.9) {\tiny $2$};

\draw[thick] (18.8,2.1) to (18.6,1.9);

\draw[thick] (20.95, 2.1) to (21.15,1.9);
\node at (21.4, 1.6) {\tiny $2$};

\draw[thick] (17.1, -3.5) to (17.35,-3.25);

\node at (16.3,-0.72) {\tiny $\cD$};

\draw[thick] (15.1, -3.5) to (14.9,-3.3);
\node at (14.7,-2.9) {\tiny $1$};

\draw[thick] (16.1, -5.8) to (16.1,-6);
\node at (16.1,-6.4) {\tiny $1$};

\node at (23,-1) {$=$};


\draw[thick, fill=darkblue, fill opacity=0.7] (26.1,2.9) circle (1.5);
\draw[thick, dotted] (24.6,2.9) arc (180:0: 1.5 and 0.3);
\draw[thick] (24.6,2.9) arc (180:360: 1.5 and 0.3);

\draw[thick, fill=red, fill opacity=0.7] (28,1.28) circle (1);
\draw[thick, dotted] (27,1.28) arc (180:0: 1 and 0.2);
\draw[thick] (27,1.28) arc (180:360: 1 and 0.2);

\draw[thick, fill=yellow, fill opacity=0.3] (28,-0.72) circle (1);
\draw[thick, dotted] (27,-0.72) arc (180:0: 1 and 0.2);
\draw[thick] (27,-0.72) arc (180:360: 1 and 0.2);

\draw[thick, fill=red, fill opacity=0.7] (28,-2.72) circle (1);
\draw[thick, dotted] (27,-2.72) arc (180:0: 1 and 0.2);
\draw[thick] (27,-2.72,0) arc (180:360: 1 and 0.2);

\draw[thick, fill=blue, fill opacity=0.2] (29.9,-4.4) circle (1.5);
\draw[thick, dotted] (28.4,-4.4) arc (180:0: 1.5 and 0.3);
\draw[thick] (28.4,-4.4) arc (180:360: 1.5 and 0.3);

\draw[thick] (26.1,4.3) to (26.1,4.5);
\node at (26.1,4.9) {\tiny $1$};

\draw[thick] (25, 2.1) to (24.8,1.9);
\node at (24.6,1.5) {\tiny $1$};

\draw[thick] (28, 0.15) to (28,0.4);

\draw[thick] (28, -1.85) to (28,-1.6);

\draw[thick] (27.2,2.1) -- (27.4,1.9);

\draw[thick] (28.6,-3.34) -- (28.8,-3.54);

\node at (30,-.72) {\tiny $\cD^{\mathrm{op}}$};

\draw[thick] (30.9, -3.5) to (31.1,-3.3);
\node at (31.3, -2.9) {\tiny $2$};

\draw[thick] (29.9, -5.8) to (29.9,-6);
\node at (29.9,-6.4) {\tiny $2$};

\end{tikzpicture}
\caption{Reinterpreting the Floer problem at $\eta=\frac 23$.\label{fig:upside_down}}
\end{figure}
By the choices we made above, the data on the main components and the first and third continuation cylinders will match the corresponding data for $\eta=\frac 13$ exactly. However, since the continuation cylinder from $-K$ to $K$ has also been turned upside down, we get the ``upside down'' version $\cD^{\mathrm{op}}$ of the continuation data $\cD$ that we fixed for that continuation map. So to get a smooth moduli problem for $\eta \in [0,1]$ interpolating between the two given configurations at $\eta=0$ and $\eta=1$, it only remains to fill in the middle third of the interval by a generic homotopy from $\cD$ to $\cD^{\mathrm{op}}$.

The conclusion of this discussion is that on $\xi(\widetilde{b_1})$
we now have a smoothly varying family of Floer data. Note that the end points of the middle interval give rise to corners in the blow-up, so smoothness there is not an issue. The discussion of $\xi(\widetilde{b_2})$
is completely analogous, just with the labels of the output punctures exchanged.
As we explained close to the start of this subsection, extending the data to all of $\Mcal_{2,2}$ is now straightforward.

\subsubsection{Algebraic consequences}\label{ssec:alg_consequences}
To draw a first consequence  of this construction, consider a path $c$ in $\Mcal^0_{2,2}$ connecting the point $c_3\in \hat B^0_3$ for which the circles through the 3 special points on the two components align so that the inputs 1 and 2 are followed by the outputs 2 and 1 in the resulting cyclic order, and the point $c_1\in \hat B^0_1$ with the same property.
The chain $\simplex_1 \times c \subseteq \Mcal'_{2,2}$ is transformed by the blow-up into a hexagon. There is one side over $c_3$, which in our parametrization of the boundary corresponds to the chain $-\hat R^*_{12} \times \hat P^{12}_*$ (because we chose $c_3$ as the starting point of $c$). We have seen that this gives rise to the algebraic term $\lambda_\cD \mu$. The adjacent two sides correspond to $(\p \simplex_1) \times c$. Their contributions vanish for reduced symplectic homology, because by construction they factor through the continuation map from $-K$ to $K$.
Over the point $c_1$, we have three more sides (due to the blow-up). The two halves of $\simplex_1$ are identified in our parametrization of $\hat B_1$ with $\hat P^{*2}_2 \times \hat R^1_{1*}$ and $\hat P^{1*}_1 \times \hat R^2_{*2}$, respectively. They give rise to the algebraic terms $(1 \otimes \mu)(\lambda_\cD \otimes 1)$ and $(\mu \otimes 1)(1\otimes \lambda_\cD)$.
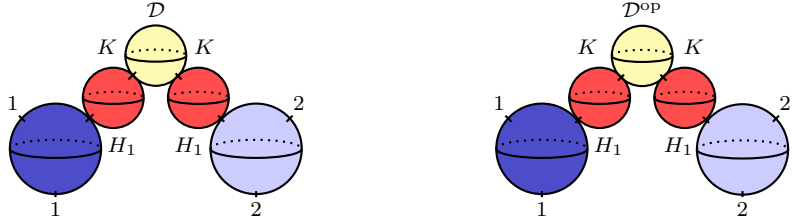
\begin{figure}[h]
\begin{tikzpicture}[scale=0.40]
\definecolor{darkblue}{rgb}{0,0,0.7};


\draw[thick, fill=darkblue, fill opacity=0.7] (24.7,-3.8) circle (1.5);
\draw[thick, dotted] (23.2,-3.8) arc (180:0: 1.5 and 0.3);
\draw[thick] (23.2,-3.8) arc (180:360: 1.5 and 0.3);

\draw[thick, fill=red, fill opacity=0.7] (26.6,-2.11) circle (1);
\draw[thick, dotted] (25.6,-2.11) arc (180:0: 1 and 0.2);
\draw[thick] (25.6,-2.11) arc (180:360: 1 and 0.2);

\draw[thick, fill=yellow, fill opacity=0.3] (28,-0.72) circle (1);
\draw[thick, dotted] (27,-0.72) arc (180:0: 1 and 0.2);
\draw[thick] (27,-0.72,0) arc (180:360: 1 and 0.2);

\draw[thick, fill=red, fill opacity=0.7] (29.4,-2.11) circle (1);
\draw[thick, dotted] (28.4,-2.11) arc (180:0: 1 and 0.2);
\draw[thick] (28.4,-2.11) arc (180:360: 1 and 0.2);

\draw[thick, fill=blue, fill opacity=0.2] (31.3,-3.82) circle (1.5);
\draw[thick, dotted] (29.8,-3.82) arc (180:0: 1.5 and 0.3);
\draw[thick] (29.8,-3.82) arc (180:360: 1.5 and 0.3);

\node at (28,0.8) {\tiny $\cD^{\mathrm{op}}$};

\draw[thick] (23.7, -2.9) to (23.5,-2.7);
\node at (23.3,-2.3) {\tiny $1$};

\draw[thick] (24.7,-5.2) to (24.7,-5.4);
\node at (24.7,-5.8) {\tiny $1$};

\draw[thick] (27.2,-1.54) -- (27.4,-1.34);
\node at (26.2,-0.4) {\tiny $K$};

\draw[thick] (25.8,-2.94) -- (26,-2.74);
\node at (26.9,-3.7) {\tiny $H_1$};

\draw[thick] (28.6,-1.34) -- (28.8,-1.54);
\node at (29.7, -0.4) {\tiny $K$};

\draw[thick] (30,-2.74) -- (30.2,-2.94);
\node at (29.2, -3.7) {\tiny $H_1$};

\draw[thick] (32.3, -2.9) to (32.5,-2.7);
\node at (32.7, -2.3) {\tiny $2$};

\draw[thick] (31.3, -5.2) to (31.3,-5.4);
\node at (31.3,-5.8) {\tiny $2$};

\pgftransformxshift{-6cm}


\draw[thick, fill = blue, fill opacity=0.2] (21.3,-3.8) circle (1.5);
\draw[thick, dotted] (19.8,-3.8) arc (180:0: 1.5 and 0.3);
\draw[thick] (19.8,-3.8) arc (180:360: 1.5 and 0.3);

\draw[thick, fill = red, fill opacity=0.7] (16.6,-2.11) circle (1);
\draw[thick, dotted] (15.6,-2.11) arc (180:0: 1 and 0.2);
\draw[thick] (15.6,-2.11) arc (180:360: 1 and 0.2);

\draw[thick, fill = yellow, fill opacity=0.3] (18,-0.71) circle (1);
\draw[thick, dotted] (17,-0.71) arc (180:0: 1 and 0.2);
\draw[thick] (17,-0.71) arc (180:360: 1 and 0.2);

\draw[thick, fill = red, fill opacity=0.7] (19.4,-2.11) circle (1);
\draw[thick, dotted] (18.4,-2.11) arc (180:0: 1 and 0.2);
\draw[thick] (18.4,-2.11) arc (180:360: 1 and 0.2);

\draw[thick, fill = darkblue, fill opacity=0.7] (14.7,-3.8) circle (1.5);
\draw[thick, dotted] (13.2,-3.8) arc (180:0: 1.5 and 0.3);
\draw[thick] (13.2,-3.8) arc (180:360: 1.5 and 0.3);

\node at (18,0.8) {\tiny $\cD$};

\draw[thick] (21.3,-5.2) to (21.3,-5.4);
\node at (21.3,-5.8) {\tiny $2$};

\draw[thick] (22.3, -2.9) to (22.5,-2.7);
\node at (22.7, -2.3) {\tiny $2$};

\draw[thick] (17.1, -1.5) to (17.35,-1.25);
\node at (16.4,-0.4) {\tiny $K$};

\draw[thick] (15.7, -2.9) to (15.95,-2.65);
\node at (16.9,-3.7) {\tiny $H_1$};

\draw[thick] (18.85,-1.5) to (18.65,-1.3);
\node at (19.6,-0.4) {\tiny $K$};

\draw[thick] (20.25,-2.9) to (20.05,-2.7);
\node at (19.1,-3.7) {\tiny $H_1$};

\draw[thick] (13.7, -2.9) to (13.5,-2.7);
\node at (13.3,-2.3) {\tiny $1$};

\draw[thick] (14.7, -5.2) to (14.7,-5.4);
\node at (14.7,-5.8) {\tiny $1$};

\end{tikzpicture}
\caption{The middle third of the middle interval can also be interpreted as a homotopy between these two configurations. \label{fig:upside_down_2}}
\end{figure}

The contributions of the outer thirds of the middle interval, depicted in Figure~\ref{fig:transition}, can be ignored in reduced symplectic homology. 
This is because we can choose the Floer data for the coproduct near the endpoints of the parametrizing interval, where it degenerates  through $-K$, to be very close to a further degeneration through $-H_1$. As a consequence, the Floer data on those intervals is close to being constant and therefore brings no index $-1$ contributions. We are therefore reduced to analyzing the contribution of the middle third of the middle interval.
As explained in \cite[\S 4.2]{CO-reduced}, after turning the ``upper'' components upside down once more (as in Figure~\ref{fig:upside_down_2}), the homotopy of continuation data from $\cD^{\mathrm{op}}$ to $\cD$ can be reinterpreted algebraically as giving rise to the term $\lambda_\cD \eta$. As we need to use the homotopy in the opposite direction, there is an additional sign, and so algebraically this edge gives rise to the term $-(\mu \otimes \mu)(1 \otimes \lambda_\cD \eta \otimes 1)$. In summary, we see that the Floer problem associated to this hexagon gives rise to the chain homotopy for the unital infinitesimal relation, whose proof along these lines first appeared in \cite[\S 6.4]{CO-reduced}.
\begin{proof}[Proof of Proposition~\ref{prop:11-term}]
Consider the 3-chain $\xi(\Ccal_{2,2}) \subseteq \Mcal_{2,2}$. According to Lemma~\ref{lem:simplified_11_term}, its boundary is homologous to
\begin{align*}
&-\hat R^*_{12} \times D_* \times \hat P^{12}_*
-D_{1^-} \times \hat R^*_{12} \times \hat P^{12}_* -D_{2^-} \times \hat R^*_{12}  \times \hat P^{12}_*\\
&+\hat R^*_{12} \times \hat P^{12}_* \times D_{1^+} +\hat R^*_{12}  \times \hat P^{12}_* \times D_{2^+}+ \hat P^{*2}_2 \times D_* \times \hat R^1_{1*}\\
&+ \hat P^{1*}_1 \times D_* \times \hat R^2_{*2}
+ \hat P^{*1}_2 \times D_* \times \hat R^2_{1*}+ \hat P^{2*}_1 \times D_* \times \hat R^1_{*2}\\
& + \hat P^{1*}_1 \times D_* \times \hat P^{*2}_{2}\times [0,1]
+\hat P^{2*}_1 \times D_* \times \hat P^{*1}_{2}\times [0,1]
\end{align*}
as a cycle in $\Mcal_{2,2}$ relative to $(\p \simplex_1) \times \Mcal^0_{2,2}$. We claim that these 11 summands correspond direclty to the terms in the 11-term relation. This is clear for the first 9 terms.
It remains to  identify the count of solutions to the Floer problems parametrized by the last two terms with $-(\mu \otimes \mu) (1 \otimes 1 \otimes \Delta \otimes 1) (1 \otimes \lambda \eta \otimes 1)$ and its version precomposed with $\tau$, respectively.

The two cases are similar, so we discuss $\hat P^{1*}_1 \times D_* \times \hat P^{*2}_{2}\times [0,1]$. Recall from the above discussion of the unital infinitesimal relation that the 1-chain $\hat P^{1*}_1 \times \hat P^{*2}_{2}\times [0,1]$ gives rise to the algebraic term $-(\mu \otimes \mu)(1 \otimes \lambda_\cD \eta \otimes 1)$. The contribution of the chain $D_*$ is an additional insertion of $\Delta$ at one of the two outputs of $\lambda_\cD \eta$. In fact, a moment's reflection shows that these two choices are chain homotopic, and so we choose to put it at the second ouput, which results in the desired algebraic translation $-(\mu \otimes \mu) (1 \otimes 1 \otimes \Delta \otimes 1) (1 \otimes \lambda_\cD \eta \otimes 1)$ for the contribution of this term.
\end{proof} 
\begin{proof}[Proof of Theorem~\ref{thm:BV_unital_infinitesimal_reduced}]
We have reviewed the definitions of $\Delta$, $\mu$ and $\lambda_\cD$ in \S\ref{sec:BVoperator} and~\ref{ssec:prod_and_coprod}. In particular, we discussed how to get the basic properties of $\mu$ and $\lambda_\cD$ (symmetries, associativity and coassociativity and the unital infinitesimal relation from \cite[\S6.4]{CO-reduced}) from the point of view taken in \S\ref{sec:moduli}, and we also discussed the 7-term relations for $\Delta$ and $\mu$ (originally due to \cite[Chapter~10, \S5.2]{Abouzaid-cotangent}) and for $\Delta$ and $\lambda_\cD$ (Proposition~\ref{prop:coBV}).

The 11-term relation for $\Delta$, $\mu$, $\lambda_\cD$ and $\eta$ is the content of Proposition~\ref{prop:11-term}.
\end{proof}

\section{BV Frobenius algebra structure on Rabinowitz Floer homology}\label{sec:RFH}

We use in this section notation from~\cite[\S5]{CHO-PD}. Let $W$ be a Liouville domain. Rabinowitz Floer homology $SH_*(\p W)$ of its boundary is defined using the family  
$$
\cH=\{H_{u,v}\, : \, u,v\in\R\}
$$ 
of Hamiltonians on $\widehat W$, equal to $0$ on $\p W$, linear in $r$ of slope $v$ on $[1,\infty)\times \p W$, linear in $r$ of slope $u$ on $[1/2,1]\times \p V$, and constant equal to $-u/2$ on $\{r\le 1/2\}$.  
Given $-\infty<a<b<\infty$ we define (see~\cite{Cieliebak-Frauenfelder-Oancea,CO})
$$
SH_*^{(a,b)}(\p W)=\lim^{\longrightarrow}_{v\to\infty,\, u\to-\infty}FH_*^{(a,b)}(H_{u,v}),
$$
and further
$$
SH_*(\p W)=\lim^{\longrightarrow}_{b\to\infty}\lim^{\longleftarrow}_{a\to-\infty}SH_*^{(a,b)}(\p W).
$$

We denote $S\H_*(\p W)=SH_{*+n}(\p W)$ and call it \emph{degree shifted Rabinowitz Floer homology}. 

\begin{theorem}[{\cite[Theorem~1.2]{CHO-PD}}] \label{thm:RFH-Frob} Degree shifted Rabinowitz Floer homology $S\H_*(\p W)$ is a graded Frobenius algebra. \qed
\end{theorem}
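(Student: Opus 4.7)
The plan is to construct the Frobenius algebra operations on $S\H_*(\p W)$ using pair-of-pants type moduli spaces, adapted to the two-parameter family $\{H_{u,v}\}$ that defines Rabinowitz Floer homology, and then verify the Frobenius relation \eqref{eq:Frobenius} via a four-punctured sphere argument. The product $\mu$ and coproduct $\lambda$ should arise from the same moduli spaces $\cN_{2,1}$ and $\cN_{1,2}$ used in Section \ref{sec:reduced}, but now interpreted in the Rabinowitz context where both positive-action (``$v$-truncated'') and negative-action (``$u$-truncated'') generators coexist.

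First I would set up the product: for admissible parameters $(u_i,v_i)$, $i=0,1,2$ with $v_1+v_2 \le v_0$ and $u_0 \le u_1+u_2$ (with the reverse inequality on the negative-slope side, reflecting that Rabinowitz orbits can have arbitrary sign action), one counts Floer trajectories on a pair-of-pants with Hamiltonian $H^z$ interpolating between $H_{u_i,v_i}$ near the three punctures, satisfying the appropriate subclosedness condition on $b(z)\alpha$. Passing to the limit $v_i \to \infty$, $u_i \to -\infty$ using the double colimit/limit, one obtains $\mu: S\H \otimes S\H \to S\H$, with unit $\eta$ represented by the Morse class of a minimum of $K$ restricted to $W$. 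The coproduct $\lambda$ is constructed analogously but dually, using a one-input/two-output Riemann surface, and the counit $\epsilon$ pairs to give the fundamental Rabinowitz PSS class.

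Next I would prove the Frobenius relation. The key geometric input is that the space $\cC_{2,2}$ of four-punctured spheres with two inputs and two outputs (discussed in \S\ref{ssec:moduli_2to2}, without the weights, since no weights are needed in the Frobenius setting) provides a cobordism between the two pair-of-pants decompositions $(\mu \otimes 1)(1 \otimes \lambda)$ and $\lambda \mu$. Counting trajectories parametrized by this cobordism gives the required chain homotopy. Unlike the reduced symplectic homology case, here there is no need to subtract $(\mu \otimes \mu)(1 \otimes \lambda\eta \otimes 1)$ because the coproduct can be defined \emph{without} inserting a continuation cylinder through $-K,K$: the asymptotic ends in Rabinowitz Floer homology already accommodate generators of any action.

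The main obstacle will be the analytic control needed to pass from operations defined at finite $(u,v)$ to the colimit/limit that defines $S\H_*(\p W)$, and to show that all operations are compatible with the Tate vector space topology (see Appendix~\ref{sec:Tate}) so that $\lambda$ lands in the completed tensor product $S\H \hatotimes S\H$ and the pairing $\epsilon \mu$ is continuous and perfect. This requires careful uniform energy estimates for the parametrized Floer equations across truncation windows, as well as verifying that the continuation maps used to define the direct and inverse limits intertwine $\mu$, $\lambda$, $\eta$, and $\epsilon$ up to chain homotopy. Once these compatibilities are in place, associativity, coassociativity and the Frobenius relation pass to the limit essentially formally.
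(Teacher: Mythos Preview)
The paper does not prove this statement: it is quoted verbatim from \cite[Theorem~1.2]{CHO-PD} and closed with a \qed. So there is no ``paper's own proof'' to compare against beyond the hints about the construction that appear in the surrounding text, especially in the proof of Theorem~\ref{thm:BVFrob}.

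Your outline is a reasonable high-level sketch, but it departs from the mechanism actually used in \cite{CHO-PD} in a way that matters. From the proof of Theorem~\ref{thm:BVFrob} one learns that the copairing $\boldc=\boldlambda\boldeta$ on $S\H_*(\p W)$ is \emph{not} obtained from a naive 1-to-2 pair-of-pants count. It is induced from a \emph{secondary continuation map} $\vec\boldc:FC_*^{(a,b)}(-H)\to FC_{*+1}^{(a,b)}(H)$ that interpolates between two continuation maps $c_{-L}$ and $c_L$ (which vanish after action truncation for $u\ll a$, $v\gg b$); one then dualizes via $\vec\boldc=(\ev\otimes 1)(1\otimes\boldc)$ using the identification $FC_*(-H)\simeq FC_*(H)^\vee$. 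So your sentence ``the coproduct can be defined without inserting a continuation cylinder through $-K,K$'' is misleading: continuation data is precisely what produces the copairing, and it is the secondary nature of $\vec\boldc$ (degree $+1$) that gives $\boldlambda$ its odd degree. The Frobenius relation then comes essentially for free from this definition together with the product, rather than from an independent four-punctured-sphere cobordism.

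There is also a gap in your slope bookkeeping. You propose $u_0\le u_1+u_2$ for the product, but a genuine 1-to-2 coproduct on Rabinowitz complexes would require inequalities going the other way on the negative side, and it is not clear that a single subclosedness condition $d(b(z)\alpha)\le 0$ can be arranged simultaneously for both the $u$- and $v$-regimes across the pair-of-pants without the secondary-continuation trick. This is exactly the analytic issue that the $\vec\boldc$ construction in \cite{CHO-PD} is designed to circumvent. Your identification of the Tate/limit compatibility as the main remaining obstacle is correct, but it is downstream of getting the copairing right.
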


This is the main result proved in~\cite{CHO-PD}. In this section we improve it to 
\begin{theorem}\label{thm:BVFrob}
Degree shifted Rabinowitz Floer homology $S\H_*(\p W)$ is an odd BV Frobenius algebra. 
\end{theorem}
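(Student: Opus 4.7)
The plan is to reduce the verification of the odd BV Frobenius algebra axioms to (i) the construction of a BV operator $\Delta$ on $S\H_*(\p W)$ with $\Delta^2=0$ and (ii) the 7-term relation \eqref{eq:7_term_for_Delta_and_mu} between $\Delta$ and the product $\mu$. Once these Floer-theoretic inputs are in hand, the remaining axioms follow by purely algebraic arguments building on the Frobenius structure of Theorem~\ref{thm:RFH-Frob}.

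For (i), I would define $\Delta$ on each Floer complex $FC_*(H_{u,v})$ following the construction of \S\ref{sec:BVoperator}, by counting rigid elements of the moduli space $\cM^\Delta$ parametrized by $\cM_{1,1}\simeq S^1$. Since the defining Floer problem only involves compact cylinders with admissible Hamiltonians, standard arguments show that $\Delta$ is a degree-$1$ chain map, that $\Delta^2$ is chain-homotopic to zero, and that $\Delta$ commutes with continuation maps up to chain homotopy. Passing to the bidirectional limit defining $S\H_*(\p W)=\lim_{b\to\infty}\lim_{a\to-\infty}SH_*^{(a,b)}(\p W)$ yields a degree-$1$ operator with $\Delta^2=0$. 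For (ii), I would transpose the moduli-space argument of \S\ref{ssec:prod_and_coprod} to the Rabinowitz setting. The product $\mu$ on $S\H_*(\p W)$ is constructed in \cite{CHO-PD} from Floer data on three-punctured spheres, for suitable Hamiltonians $H_{u_i,v_i}$ satisfying the relevant slope conditions. Extending these data coherently to a family of parametrized Floer problems over $\xi(\Ccal_{3,1})\subseteq\Mcal_{3,1}$, the count of rigid solutions realizes the chain-level identity dictated by Corollary~\ref{cor:cycles_7term}, which translates into the 7-term relation \eqref{eq:7_term_for_Delta_and_mu} after passing to the limit in $u,v$.

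With (i) and (ii) in hand, the remaining BV Frobenius algebra axioms follow algebraically. Inspecting the proof of Lemma~\ref{lem:BVFrob-BVui}, one observes that the derivations of both the 7-term relation \eqref{eq:7_term_for_Delta_and_lambda} for $\Delta$ and $\lambda$ and the 11-term relation \eqref{eq:11term} rely only on the Frobenius relation \eqref{eq:Frobenius} together with the 7-term relation for $\Delta$ and $\mu$; the BV Frobenius relation itself is never invoked in those arguments. Hence both relations hold on $S\H_*(\p W)$. Applying the 7-term relation for $\Delta$ and $\mu$ to $\eta\otimes\eta\otimes\eta$ then gives $\Delta\eta=0$ as in \eqref{eq:Delta_and_eta}, and applying the 11-term relation to $\eta\otimes\eta$ while using $\Delta\eta=0$ yields the BV Frobenius relation $(\Delta\otimes 1)\lambda\eta=(1\otimes\Delta)\lambda\eta$ as in \eqref{eq:Delta_and_lambda_eta}. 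Combined with Theorem~\ref{thm:RFH-Frob} and the BV algebra axioms from (i) and (ii), the tuple $(S\H_*(\p W),\mu,\lambda,\eta,\eps,\Delta)$ satisfies all requirements of Definition~\ref{defi:odd-Frobenius}.

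The main obstacle is the careful construction of coherent Floer data over $\Mcal_{3,1}$ which is simultaneously compatible with the family $H_{u,v}$ and with the continuation maps used to form the bidirectional limit defining Rabinowitz Floer homology. This is technical but follows the pattern established in \cite{CHO-PD} and the product-side construction in the proof of Theorem~\ref{thm:BV_unital_infinitesimal_reduced}; the real content of the theorem, namely the interaction of $\Delta$ with the Frobenius pairing, is then formal.
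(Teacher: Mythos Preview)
Your approach is correct and takes a genuinely different route from the paper. Both you and the paper agree that the BV algebra structure on $S\H_*(\p W)$---the operator $\Delta$ with $\Delta^2=0$ and the 7-term relation~\eqref{eq:7_term_for_Delta_and_mu} for $\Delta$ and $\mu$---must be established by Floer-theoretic means, and both treat this as a straightforward transplant of the argument for symplectic homology of Liouville domains.

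The divergence is in how the BV Frobenius relation~\eqref{eq:BVFrobenius-relation} is obtained. The paper proves it \emph{directly} via Floer theory: it works with the secondary continuation map $\vec\boldc:FC_*^{(a,b)}(-H)\to FC_{*+1}^{(a,b)}(H)$ inducing the copairing, shows that the BV operators on $FC_*(H)$ and $FC_*(-H)$ are dual to each other ($\Delta_{-H}\equiv\Delta_H^\vee$), and establishes the chain-level identity $\vec\boldc\circ\Delta_{-H}+\Delta_H\circ\vec\boldc=[\p,\alpha]$ from the commutation of BV operators with secondary continuation maps; this is then manipulated with evaluation/coevaluation maps into $(\Delta\otimes 1)\boldc-(1\otimes\Delta)\boldc=[\p,\alpha']$. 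Your approach instead observes that the derivation of the 11-term relation in the proof of Lemma~\ref{lem:BVFrob-BVui} uses only the Frobenius relation and the 7-term relation for $\Delta$ and $\mu$, never the BV Frobenius relation itself; hence the 11-term relation already holds on $S\H_*(\p W)$, and evaluating it on $\eta\otimes\eta$ yields~\eqref{eq:BVFrobenius-relation} as a formal consequence. This is a clean algebraic shortcut: it shows that on a Frobenius algebra the BV Frobenius relation is \emph{automatic} once the 7-term relation for $\Delta$ and $\mu$ holds, so the only genuinely analytic content of the theorem beyond Theorem~\ref{thm:RFH-Frob} is the BV algebra structure. The paper's route, while requiring more Floer-theoretic work, has the merit of exhibiting the geometric origin of the BV Frobenius relation in the duality $\Delta_{-H}\equiv\Delta_H^\vee$.
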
 

{\bf Notation.} We will denote all operations in Rabinowitz Floer homology by boldface letters $\boldlambda$, $\boldeta$, $\boldc$ etc.

\begin{proof}
In view of Theorem~\ref{thm:RFH-Frob} and Definition~\ref{defi:odd-Frobenius}, we need to prove that $S\H_*(\p W)$ is a BV algebra, and also that the BV Frobenius relation 
\begin{equation} \label{eq:BV-Frob-relation}
(\boldDelta\otimes 1)\boldc = (1\otimes\boldDelta)\boldc
\end{equation}
holds, where $\boldc=\boldlambda\boldeta$ is the canonical nondegenerate copairing on Rabinowitz Floer homology. 

The BV algebra property for $S\H_*(\p W)$ is proved exactly as for the symplectic homology of a Liouville domain (see \S\ref{sec:reduced} or \cite[\S10]{Abouzaid-cotangent}), and we omit any further details.

We now prove the BV Frobenius relation~\eqref{eq:BV-Frob-relation}. Let $H=H_{u,v}$ with $u<0<v$ and let $L=H_{u,u}$. We denote $c_L:FC_*(-H)\to FC_*(H)$ the continuation map obtained by concatenating the continuation maps $FC_*(-H)\to FC_*(L)\to FC_*(H)$, and we denote by 
$c_{-L}:FC_*(-H)\to FC_*(H)$ the continuation map obtained by concatenating continuation maps $FC_*(-H)\to FC_*(-L)\to FC_*(H)$. We fix in the sequel a finite action window $(a,b)$ with $a<0<b$ and consider Floer complexes supported in that action window. We also consider parameters $u\ll a$ and $v \gg b$.  Let $\vec \boldc:FC_*^{(a,b)}(-H)\to FC_{*+1}^{(a,b)}(H)$ be the secondary continuation map interpolating between $c_{-L}$ and $c_L$ as in~\cite[\S5.6]{CHO-PD} (the action truncated ``primary" continuation maps $c_{-L}$ and $c_L$ vanish for $u\ll a$ and $v \gg b$). After identifying $FC_*(-H)$ with $FC_*(H)^\vee$, one can view the copairing $\boldc=\boldlambda\boldeta$ in Rabinowitz Floer homology as being implicitly induced from $\vec \boldc$ via dualization
$$
\vec \boldc = (\ev\otimes 1)(1\otimes \boldc).
$$

We define BV operators $\Delta_{-H}:FC_*(-H)\to FC_{*+1}(-H)$ and $\Delta_H:FC_*(H)\to FC_*(H)$ by slightly perturbing the autonomous Hamiltonians $\pm H$ to nondegenerate (and non-autonomous) ones. Such BV operators have the property that they preserve action filtrations. The proof from~\cite[Chapter~10, \S2.3]{Abouzaid-cotangent} showing that BV operators commute with continuation maps adapts in a straightforward way in order to show that the BV operators $\Delta_{\pm H}$ commute up to homotopy with the secondary continuation map $\vec \boldc$, i.e., 
\begin{equation} \label{eq:Delta-vecc}
\vec \boldc\circ \Delta_{-H} + \Delta_H\circ \vec \boldc=[\p,\alpha_1],
\end{equation}
for a suitable map $\alpha_1:FC_*^{(a,b)}(-H)\to FC_{*+3}^{(a,b)}(H)$. The plus sign is a consequence of the fact that the operations $\vec \boldc$ and $\Delta_{\pm H}$ arise from counts of solutions of parametrized Floer problems with odd dimensional parameter spaces. In our case the parameter spaces are $[-1,1]$ and $S^1$, and switching the order of the parameters results in a change of orientation of the product parameter space $S^1\times [-1,1]$. 

The complex $FC_*^{(a,b)}(-H)$ is naturally identified with $FC_*^{(-b,-a)}(H)^\vee$. Via this identification the BV operator $\Delta_{-H}$ corresponds to the dual of the BV operator $\Delta_H$, i.e., 
\begin{equation*} 
\Delta_{-H}\equiv \Delta_H^\vee.
\end{equation*}
This can be seen by writing explicit models for the moduli spaces that define the BV operators, and noting that, in the definition of the BV operator $\Delta$, inserting a positive rotation at the output is equivalent to inserting a negative rotation at the input. 

In view of this identification equation~\eqref{eq:Delta-vecc} becomes
\begin{equation} \label{eq:Deltavee-vecc}
\vec \boldc\circ \Delta_H^\vee + \Delta_H\circ \vec \boldc=[\p,\alpha_2],
\end{equation}
where $\alpha_2:FC_*^{(-b,-a)}(H)^\vee\to FC_{*+3}^{(a,b)}(H)$ is the map that corresponds to $\alpha_1$ under the identification. 

We now use the action filtered copairing $\boldc$ defined by $\vec \boldc=(\ev\otimes 1)(1\otimes \boldc)$. 
Inserting this into~\eqref{eq:Deltavee-vecc} we obtain
\begin{align*}
-[\p,\alpha_2]&=
-(\ev\otimes 1) (1\otimes \boldc)\Delta_H^\vee + \Delta_H(\ev\otimes 1)(1\otimes \boldc) \\
& = (\ev\otimes 1)(\Delta_H^\vee\otimes \boldc)+(\ev\otimes \Delta_H)(1\otimes \boldc)\\
& = (\ev(\Delta_H^\vee\otimes 1)\otimes 1)(1\otimes \boldc)+ (\ev\otimes 1)(1\otimes (1\otimes\Delta_H)\boldc)\\
& = (\ev(1\otimes \Delta_H)\otimes 1)(1\otimes \boldc)+ (\ev\otimes 1)(1\otimes (1\otimes\Delta_H)\boldc)\\
& = (\ev\otimes 1)(1\otimes (\Delta_H\otimes 1)\boldc) + (\ev\otimes 1)(1\otimes (1\otimes \Delta_H)\boldc)\\
& = (\ev\otimes 1)((\Delta_H\otimes 1)\boldc -(1\otimes\Delta_H)\boldc).
\end{align*} 
The second equality is the Koszul sign rule, and the forth equality is the adjunction formula for $\Delta_H^\vee$. 
We apply the map $1\otimes -$ to this equality and precompose with the chain map $\coev$. The left hand side writes $[\p,\alpha_3]$, with $\alpha_3=(1\otimes (-\alpha_2))\coev$. The right hand side writes 
$$
((1\otimes\ev)(\coev\otimes 1)\otimes 1)((\Delta_H\otimes 1)\boldc -(1\otimes\Delta_H)\boldc) = (\Delta_H\otimes 1)\boldc -(1\otimes\Delta_H)\boldc.
$$ 
We have thus proved the chain level equality 
$$
[\p,\alpha_3] = (\Delta_H\otimes 1)\boldc -(1\otimes\Delta_H)\boldc,
$$
for a suitable element $\alpha_3$. The BV Frobenius relation $(\boldDelta\otimes 1)\boldc =(1\otimes \boldDelta)\boldc$ therefore holds in action filtered homology. Passing to the limit over $H$ and letting $a\to-\infty$, $b\to\infty$ as in the definition of the Rabinowitz Floer homology groups we obtain the desired BV Frobenius relation on Rabinowitz Floer homology. 
\end{proof}

\section{Application to string topology}\label{sec:stringtop}

\subsection{Results and conjectures}
In this first subsection we present our results and state some conjectures. The following subsections will contain further details on some of the proofs. Let $Q$ be a closed oriented smooth manifold of dimension $n$. Let $\Lambda=\Lambda Q=\{\gamma:S^1\to Q\}$ be the space of free loops on $Q$, and let $\H_*\Lambda=H_{*+n}\Lambda$ be the \emph{loop homology of $Q$}. 

\subsubsection{Algebras} Loop homology carries the famous \emph{Chas-Sullivan product} $\mu$, which is associative, commutative, and unital, with unit denoted $\eta$ and represented by the fundamental class of $Q$ under the inclusion of constant loops $\Lambda_0\subset \Lambda$. It also carries a natural \emph{BV operator} $\Delta$ of degree one induced by the canonical $S^1$-action on $\Lambda$ by shift reparametrizations of loops in the source. The first foundational result of string topology is the following. 

\begin{theorem}[{Chas-Sullivan~\cite[Theorem~5.4]{CS}}] \label{thm:H-BV-algebra}
\qquad 

Loop homology $(\H_*\Lambda,\mu,\eta,\Delta)$ is a BV algebra. \qed
\end{theorem}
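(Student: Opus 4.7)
The plan is to derive this as a consequence of the Floer-theoretic constructions from Section~\ref{sec:reduced}, combined with the Viterbo isomorphism. For $Q$ closed and oriented, there is a canonical isomorphism $SH_*(D^*Q;\sigma)\cong \H_*\Lambda$ with $\sigma$ the transgression local system, and this isomorphism intertwines the pair-of-pants product with the Chas-Sullivan loop product (Abbondandolo--Schwarz, Abouzaid, Kragh, Salamon--Weber). The BV algebra axioms will then be transported from the symplectic side, where they have already been established geometrically via the moduli spaces of Section~\ref{sec:moduli}.

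More concretely, Section~\ref{sec:reduced} constructs on $SH_*(W)$, for any Liouville domain $W$, the product $\mu$ out of Floer data parametrized by $\Ncal_{2,1}$, the BV operator $\Delta$ out of data parametrized by $\cM_{1,1}\simeq S^1$, and proves the 7-term relation~\eqref{eq:7_term_for_Delta_and_mu} via the cycle relation of Corollary~\ref{cor:cycles_7term} in $H_*(\Mcal_{3,1})$. Specializing to $W=D^*Q$, one obtains the BV algebra structure on $SH_*(D^*Q;\sigma)$. The next step is to identify the Floer-theoretic $\Delta$ with the $S^1$-rotation operator on $\Lambda$: the Floer cylinder whose asymptotic marker rotates by $\theta\in S^1=\cM_{1,1}$ corresponds, under Salamon--Weber's heat-flow interpolation or Abouzaid's parametrized compactification, to a loop whose parametrization is shifted by $\theta$. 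This, together with the chain-level intertwining of products, gives the BV algebra isomorphism $SH_*(D^*Q;\sigma)\cong \H_*\Lambda$.

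The main obstacle is the chain-level intertwining of the two $S^1$-actions and the compatibility of the local system $\sigma$ with the Floer orientations in the parametrized moduli problem defining $\Delta$. The product intertwining is by now standard, but the BV intertwining requires one additional parameter, and the sign analysis has to be consistent with that used in the 7-term relation both on the Floer and on the topological side.

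An alternative, purely topological route is the original Chas-Sullivan argument: one represents classes in $\H_*\Lambda$ by chains transverse to evaluation at a basepoint, defines $\mu$ by concatenation at the common basepoint, and $\Delta$ by the $S^1$-reparametrization action. Associativity and commutativity of $\mu$ are manifest from the geometry, $\Delta^2=0$ from the fact that $S^1$-actions square to trivial in homology, and the 7-term relation follows from the framed little 2-disks operad action on $\Lambda$, which can be constructed directly from evaluation and concatenation maps parametrized by the moduli spaces $\Mcal_{k,1}$ of Section~\ref{ssec:moduli_kto1}. In this approach, the hard part is establishing the operadic framework at the chain level with the required transversality and orientation compatibilities, which is the technical heart of Chas--Sullivan's original work.
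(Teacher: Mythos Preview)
The paper does not prove this theorem: it is stated with an immediate \qed and attributed to Chas--Sullivan's original paper~\cite{CS}. There is no proof in the paper to compare your proposal against.

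Your first approach --- transporting the BV algebra structure from $SH_*(D^*Q;\sigma)$ via the Viterbo isomorphism --- is valid and is essentially the route taken by Abouzaid in~\cite[Chapter~10]{Abouzaid-cotangent}, which the paper itself cites at the start of \S\ref{sec:BVoperator} as the source for the BV algebra structure on symplectic homology, and again in Theorem~\ref{thm:Psi-reduced} for the fact that $\Psi$ intertwines the BV operators. Note that this route does not require the dimensional restriction $2n\ge 8$ of Theorem~\ref{thm:BV_unital_infinitesimal_reduced}: that restriction enters only through the coproduct, while the BV \emph{algebra} structure (product, unit, and $\Delta$ alone) holds on the full $S\H_*(W)$ for any Liouville domain, as the paper explicitly remarks. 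Your second approach sketches the original Chas--Sullivan argument, which is the one the paper actually cites. Both are correct; the paper simply takes this result as known input from the literature rather than reproving it.
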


The \emph{string homology of $Q$} is the $S^1$-equivariant homology group   
$\H_*^{S^1}\Lambda= H_{*+n}^{S^1}\Lambda$. 
The \emph{string bracket} on $\H_*^{S^1}\Lambda$ is 
$$
\beta^{S^1}=E\mu(M\otimes M),
$$ 
where 
$E:\H_*\Lambda \to \H_*^{S^1}\Lambda$ and $M:\H_*^{S^1}\Lambda\to \H_{*+1}\Lambda$ 
are the ``erase'' and ``mark'' maps in the Gysin long exact sequence 
\begin{equation} \label{eq:Gysin}
\xymatrix{
\dots \ar[r] & \H_*\Lambda \ar[r]^-E & \H_*^{S^1}\Lambda \ar[r] & \H_{*-2}^{S^1}\Lambda \ar[r]^-M & \H_{*-1}\Lambda \ar[r] & \dots
}
\end{equation}
(The middle map is cap product with the Euler class of the $S^1$-bundle $\Lambda Q \times ES^1\to  \Lambda Q \times_{S^1} ES^1$.) Note that $\Delta = ME$ and $EM=0$, which implies $\Delta M = 0$ and $E\Delta = 0$. 

The second foundational result of string topology is the following. 

\begin{theorem}[{Chas-Sullivan~\cite[Theorem~6.1]{CS}}]  \label{thm:HS1-Lie-algebra}
\qquad 

String homology $(\H_*^{S^1}\Lambda,\beta^{S^1})$ is a Lie algebra. \qed
\end{theorem}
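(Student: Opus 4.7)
The plan is to derive the Lie algebra structure on $\H_*^{S^1}\Lambda$ purely formally from the BV algebra structure on $\H_*\Lambda$ provided by Theorem~\ref{thm:H-BV-algebra}, using only the Gysin sequence~\eqref{eq:Gysin} to transfer the algebra from the non-equivariant to the $S^1$-equivariant theory. Let $\beta := [\Delta, \mu]$ be the underlying Gerstenhaber bracket on $\H_*\Lambda$, defined by
\[
\beta(a, b) = \Delta\mu(a, b) - \mu(\Delta a, b) - (-1)^{|a|}\mu(a, \Delta b),
\]
which is automatically graded antisymmetric, satisfies graded Jacobi, and is a graded derivation of $\mu$ in each argument by the standard BV-to-Gerstenhaber implication.

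First I would record the two Gysin-derived identities $\Delta M = MEM = 0$ and $E\Delta = EME = 0$, which say that $\mathrm{im}(M) \subseteq \ker(\Delta)$ and $\mathrm{im}(\Delta) \subseteq \ker(E)$. Combining the first identity with the definition of $\beta$ yields the key compatibility
\[
M\beta^{S^1}(x, y) = \Delta\mu(Mx, My) = \beta(Mx, My),
\]
valid for all $x, y \in \H_*^{S^1}\Lambda$. Graded antisymmetry of $\beta^{S^1}$ is then immediate from the graded commutativity of $\mu$ applied to $Mx$ and $My$, once the degree shifts contributed by the two $M$'s are accounted for.

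For graded Jacobi, I would evaluate $E\Delta\mu(\mu(Mx, My), Mz)$ in two ways. It vanishes directly because $E\Delta = 0$. On the other hand, expanding $\Delta\mu(\mu(Mx, My), Mz)$ using the definition of $\beta$ (exploiting that $\Delta$ kills $Mx$, $My$, $Mz$, and $\beta(Mx, My)$) together with the Poisson derivation property~\eqref{eq:Poisson} of $\beta$ with respect to $\mu$, one obtains
\[
\Delta\mu(\mu(Mx, My), Mz) = \mu(\beta(Mx, My), Mz) \pm \mu(\beta(Mx, Mz), My) + \mu(Mx, \beta(My, Mz)).
\]
Applying $E$, substituting $\beta(Ma, Mb) = M\beta^{S^1}(a, b)$, and using graded commutativity of $\mu$ to move $Mx$ past $M\beta^{S^1}(y, z)$ in the last term, each summand is converted into a term of the form $\pm \beta^{S^1}(\beta^{S^1}(\cdot, \cdot), \cdot)$, and the resulting identity is precisely the cyclic sum required for graded Jacobi of $\beta^{S^1}$.

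The main obstacle is sign bookkeeping in the degree-shifted graded setting: one must verify that the signs coming from the Poisson identity, from graded commutativity of $\mu$ applied to elements in the image of $M$, and from the intrinsic degree of the bracket $\beta^{S^1}$ combine correctly. A notable feature of this approach is that it avoids the need to prove that the Jacobiator lies in $\ker(M)$ and then argue it is zero; instead, Jacobi drops out directly from the single vanishing $E\Delta\mu(\mu(Mx, My), Mz) = 0$. Beyond the sign check, the argument is purely formal and uses no information about $M$ or $E$ beyond the identities produced by~\eqref{eq:Gysin} and the BV algebra axioms on $\H_*\Lambda$.
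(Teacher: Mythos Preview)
Your approach is correct and is essentially the same as the one the paper uses. Note that the paper does not prove Theorem~\ref{thm:HS1-Lie-algebra} directly (it is cited from Chas--Sullivan with a \qed), but the Jacobi identity for $\beta^{S^1}$ is established inside the proof of Theorem~\ref{thm:HS1-Lie-bialgebra}, where the authors write: ``The Jacobi relation for $\beta^{S^1}$ is obtained from the 7-term relation for $\Delta$ and $\mu$ by precomposing with $M\otimes M\otimes M$ and postcomposing with $E$.'' Your argument is exactly this, unpacked element-wise: your expression $E\Delta\mu(\mu(Mx,My),Mz)$ is the left-hand side of the 7-term relation~\eqref{eq:7_term_for_Delta_and_mu} after the pre- and post-composition, the terms $\mu(\mu\otimes 1)(\Delta\otimes 1\otimes 1)(1+\sigma+\sigma^2)$ vanish because $\Delta M=0$, and the surviving terms $\mu(\Delta\mu\otimes 1)(1+\sigma+\sigma^2)$ are precisely what you obtain via the Poisson identity (which is a consequence of the 7-term relation). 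The only cosmetic difference is that the paper works with the 7-term relation in operator form, while you expand through the derived bracket $\beta$ and the Poisson derivation property; these are equivalent formulations of the same computation.
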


\subsubsection{Bialgebras} 
Our purpose in this section is to upgrade the previous two theorems to statements about bialgebras. These are Theorems~\ref{thm:H-BV-bialgebra} and~\ref{thm:HS1-Lie-bialgebra} below. 

Following~\cite{CHO-MorseFloerGH}, the relevant non-equivariant object is the \emph{reduced loop homology of $Q$},   
$$
\ol\H_*\Lambda=\H_*\Lambda/\chi(Q)[\mathrm{pt}].
$$
It was proved in~\cite{CHO-MorseFloerGH} that the Chas-Sullivan loop product on $\H_*\Lambda$ descends to a unital associative and commutative product $\mu$ on $\ol\H_*\Lambda$, with unit $\eta$ represented by the fundamental class. It was also proved in the same reference that, using field coefficients, a choice of continuation data $\cD$ as in~\S\ref{sec:reduced} determines in dimension $n\ge 3$ a coassociative and cocommutative \emph{loop coproduct} $\lambda_\cD$ on $\ol\H_*\Lambda$, and $(\ol\H_*\Lambda, \mu,\lambda_\cD,\eta)$ is a unital infinitesimal bialgebra. The BV operator on $\H_*\Lambda$ descends to $\ol\H_*\Lambda$ as well, denoted still by $\Delta$. 

\begin{theorem}  \label{thm:H-BV-bialgebra}
Reduced loop homology $(\ol\H_*\Lambda, \mu,\lambda_\cD,\eta,\Delta)$ is a BV unital infinitesimal bialgebra whenever $\dim Q\ge 4$.
\end{theorem}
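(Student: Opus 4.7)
The plan is to transport the algebraic structure from Theorem~\ref{thm:BV_unital_infinitesimal_reduced} to reduced loop homology via the Viterbo isomorphism. First I would recall that, for a closed oriented manifold $Q$ of dimension $n$, the unit disc cotangent bundle $D^*Q$ is a Weinstein domain of dimension $2n$, and moreover it is strongly $R$-essential in the sense of Definition~\ref{defi:strongly-R-essential}; this is precisely the class of examples highlighted in the introduction and established in~\cite{CO-reduced}. The orientability hypothesis enters here to ensure the existence of the twisted local system $\sigma$ under which the Viterbo isomorphism identifies $\ol{S\H}_*(D^*Q;\sigma)\simeq\ol\H_*\Lambda$. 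Since the hypothesis $\dim Q\ge 4$ of the theorem translates into $2n\ge 8$, Theorem~\ref{thm:BV_unital_infinitesimal_reduced} applies and equips $\ol{S\H}_*(D^*Q;\sigma)$ with an odd BV unital infinitesimal bialgebra structure $(\mu_{\rm SH},\lambda_\cD,\eta_{\rm SH},\Delta_{\rm SH})$.

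Next I would check that the Viterbo isomorphism intertwines each of the four operations with its string-topological counterpart. For the product $\mu_{\rm SH}$ and the unit $\eta_{\rm SH}$, this identification with the Chas-Sullivan product and fundamental-class unit is the classical statement going back to~\cite{Viterbo-cotangent,AS-Legendre,Abouzaid-cotangent,Kragh,SW}, already used in~\cite{CHO-MorseFloerGH} to descend the product to $\ol\H_*\Lambda$. For the coproduct, the identification of $\lambda_\cD$ with the reduced loop coproduct constructed in~\cite{CHO-MorseFloerGH} on $\ol\H_*\Lambda$ is precisely how the coproduct was originally defined on the string-topology side, via the same continuation datum $\cD$ through the Viterbo isomorphism; in particular the dependence on $\cD$ is the same on both sides. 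For the BV operator $\Delta_{\rm SH}$ constructed in~\S\ref{sec:BVoperator}, I would invoke the known identification (see~\cite[Chapter~10]{Abouzaid-cotangent}) of the symplectic-homology BV operator with the BV operator on $\H_*\Lambda$ induced by the canonical $S^1$-action on $\Lambda$, and note that this identification descends to reduced loop homology because the reduction step is the same on both sides of the Viterbo isomorphism.

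Once the four operations are matched under the Viterbo isomorphism, the five defining relations of an odd BV unital infinitesimal bialgebra---associativity and commutativity of $\mu$, coassociativity and cocommutativity of $\lambda$, $\Delta^2=0$, the unital infinitesimal relation, the two 7-term relations~\eqref{eq:7_term_for_Delta_and_mu} and~\eqref{eq:7_term_for_Delta_and_lambda}, and the 11-term relation~\eqref{eq:11term}---are transported formally from $\ol{S\H}_*(D^*Q;\sigma)$ to $\ol\H_*\Lambda$. This gives the conclusion.

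The hard part is really compressed into the identification of the BV operators, since it requires comparing a moduli-theoretic operation built from the $S^1$-family of cylindrical domains $\cM_{1,1}$ with the topological $S^1$-action on $\Lambda$. Fortunately, as noted above, this comparison is classical in the setting of cotangent bundles, so the bulk of the argument reduces to invoking previously established compatibilities. The remaining care is that the reduction from $S\H_*$ to $\ol{S\H}_*$ on one side corresponds to the reduction from $\H_*\Lambda$ to $\ol\H_*\Lambda$ by killing $\chi(Q)[\mathrm{pt}]$ on the other, which was already verified in~\cite{CHO-MorseFloerGH} at the level of the product and coproduct and poses no additional difficulty for $\Delta$, which vanishes on the class of a point.
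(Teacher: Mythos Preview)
Your proposal is correct and follows essentially the same route as the paper: apply the symplectic result to $D^*Q$ with the local system $\sigma$, then transport via the Viterbo isomorphism, which is known to intertwine $\mu$, $\eta$, $\lambda_\cD$, and $\Delta$. One small inaccuracy: the orientability hypothesis is not what ensures the existence of $\sigma$ (the transgression of $w_2$ exists regardless); rather, orientability is used to guarantee that $D^*Q$ is a strongly $R$-essential Weinstein domain and that the BV structure is untwisted in the sense of~\cite[\S10]{Abouzaid-cotangent}.
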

We deduce this theorem from the corresponding results for reduced symplectic homology in \S\ref{sec:SH-H}.

To phrase the bialgebra counterpart of Theorem~\ref{thm:HS1-Lie-algebra} we introduce \emph{reduced string homology}
$$
\ol\H_*^{S^1}\Lambda=\H_*^{S^1}\Lambda/\chi(Q)[\mathrm{pt}].
$$
Here $\chi(Q)[\mathrm{pt}]\in\H_{-n}^{S^1}\Lambda$ is the image under the ``erase" map $E:\H_*\Lambda\to \H_*^{S^1}\Lambda$ of the class $\chi(Q)[\mathrm{pt}]\in\H_{-n}\Lambda$. As a consequence of the definition, the map $E$ induces a map between reduced homologies
$$
E:\ol\H_*\Lambda\to \ol\H_*^{S^1}\Lambda. 
$$
That the map $M$ also induces a map between reduced homologies 
$$
M:\ol\H_*^{S^1}\Lambda\to \ol\H_{*+1}\Lambda
$$
follows from the fact that the inclusion $i: Q\hookrightarrow \Lambda$, which is $S^1$-equivariant (for the trivial $S^1$-action on $Q$), induces a map of Gysin sequences 
$$
\xymatrix{
H_*\Lambda \ar[r]^E & H_*^{S^1}\Lambda \ar[r] & H_{*-2}^{S^1}\Lambda \ar[r]^M & H_{*-1}\Lambda \\
H_*Q \ar@{>->}[u]_{i_*} \ar@{>->}[r]^E & H_*^{S^1}Q \ar[u]_{i_*^{S^1}} \ar@{->>}[r] & H_{*-2}^{S^1}Q \ar[u]^{i_*^{S^1}} \ar[r]^0 & H_{*-1}Q \ar@{>->}[u]_{i_*} 
}
$$
The connecting map in the Gysin sequence for $Q$ vanishes because the $S^1$-action is trivial. As a consequence, the map $M$ factors through $\H_*^{S^1}\Lambda/i_*^{S^1}(\H_*^{S^1}Q)$, and in particular also through the quotient of $\H_*^{S^1}\Lambda$ by the smaller subgroup $i_*^{S^1}(\chi(Q)[\mathrm{pt}])$.
\begin{remark}
The maps $M$ and $E$ also induce maps with source, respectively target, $\H_*^{S^1}\Lambda/\chi(Q)\cdot i_*^{S^1}(\H_*^{S^1}(\mathrm{pt}))$. Our motivation to define reduced string homology as $\H_*^{S^1}\Lambda/\chi(Q)[\mathrm{pt}]$ is that this is the \emph{largest} quotient on which the maps $M$ and $E$ are defined with target, respectively source, $\ol\H_*\Lambda$. An algebraic structure that exists on $\ol\H_*^{S^1}\Lambda$ will also exist on smaller quotients. 
\end{remark}
\begin{definition}
The \emph{string bracket} and \emph{string cobracket} on $\ol\H_*^{S^1}\Lambda$ are defined as 
$$
\beta^{S^1} = E\mu (M\otimes M),\qquad \gamma_\cD^{S^1}=(E\otimes E)\lambda_\cD M,
$$
where $\mu$ and $\lambda_\cD$ are the loop product and loop coproduct on reduced loop homology, and $E$, $M$ are understood to act between reduced homologies.
\end{definition}

The string cobracket depends on the continuation data $\cD$ through the loop coproduct. Following~\cite{CHO-MorseFloerGH}, if $H_1Q=0$ then the loop coproduct does not depend on $\cD$, and therefore the string cobracket does not depend on $\cD$ either. It would be interesting to investigate more thoroughly the dependence of $\gamma_\cD^{S^1}$ on $\cD$, in the manner of~\cite{CHO-MorseFloerGH}. 

\begin{theorem} \label{thm:HS1-Lie-bialgebra}
\qquad 

Reduced string homology $(\ol\H_*^{S^1}\Lambda,\beta^{S^1},\gamma_\cD^{S^1})$ is a Lie bialgebra whenever $\dim Q \ge 4$. 
\end{theorem}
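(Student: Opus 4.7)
The plan is to derive the Lie bialgebra structure on reduced string homology as a purely formal algebraic consequence of the BV unital infinitesimal bialgebra structure on $\ol\H_*\Lambda$ provided by Theorem~\ref{thm:H-BV-bialgebra}, together with the Gysin relations $\Delta = ME$ and $EM = 0$ (which imply also $\Delta M = MEM = 0$ and $E\Delta = EME = 0$). The argument parallels and extends the classical Chas--Sullivan derivation of the Lie bracket on $S^1$-equivariant string homology from the BV structure; the descent to the reduced quotients is respected since $E$ and $M$ are already shown to be well-defined on reduced homologies just above the statement of the theorem.

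First I would verify that $\beta^{S^1}$ satisfies graded antisymmetry and Jacobi, and that $\gamma_\cD^{S^1}$ satisfies graded co-antisymmetry and co-Jacobi. Antisymmetry of $\beta^{S^1}$ is immediate from commutativity of $\mu$; its Jacobi identity is obtained from the Jacobi identity~\eqref{eq:Jacobi-graded} for $\beta = [\Delta,\mu]$ on $\ol\H_*\Lambda$ by postcomposition with $E$ and substitution of $M(-)$ for each input, using $\Delta M = 0 = E\Delta$ to collapse the contributions $\mu(\Delta \otimes 1)$ and $\mu(1 \otimes \Delta)$ of $\beta$ when sandwiched between $E$ on the outside and $M \otimes M$ on the inside. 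Dually, the co-antisymmetry and co-Jacobi identity for $\gamma_\cD^{S^1}$ follow from~\eqref{eq:coJacobi} for $\gamma_\cD = [\Delta, \lambda_\cD]$ by precomposition with $M$ and postcomposition with $E^{\otimes 3}$, by the same mechanism.

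The main obstacle is the Lie bialgebra compatibility (cocycle) identity relating $\beta^{S^1}$ and $\gamma_\cD^{S^1}$. The plan is to substitute the definitions, yielding $\gamma_\cD^{S^1}\beta^{S^1}$ in the form $(E \otimes E)\lambda_\cD\Delta\mu(M \otimes M)$, and then use the identity of Lemma~\ref{lemma:rel_mu_lambda_beta_gamma} to rewrite $(E \otimes E)(\gamma_\cD\mu - \lambda_\cD\beta)(M \otimes M)$. The four ``geometric'' terms on the right-hand side of Lemma~\ref{lemma:rel_mu_lambda_beta_gamma}, once sandwiched in this way, will assemble into the desired adjoint-action expression for the Lie bialgebra cocycle relation, using once again $E\Delta = \Delta M = 0$ to simplify the appearances of $\beta$ and $\gamma_\cD$ within those terms. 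The delicate step, which I expect to require the most care, is showing that the three ``unit'' terms carrying a factor of $\lambda_\cD\eta$ vanish after this sandwiching. The inputs here are identities~\eqref{eq:Delta_and_eta} and~\eqref{eq:Delta_and_lambda_eta} together with the observation that $\lambda_\cD\eta$ is supported on constant loops, where $E$ annihilates the image of $\Delta$; these cancellations should reduce the contribution of the unit terms to zero and the compatibility identity will emerge.
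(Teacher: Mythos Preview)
Your strategy has a genuine gap: sandwiching the Jacobi identity for $\beta$, the co-Jacobi identity for $\gamma_\cD$, and Lemma~\ref{lemma:rel_mu_lambda_beta_gamma} between $E$'s and $M$'s yields only tautologies $0=0$, not the desired identities for $\beta^{S^1}$ and $\gamma_\cD^{S^1}$. The reason is that $\beta^{S^1}$ is \emph{not} $E\beta(M\otimes M)$. Indeed, using $\Delta M=0$ one finds
\[
\beta(M\otimes M)=\Delta\mu(M\otimes M)=ME\mu(M\otimes M)=M\beta^{S^1},
\]
so $E\beta(M\otimes M)=EM\beta^{S^1}=0$. Hence each summand $E\beta(1\otimes\beta)(M^{\otimes 3})$ already vanishes, and the sandwiched Jacobi identity is vacuous. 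The same mechanism makes the sandwich of Lemma~\ref{lemma:rel_mu_lambda_beta_gamma} vacuous: after $E\Delta=0=\Delta M$, both $(E\otimes E)\gamma_\cD\mu(M\otimes M)$ and $(E\otimes E)\lambda_\cD\beta(M\otimes M)$ equal $(E\otimes E)\lambda_\cD\Delta\mu(M\otimes M)$, so the sandwiched left-hand side is $0$; and on the right-hand side the ``geometric'' terms cancel in pairs (for instance $(\beta\otimes 1)(1\otimes\lambda)$ and $(\mu\otimes 1)(1\otimes\gamma)$ both reduce to $\pm(\beta^{S^1}\otimes 1)(1\otimes\gamma_\cD^{S^1})$ with opposite signs).

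The paper's approach avoids this collapse by sandwiching identities that contain the \emph{primitive} operations $\mu$, $\lambda_\cD$, $\Delta$ separated, rather than the derived operations $\beta$, $\gamma_\cD$. For Jacobi, sandwich the $7$-term relation~\eqref{eq:7_term_for_Delta_and_mu}: the left-hand side $E\Delta\mu(\mu\otimes 1)M^{\otimes 3}$ vanishes, the terms $\mu(\mu\otimes 1)(\Delta\otimes 1\otimes 1)$ etc.\ vanish by $\Delta M=0$, and the surviving terms $E\mu(\Delta\mu\otimes 1)M^{\otimes 3}$ become $\beta^{S^1}(\beta^{S^1}\otimes 1)$ after replacing the inner $\Delta$ by $ME$. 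Co-Jacobi follows dually from~\eqref{eq:7_term_for_Delta_and_lambda}. For Drinfel'd compatibility, sandwich the $9$-term reduction of the $11$-term relation (Remark~\ref{rmk:11-term-is-9-term-for-SH}): the left-hand side $(E\otimes E)\lambda_\cD\Delta\mu(M\otimes M)$ is exactly $\gamma_\cD^{S^1}\beta^{S^1}$, the first four right-hand terms vanish by $\Delta M=0=E\Delta$, and each of the last four terms $(\mu\otimes 1)(1\otimes\Delta\otimes 1)(1\otimes\lambda)(\cdots)$ yields a summand of the cocycle expression after the inner $\Delta=ME$ splits into an $M$ feeding $\beta^{S^1}$ and an $E$ feeding $\gamma_\cD^{S^1}$. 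No argument about unit terms is needed, since the two $\lambda_\cD\eta$ terms of~\eqref{eq:11term} are already absent from the $9$-term version.
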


We prove this result in \S\ref{sec:SH-H}. Here we discuss two ramifications of this theorem.

(1) Theorem~\ref{thm:HS1-Lie-bialgebra} implies the main result of Chas and Sullivan from~\cite{CS-Lie} under the assumption that the map $i_*^{S^1}:H_*^{S^1}Q\to H_*^{S^1}\Lambda$ is injective. 

This assumption ensures that the relative homology group $\H_*^{S^1}(\Lambda,\Lambda_0)$ is identified with the quotient $\H_*^{S^1}\Lambda/\H_*^{S^1}Q$. Then the operation $E\mu(M\otimes M)$, which is \emph{a priori} defined on $\H_*^{S^1}\Lambda$, descends to an operation denoted $\beta^{S^1}$ on $\H_*^{S^1}\Lambda/\H_*^{S^1}Q=\H_*^{S^1}(\Lambda,\Lambda_0)$. On the other hand, the maps $M$ and $E$ have counterparts acting between loop/string homologies of the pair $(\Lambda,\Lambda_0)$, and the coproduct $\lambda_\cD$ descends to the Sullivan-Goresky-Hingston coproduct $\lambda$ on $H_*(\Lambda,\Lambda_0)$ (that does not depend on $\cD$), resulting in a well defined cobracket $\gamma^{S^1}=(E\otimes E)\lambda_\cD M$. 
With these definitions, we see that the operations on $\H_*^{S^1}(\Lambda,\Lambda_0)$ descend from the operations on $\ol\H_*^{S^1}\Lambda$. As a consequence, under the above injectivity assumption for $i_*^{S^1}$ and for $\dim Q \ge 4$, Theorem~\ref{thm:HS1-Lie-bialgebra} directly implies the following result. 

\begin{theorem}[{Chas-Sullivan~\cite{CS-Lie}}]  \label{cor:HS1-relQ-Lie-bialgebra}
\qquad 

String homology $(\H_*^{S^1}(\Lambda,\Lambda_0),\beta^{S^1},\gamma^{S^1})$ is a Lie bialgebra. \qed
\end{theorem}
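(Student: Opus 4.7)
The plan is to derive the theorem as a purely formal consequence of the BV unital infinitesimal bialgebra structure on $\ol\H_*\Lambda$ furnished by Theorem~\ref{thm:H-BV-bialgebra}, combined with the Gysin sequence~\eqref{eq:Gysin}, mirroring the classical argument that upgrades a BV algebra on $\H_*\Lambda$ to the Chas--Sullivan Lie algebra on $\H_*^{S^1}\Lambda$. First I would confirm that $\beta^{S^1}=E\mu(M\otimes M)$ and $\gamma^{S^1}_\cD=(E\otimes E)\lambda_\cD M$ are well-defined on the quotient $\ol\H_*^{S^1}\Lambda$, which is already guaranteed by the discussion preceding the theorem statement.

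The formal inputs used throughout are $\Delta=ME$ together with $EM=0$, which yield the two vanishings $E\Delta=0$ and $\Delta M=0$. Writing $\beta=[\Delta,\mu]$ and $\gamma=[\Delta,\lambda_\cD]$ for the derived bracket and cobracket on $\ol\H_*\Lambda$ from~\eqref{eq:beta-gamma}, these vanishings produce directly the intertwining relations
\begin{equation*}
\beta(M\otimes M)=M\,\beta^{S^1},\qquad (E\otimes E)\,\gamma=\gamma^{S^1}_\cD\,E,
\end{equation*}
and the graded (co)commutativity of $\mu$ and $\lambda_\cD$ translates into graded anti-symmetry of $\beta^{S^1}$ and $\gamma^{S^1}_\cD$. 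For the Jacobi identity of $\beta^{S^1}$, I would expand $\beta^{S^1}(\beta^{S^1}(x,y),z)=E\mu(\beta(Mx,My),Mz)$, apply the Poisson identity~\eqref{eq:Poisson} to move the inner $\beta$ through the outer $\mu$, and use $E\Delta=0$ to reduce $E\beta(Mx,\,\cdot\,)$ to $\pm\beta^{S^1}(x,E\,\cdot\,)$; summing over cyclic permutations produces graded Jacobi. The dual argument via~\eqref{eq:coPoisson} yields coJacobi for $\gamma^{S^1}_\cD$.

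The delicate step is the Drinfel'd $1$-cocycle compatibility between $\beta^{S^1}$ and $\gamma^{S^1}_\cD$, and here the key input is Remark~\ref{rmk:11-term-is-9-term-for-SH}: on reduced loop homology (via its identification with reduced symplectic homology of $D^*Q$) one has $(1\otimes\Delta)\lambda_\cD\eta=0$, so the 11-term relation~\eqref{eq:11term} collapses to a 9-term relation in which the $\lambda\eta$-correction is absent. I would sandwich this 9-term identity by $(E\otimes E)(\,\cdot\,)(M\otimes M)$: the four terms $\lambda_\cD\mu(\Delta\otimes 1)$, $\lambda_\cD\mu(1\otimes\Delta)$, $(\Delta\otimes 1)\lambda_\cD\mu$, and $(1\otimes\Delta)\lambda_\cD\mu$ all vanish by $\Delta M=0$ or $E\Delta=0$; the left-hand side $\lambda_\cD\Delta\mu$ becomes $\gamma^{S^1}_\cD\beta^{S^1}$ via the intertwining relations above; and each of the two surviving mixed terms transforms, by the same substitution $\Delta=ME$, into one of $(\beta^{S^1}\otimes 1)(1\otimes\gamma^{S^1}_\cD)(1+\tau)$ and $(1\otimes\beta^{S^1})(\gamma^{S^1}_\cD\otimes 1)(1+\tau)$. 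The hardest part will be the careful bookkeeping of the Koszul signs coming from the odd degree of $\lambda_\cD$ (hence of $\gamma^{S^1}_\cD$) and the degree shift carried by $M$; once these are tracked consistently with the conventions of \S\ref{sec:BVuiBVFrob}, the identity reads as exactly the Lie bialgebra compatibility.
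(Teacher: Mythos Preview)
Your proposal is essentially the paper's proof of Theorem~\ref{thm:HS1-Lie-bialgebra} (the statement on $\ol\H_*^{S^1}\Lambda$), from which Theorem~\ref{cor:HS1-relQ-Lie-bialgebra} is then obtained by the descent argument spelled out just above the statement: under the injectivity assumption on $i_*^{S^1}$, one has $\H_*^{S^1}(\Lambda,\Lambda_0)\cong \H_*^{S^1}\Lambda/\H_*^{S^1}Q$ and the operations descend. You should say this final step explicitly; your write-up only treats $\ol\H_*^{S^1}\Lambda$.

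Your treatment of Drinfel'd compatibility is exactly the paper's: sandwich the 9-term relation by $(E\otimes E)(\,\cdot\,)(M\otimes M)$, kill four terms via $E\Delta=0$ and $\Delta M=0$, and convert the rest using $\Delta=ME$. The paper records the outcome as \eqref{eq:drinfeld_in_string_topology}, with $(1-\tau)$ rather than $(1+\tau)$; the sign flip comes from commuting the odd map $M\otimes M$ past $\tau$, which is precisely the bookkeeping you flag.

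For Jacobi and coJacobi, however, your route via the Poisson and coPoisson identities has a gap as written. In $E\mu(\beta(Mx,My),Mz)$ the outer operation is $\mu$ and the inner is $\beta$; the Poisson identity \eqref{eq:Poisson} tells you how $\beta$ distributes over an \emph{inner} product, not how $\mu$ interacts with an inner bracket, so it does not let you ``move the inner $\beta$ through the outer $\mu$''. Your intertwining $E\beta(Mx,\,\cdot\,)=\pm\beta^{S^1}(x,E\,\cdot\,)$ is correct, but it is of no help until an outer $E\beta$ appears, and Poisson does not produce one. The paper avoids this entirely by sandwiching the 7-term relations \eqref{eq:7_term_for_Delta_and_mu} and \eqref{eq:7_term_for_Delta_and_lambda} themselves with $M^{\otimes 3}$ and $E$ (resp.\ $M$ and $E^{\otimes 3}$): the terms with a bare $\Delta$ at an input or output die by $\Delta M=0$ or $E\Delta=0$, and the surviving $\mu(\Delta\mu\otimes 1)$ terms (resp.\ $(\lambda\Delta\otimes 1)\lambda$ terms) reassemble, via $\Delta=ME$, into $\beta^{S^1}(\beta^{S^1}\otimes 1)(1+\sigma+\sigma^2)=0$ and its dual. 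Replace your Poisson/coPoisson step by this direct sandwiching and the argument goes through.
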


\begin{remark} A proof of this result is sketched in~\cite{CS-Lie}. The original statement also contains involutivity of this Lie bialgebra structure, whose proof is beyond the scope of this paper. 
\end{remark}

(2) Theorem~\ref{thm:HS1-Lie-bialgebra} admits a generalization in symplectic homology. Let $W$ be a Liouville domain of dimension $2n$, and let $SH_*^{S^1}(W)$ be the $S^1$-equivariant symplectic homology of $W$ (see~\cite{BO3+4,BOGysin} and~\cite[\S8.2]{CO} for its definition and properties). This fits into the Gysin sequence
$$
{\tiny
\xymatrix
@C=15pt
{
SH_*(W) \ar[r]^E & SH_*^{S^1}(W) \ar[r] & SH_{*-2}^{S^1}W) \ar[r]^M & SH_{*-1}(W) \\
H_{*+n}(W,\p W) \ar @{->} [u]_{i_*} \ar@{>->}[r]^E & H_{*+n}^{S^1}(W,\p W) \ar[u]_{i_*^{S^1}} \ar@{->>}[r] & H_{*-2+n}^{S^1}(W,\p W) \ar[u]^{i_*^{S^1}} \ar[r]^0 & H_{*-1+n}(W,\p W) \ar@{->}[u]_{i_*} 
}
}
$$
Assume now that $W$ is a strongly $R$-essential Weinstein domain, and recall from~\S\ref{sec:reduced} the reduced symplectic homology group $\ol{SH}_*(W)$.
This can be equivalently written as $\ol{SH}_*(W)=SH_*(W)/i_*c_*H_{*+n}W$, where $c_*:H_{*+n}W\to H_{*+n}(W,\p W)$ is the canonical map induced by the inclusion $W\hookrightarrow (W,\p W)$. We define analogously the \emph{reduced $S^1$-equivariant symplectic homology group} 
$$
\ol{SH}_*^{S^1}(W)=SH_*^{S^1}(W)/i_*^{S^1}c_*^{S^1}H_{*+n}^{S^1}W,
$$
and further let $\ol{S\H}_*^{S^1}(W)=\ol{SH}_{*+n}^{S^1}(W)$.

Arguing as for $\ol{\H}_*^{S^1}\Lambda$, one shows that the ``erase" and ``mark" maps $E$ and $M$ descend to maps between reduced symplectic homologies $E:\ol{S\H}_*(W)\to \ol{S\H}_*^{S^1}(W)$ and $M:\ol{S\H}_*^{S^1}(W)\to \ol{S\H}_{*+1}(W)$, and we can define the \emph{bracket} $\beta^{S^1}$ and the \emph{cobracket} $\gamma^{S^1}$ on $\ol{S\H}_*^{S^1}(W)$ by 
$$
\beta^{S^1}=E\mu(M\otimes M),\qquad \gamma^{S^1}=(E\otimes E)\lambda_\cD M,
$$
where $\mu$ and $\lambda_\cD$ are the product and coproduct on $\ol{S\H}_*(W)$ as in~\S\ref{sec:reduced}. 

\begin{theorem} \label{thm:SHS1-Lie-bialgebra}
%
Reduced $S^1$-equivariant symplectic homology\\ $(\ol{S\H}_*^{S^1}(W),\beta^{S^1},\gamma_\cD^{S^1})$ is a Lie bialgebra whenever $\dim W\ge 8$. 
\end{theorem}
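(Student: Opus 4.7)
The plan is to deduce Theorem~\ref{thm:SHS1-Lie-bialgebra} as a purely formal algebraic consequence of the BV unital infinitesimal bialgebra structure on $\ol{S\H}_*(W)$ given by Theorem~\ref{thm:BV_unital_infinitesimal_reduced}, following the very same strategy by which Theorem~\ref{thm:HS1-Lie-bialgebra} is derived from Theorem~\ref{thm:H-BV-bialgebra}. The dimensional bound $\dim W \ge 8$ is inherited from Theorem~\ref{thm:BV_unital_infinitesimal_reduced}, and nothing additional of analytic nature needs to be established.

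First, I would set up the equivariant-to-nonequivariant Gysin machinery at the reduced level. The Gysin long exact sequence for (shifted) symplectic homology provides maps $E:S\H_*(W) \to S\H_*^{S^1}(W)$ and $M:S\H_*^{S^1}(W) \to S\H_{*+1}(W)$ with $ME=\Delta$ and $EM=0$. Exactly as in the loop space discussion preceding Theorem~\ref{thm:HS1-Lie-bialgebra}, one checks that these maps respect the subspaces by which one quotients to form the reduced theories, yielding maps $E:\ol{S\H}_*(W) \to \ol{S\H}_*^{S^1}(W)$ and $M:\ol{S\H}_*^{S^1}(W) \to \ol{S\H}_{*+1}(W)$ still satisfying $ME=\Delta$, $EM=0$, and hence $\Delta M=MEM=0$ and $E\Delta=EME=0$ on the respective reduced groups. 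The crucial intermediate identity $M\beta^{S^1}(x,y)=\beta(Mx,My)$ follows at once from the definition $\beta=[\Delta,\mu]$ together with $\Delta M=0$, and a dual computation gives $E\gamma$-type compatibility needed to express $\gamma^{S^1}$ via $\gamma$.

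Second, I would derive the Jacobi identity for $\beta^{S^1}$ from the Poisson relation \eqref{eq:Poisson}. Expanding
\[
\beta^{S^1}\bigl(x,\beta^{S^1}(y,z)\bigr)=E\mu\bigl(Mx,M\beta^{S^1}(y,z)\bigr)=E\mu\bigl(Mx,\beta(My,Mz)\bigr)
\]
and applying the Poisson relation produces terms of the form $E\beta(\mu(Mx,My),Mz)$, which simplify to $\pm\beta^{S^1}(\beta^{S^1}(x,y),z)$ via the identity $E\beta=-E\mu(\Delta\otimes 1+1\otimes\Delta)$ obtained from $E\Delta=0$. Summing over cyclic permutations yields the graded Jacobi identity \eqref{eq:Jacobi-graded} for $\beta^{S^1}$. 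The coJacobi identity \eqref{eq:coJacobi} for $\gamma^{S^1}$ is obtained dually, using the coPoisson relation \eqref{eq:coPoisson} together with $E\Delta=0$ and $\Delta M=0$.

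Finally, the Drinfel'd compatibility between $\beta^{S^1}$ and $\gamma^{S^1}$ is the derivation property of $\gamma^{S^1}$ with respect to $\beta^{S^1}$. This is what I expect to be the main technical obstacle, and it should follow from Lemma~\ref{lemma:rel_mu_lambda_beta_gamma}, which relates $\gamma\mu-\lambda\beta$ to four derivation-type terms plus three correction terms involving $\lambda\eta$ (a feature of the unital infinitesimal nature of the bialgebra, absent from the ordinary Lie bialgebra compatibility). The point will be to show that after pre- and post-composition with the appropriate factors of $M$ and $E$, the three $\lambda\eta$-correction terms vanish. I expect this to follow from the combination of three ingredients: Remark~\ref{rmk:11-term-is-9-term-for-SH} (which already discards the $\lambda\eta$-terms from the 11-term relation on reduced symplectic homology because $(1\otimes\Delta)\lambda\eta=0$), the identity $\Delta M=0=E\Delta$ on the reduced theory, and the fact that $\lambda\eta$ is supported at action $\le\eps$ where the BV operator vanishes at chain level, so that each offending factor in the correction eventually meets an $M$- or $E$-composition that kills it. Once these cancellations are checked, the identity in Lemma~\ref{lemma:rel_mu_lambda_beta_gamma} becomes, after composing with $M\otimes M$ on inputs and $E\otimes E$ on outputs, precisely the Lie bialgebra cocycle condition for $(\beta^{S^1},\gamma^{S^1})$, completing the proof.
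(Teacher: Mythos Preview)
Your overall strategy --- deduce everything formally from Theorem~\ref{thm:BV_unital_infinitesimal_reduced} via the Gysin maps $E$ and $M$, with $ME=\Delta$, $EM=0$, hence $E\Delta=0$ and $\Delta M=0$ on the reduced theories --- is exactly the paper's approach, and the paper's proof literally says it is ``verbatim the same'' as the proof of Theorem~\ref{thm:HS1-Lie-bialgebra}. Your derivation of Jacobi from the Poisson relation \eqref{eq:Poisson} is different from the paper's (which sandwiches the $7$-term relation \eqref{eq:7_term_for_Delta_and_mu} with $M^{\otimes 3}$ and $E$), but it is a legitimate alternative: once you know $M\beta^{S^1}=\beta(M\otimes M)$ and $E\beta=-E\mu(\Delta\otimes 1+1\otimes\Delta)$, the Poisson identity plus $\Delta M=0$ does rearrange into cyclic Jacobi. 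The paper's route is shorter, but yours works.

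The genuine gap is in your argument for Drinfel'd compatibility. You propose to sandwich Lemma~\ref{lemma:rel_mu_lambda_beta_gamma} with $M\otimes M$ and $E\otimes E$, but that lemma is derived \emph{only} from the unital infinitesimal relation and is explicitly independent of the 11-term relation (see the footnote to Remark~\ref{rem:no-intermediate1}). Concretely, after sandwiching, the left side $\gamma\mu-\lambda\beta$ collapses to $0$: both $(E\otimes E)\gamma\mu(M\otimes M)$ and $(E\otimes E)\lambda\beta(M\otimes M)$ equal $(E\otimes E)\lambda\Delta\mu(M\otimes M)$ once you use $E\Delta=0$ and $\Delta M=0$. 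On the right side, the four derivation-type terms also cancel in pairs for the same reason (e.g.\ $(E\beta\otimes E)(M\otimes\lambda M)+(E\mu\otimes E)(M\otimes\gamma M)=0$). So Lemma~\ref{lemma:rel_mu_lambda_beta_gamma} becomes the tautology $0=0$ after sandwiching, and carries no information about $\gamma_\cD^{S^1}\beta^{S^1}$.

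The correct ingredient is the one the paper uses: the $9$-term relation (the $11$-term relation with the two $\lambda\eta$-terms dropped, which is valid on reduced symplectic homology by Remark~\ref{rmk:11-term-is-9-term-for-SH}). Pre-compose it with $M\otimes M$ and post-compose with $E\otimes E$. The left side $\lambda\Delta\mu$ becomes $\gamma_\cD^{S^1}\beta^{S^1}$; the first four right-hand terms $\lambda\mu(\Delta\otimes 1)$, $\lambda\mu(1\otimes\Delta)$, $(\Delta\otimes 1)\lambda\mu$, $(1\otimes\Delta)\lambda\mu$ vanish by $\Delta M=0$ and $E\Delta=0$; and the remaining four terms give precisely the cocycle condition \eqref{eq:drinfeld_in_string_topology}. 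This is where the $11$-term relation (and hence the dimension bound) is genuinely used.
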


\subsubsection{Frobenius algebras} Rabinowitz loop homology $\widehat\H_*\Lambda$ was introduced in~\cite{CHO-PD} as a new object of interest in string topology. It was originally defined as Rabinowitz Floer homology $S\H_*(S^*Q;\sigma)$, where $S^*Q=\p D^*Q$ is the unit sphere cotangent bundle of $Q$, seen as the boundary of the unit disc cotangent bundle $D^*Q$, and $\sigma$ is a suitable local system on $\Lambda$, see~\S\ref{sec:SH-H} below. It was proved in~\cite{CHO-PD} that $\widehat\H_*\Lambda$ carries the structure of an odd Frobenius algebra with product $\mu$, coproduct $\lambda$, unit $\eta$, and counit $\eps$. Since it is defined as a symplectic homology group, it also carries a BV operator $\Delta$, see also the discussion in~\S\ref{sec:SH-H} below. 

\begin{theorem} \label{thm:RFH-BV-Frobenius}
Rabinowitz loop homology $(\widehat\H_*\Lambda,\mu,\lambda,\eta,\eps,\Delta)$ is an odd BV Frobenius algebra.  
\end{theorem}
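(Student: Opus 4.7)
The plan is to deduce Theorem~\ref{thm:RFH-BV-Frobenius} from Theorem~\ref{thm:BVFrob} applied to the Liouville domain $W=D^*Q$, the unit disc cotangent bundle of $Q$, whose contact boundary is $\p W = S^*Q$. Under the Rabinowitz--Viterbo isomorphism $\widehat\H_*\Lambda \simeq S\H_*(S^*Q;\sigma)$ of~\cite{CHO-MorseFloerGH}, with $\sigma$ the local system on $\Lambda$ recalled in~\S\ref{sec:introduction}, Theorem~\ref{thm:BVFrob} endows $\widehat\H_*\Lambda$ with an odd BV Frobenius algebra structure. It then suffices to check that this transported structure coincides with the structure claimed in the theorem.

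\textbf{Identification of the algebraic structures.} The correspondence of the Frobenius data $(\mu,\lambda,\eta,\eps)$ under the Viterbo isomorphism was already established in~\cite{CHO-PD,CHO-MorseFloerGH}, so only the BV operator requires attention. The Floer-theoretic $\Delta$ on $S\H_*(S^*Q;\sigma)$ recalled in~\S\ref{sec:BVoperator} comes from counts of Floer cylinders parametrized by $\cM_{1,1}\simeq S^1$, where the parameter rotates the asymptotic marker at an end. On the topological side, $\widehat\H_*\Lambda$ carries a natural BV operator induced by the loop-reparametrization $S^1$-action on $\Lambda$. Compatibility of these two operators with the Viterbo isomorphism is classical for symplectic homology of cotangent bundles (see, e.g.,~\cite[Chapter~10]{Abouzaid-cotangent}); the extension to the Rabinowitz setting is formal once one observes, as in the proof of Theorem~\ref{thm:BVFrob}, that $\Delta$ commutes up to chain homotopy with the continuation maps and the secondary continuation map $\vec\boldc$ used to assemble Rabinowitz Floer homology from ordinary Floer homology of the approximating Hamiltonians $H_{u,v}$.

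\textbf{Transfer of the relations.} With all six pieces $(\mu,\lambda,\eta,\eps,\Delta)$ identified, the Frobenius relation~\eqref{eq:Frobenius}, the $7$-term relation~\eqref{eq:7_term_for_Delta_and_mu}, and the BV Frobenius relation~\eqref{eq:BVFrobenius-relation} all transport from $S\H_*(S^*Q;\sigma)$ to $\widehat\H_*\Lambda$, yielding Theorem~\ref{thm:RFH-BV-Frobenius}.

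\textbf{Main obstacle.} The delicate step is the compatibility of the two BV operators under the Viterbo isomorphism. A direct approach is to construct a moduli space of hybrid Morse-Floer configurations parametrized by $\cM_{1,1}$, matching Floer cylinders with rotating markers at one end to Morse representatives of the $S^1$-orbit of a critical point of the energy functional. The count of rigid elements of this moduli space provides the required chain homotopy. Care is needed with orientations, with the twisting by the local system $\sigma$ (which accounts for spin/$w_2$ obstructions on cotangent bundles), and with the action-window truncation used to define $\widehat\H_*\Lambda$, but no new conceptual input beyond what is already present in~\cite{CHO-MorseFloerGH} and~\S\ref{sec:RFH} is required.
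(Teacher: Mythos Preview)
Your reduction to Theorem~\ref{thm:BVFrob} is the right idea and matches the paper's approach. However, you have manufactured a ``main obstacle'' that does not exist. In this paper, Rabinowitz loop homology is \emph{defined} as $\widehat\H_*\Lambda := S\H_*(S^*Q;\sigma)$, and the BV operator $\Delta$ appearing in the statement is the Floer-theoretic one from~\S\ref{sec:BVoperator}, not a separately constructed topological operator induced by the $S^1$-action on $\Lambda$. There is therefore no Viterbo-type comparison of BV operators to perform, no hybrid moduli space to build, and no ``Rabinowitz--Viterbo isomorphism'' to invoke. The paper's proof is accordingly one line: Theorem~\ref{thm:RFH-BV-Frobenius} is literally Theorem~\ref{thm:BVFrob} for $W=D^*Q$, with the sole remark that the presence of the local system $\sigma$ does not affect the argument (it only modifies signs in the counts, and the monodromy along $S^1$-orbits is trivial, as noted in~\S\ref{sec:SH-H}).

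Your discussion of compatibility of $\Delta$ with continuation maps and with $\vec\boldc$ is not wrong, but it is already part of the proof of Theorem~\ref{thm:BVFrob} itself, not an additional step needed here.
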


\begin{proof}
By definition $\widehat\H_*\Lambda=S\H_*(S^*Q;\sigma)$, so this is a variant of Theorem~\ref{thm:BVFrob} in the presence of the local system $\sigma$. The proof is verbatim the same. The local system $\sigma$ is discussed in more detail in~\S\ref{sec:SH-H}.
\end{proof}

\begin{remark}
Rabinowitz loop homology was studied from a Morse theoretic perspective in~\cite{CHO-MorseFloerGH} using the formalism of cones and $A_2^+$ structures from~\cite{CO-cones}. It is an interesting question to upgrade the notion of an $A_2^+$ structure from~\cite{CO-cones} to the BV setting. See also the conjectures below.
\end{remark}

The next result follows essentially from~\cite[Theorem~7.8]{CO-reduced}. 

\begin{theorem} \label{thm:H-double} Assume $H_1Q=0$ and $\dim Q\ge 3$. The odd Frobenius algebra structure on $\widehat\H_*\Lambda$ is isomorphic to the double of the odd unital infinitesimal structure on $\ol\H_*\Lambda$. 
\end{theorem}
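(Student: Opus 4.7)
The non-BV content of this theorem is already contained in~\cite[Theorem~7.8]{CO-reduced}, which under the present hypotheses produces an explicit isomorphism $\Phi:D(\ol\H_*\Lambda)\xrightarrow{\simeq}\widehat\H_*\Lambda$ of odd Frobenius algebras. So the plan is to first verify that the BV Drinfel'd double is defined in our setting, and then to show that the same map $\Phi$ intertwines the BV operators. For the first point, one needs $\lambda_\cD\eta=0$ in order to apply Proposition~\ref{prop:BVui-BVFrob}. When $H_1Q=0$, the coproduct $\lambda_\cD$ is independent of $\cD$ by~\cite{CHO-MorseFloerGH}, and the class $\lambda_\cD\eta$ is represented by a low-action cycle supported on constant loops which, by an action/index argument using $H_1Q=0$, must vanish. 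Hence $D(\ol\H_*\Lambda)$ carries a canonical odd BV Frobenius algebra structure.

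To show that $\Phi$ is BV-compatible, one exploits the chain-level picture behind Rabinowitz loop homology $\widehat\H_*\Lambda=S\H_*(S^*Q;\sigma)$. As discussed in~\S\ref{sec:RFH}, the underlying complex is built from Hamiltonian Floer complexes $FC_*(\pm H)$, and the BV operator on $\widehat\H_*\Lambda$ is induced by BV operators $\Delta_{\pm H}$ related by $\Delta_{-H}\equiv\Delta_H^\vee$ under the chain-level duality $FC_*(-H)\cong FC_*(H)^\vee$ (this identification was already a key input in the proof of Theorem~\ref{thm:BVFrob}). Passing to the limit and using the Tate vector space formalism of Appendix~\ref{sec:Tate}, the BV operator on $\widehat\H_*\Lambda$ restricts to $\Delta$ on the subalgebra $\ol\H_*\Lambda\subset\widehat\H_*\Lambda$ coming from the canonical map $\ol{S\H}_*(D^*Q;\sigma)\to S\H_*(S^*Q;\sigma)$, and after identifying the complementary Tate subalgebra with $\ol\H_*\Lambda^\vee[-|\lambda|]$ as in~\cite[Theorem~7.8]{CO-reduced} it acts there (up to sign) as $\Delta^\vee$. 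This matches exactly the BV operator on $D(\ol\H_*\Lambda)$ prescribed by Proposition~\ref{prop:BVui-BVFrob} on each of the two subalgebras.

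The off-diagonal parts of the BV operator, i.e.\ the components mixing $\ol\H_*\Lambda$ and $\ol\H_*\Lambda^\vee[-|\lambda|]$, are determined by the 7-term relations for $(\Delta,\mu)$ and $(\Delta,\lambda)$, the 11-term relation, and the BV Frobenius relation, together with nondegeneracy of the Frobenius pairing. A rigidity argument analogous to the one used in Appendix~\ref{app:BVui-BVTateFrob} (where the entire BV Frobenius structure on a Tate double is reconstructed from its restriction to a pair of complementary unital infinitesimal subalgebras satisfying $\lambda\eta=0$) then forces the two BV operators to coincide globally, since they already agree on the two subalgebras and are subject to the same set of algebraic relations.

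The main obstacle is keeping track of signs and orientation conventions in the identification $\Delta_{-H}\equiv\Delta_H^\vee$ relative to those implicit in the algebraic Drinfel'd double of Proposition~\ref{prop:BVui-BVFrob}; a related subtlety is that~\cite[Theorem~7.8]{CO-reduced} is stated without reference to BV structures, so upgrading its construction to the BV setting requires checking that the various Floer-theoretic ingredients used to build $\Phi$ (continuation maps, secondary continuation maps such as the $\vec\boldc$ of~\S\ref{sec:RFH}, and the splitting of the Tate exact sequence relating $\ol{S\H}_*$, $\ol{S\H}^{-*}$ and $\widehat\H_*$) all commute up to homotopy with the $S^1$-action on loops and hence with the BV operators.
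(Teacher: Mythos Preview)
You have misread the statement. Theorem~\ref{thm:H-double} concerns only the \emph{odd Frobenius algebra} structure on $\widehat\H_*\Lambda$ and the \emph{odd unital infinitesimal} structure on $\ol\H_*\Lambda$ --- there is no BV operator in the claim. The BV upgrade you spend most of your proposal on is precisely Conjecture~\ref{conj:H-doubles}(i), which the paper explicitly leaves open. So everything from ``So the plan is to first verify that the BV Drinfel'd double is defined\ldots'' onward is addressing a different, harder, and currently unproven statement.

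What remains --- your first sentence --- is close to the paper's actual proof, but you are too quick in saying that the non-BV content ``is already contained in'' \cite[Theorem~7.8]{CO-reduced}. That theorem gives formulas for the product, coproduct, etc.\ on $\widehat\H_*\Lambda$ in terms of the structure on $\ol\H_*\Lambda$, but those formulas involve a \emph{secondary continuation bivector}. The paper's proof observes that the hypothesis $H_1Q=0$ forces (i) a canonical splitting of $\widehat\H_*\Lambda$ as $\ol\H_*\Lambda\oplus \ol\H_*\Lambda^\vee[-|\lambda|]$, and (ii) the vanishing of this secondary continuation bivector, so that the formulas from \cite[Theorem~7.8]{CO-reduced} reduce exactly to the double construction described in the proof of Proposition~\ref{prop:BVui-BVFrob}. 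That vanishing is the substantive step linking \cite{CO-reduced} to the abstract double, and you have not mentioned it. Your remark about $\lambda_\cD\eta=0$ is relevant (the double construction in Appendix~\ref{app:BVui-BVTateFrob} uses the 3-term infinitesimal relation), but it is subsumed by the vanishing of the secondary continuation bivector rather than being a separate verification.
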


\begin{proof}
The assumption $H_1Q=0$ ensures that $\widehat\H_*\Lambda$ splits canonically as the double of $\ol\H_*\Lambda$. Moreover, the secondary continuation bivector that appears in the formulas from~\cite[Theorem~7.8]{CO-reduced} vanishes, and this shows that the product and the unit on $\widehat \H_*\Lambda$ coincide with the one on the double as described in the proof of Proposition~\ref{prop:BVui-BVFrob}. A similar computation shows that the coproduct on $\widehat\H_*\Lambda$ also coincides with the one on the double, and therefore the copairings coincide as well.
\end{proof}

The above theorem has a generalization in symplectic homology. 

\begin{theorem} \label{thm:SH-double} Let $W$ be a strongly essential Weinstein domain of dimension $2n\ge 6$ such that $H^{n-1}W=0$. The odd Frobenius algebra structure on $S\H_*(\p W)$ is isomorphic to the double of the odd unital infinitesimal structure on $\ol{S\H}_*(W)$. 
\end{theorem}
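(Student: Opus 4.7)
The plan is to mirror the proof of Theorem~\ref{thm:H-double}, working directly in symplectic homology rather than via the Viterbo isomorphism. The essential input is~\cite[Theorem~7.8]{CO-reduced}, which already expresses the Frobenius algebra structure on $S\H_*(\p W)$ in terms of the unital infinitesimal bialgebra structure on $\ol{S\H}_*(W)$, together with a \emph{secondary continuation bivector} correction term.

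First I would produce the canonical splitting of the underlying Tate vector space. The tautological exact sequence relating Rabinowitz Floer homology to reduced symplectic homology and its dual yields, under the strongly essential Weinstein hypothesis, an extension of $\ol{S\H}_*(W)$ by $\ol{S\H}_*(W)^\vee[-|\lambda|]$ whose total space is $S\H_*(\p W)$. The assumption $H^{n-1}W = 0$ is exactly what is needed to split this extension canonically; under the Viterbo dictionary (with $H^{n-1}D^*Q \cong H_1 Q$ by Poincar\'e duality on the orientable base), this is the symplectic counterpart of the condition $H_1 Q = 0$ used in Theorem~\ref{thm:H-double}.

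Next I would verify the hypothesis of Proposition~\ref{prop:BVui-BVFrob}: the same vanishing $H^{n-1} W = 0$ forces $\lambda \eta = 0$ on $\ol{S\H}_*(W)$, so the Drinfel'd double $D(\ol{S\H}_*(W))$ carries a canonical odd Frobenius algebra structure, as constructed in the proof of Proposition~\ref{prop:BVui-BVFrob} (detailed in Appendix~\ref{app:BVui-BVTateFrob}).

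Finally I would compare the two structures. Under the canonical splitting of the first step, the formulas of~\cite[Theorem~7.8]{CO-reduced} for the product, coproduct, unit and counit on $S\H_*(\p W)$ reduce exactly to those defining the Drinfel'd double, once the secondary continuation bivector is seen to vanish. This vanishing is again a direct consequence of $H^{n-1}W = 0$, which kills the only cohomological degree in which a non-trivial bivector contribution could arise. The hard part will be the bookkeeping: the formalism of~\cite{CO-reduced} differs slightly in conventions from that of~\S\ref{sec:BVuiBVFrob}, and one must carefully match signs, grading shifts, and duality conventions to confirm that the two odd Frobenius algebra structures really agree, not merely their underlying products. Once this translation is complete, the theorem follows exactly as Theorem~\ref{thm:H-double}.
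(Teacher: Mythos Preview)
Your proposal is correct and follows essentially the same route as the paper: the paper's proof simply says to imitate Theorem~\ref{thm:H-double} using~\cite[Theorem~7.8]{CO-reduced} (which is already formulated for symplectic homology), and you have spelled out precisely that---the canonical splitting from $H^{n-1}W=0$, the vanishing of the secondary continuation bivector, and the comparison with the double formulas of Proposition~\ref{prop:BVui-BVFrob}. Your additional remark that $H^{n-1}W=0$ forces $\lambda\eta=0$ (needed to invoke Proposition~\ref{prop:BVui-BVFrob}) makes explicit a point the paper leaves implicit; this is closely tied to the vanishing of the secondary continuation bivector, so no new ideas are required beyond what you outline.
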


\begin{proof}
The proof is similar to that of Theorem~\ref{thm:H-double}. It relies on~\cite[Theorem~7.8]{CO-reduced}, which is phrased for symplectic homology. 
\end{proof}

The following conjecture is within reach of the methods used in~\cite{CO-cones,CO-reduced} and in this paper. 

\begin{conjecture} \label{conj:H-doubles}
\qquad 

(i) Assuming $H_1Q=0$ and $\dim Q\ge 4$, the BV operator on $\widehat\H_*\Lambda$ is isomorphic to the BV operator on the double of $\ol\H_*\Lambda$, so that Theorem~\ref{thm:H-double} can be upgraded to an isomorphism of odd BV Frobenius algebra structures. 

(ii) Assuming $H^{n-1}W=0$ and $2n\ge 8$, the BV operator on $S\H_*(\p W)$ is isomorphic to the BV operator on the double of $\ol{S\H}_*(W)$, so that Theorem~\ref{thm:SH-double} can be upgraded to an isomorphism of odd BV Frobenius algebra structures. 

(iii) The double construction from Proposition~\ref{prop:BVui-BVFrob} generalizes to all odd BV unital infinitesimal algebras, i.e.,  the assumption $\lambda\eta=0$ is redundant in that proposition. 

(iv) With respect to this generalized double construction, the assumptions $H_1Q=0$ in Theorem~\ref{thm:H-double} and $H^{n-1}W=0$ in Theorem~\ref{thm:SH-double} can be dropped, and the isomorphisms from these two theorems can be upgraded to BV.  
\end{conjecture}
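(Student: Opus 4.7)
The plan is to tackle Conjecture~\ref{conj:H-doubles} by first establishing the algebraic generalization of the Drinfel'd double in part (iii), from which the geometric upgrades in parts (i), (ii), (iv) follow by a uniform argument. For parts (i) and (ii), which concern the BV upgrades of Theorems~\ref{thm:H-double} and~\ref{thm:SH-double} under the cohomological vanishing hypotheses, one starts from the isomorphisms of odd Frobenius algebras already in place and only needs to verify that the BV operators match. Under $H_1Q=0$ (respectively $H^{n-1}W=0$), Rabinowitz Floer homology splits canonically via its action filtration as $D(\ol{S\H}_*(W))=\ol{S\H}_*(W)\oplus \ol{S\H}_*(W)^\vee[-|\lambda|]$, and it suffices to check that the BV operator respects this decomposition: on the positive-action summand it descends to the BV operator $\Delta$ on $\ol{S\H}_*(W)$ from Theorem~\ref{thm:BV_unital_infinitesimal_reduced}; on the negative-action summand it corresponds to $\Delta^\vee$, via the identification $FC_*(-H)\cong FC_*(H)^\vee$ and the equality $\Delta_{-H}\equiv\Delta_H^\vee$ derived in the proof of Theorem~\ref{thm:BVFrob}. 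This matches the BV operator $\Delta\oplus\Delta^\vee$ that $D(\ol{S\H}_*(W))$ carries when $\lambda\eta=0$, so (i) and (ii) follow.

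For part (iii), I would generalize the construction of Proposition~\ref{prop:BVui-BVFrob} (whose proof occupies Appendix~\ref{app:BVui-BVTateFrob}) by introducing correction terms built from the bivector $\lambda\eta \in A\otimes A$. The non-BV analogue is~\cite[Theorem~7.8]{CO-reduced}, in which a ``secondary continuation bivector'' playing the role of $\lambda\eta$ corrects the product and counit of the Frobenius structure on $D(A)=A\oplus A^\vee[-|\lambda|]$. Adding the BV operator, one sets $\Delta_{D(A)}=\Delta\oplus\Delta^\vee+R$, where the correction $R$ is assembled from $\Delta$, $\lambda\eta$, and $\mu$; the natural guess uses expressions of the form $(\mu\otimes 1)(1\otimes\Delta\otimes 1)(\lambda\eta\otimes 1)$ and its mirror, restricted to the appropriate summand. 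The nilpotency $\Delta_{D(A)}^2=0$, the BV 7-term relation for $\mu_{D(A)}$ and $\Delta_{D(A)}$, and the BV Frobenius relation~\eqref{eq:BVFrobenius-relation} on $D(A)$ are then verified by algebraic computations which use the 11-term relation~\eqref{eq:11term}, the 7-term relations~\eqref{eq:7_term_for_Delta_and_mu} and~\eqref{eq:7_term_for_Delta_and_lambda}, and Lemma~\ref{lemma:rel_mu_lambda_beta_gamma}. Once (iii) is established, part (iv) follows by revisiting the proofs of Theorems~\ref{thm:H-double} and~\ref{thm:SH-double}: the secondary continuation bivector of~\cite[Theorem~7.8]{CO-reduced} geometrically realizes $\lambda\eta$ on the reduced theory, so the generalized double automatically matches the geometric Rabinowitz structure without cohomological vanishing hypotheses, and the BV statement then follows as in (i) and (ii).

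The main obstacle is part (iii). Writing down the correct BV-refined formulas and verifying the full axiom list of an odd BV Frobenius algebra is a finite but genuinely delicate bookkeeping task: even without the BV operator, the formulas in~\cite[Theorem~7.8]{CO-reduced} are intricate, and adding $\Delta$ produces many cross-terms in which it interacts with $\lambda\eta$ and with the product corrections. Forcing all these cross-terms to cancel is exactly what the 11-term relation~\eqref{eq:11term} should achieve---this is the algebraic reason it was identified as the natural compatibility axiom in the first place---but the verification must be carried out uniformly and within the Tate vector space framework (Appendix~\ref{sec:Tate}) required for the infinite-dimensional examples. A secondary obstacle on the geometric side of (iv) will be to choose the $S^1$-equivariant perturbations defining the BV operator compatibly with the secondary continuation data entering the action splitting, so that the algebraic matching of (iii) transfers cleanly to a chain-level identification.
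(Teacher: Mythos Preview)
This statement is a \emph{conjecture} in the paper, not a proven result: the paper offers no proof and explicitly leaves all four parts open. What the paper does provide, immediately after the statement, is a pair of one-line hints: a solution to part (iii) ``should be informed by the explicit formulas from~\cite[Theorem~7.8]{CO-reduced}'', and a solution to parts (i), (ii), (iv) ``would go through a BV enhancement of the cone formalism from~\cite{CO-cones}''. So there is no paper proof to compare your proposal against---only these suggested strategies.

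Your plan for (iii) is well aligned with the paper's hint: you propose to deform the double construction of Proposition~\ref{prop:BVui-BVFrob} by correction terms built from $\lambda\eta$, guided by the secondary-continuation formulas of~\cite[Theorem~7.8]{CO-reduced}, and to verify the axioms using the 11-term relation. This is exactly the direction the authors point toward, and you correctly flag the bookkeeping as the main obstacle.

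For (i), (ii), (iv) your route diverges from the paper's hint. You propose a direct argument: use the action splitting of Rabinowitz Floer homology, check that the BV operator is block-diagonal with blocks $\Delta$ and $\Delta^\vee$ (up to the shift and sign in the definition $\boldDelta=\Delta\oplus(-s\Delta^\vee\omega)$ from Appendix~\ref{app:BVui-BVTateFrob}), and conclude. The paper instead points to a BV enhancement of the $A_2^+$/cone formalism of~\cite{CO-cones}. Your approach is more elementary and would suffice under the vanishing hypotheses if the block-diagonality can be made rigorous at chain level; the cone formalism is the machinery the authors used to prove the non-BV versions (Theorems~\ref{thm:H-double} and~\ref{thm:SH-double}) and is presumably what they regard as the natural vehicle for the upgrade, particularly for (iv) where no splitting is available and the corrections from $\lambda\eta$ must be handled simultaneously with the BV operator.
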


A solution to Conjecture~\ref{conj:H-doubles}(iii) should be informed by the explicit formulas from~\cite[Theorem~7.8]{CO-reduced}, which hold without the assumption $H_1Q=0$ or $H^{n-1}(W)=0$.  A solution to Conjecture~\ref{conj:H-doubles}(i-ii-iv) would go through a BV enhancement of the cone formalism from~\cite{CO-cones}. 

We now discuss the $S^1$-equivariant analogue of Rabinowitz loop homology. We define \emph{Rabinowitz string homology} as the $S^1$-equivariant Rabinowitz Floer homology of $S^*Q$, i.e.,
$$
\widehat\H_*^{S^1}\Lambda=S\H_*^{S^1}(S^*Q;\sigma).
$$
More generally, for a Liouville domain $W$ one defines its $S^1$-equivariant Rabinowitz Floer homology $S\H_*^{S^1}(\p W)$ as in~\cite[\S8.2]{CO}.

The double construction was originally introduced by Drinfel'd in the context of Lie algebras~\cite{Drinfeld-quantum-groups}. As more recent references we point the reader to~\cite{Kosmann-Schwarzbach}, the lecture notes~\cite{JF,JF-Reshetikhin}, and~\cite[\S5 and Appendix]{Bai2010}. Given a finite dimensional Lie bialgebra $\mathfrak{g}$, its double $\mathfrak{g}\oplus\mathfrak{g}^*$ is a Lie algebra whose canonical symmetric pairing $\pi(x+a^*,y+b^*)=\langle x,b^*\rangle + \langle a^*,y\rangle$ is nondegenerate and invariant, i.e., $\pi([x,y],z)=\pi(x,[y,z])$~\cite[\S1.6]{Kosmann-Schwarzbach}. A little bit more is true: the double is a ``factorizable" Lie bialgebra~\cite[\S2.5]{Kosmann-Schwarzbach}. Note however that, in contrast to the non-equivariant setting, the canonical pairing on the double is not ``Frobenius" in the sense that it does not turn the bracket into the cobracket and vice-versa.\footnote{There is an established notion of ``Frobenius Lie algebra" in the literature~\cite{Ooms1980,Elashvili1982}, but this is not the structure present on the double of a Lie algebra.} We expect that the doubling construction extends to (filtered) graded Lie bialgebras, producing a (filtered) graded factorizable Lie bialgebra.

In analogy with the previous discussion on $\widehat\H_*\Lambda$ and $S\H_*(\p W)$, we conjecture the following. 

\begin{conjecture} \label{conj:HS1-doubles}
 (i) Rabinowitz string homology $\widehat\H_*^{S^1}\Lambda$, and more generally $S^1$-equivariant Rabinowitz Floer homology $S\H_*^{S^1}(\p W)$, are graded factorizable Lie bialgebras. 

 (ii) Assume $Q$ is orientable and $\dim Q\ge 4$. The graded factorizable Lie bialgebra structure on $\widehat \H_*^{S^1}\Lambda$ is isomorphic to the one on the double of the Lie bialgebra $\ol\H_*^{S^1}\Lambda$. 

 (iii) Assume $W$ is a strongly essential Weinstein domain of dimension $2n\ge 8$. The graded factorizable Lie bialgebra structure on $S\H_*^{S^1}(\p W)$ is isomorphic to the one on the double of the Lie bialgebra $\ol{S\H}_*^{S^1}(W)$.    
\end{conjecture}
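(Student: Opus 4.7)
The plan is to derive part (i) as a Gysin-type formal consequence of the odd BV Frobenius algebra structure on $S\H_*(\p W)$ established in Theorem~\ref{thm:BVFrob}, in parallel with how Theorem~\ref{thm:HS1-Lie-bialgebra} is extracted from Theorem~\ref{thm:H-BV-bialgebra}. I would first introduce the erase and mark maps $E\colon S\H_*(\p W)\to S\H_*^{S^1}(\p W)$ and $M\colon S\H_*^{S^1}(\p W)\to S\H_{*+1}(\p W)$ from the equivariant Gysin sequence for Rabinowitz Floer homology, together with the identities $\boldDelta=ME$ and $EM=0$. The Lie bracket and cobracket on $S\H_*^{S^1}(\p W)$ are then defined by $\beta^{S^1}=E\boldmu(M\otimes M)$ and $\gamma^{S^1}=(E\otimes E)\boldlambda M$, exactly as in the reduced symplectic setting. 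The graded Jacobi and coJacobi relations are formal consequences of the $7$-term relations for $(\boldDelta,\boldmu)$ and $(\boldDelta,\boldlambda)$ respectively, while the Drinfel'd cocycle identity relating $\beta^{S^1}$ and $\gamma^{S^1}$ should follow from the $11$-term relation and from Lemma~\ref{lemma:rel_mu_lambda_beta_gamma}, with the terms involving $\boldlambda\boldeta$ controlled by the BV Frobenius relation~\eqref{eq:BVFrobenius-relation}.

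The factorizability data should come from the nondegenerate Frobenius copairing $\boldc=\boldlambda\boldeta$ on $S\H_*(\p W)$. The plan is to show that $\boldc$ descends to a symmetric nondegenerate invariant bilinear form on $S\H_*^{S^1}(\p W)$: invariance is a direct transcription of~\eqref{eq:BVFrobenius-relation}, and nondegeneracy combines the Tate-perfectness of $\boldc$ (Appendix~\ref{sec:Tate}) with an adjunction between $E$ and $M$ across the two tensor factors. Once such a pairing is in place, factorizability — i.e.\ that the resulting Lie bialgebra coincides with the double of its positive half — follows by standard Drinfel'd double arguments.

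For parts (ii) and (iii), the strategy is to combine a BV-compatible upgrade of Theorems~\ref{thm:H-double} and~\ref{thm:SH-double}, namely the affirmative resolution of Conjecture~\ref{conj:H-doubles}, with the naturality of the Gysin construction. Indeed, any isomorphism of odd BV Frobenius algebras $\widehat\H_*\Lambda\cong D(\ol\H_*\Lambda)$ (respectively $S\H_*(\p W)\cong D(\ol{S\H}_*(W))$) automatically intertwines the two equivariant Gysin sequences, and therefore induces an isomorphism of the extracted Lie bialgebras. What remains is to check that the graded Drinfel'd double commutes with the Gysin extraction, so that $D$ applied to $\ol\H_*^{S^1}\Lambda$ (respectively $\ol{S\H}_*^{S^1}(W)$) reproduces $\widehat\H_*^{S^1}\Lambda$ (respectively $S\H_*^{S^1}(\p W)$) as a factorizable Lie bialgebra. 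This commutativity should follow from the explicit formulas for the product, coproduct, and BV operator on the double furnished by the anticipated BV enhancement of Proposition~\ref{prop:BVui-BVFrob}.

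The main obstacle will be twofold. First, establishing factorizability is substantially more delicate than extracting the Lie bialgebra axioms alone, as it requires tight control over how the nondegenerate Frobenius pairing interacts with the erase and mark maps across the entire equivariant Gysin sequence, all within the Tate vector space framework of Appendix~\ref{sec:Tate}. Second, parts (ii) and (iii) genuinely hinge on Conjecture~\ref{conj:H-doubles}: without a BV-compatible extension of the double construction beyond the assumptions $H_1Q=0$ or $H^{n-1}W=0$, the identification of the factorizable Lie bialgebra with a double only goes through in those restricted settings. A complete proof therefore seems to require, as a preliminary, a BV-enhanced version of the cone formalism of~\cite{CO-cones} and a careful analysis of the secondary continuation bivector from~\cite[Theorem~7.8]{CO-reduced} to ensure its compatibility with $\boldDelta$.
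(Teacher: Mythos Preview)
The statement you are addressing is labeled as a \emph{conjecture} in the paper, and the paper contains no proof of it. The authors explicitly present it as an open problem, motivated by analogy with the nonequivariant Drinfel'd double picture (Theorems~\ref{thm:H-double} and~\ref{thm:SH-double}) and with the classical factorizability of the double of a Lie bialgebra. There is therefore no paper proof against which to compare your proposal.

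Your outline is reasonable as a research plan and you correctly identify the principal gaps. A few comments. First, your derivation of the Lie bialgebra structure on $S\H_*^{S^1}(\p W)$ via $E$ and $M$ is the natural approach, parallel to the proof of Theorem~\ref{thm:HS1-Lie-bialgebra}; note however that the Drinfel'd compatibility will need the reduction of the 11-term relation that holds in the Frobenius case (proof of Lemma~\ref{lem:BVFrob-BVui}), not Lemma~\ref{lemma:rel_mu_lambda_beta_gamma}. Second, the step where you claim the Frobenius copairing $\boldc$ ``descends'' to $S\H_*^{S^1}(\p W)$ is not correct as stated: $\boldc$ lives in the nonequivariant theory, and there is no obvious reason it should push forward along $E\otimes E$ to something nondegenerate — this is one of the genuine difficulties, as the paper's own cautionary remark that the pairing on the Lie double is \emph{not} Frobenius already suggests. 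Third, you are right that parts (ii) and (iii) are contingent on Conjecture~\ref{conj:H-doubles}, and the paper says as much. In short, your proposal is an honest sketch of a plausible attack with its obstacles named, but it is not a proof, and neither does the paper claim one.
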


The rest of the section is structured as follows. In~\S\ref{sec:SH-H} we prove Theorem~\ref{thm:H-BV-bialgebra} by specializing our results on reduced symplectic homology from~\S\ref{sec:reduced} to the case of cotangent bundles. In~\S\ref{sec:equivariant} we deduce Theorem~\ref{thm:HS1-Lie-bialgebra} from Theorem~\ref{thm:H-BV-bialgebra}, and we deduce Theorem~\ref{thm:SHS1-Lie-bialgebra} from Theorem~\ref{thm:BV_unital_infinitesimal_reduced}. 

\subsection{From symplectic homology to loop space homology} \label{sec:SH-H}

We describe in this subsection the passage from the symplectic setting to the topological setting. Let $D^*Q$ be the unit cotangent bundle of $Q$ (with respect to some Riemannian metric on $Q$). The free loop space $\Lambda$ carries a local system $\sigma$ of free rank one abelian groups obtained by transgressing the second Stiefel-Whitney class of $Q$~\cite{Kragh,Abouzaid-cotangent}. This induces canonically a local system, also denoted $\sigma$, on the free loop space of $T^*Q$, and the symplectic homology groups with local coefficients in this local system are denoted $S\H_*(D^*Q;\sigma)$, with corresponding \emph{reduced} versions $\ol{S\H}_*(D^*Q;\sigma)$ defined as in~\S\ref{sec:reduced}. Following~\cite[\S10.3-4]{Abouzaid-cotangent}, see also~\cite[Appendix~A]{CHO-MorseFloerGH}, $\ol{S\H}_*(D^*Q;\sigma)$ carries a unital associative and commutative product $\mu$, with unit denoted $\eta$.
With field coefficients and associated to a choice of continuation data $\cD$, it also carries a coassociative and cocommutative coproduct $\lambda_\cD$. It also carries a BV operator $\Delta$ constructed as in~\S\ref{sec:BVoperator} (the construction goes through because the monodromy of $\sigma$ is trivial along the orbits of the $S^1$-action, as explained in~\cite[\S10.2, eq.~(10.12)]{Abouzaid-cotangent}). We obtain

\begin{theorem} \label{thm:SH-sigma-BV-bialgebra} Reduced symplectic homology with local coefficients 
$(\ol{S\H}_*(D^*Q;\sigma),\mu,\lambda_\cD,\eta,\Delta)$ is a BV unital infinitesimal bialgebra. \qed
\end{theorem}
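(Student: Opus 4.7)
The plan is to reduce this statement to Theorem~\ref{thm:BV_unital_infinitesimal_reduced} by carrying every step of its proof through in the presence of the local system $\sigma$. First, recall from~\cite[\S5]{CO-reduced} that $D^*Q$ is a strongly $R$-essential Weinstein domain as soon as $Q$ is closed and orientable, with an $R$-essential Morse function obtained by pulling back a Morse function on $Q$ and using the fibrewise kinetic energy. Under the implicit assumption $\dim Q\geq 4$, we are thus in the setting of Theorem~\ref{thm:BV_unital_infinitesimal_reduced}, and it suffices to verify that each ingredient of its proof is compatible with the coefficient system $\sigma$.

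Next, I would recall that the four structural operations are already known to exist in the twisted setting: the product $\mu$ and unit $\eta$ are constructed with coefficients in $\sigma$ in~\cite[Chapter~10]{Abouzaid-cotangent}, the coproduct $\lambda_\cD$ in~\cite[Appendix~A]{CHO-MorseFloerGH}, and the BV operator $\Delta$ is defined as in~\S\ref{sec:BVoperator}. The only non-formal compatibility involves $\Delta$: the parametrized Floer problem underlying $\Delta$ uses a circle's worth of marker rotations, and for the local system to propagate consistently through this family, its monodromy along the resulting $S^1$-orbits of Hamiltonian 1-periodic orbits must be trivial. This is exactly the content of~\cite[Eq.~(10.12)]{Abouzaid-cotangent} and follows from the transgressive nature of $\sigma$.

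With the operations in place, the algebraic identities of Definition~\ref{defi:BVui}---associativity and coassociativity, the unital infinitesimal relation, the two 7-term relations~\eqref{eq:7_term_for_Delta_and_mu} and~\eqref{eq:7_term_for_Delta_and_lambda}, and the 11-term relation~\eqref{eq:11term}---are derived in~\S\ref{sec:reduced} from moduli space identities in $\cM_{2,1}$, $\cM_{3,1}$, $\cM_{1,2}$, $\cM_{1,3}$ and $\cM_{2,2}$. Each such identity is implemented through a parametrized Floer problem whose output involves evaluation maps to $\Lambda$, and so the local system enters simply by pulling back $\sigma$ along the appropriate evaluation maps. Because $\sigma$ arises from a transgressed class and hence is compatible with gluing at nodes, with Floer-theoretic continuation, and with $S^1$-rotations of markers at the punctures, the twisted versions of the chain homotopies constructed in the proofs of Theorem~\ref{thm:BV_unital_infinitesimal_reduced} and Proposition~\ref{prop:11-term} go through verbatim.

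The main obstacle is purely bookkeeping: one must check that the local systems on all the spaces of Floer data introduced in~\S\ref{ssec:11-term-implemented} (most notably the hexagonal chain underlying the 11-term relation, and the triangular strata $T_i\subset \cN_{1,3}$ entering the coassociativity and co-Poisson identities) are identified consistently under the gluing maps that present boundary strata of the moduli spaces as fiber products. Since all of these strata are built from moduli spaces of spheres with markers, and $\sigma$ is insensitive to reparametrizations of loops (by its transgressive origin), this identification is tautological. No additional index-theoretic input is required beyond what is already used in the untwisted case, and so the proof of Theorem~\ref{thm:BV_unital_infinitesimal_reduced} carries through to yield Theorem~\ref{thm:SH-sigma-BV-bialgebra}. \qed
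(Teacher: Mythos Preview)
Your proposal is correct and follows essentially the same approach as the paper: both reduce to Theorem~\ref{thm:BV_unital_infinitesimal_reduced} by noting that $D^*Q$ is a strongly $R$-essential Weinstein domain and that the local system $\sigma$ is compatible with all the Floer-theoretic constructions. The paper's own argument is considerably more terse—it simply observes that the proof is ``verbatim the same'' since $\sigma$ only modifies signs in the counts—whereas you spell out in more detail why the transgressive origin of $\sigma$ guarantees compatibility with gluing, continuation, and marker rotations; this elaboration is welcome but not strictly necessary.
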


This is the 
analogue of Theorem~\ref{thm:BV_unital_infinitesimal_reduced} for twisted coefficients, and the proof is verbatim the same. Indeed, disc cotangent bundles of orientable manifolds are strongly essential Weinstein domains~\cite{CO-reduced}, and the only effect of the local system $\sigma$ is a modification of the signs when counting solutions of Floer-type equations in the definitions of the boundary operator and the various operations. Note that, since we assume our base manifold $Q$ to be orientable and oriented, there is no ``twist'' on the BV structure in the sense of~\cite[\S10]{Abouzaid-cotangent}.

Symplectic homology of $D^*Q$ with local coefficients in $\sigma$ is relevant in this context because of the \emph{Viterbo isomorphism}     
$$
\Psi:S\H_*(D^*Q;\sigma)\stackrel\simeq\longrightarrow \H_*\Lambda.
$$
While the original Viterbo isomorphism went through the intermediate of generating function homology~\cite{Viterbo-cotangent}, the previous map was constructed by Abbondandolo-Schwarz~\cite{AS-Legendre} and Abouzaid~\cite{Abouzaid-cotangent}, and it was revisited by Cieliebak and the second author of this paper in~\cite{CHO-MorseFloerGH}. 

\begin{theorem} \label{thm:Psi-reduced} The map $\Psi$ descends to an isomorphism of unital infinitesimal bialgebras 
$$
\Psi:\ol{S\H}_*(D^*Q;\sigma)\stackrel\simeq\longrightarrow \ol\H_*\Lambda
$$
which intertwines the BV operators. 
\end{theorem}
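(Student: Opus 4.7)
The plan is to reduce the theorem to three compatibility statements for the chain-level Viterbo isomorphism: compatibility with products, with coproducts, and with the BV operator. First I would invoke the Abbondandolo--Schwarz/Abouzaid construction of $\Psi$ as a chain isomorphism intertwining the pair-of-pants product on $SC_*(D^*Q;\sigma)$ with the Chas--Sullivan product on $C_*\Lambda$~\cite{AS-Legendre,Abouzaid-cotangent}, and the coproduct intertwining established in~\cite{CHO-MorseFloerGH}. The descent to reduced homologies then reduces to checking that $\Psi$ identifies the image of the canonical map $\eps:SH^{-*}(D^*Q;\sigma)\to SH_*(D^*Q;\sigma)$ with the subgroup $\chi(Q)\cdot[\mathrm{pt}]\subset\H_*\Lambda$, which follows from the commutative square relating $\eps$ on the symplectic side to the map $H^{n-*}Q\to H_{*+n}Q$ given by cap product with the fundamental class composed with the inclusion of constants (see~\cite[Appendix~A]{CHO-MorseFloerGH}).

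Next I would establish the BV intertwining $\Psi\circ\Delta^{SH}=\Delta^{\Lambda}\circ\Psi$ at chain level up to homotopy. The strategy is to enhance the Viterbo moduli problem, which uses a half-cylinder with a single cylindrical end and Lagrangian boundary condition along the zero section, by adding a marker at the puncture; rotating this marker defines a parametrized Floer problem whose count realizes $\Delta^{SH}\circ\Psi$ on the symplectic side and corresponds, on the topological side, to $\Psi\circ\Delta^{\Lambda}$ via the reparametrization $S^1$-action on $\Lambda$. A one-parameter interpolation between the Floer data realizing the two descriptions bounds a moduli space whose codimension-one boundary provides the desired chain homotopy. This is essentially the cotangent-bundle specialization of the construction in~\cite[Chapter~10]{Abouzaid-cotangent}; the extension to $\sigma$-twisted coefficients is unproblematic because, as noted in {\it loc.\ cit.}, the monodromy of $\sigma$ along the reparametrization orbits in $\Lambda$ is trivial, so the $\sigma$-twisted counts are well-defined.

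The main obstacle will be ensuring that the BV intertwining is compatible with the reduction. On the symplectic side, $\Delta^{SH}$ vanishes on the low-energy subcomplex (as recalled in~\S\ref{sss:BV_on_limit}), so it descends to $\ol{S\H}_*(D^*Q;\sigma)$; on the topological side, $\Delta^{\Lambda}$ vanishes on the image of $\H_*Q\hookrightarrow\H_*\Lambda$ coming from constant loops, because the $S^1$-action fixes constant loops. Matching these two vanishing phenomena through $\Psi$ so that the descended BV operators genuinely correspond requires arranging the chain-level BV homotopy above to be compatible with the continuation-map subcomplex used in the definition of $\ol{S\H}_*$. Once this is verified, combining the intertwinings of $\mu$, $\lambda_\cD$, $\eta$, and $\Delta$ with the descent to reduced homologies yields the theorem.
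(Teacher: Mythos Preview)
Your proposal is essentially correct and aligns with the paper's approach: both reduce the theorem to the four ingredients (descent of $\Psi$, intertwining of products, of coproducts, and of BV operators), and both cite the same sources---\cite{CHO-MorseFloerGH} for descent and coproducts, \cite{AS-Legendre,Abouzaid-cotangent} for products, and \cite{Abouzaid-cotangent} for the BV operator. The paper's proof is nothing more than this list of citations.

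Where you diverge is in attempting to re-derive the BV intertwining via a parametrized half-cylinder moduli problem. This is unnecessary: the statement $\Psi\Delta^{SH}=\Delta^{\Lambda}\Psi$ on the full (unreduced) homology is already established in~\cite[Chapter~10]{Abouzaid-cotangent}, so you can simply cite it. More importantly, the ``main obstacle'' you raise---arranging the chain-level BV homotopy to be compatible with the continuation-map subcomplex---is not an obstacle at all. Once you know (i) $\Psi$ intertwines the BV operators on the full homologies, (ii) $\Psi$ identifies the subgroups being quotiented, and (iii) both BV operators vanish on those subgroups (so they descend), the induced isomorphism on the quotients automatically intertwines the descended operators. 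No chain-level compatibility argument is needed; this is pure linear algebra at the homology level.
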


\begin{proof}
That $\Psi$ descends was proved in~\cite{CHO-MorseFloerGH}. That $\Psi$ intertwines the products was proved by Abbondandolo-Schwarz~\cite{AS-Legendre} and Abouzaid~\cite{Abouzaid-cotangent}. That $\Psi$ intertwines the coproducts (for the same choice of continuation data on both sides) was proved in~\cite{CHO-MorseFloerGH}. That $\Psi$ intertwines the BV operators was proved by Abouzaid~\cite{Abouzaid-cotangent}. 
\end{proof}

We infer Theorem~\ref{thm:H-BV-bialgebra} as a corollary. 

\begin{proof}[Proof of Theorem~\ref{thm:H-BV-bialgebra}]
By Theorem~\ref{thm:Psi-reduced} the map $\Psi$ intertwines the products, coproducts, units, and BV operator. By Theorem~\ref{thm:SH-sigma-BV-bialgebra} the domain of $\Psi$ is a BV unital infinitesimal bialgebra. Sine $\Psi$ is an isomorphism, the target of $\Psi$ is also a BV unital infinitesimal bialgebra. 
\end{proof}

\subsection{From nonequivariant to equivariant string topology} \label{sec:equivariant}

In this section we prove Theorem~\ref{thm:HS1-Lie-bialgebra} as a formal consequence of Theorem~\ref{thm:H-BV-bialgebra}.

\begin{proof}[Proof of Theorem~\ref{thm:HS1-Lie-bialgebra}] The Jacobi relation for $\beta^{S^1}$ is obtained from the 7-term relation for $\Delta$ and $\mu$ by precomposing with $M\otimes M\otimes M$ and postcomposing with $E$. Similarly, the coJacobi relation for $\gamma^{S^1}$ is obtained from the 7-term relation for $\Delta$ and $\lambda$ by precomposing with $M$ and postcomposing with $E\otimes E \otimes E$. 

It remains to prove Drinfel'd compatibility.  
Recall from Remark~\ref{rmk:11-term-is-9-term-for-SH} that, in the case of reduced symplectic homology, the 11-term identity reduces to the 9-term identity
\begin{center}
\begin{tikzpicture}[scale=.25]

\draw (0,0) -- (1,-1) -- (2,0);
\draw (1,-1) -- (1,-3);
\node at (0,-2) {\tiny $\Delta$};
\draw (0,-4) -- (1,-3) -- (2,-4);
\node at (4,-2) {$=$};

\pgftransformxshift{6cm}
\draw (0,0) -- (1,-1) -- (2,0);
\draw (1,-1) -- (1,-3);
\draw (0,-4) -- (1,-3) -- (2,-4);
\node at (0,0.8) {\tiny$\Delta$};
\node at (4,-2) {$+$};

\pgftransformxshift{6cm}
\draw (0,0) -- (1,-1) -- (2,0);
\draw (1,-1) -- (1,-3);
\draw (0,-4) -- (1,-3) -- (2,-4);
\node at (2,0.8) {\tiny$\Delta$};
\node at (4,-2) {$-$};

\pgftransformxshift{6cm}
\draw (0,0) -- (1,-1) -- (2,0);
\draw (1,-1) -- (1,-3);
\draw (0,-4) -- (1,-3) -- (2,-4);
\node at (0,-4.8) {\tiny$\Delta$};
\node at (4,-2) {$-$};

\pgftransformxshift{6cm}
\draw (0,0) -- (1,-1) -- (2,0);
\draw (1,-1) -- (1,-3);
\draw (0,-4) -- (1,-3) -- (2,-4);
\node at (2,-4.8) {\tiny$\Delta$};

\pgftransformxshift{-18cm}
\pgftransformyshift{-8cm}

\node at (0,-2) {$+$};
\pgftransformxshift{2cm}

\draw (0,0) -- (0,-2) -- (1,-3) -- (1,-4);
\draw (1,-3) -- (3,-1);
\node at (1.5,-1.5) {\tiny$\Delta$};
\draw (3,0) -- (3,-1) -- (4,-2) -- (4,-4);
\node at (7,-2.5) {$+$};

\pgftransformxshift{9cm}
\node at (0,.8) {\tiny$2$};
\node at (3,.8) {\tiny$1$};
\draw (0,0) -- (0,-2) -- (1,-3) -- (1,-4);
\draw (1,-3) -- (3,-1);
\node at (1.5,-1.5) {\tiny$\Delta$};
\draw (3,0) -- (3,-1) -- (4,-2) -- (4,-4);
\node at (7,-2.5) {$+$};

\pgftransformxshift{9cm}

\draw (1,0)  -- (1,-1) -- (0,-2) -- (0,-4);
\draw (1,-1) -- (3,-3);
\node at (2.5,-1.5) {\tiny$\Delta$};
\draw (4,0) -- (4,-2) -- (3,-3) -- (3,-4);
\node at (7,-2.5) {$+$};

\pgftransformxshift{9cm}
\node at (1,.8) {\tiny$2$};
\node at (4,.8) {\tiny$1$};
\draw (1,0)  -- (1,-1) -- (0,-2) -- (0,-4);
\draw (1,-1) -- (3,-3);
\node at (2.5,-1.5) {\tiny$\Delta$};
\draw (4,0) -- (4,-2) -- (3,-3) -- (3,-4);
\pgftransformxshift{9cm}
\end{tikzpicture}
\end{center}
By simultaneously pre-composing this identity with $M\otimes M=(M \otimes 1)(1 \otimes M)$ and post-composing it with $E\otimes E$, we obtain a modified version of this 9-term identity, which in turn implies Drinfel'd compatibility for the string bracket and cobracket in the form
\begin{equation}\label{eq:drinfeld_in_string_topology}
\gamma_\cD^{S^1} \beta^{S^1}=(\beta^{S^1} \otimes 1)(1 \otimes \gamma_\cD^{S^1})(1-\tau)
 + (1\otimes \beta^{S^1})(\gamma_\cD^{S^1} \otimes 1)(1-\tau).
\end{equation}
Indeed, it is clear that the left hand side of the modified 9-term identity gives the left hand term in \eqref{eq:drinfeld_in_string_topology}. On the right hand side of the modified 9-term identity the first four terms vanish, because $\Delta M = 0$ and $E\Delta = 0$.

The last four terms give rise to the right hand side of \eqref{eq:drinfeld_in_string_topology} as follows. 
There is no sign in the first term because we need to switch the order of the degree 1 map $(M \otimes 1)$ with both $(1 \otimes \lambda)$ and $(1 \otimes \Delta \otimes 1)$. Compared to the first term, the second term gets an additional sign from moving $M \otimes M$ past $\tau$.

In the third term, $(1 \otimes M)$ is moved past $(M\otimes 1)$ and $(\lambda \otimes 1)$, therefore no sign appears. 
Compared to that, the final term again has an additional sign from moving $M \otimes M$ past $\tau$.
\end{proof}

\begin{remark} Using tripods with square marks at the center to denote $\beta^{S^1}$ and $\gamma_\cD^{S^1}$, equation \eqref{eq:drinfeld_in_string_topology} is graphically represented by
\begin{center}
\begin{tikzpicture}[scale=.25]

\draw (0,0) -- (1,-1) -- (2,0);
\draw (1,-1) -- (1,-3);
\draw (0,-4) -- (1,-3) -- (2,-4);
\node[rectangle,draw,fill,inner sep=1.5pt] at (1,-1) {};
\node[rectangle,draw,fill,inner sep=1.5pt] at (1,-3) {};

\node at (4,-2) {$=$};

\pgftransformxshift{6cm}

\draw (0,0) -- (0,-2) -- (1,-3) -- (1,-4);
\draw (1,-3) -- (3,-1);
\draw (3,0) -- (3,-1) -- (4,-2) -- (4,-4);
\node[rectangle,draw,fill,inner sep=1.5pt] at (1,-3) {};
\node[rectangle,draw,fill,inner sep=1.5pt] at (3,-1) {};
\node at (7,-2.5) {$-$};

\pgftransformxshift{9cm}
\node at (0,.8) {\tiny$2$};
\node at (3,.8) {\tiny$1$};
\draw (0,0) -- (0,-2) -- (1,-3) -- (1,-4);
\draw (1,-3) -- (3,-1);
\draw (3,0) -- (3,-1) -- (4,-2) -- (4,-4);
\node[rectangle,draw,fill,inner sep=1.5pt] at (1,-3) {};
\node[rectangle,draw,fill,inner sep=1.5pt] at (3,-1) {};
\node at (7,-2.5) {$+$};

\pgftransformxshift{9cm}

\draw (1,0)  -- (1,-1) -- (0,-2) -- (0,-4);
\draw (1,-1) -- (3,-3);
\draw (4,0) -- (4,-2) -- (3,-3) -- (3,-4);
\node[rectangle,draw,fill,inner sep=1.5pt] at (1,-1) {};
\node[rectangle,draw,fill,inner sep=1.5pt] at (3,-3) {};
\node at (7,-2.5) {$-$};

\pgftransformxshift{9cm}
\node at (1,.8) {\tiny$2$};
\node at (4,.8) {\tiny$1$};
\draw (1,0)  -- (1,-1) -- (0,-2) -- (0,-4);
\draw (1,-1) -- (3,-3);
\draw (4,0) -- (4,-2) -- (3,-3) -- (3,-4);
\node[rectangle,draw,fill,inner sep=1.5pt] at (1,-1) {};
\node[rectangle,draw,fill,inner sep=1.5pt] at (3,-3) {};
\pgftransformxshift{9cm}
\end{tikzpicture}
\end{center}
\end{remark}

\begin{proof}[Proof of Theorem~\ref{thm:SHS1-Lie-bialgebra}] The proof is verbatim the same as the previous one of Theorem~\ref{thm:HS1-Lie-bialgebra}. The role of $\ol\H_*\Lambda$ is taken by $\ol{S\H}_*(W)$, which carries an odd BV unital infinitesimal bialgebra structure by Theorem~\ref{thm:BV_unital_infinitesimal_reduced}. 
\end{proof}

\section{Example: odd dimensional spheres} \label{sec:spheres}

For spheres $S^n$ of odd dimension $n\ge 3$, we have
$$
\H_*(\Lambda S^n) \cong \Lambda[A,U], \quad |A|=-n\,,\quad |U|=n-1,
$$
and the coproducts are given on additive generators by~\cite{CHO-MorseFloerGH}
\begin{align*}
\lambda(AU^k) &=\sum_{i+j=k-1} AU^i \otimes AU^j\\
\lambda(U^k) &=\sum_{i+j=k-1} AU^i \otimes U^j-U^i \otimes AU^j.
\end{align*}
In particular, $\lambda(\eta)=0$, so the 11-term relation reduces to 9 terms. Moreover, from \cite{Menichi} we know that
the BV operator $\Delta$ satisfies
$$
\forall\, k\ge 0:\,\Delta(U^k)=0 \quad\text{and}\quad \Delta (AU^k)= kU^{k-1}.
$$

These formulas allow explicit computations of the various terms appearing in the 9-term relation in this specific example. The cancellations are quite intricate, to the point that the relative signs in the 9-term relation are actually uniquely determined. 

To illustrate this point, we perform the computations for the input $AU^r \otimes U^s$. The interested reader may want to compute other cases.

For the left hand side of the equation, we have

\begin{minipage}{.1\textwidth}
\begin{tikzpicture}[scale=.25]
\draw (0,0) -- (1,-1) -- (2,0);
\draw (1,-1) -- (1,-3);
\node at (0,-2) {\tiny $\Delta$};
\draw (0,-4) -- (1,-3) -- (2,-4);
\end{tikzpicture}
\end{minipage}
\begin{minipage}{.89\textwidth}
\begin{align*}
\lambda \, \Delta \, \mu (AU^r \otimes U^s)
&= (r+s) \lambda (U^{r+s-1})\\
&= (r+s) \displaystyle\sum_{i+j=r+s-2} \!\!\! \left(AU^i \otimes U^j - U^i \otimes AU^j\right).
\end{align*}
\end{minipage}

The terms of the right hand side are computed as follows:

\begin{minipage}{.1\textwidth}
\begin{tikzpicture}[scale=.25]
\draw (0,0) -- (1,-1) -- (2,0);
\draw (1,-1) -- (1,-3);
\draw (0,-4) -- (1,-3) -- (2,-4);
\node at (0,0.8) {\tiny$\Delta$};
\end{tikzpicture}
\end{minipage}
\begin{minipage}{.85\textwidth}
\begin{align*}
\lambda  \, \mu & \, (\Delta \otimes 1) (AU^r \otimes U^s)\\
&= r \lambda (U^{r+s-1})
= r \sum_{i+j=r+s-2} \left(AU^i \otimes U^j - U^i \otimes AU^j\right).
\end{align*}
\end{minipage}

\begin{minipage}{.1\textwidth}
\begin{tikzpicture}[scale=.25]
\draw (0,0) -- (1,-1) -- (2,0);
\draw (1,-1) -- (1,-3);
\draw (0,-4) -- (1,-3) -- (2,-4);
\node at (2,0.8) {\tiny$\Delta$};
\end{tikzpicture}
\end{minipage}
\begin{minipage}{.85\textwidth}
$\lambda \, \mu \, (1 \otimes \Delta) (AU^r \otimes U^s) = 0.$
\end{minipage}

\begin{minipage}{.1\textwidth}
\begin{tikzpicture}[scale=.25]
\draw (0,0) -- (1,-1) -- (2,0);
\draw (1,-1) -- (1,-3);
\draw (0,-4) -- (1,-3) -- (2,-4);
\node at (0,-4.8) {\tiny$\Delta$};
\end{tikzpicture}

\phantom{blah}

\phantom{blah}
\end{minipage}
\begin{minipage}{.85\textwidth}
\begin{align*}
(\Delta \otimes 1) \,\lambda \, \mu (AU^r \otimes U^s)
&= (\Delta \otimes 1) \lambda (AU^{r+s})\\
&= \sum_{i'+j=r+s-1} (\Delta \otimes 1)(AU^{i'} \otimes AU^j)\\
&= \sum_{i+j=r+s-2} (i+1) U^i \otimes AU^j,
\end{align*}
\end{minipage}

Here we have used $\Delta(A)=0$ to reindex $i=i'-1$. We will make similar replacements below without explicit mention.

\begin{minipage}{.1\textwidth}
\begin{tikzpicture}[scale=.25]
\draw (0,0) -- (1,-1) -- (2,0);
\draw (1,-1) -- (1,-3);
\draw (0,-4) -- (1,-3) -- (2,-4);
\node at (2,-4.8) {\tiny $\Delta$};
\end{tikzpicture}

\phantom{blah}

\phantom{blah}
\end{minipage}
\begin{minipage}{.85\textwidth}
\begin{align*}
(1 \otimes \Delta) \,\lambda \, \mu (AU^r \otimes U^s)
&= (1 \otimes \Delta) \lambda (AU^{r+s})\\
&= \sum_{i+j=r+s-1} (1 \otimes \Delta)(AU^i \otimes AU^j)\\
&= - \sum_{i+j=r+s-2} (j+1) AU^i \otimes U^j.
\end{align*}
\end{minipage}

\begin{minipage}{.1\textwidth}
\begin{tikzpicture}[scale=.25]
\draw (0,0) -- (0,-2) -- (1,-3) -- (1,-4);
\draw (1,-3) -- (3,-1);
\node at (1.5,-1.5) {\tiny$\Delta$};
\draw (3,0) -- (3,-1) -- (4,-2) -- (4,-4);
\end{tikzpicture}

\phantom{blah}

\phantom{blah}

\phantom{blah}

\phantom{blah}

\phantom{blah}

\phantom{blah}

\phantom{blah}

\phantom{blah}
\end{minipage}
\begin{minipage}{.89\textwidth}
\begin{align*}
(\mu &\otimes 1)(1 \otimes \Delta \otimes 1)(1 \otimes \lambda)(AU^r \otimes U^s)\\
&=-(\mu \otimes 1)(1 \otimes \Delta \otimes 1) \\
& \qquad \qquad \sum_{i+j=s-1} \left(AU^r \otimes AU^i \otimes U^j - AU^r \otimes U^i \otimes AU^j \right) \\
&=(\mu \otimes 1) \sum_{i+j=s-2} (i+1)AU^r\otimes U^i \otimes U^j\\
&=\!\!\!\!\sum_{i+j=r+s-2 \atop i \ge r} \!\!\!\! (i+1-r) AU^i \otimes U^j
=\!\!\!\!\sum_{i+j=r+s-2 \atop i \ge r} \!\!\!\! (s-(j+1)) AU^i \otimes U^j.
\end{align*}
\end{minipage}

\begin{minipage}{.1\textwidth}
\begin{tikzpicture}[scale=.25]
\node at (0,.8) {\tiny$2$};
\node at (3,.8) {\tiny$1$};
\draw (0,0) -- (0,-2) -- (1,-3) -- (1,-4);
\draw (1,-3) -- (3,-1);
\node at (1.5,-1.5) {\tiny$\Delta$};
\draw (3,0) -- (3,-1) -- (4,-2) -- (4,-4);
\end{tikzpicture}

\phantom{blah}

\phantom{blah}

\phantom{blah}

\phantom{blah}

\phantom{blah}
\end{minipage}
\begin{minipage}{.85\textwidth}
\begin{align*}
\lefteqn{(\mu \otimes 1)(1 \otimes \Delta \otimes 1)(1 \otimes \lambda)(U^s \otimes AU^r)}\\
&=(\mu \otimes 1)(1 \otimes \Delta \otimes 1) \sum_{i+j=r-1} U^s \otimes AU^i \otimes AU^j  \\
&=(\mu \otimes 1) \sum_{i+j=r-2} (i+1)U^s\otimes U^i \otimes AU^j\\
&=\sum_{i+j=r+s-2 \atop i \ge s} (i+1-s) U^i \otimes AU^j.
\end{align*}
\end{minipage}

\begin{minipage}{.1\textwidth}
\begin{tikzpicture}[scale=.25]
\draw (1,0)  -- (1,-1) -- (0,-2) -- (0,-4);
\draw (1,-1) -- (3,-3);
\node at (2.5,-1.5) {\tiny$\Delta$};
\draw (4,0) -- (4,-2) -- (3,-3) -- (3,-4);
\end{tikzpicture}

\phantom{blah}

\phantom{blah}

\phantom{blah}

\phantom{blah}

\phantom{blah}

\phantom{blah}

\phantom{blah}

\phantom{blah}
\end{minipage}
\begin{minipage}{.85\textwidth}
\begin{align*}
\lefteqn{(1 \otimes \mu)(1 \otimes \Delta \otimes 1)(\lambda \otimes 1)(AU^r \otimes U^s)}\\
&=(1 \otimes \mu)(1 \otimes \Delta \otimes 1) \sum_{i+j=r-1} AU^i \otimes AU^j \otimes U^s  \\
&=-(1 \otimes \mu) \sum_{i+j=r-2} (j+1) AU^i\otimes U^j \otimes U^s\\
&=-\sum_{i+j=r+s-2 \atop j \ge s-1} (j+1-s) AU^i \otimes U^j\\
&=\sum_{i+j=r+s-2 \atop i \le r-1} (s-(j+1)) AU^i \otimes U^j.
\end{align*}
\end{minipage}

Including the index $j=s-1$ in the next to last line does not cause problems here, because $s-(j+1)=0$ in this case.

\begin{minipage}{.1\textwidth}
\begin{tikzpicture}[scale=.25]
\node at (1,.8) {\tiny$2$};
\node at (4,.8) {\tiny$1$};
\draw (1,0)  -- (1,-1) -- (0,-2) -- (0,-4);
\draw (1,-1) -- (3,-3);
\node at (2.5,-1.5) {\tiny$\Delta$};
\draw (4,0) -- (4,-2) -- (3,-3) -- (3,-4);
\end{tikzpicture}

\phantom{blah}

\phantom{blah}

\phantom{blah}

\phantom{blah}

\phantom{blah}

\phantom{blah}

\phantom{blah}

\phantom{blah}
\end{minipage}
\begin{minipage}{.85\textwidth}
\begin{align*}
(1 &\otimes \mu)(1 \otimes \Delta \otimes 1)(\lambda \otimes 1)(U^s \otimes AU^r)\\
&=(1 \otimes \mu)(1 \otimes \Delta \otimes 1) \sum_{i+j=s-1} \left(\dots - U^i \otimes AU^j \otimes AU^r \right) \\
&=-(1 \otimes \mu) \sum_{i+j=s-2} (j+1) U^i\otimes U^j \otimes AU^r\\
&=-\sum_{i+j=r+s-2 \atop j \ge r-1} \underbrace{(j+1-r)}_{=s-(i+1)} U^i \otimes AU^j\\
&=\sum_{i+j=r+s-2 \atop i \le s-1} (i+1-s) U^i \otimes AU^j.
\end{align*}
\end{minipage}

In total, this is consistent with the equality
\begin{center}
\begin{tikzpicture}[scale=.25]

\draw (0,0) -- (1,-1) -- (2,0);
\draw (1,-1) -- (1,-3);
\node at (0,-2) {\tiny $\Delta$};
\draw (0,-4) -- (1,-3) -- (2,-4);
\node at (4,-2) {$=$};

\pgftransformxshift{6cm}
\draw (0,0) -- (1,-1) -- (2,0);
\draw (1,-1) -- (1,-3);
\draw (0,-4) -- (1,-3) -- (2,-4);
\node at (0,0.8) {\tiny$\Delta$};
\node at (4,-2) {$\pm$};

\pgftransformxshift{6cm}
\draw (0,0) -- (1,-1) -- (2,0);
\draw (1,-1) -- (1,-3);
\draw (0,-4) -- (1,-3) -- (2,-4);
\node at (2,0.8) {\tiny$\Delta$};
\node at (4,-2) {$-$};

\pgftransformxshift{6cm}
\draw (0,0) -- (1,-1) -- (2,0);
\draw (1,-1) -- (1,-3);
\draw (0,-4) -- (1,-3) -- (2,-4);
\node at (0,-4.8) {\tiny$\Delta$};
\node at (4,-2) {$-$};

\pgftransformxshift{6cm}
\draw (0,0) -- (1,-1) -- (2,0);
\draw (1,-1) -- (1,-3);
\draw (0,-4) -- (1,-3) -- (2,-4);
\node at (2,-4.8) {\tiny$\Delta$};

\pgftransformxshift{-18cm}
\pgftransformyshift{-8cm}

\node at (0,-2) {$+$};
\pgftransformxshift{2cm}

\draw (0,0) -- (0,-2) -- (1,-3) -- (1,-4);
\draw (1,-3) -- (3,-1);
\node at (1.5,-1.5) {\tiny$\Delta$};
\draw (3,0) -- (3,-1) -- (4,-2) -- (4,-4);
\node at (7,-2.5) {$+$};

\pgftransformxshift{9cm}
\node at (0,.8) {\tiny$2$};
\node at (3,.8) {\tiny$1$};
\draw (0,0) -- (0,-2) -- (1,-3) -- (1,-4);
\draw (1,-3) -- (3,-1);
\node at (1.5,-1.5) {\tiny$\Delta$};
\draw (3,0) -- (3,-1) -- (4,-2) -- (4,-4);
\node at (7,-2.5) {$+$};

\pgftransformxshift{9cm}

\draw (1,0)  -- (1,-1) -- (0,-2) -- (0,-4);
\draw (1,-1) -- (3,-3);
\node at (2.5,-1.5) {\tiny$\Delta$};
\draw (4,0) -- (4,-2) -- (3,-3) -- (3,-4);
\node at (7,-2.5) {$+$};

\pgftransformxshift{9cm}
\node at (1,.8) {\tiny$2$};
\node at (4,.8) {\tiny$1$};
\draw (1,0)  -- (1,-1) -- (0,-2) -- (0,-4);
\draw (1,-1) -- (3,-3);
\node at (2.5,-1.5) {\tiny$\Delta$};
\draw (4,0) -- (4,-2) -- (3,-3) -- (3,-4);
\pgftransformxshift{9cm}
\end{tikzpicture}
\end{center}
The undetermined sign could be obtained from doing the calculation for $U^r \otimes AU^s$ instead. This shows that, already in this example, there is no freedom in the signs for the first 9 terms in the general 11-term relation in~\S\ref{ssec:definitions}. 

\vfill \pagebreak

\appendix 



\section{Tate vector spaces} \label{sec:Tate}
We give in this appendix a brief account of \emph{Tate vector spaces}, summarizing results from~\cite{CO-Tate,CO-algebra}. Tate vector spaces were first defined by Lefschetz~\cite[II]{Lefschetz-book} under the name  {\em locally linearly compact vector spaces} as linear analogues of locally compact abelian groups. 

\subsection{Tate vector spaces} 
Let $\bk$ be a field equipped with the discrete topology. 
A {\em linearly topologized vector space} is a $\bk$-vector space $V$
with a Hausdorff topology that is translation invariant and has a
basis of open neighbourhoods of $0$ consisting of linear subspaces. 
Morphisms between linearly topologized vector spaces are by definition 
continuous linear maps.
The kernel of a morphism $f:V\to W$ is $\ker f=f^{-1}(0)$ (this is a closed subspace), 
and the cokernel is $\coker f=W/\ol{f(V)}$ (the quotient by a closed subspace is Hausdorff).
The {\em completion} of a linearly topologized vector space $V$ is
$$
   \wh V := \lim_{U\subset V \mbox{\tiny  open linear}} V/U.
$$
This inverse limit is topologized as a subset of the product $\prod_U A/U$, and each $A/U$ is discrete with respect to the quotient topology.
The space $V$ is called {\em complete} if the canonical map $V \to \wh V$ is an isomorphism. A linearly topologized vector space $V$ is called \emph{discrete} if $\{0\}$ is an open set, \emph{linearly compact} if it is complete and $\dim(V/U)<\infty$ for each open linear subspace $U\subset V$, and \emph{Tate}, or \emph{locally linearly compact}, if it is complete and admits an open linearly compact subspace.

\begin{example}
The vector space $\bk[t^{-1},t]]$ of Laurent power series, with a basis of neighborhoods of $0$ given by $t^n\bk[[t]]$, $n\in\Z$, is Tate. The open subspace $\bk[[t]]$ is linearly compact. 
\end{example}

The topological dual $V^\vee=\Hom(V,\bk)$ is topologized by the compact-open topology, i.e., by defining a neighbourhood basis of the origin to be given by the linear subspaces
$K^\perp=\{\alpha\in V^\vee\mid \alpha|_K=0\}$, with $K\subset V$ an arbitrary linearly compact subspace.

The Lefschetz-Tate duality theorem from~\cite[(II.28.2-29.1)]{Lefschetz-book} states that, if $V$ is discrete, then $V^\vee$ is linearly compact, if $V$ is linearly compact, then $V^\vee$ is discrete, and if $V$ is Tate, then so is $V^\vee$ and the canonical map $V\to
  V^{\vee\vee}$ is a topological isomorphism.

\begin{example}
The space 
$\bk[t^{-1}]\simeq \bigoplus_\N \bk$ is discrete, the space $\bk[[t]]\simeq \prod_\N\bk$ is linearly compact, and they are dual of each other~\cite
{CO-Tate}. 
\end{example}  

There are two kinds of topological tensor products that are relevant in this context (Beilinson~\cite[\S1.1]{Beilinson}). The first, denoted $\hatotimes^*$, is naturally the source of the product maps in our bialgebra structures, and the second, denoted $\hatotimes^!$, is naturally the target of the coproduct maps. Both are defined as completions of the algebraic tensor product, with respect to two different topologies. For Tate vector spaces $V,W$, these topologies on $V\otimes W$ can be described as follows: 
\begin{itemize}
\item The \emph{$*$ topology} is the finest linear topology such that the canonical bilinear map $V\times W\to V\otimes W$ is continuous.
\item The \emph{$!$ topology} is the coarsest topology such that the canonical evaluation map 
$
V\otimes W\to B(V^\vee\times W^\vee,\bk)
$  
is continuous. Here $B(V^\vee\times W^\vee,\bk)$ is the space of continuous bilinear forms on $V^\vee\times W^\vee$ with the  compact-open topology. 
\end{itemize}
The $!$ topology is coarser than the $*$ topology, so there is a canonical morphism induced by the identity 
$V\hatotimes^* W\to V\hatotimes^! W$.  Also, the identity induces a canonical morphism $U\hatotimes^*(V\hatotimes^!W) \to (U\hatotimes^*V)\hatotimes^!W$.

For Tate spaces $V,W$ there are canonical isomorphisms $(V\hatotimes^* W)^\vee\simeq V^\vee\hatotimes^! W^\vee$ and $(V\hatotimes^! W)^\vee\simeq V^\vee\hatotimes^* W^\vee$, but note that none of the factors need be Tate, see Esposito and Penkov~\cite{Esposito-Penkov22,Esposito-Penkov23} as well as~\cite[\S5]{CO-Tate}.

\subsection{Graded Tate vector spaces} 
The previous discussion can be enhanced to a graded setting, and this is useful in the context of Floer homology. We follow~\cite[\S2]{CO-algebra}. 

A \emph{($\Z$-)graded Tate vector space} is a direct sum $V=\bigoplus_{i\in\Z}V_i$ with Tate summands. Morphism spaces in the category of graded Tate vector spaces are by definition  
\begin{align*}
  \Hom(V,W) = \bigoplus_{k\in\Z}\Hom_k(V,W),\quad
  \Hom_k(V,W) = \prod_i \Hom(V_i,W_{i+k}),
\end{align*}
and each $\Hom(V_i,W_{i+k})$ is topologized by the compact-open topology.
An element in $\Hom_k(V,W)$ is called a \emph{(homogeneous) morphism of degree $k$}, and we denote $|\phi|=k$ its degree. 

The \emph{dual} of a graded linearly topologized vector space is defined to be  
$$
  V^\vee = \bigoplus_{i\in\Z}V^\vee_i,\qquad V^\vee_i=\Hom(V_{-i},\bk).
$$
The $*$ and $!$ tensor products of graded Tate vector spaces are the graded linearly topologized vector spaces 
\begin{gather*}
V\otimes^* W = \bigoplus_{k\in\Z}(V\otimes^* W)_k,\qquad 
  (V\otimes^* W)_k = \bigoplus_{i+j=k}V_i\hatotimes^* W_j, \cr
V\otimes^! W = \bigoplus_{k\in\Z}(V\otimes^! W)_k,\qquad 
  (V\otimes^! W)_k = \prod_{i+j=k}V_i\hatotimes^! W_j.
\end{gather*}

Notice that these tensor products are again completed, with the completion happening separately in each constituent piece. To unburden the notation, we leave off the hats for the tensor products in the graded setting. As in the non-graded case, we have canonical isomorphisms
$(V\otimes^* W)^\vee\simeq V^\vee\otimes^! W^\vee$ and $(V\otimes^! W)^\vee\simeq V^\vee\otimes^* W^\vee$, and also a canonical morphism induced by the identity $U\otimes^*(V\otimes^!W) \to (U\otimes^*V)\otimes^!W$.

The main theorem in~\cite{CHO-PD} states that, given a Liouville domain $V$, Rabinowitz Floer homology $SH_*(\p V)$ and cohomology $SH^*(\p V)$ are
graded Tate vector spaces topologically dual to each other. 
Poincar\'e duality defines an isomorphism of graded Tate vector spaces.

\subsection{Frobenius algebras}

We follow~\cite{CO-algebra}. Let $V=\bigoplus_{i\in\Z}V_i$ be a graded Tate vector space. The \emph{twist} $\tau:V\otimes V\to V\otimes V$ defined by $\tau(a\otimes b)=(-1)^{|a||b|}b\otimes a$ induces maps $V\otimes^* V\to V\otimes^* V$ and $V\otimes^! V\to V\otimes^! V$, still denoted $\tau$.  

A morphism $\boldmu:V\otimes^* V\to V$ is called a \emph{product}.  
Commutativity, associativity, and the notion of a unit are defined in the usual way.

A morphism $\boldlambda:V\to V\otimes^! V$ is called a \emph{coproduct}. 
Cocommutativity, coassociativity, and the notion of a counit are defined in the usual way. 
A \emph{Tate Frobenius algebra}~\cite[\S5]{CO-algebra}, also called \emph{biunital coFrobenius bialgebra}, is a graded Tate vector space $V$ endowed with an associative degree zero product $\boldmu:V\otimes^* V\to V$ with unit $\boldeta\in V$, and a coassociative coproduct $\boldlambda:V\to V\otimes^! V$ with counit $\boldeps:V\to \bk$, which satisfy in addition the following relations:
\begin{itemize}
\item {\sc (Frobenius)} Define the \emph{copairing} by 
$$
\boldc = \boldlambda\boldeta,
$$
and the \emph{pairing} by 
$$
\boldp = (-1)^{|\boldlambda|}\boldeps\boldmu.
$$
Then the following Frobenius relations hold: 
$$
\left.\begin{array}{rrr}
\boldlambda = & (1\otimes \boldmu)(\boldc\otimes 1)   = & (\boldmu\otimes 1)(1\otimes \boldc),\\
\boldmu = & (-1)^{|\boldlambda|}(\boldp\otimes 1)(1\otimes \boldlambda)  = & (1\otimes \boldp)(\boldlambda\otimes 1).
\end{array}\right.
$$
\item {\sc (symmetry)}
\begin{align*}
\tau \boldc & = (-1)^{|\boldlambda|} \boldc, \\
\boldp \tau  & = \boldp. 
\end{align*}
\end{itemize}

The {\sc (Frobenius)} relations have to be interpreted in terms of the canonical maps $V\otimes^*(V\otimes^!V) \to (V\otimes^*V)\otimes^!V$ and $(V\otimes^! V)\otimes^* V\to V\otimes^!(V\otimes^* V)$, denoted $can$. E.g., the relation $\boldlambda=(\boldmu\otimes 1)(1\otimes \boldc)$ has to be interpreted as $\boldlambda$ being equal to the composition 
$$
V\stackrel{1\otimes\boldc}{\longrightarrow} V\otimes^* (V\otimes^! V) \stackrel{can}{\longrightarrow} (V\otimes^*V)\otimes^!V \stackrel{\boldmu\otimes 1}{\longrightarrow} V\otimes^! V.
$$

{\bf Terminology.} For readability, we refer in the paper to these algebras simply as \emph{Frobenius algebras}. In the finite dimensional case they coincide with the classical notion of Frobenius algebra. In the infinite dimensional case, they need to be understood in a Tate setting.

A Frobenius algebra is called {\em commutative and cocommutative} if $\boldmu$ is commutative, i.e., $\boldmu\tau=\boldmu$, and $\boldlambda$ is cocommutative, i.e., $\tau\boldlambda=(-1)^{|\boldlambda|}\boldlambda$.  
  
\begin{remark}\label{rmk:TateBVui} The notion of a \emph{BV unital infinitesimal bialgebra} from Definition~\ref{defi:BVui} also makes sense in the context of Tate vector spaces. E.g., the term $(\mu\otimes\mu)(1\otimes\lambda\eta\otimes1)$ from the unital infinitesimal relation has to be understood as acting from $A\otimes^*A$ to $A\otimes^!A$, being equal to the composition 
$A\otimes^*A\stackrel{1\otimes\lambda\eta\otimes1}\longrightarrow A\otimes^*(A\otimes^!A)\otimes^*A\stackrel{can\otimes1}{\longrightarrow} ((A\otimes^*A)\otimes^!A)\otimes^*A\stackrel{can}{\longrightarrow} (A\otimes^*A)\otimes^!(A\otimes^*A)\stackrel{\mu\otimes\mu}{\longrightarrow}A\otimes^!A$. We will not spell out further details here because, in our context, the structure arises on reduced symplectic homology and the underlying linearly topologized vector space is discrete. In this case, the $*$ tensor product and the $!$ tensor product are both discrete, and they both coincide with the algebraic tensor product~\cite[Lemma~5.14]{CO-Tate}. 
\end{remark}


\section{From BV unital infinitesimal to BV Frobenius}\label{app:BVui-BVTateFrob}

In this appendix we prove Proposition~\ref{prop:BVui-BVFrob}. Before starting the proof we discuss notation, and also some facts regarding the behavior of operations under dualization.

We work in the category of graded Tate vector spaces over a discrete field $\bk$, with the two tensor products $\otimes^*$ and $\otimes^!$.
Graded morphisms $f:A\to A'$ and $g:B\to B'$ induce morphisms $f\otimes g:A\otimes^* B\to A'\otimes^* B'$ and $f\otimes g:A\otimes^! B\to A'\otimes^! B'$ (we use the same notation $f\otimes g$ for both kinds of tensor product). We have $(g\otimes f)\tau = (-1)^{|f||g|}\tau(f\otimes g)$. 
In calculations with maps on tensor products with multiple factors, we sometimes denote the transposition $(ij)$ on the terms $i$ and $j$ in a tensor product by $\tau_{ij}$. For example $1\otimes\tau \otimes 1 = \tau_{23}:A^{\otimes^* 4} \to A^{\otimes^* 4}$.

In the explicit description of the various operations below we use the canonical evaluation map $\ev:A^\vee\otimes^* A\to \bk$~\cite[Proposition~4.12]{CO-Tate}. Identifying $\bk \cong \bk^\vee$, we denote its dual map $\coev=\ev^\vee: \bk \to (A^\vee \otimes^* A)^\vee \cong A  \otimes^! A^\vee$. 

It follows from the definitions that the maps $\ev$ and $\coev$ satisfy the identities
$$
(\ev \otimes 1_{A^\vee})(1_{A^\vee} \otimes \coev) = 1_{A^\vee}
\quad \text{and} \quad
(1_A \otimes \ev)(\coev \otimes 1_A) = 1_A.
$$
We also have
$$
(\ev\tau\otimes 1)(1\otimes \tau\coev)=1_A
\quad \text{and} \quad
(1\otimes \ev\tau)(\tau\coev\otimes 1)=1_{A^\vee}.
$$ 

Given a BV unital infinitesimal bialgebra $A$, we will define on $D(A)=A\oplus A^\vee[-|\lambda|]$ a product $\boldmu$ with unit $\boldeta$, a coproduct $\boldlambda$ with counit $\boldeps$, and a BV operator $\boldDelta$. The operations $\boldmu$, $\boldeta$, $\boldlambda$, $\boldeps$ will be depicted graphically using double lines
\begin{center}
\begin{tikzpicture}[scale=.25]

\draw[double, double distance=1.5pt] (0,0) -- (1,-1) -- (2,0);
\draw[double, double distance=1.5pt] (1,-1) -- (1,-2.5);
\node at (1,-5.5) {$\boldmu$};

\pgftransformxshift{6cm}
\draw[double, double distance=1.5pt] (1,-1) -- (1,-2);
\node[circle,draw,inner sep=1.4pt] at (1,-0.85) {};
\node at (1,-5.5) {$\boldeta$};

\pgftransformxshift{6cm}
\draw[double, double distance=1.5pt] (0,-2.5) -- (1,-1.5) -- (2,-2.5);
\draw[double, double distance=1.5pt] (1,0) -- (1,-1.5);
\node at (1,-5.3) {$\boldlambda$};

\pgftransformxshift{6cm}
\draw[double, double distance=1.5pt] (1,-1) -- (1,-2);
\node[circle,draw,inner sep=1.4pt] at (1,-2.15) {};
\node at (1,-5.5) {$\boldeps$};

\end{tikzpicture}
\end{center}

As before, these are read from top to bottom, i.e., inputs in $D(A)$ are at the top leaves and outputs in $D(A)$ are at the bottom leaves. In the definitions of these operations, we will use the original operations in $A$, for which we continue to use the previous graphical notation

\begin{center}
\begin{tikzpicture}[scale=.25]

\draw (0,0) -- (1,-1) -- (2,0);
\draw (1,-1) -- (1,-2.5);
\node at (1,-5.5) {$\mu$};

\pgftransformxshift{6cm}

\draw (1,-1) -- (1,-2);
\node[circle,draw,inner sep=1pt] at (1,-1) {};
\node at (1,-5.5) {$\eta$};

\pgftransformxshift{6cm}
\draw (1,0) -- (1,-1.5);
\draw (0,-2.5) -- (1,-1.5) -- (2,-2.5);
\node at (1,-5.3) {$\lambda$};

\end{tikzpicture}
\end{center}

Here inputs/outputs in $A$ can be interpreted as outputs/inputs in $A^\vee$ and so these operations in $A$ give rise to mixed operations involving inputs and outputs in $A$ and $A^\vee$. To clarify inputs and outputs of these mixed operations, we use arrows pointing to the tree (for inputs) and away from the tree (for outputs). Inputs of a 2-to-1 operation (product) are always read \emph{clockwise}, and outputs of a 1-to-2 operation (coproduct) are always read \emph{counterclockwise}. Inputs always live in $\otimes^*$ tensor products, and outputs always live in $\otimes^!$ tensor products. To illustrate these conventions, here are the graphical representations of $\mu^\vee:A^\vee \to A^\vee \otimes^! A^\vee$ and $\lambda^\vee:A^\vee \otimes^* A^\vee \to A^\vee$:
\begin{center}
\begin{tikzpicture}[scale=.25]

\draw (0,0) -- (1,-1) -- (2,0);
\draw (1,-1) -- (1,-2.5);
\node at (1,-6.5) {$\mu^\vee$};
\draw[->]  (0,.2)   -- (0,1);
\draw[->]  (2,.2)   -- (2,1);
\draw[->]  (1,-3.5)   -- (1,-2.7);
\node at (-.8,0) {\tiny $2$};
\node at (2.8,0) {\tiny $1$};

\pgftransformxshift{8cm}

\draw (1,0) -- (1,-1.5);
\draw (0,-2.5) -- (1,-1.5) -- (2,-2.5);
\node at (1,-6.35) {$\lambda^\vee$};
\draw[->]  (1,.2)   -- (1,1);
\draw[->]  (0,-3.5)   -- (0,-2.7);
\draw[->]  (2,-3.5)   -- (2,-2.7);
\node at (-.8,-2.5) {\tiny $2$};
\node at (2.8,-2.5) {\tiny $1$};

\end{tikzpicture}
\end{center}

We denote 
$$
\xymatrix{
A^\vee \ar@<.5ex>[r]^-s & A^\vee[-|\lambda|] \ar@<.5ex>[l]^-\omega
}
$$
the mutually inverse shifts, of degrees $|s|=|\lambda|$ and $|\omega|=-|\lambda|$. Operations involving inputs (resp. outputs) in $A^\vee$ can be converted to operations involving inputs (resp. outputs) in $A^\vee[-|\lambda|]$ by precomposing with $\omega$ (resp. postcomposing with $s$). For example, the dual map $\lambda^\vee:A^\vee\otimes^! A^\vee \simeq (A \otimes^* A)^\vee\to A^\vee$ has degree $|\lambda|$ and so $s\lambda^\vee (\omega\otimes\omega):A^\vee[-|\lambda|] \otimes^! A^\vee[-|\lambda|]\to A^\vee[-|\lambda|]$ has degree $0$.

\begin{proof}[Proof of Proposition~\ref{prop:BVui-BVFrob}] In the sign computations involved in the proof it is only the parity of $|\lambda|$ that matters. We therefore assume w.l.o.g. that $|\lambda|=-1$ and work on $D(A)=A\oplus A^\vee[1]$. By assumption we have $\lambda\eta=0$.

\smallskip
{\it Step~1. Definition of the product $\boldmu$ and proof of commutativity and associativity.}

In principle, the product $\boldmu$ can be written as a sum of eight components, according to whether the inputs and the output lie in $A$ or in $A^\vee[1]$.
However, the components $A\otimes^* A\to A^\vee[1]$ and $A^\vee[1]\otimes^* A^\vee[1]\to A$ vanish, and so we have the following preliminary graphical description:

\begin{center}

\begin{tikzpicture}[scale=.25]

\draw[double, double distance=1.5pt] (0,0) -- (1,-1) -- (2,0);
\draw[double, double distance=1.5pt] (1,-1) -- (1,-2.5);
\node at (6,-1) {=};

\pgftransformxshift{14cm}
\draw (0,0) -- (1,-1) -- (2,0);
\draw (1,-1) -- (1,-2.5);
\node at (1,-5.5) {\tiny $A\otimes^* A\to A$};
\node at (7,-1) {+};
\draw[->]        (0,1)   -- (0,.2);
\draw[->]        (2,1)   -- (2,.2);
\draw[->]        (1,-2.7)   -- (1,-3.5);
\node at (-.8,0) {\tiny $1$};
\node at (2.8,0) {\tiny $2$};

\pgftransformxshift{14cm}
\draw (0,0) -- (1,-1) -- (2,0);
\draw (1,-1) -- (1,-2.5);
\node at (1,-5.5) {\tiny $A^\vee[1] \otimes^* A\to A^\vee[1]$};
\node at (7,-1) {+};
\draw[->]        (0,1)   -- (0,.2);
\draw[->]        (2,.2) -- (2,1);
\draw[->]        (1,-3.5) -- (1,-2.7);
\node at (-.8,0) {\tiny $2$};
\node at (0.3,-2.5) {\tiny $1$};

\pgftransformxshift{14cm}
\draw (0,0) -- (1,-1) -- (2,0);
\draw (1,-1) -- (1,-2.5);
\node at (1,-5.5) {\tiny $A \otimes^* A^\vee[1]\to A^\vee[1]$};
\draw[->]        (0,.2) -- (0,1);
\draw[->]        (2,1)   -- (2,.2);
\draw[->]        (1,-3.5) -- (1,-2.7);
\node at (2.8,0) {\tiny $1$};
\node at (1.8,-2.5) {\tiny $2$};

\pgftransformxshift{-28cm}
\pgftransformyshift{-12cm}
\draw (1,0) -- (1,-1.5);
\draw (0,-2.5) -- (1,-1.5) -- (2,-2.5);
\node at (1,-5.3) {\tiny $A^\vee[1]\otimes^* A^\vee[1]\to A^\vee[1]$};
\node at (-5,-1) {$+$};
\node at (7,-1) {$-$};
\draw[->]        (1,.2) -- (1,1);
\draw[->]        (0,-3.5)  -- (0,-2.7);
\draw[->]        (2,-3.5)  -- (2,-2.7);
\node at (-.8,-2.5) {\tiny $2$};
\node at (2.8,-2.5) {\tiny $1$};

\pgftransformxshift{14cm}
\draw (1,0) -- (1,-1.5);
\draw (0,-2.5) -- (1,-1.5) -- (2,-2.5);
\node at (1,-5.3) {\tiny $A^\vee[1] \otimes^* A\to A$};
\node at (7,-1) {+};
\draw[->]        (1,1) -- (1,.2);
\draw[->]        (0,-3.5)  -- (0,-2.7);
\draw[->]        (2,-2.7)  -- (2,-3.5);
\node at (-0.8,-2.5) {\tiny $1$};
\node at (0.2,0) {\tiny $2$};

\pgftransformxshift{14cm}
\draw (1,0) -- (1,-1.5);
\draw (0,-2.5) -- (1,-1.5) -- (2,-2.5);
\node at (1,-5.3) {\tiny $A \otimes^* A^\vee[1]\to A$};
\draw[->]        (1,1) -- (1,.2);
\draw[->]        (0,-2.7) -- (0,-3.5) ;
\draw[->]        (2,-3.5)  -- (2,-2.7);
\node at (1.8,0) {\tiny $1$};
\node at (2.8,-2.5) {\tiny $2$};

\end{tikzpicture}
\end{center}

For the more precise formulas below, we note that whenever an operation has an input (resp. an output) in $A^\vee[1]$, one must first precompose with $\omega$ (resp. last postcompose with $s$). Moreover, we use $\ev$ to turn outputs in $A$ into inputs in $A^\vee$, and $\coev$ to turn inputs in $A$ into outputs in $A^\vee$.

The component $A \otimes^* A\to A$ of the product $\boldmu$ is simply given by $\mu$. The explicit formulas for the other components are the following. 

\bigskip
\begin{minipage}{.8\textwidth}
\begin{tikzpicture}[scale=.25]
\draw (0,0) -- (1,-1) -- (2,0);
\draw (1,-1) -- (1,-2.5);
\node at (8,-1) {=};
\draw[->]        (0,1)   -- (0,.2);
\draw[->]        (2,.2) -- (2,1);
\draw[->]        (1,-3.5) -- (1,-2.7);

\pgftransformxshift{12cm}
\draw (0,0) -- (1,-1) -- (2,0);
\draw (1,-1) -- (1,-2.5);
\node at (8,-1) {=};
\node at (2.2,-1) {\tiny $\mu^\vee$};
\draw[->]        (2,.2) -- (2,2); \node at (2,2.5) {\tiny $s$};
\draw (0,2) -- (0,1.5); \draw[>-<]  (0,1.8)   -- (0,.4); \draw (0,.7) -- (0,.2); \node at (-.8,1) {\tiny $\ev$};
\draw[->]        (1,-4.5) -- (1,-2.7); \node at (1,-5) {\tiny $\omega$};

\pgftransformxshift{12cm}
\draw (0,0) -- (1,-1) -- (2,0);
\draw (1,-1) -- (1,-2.5);
\node at (2.2,-1) {\tiny $\mu$};
\draw (2,2) -- (2,1.5); \draw[<->]  (2,1.8)   -- (2,.4); \draw (2,.7) -- (2,.2); \node at (3.8,1) {\tiny $\coev$}; \node at (2,2.5) {\tiny $s$};
\draw[->]        (0,2) -- (0,.2); 
\draw (1,-2.7) -- (1,-3.2); \draw[>-<]  (1,-2.9)   -- (1,-4.3); \draw (1,-4) -- (1,-4.5); \node at (1.8,-3.5) {\tiny $\ev$}; \node at (1,-5) {\tiny $\omega$};

\end{tikzpicture}
\end{minipage}
\begin{align*}
\boldmu |_{A^\vee[1] \otimes^* A\to A^\vee[1]}
&= s(1\otimes\ev)(\mu^\vee\otimes 1)(\omega\otimes 1)\\
&= s(\ev\otimes 1)(1\otimes\mu\otimes 1)(1\otimes 1\otimes\coev)(\omega\otimes 1).\\
\end{align*}

\begin{minipage}{.8\textwidth}
\begin{tikzpicture}[scale=.25]
\draw (0,0) -- (1,-1) -- (2,0);
\draw (1,-1) -- (1,-2.5);
\node at (8,-1) {=};
\draw[->]        (0,.2) -- (0,1);
\draw[->]        (2,1)   -- (2,.2);
\draw[->]        (1,-3.5) -- (1,-2.7);

\pgftransformxshift{12cm}
\draw (0,0) -- (1,-1) -- (2,0);
\draw (1,-1) -- (1,-2.5);
\node at (8,-1) {=};
\node at (2.2,-1) {\tiny $\mu^\vee$};
\draw[->]        (0,.2) -- (0,2); \node at (0,2.5) {\tiny $s$};
\draw (2,2) -- (2,1.5); \draw[>-<]  (2,1.8)   -- (2,.4); \draw (2,.7) -- (2,.2); \node at (2.8,1) {\tiny $\ev$};
\draw[->]        (1,-4.5) -- (1,-2.7); \node at (1,-5) {\tiny $\omega$};

\pgftransformxshift{12cm}
\draw (0,0) -- (1,-1) -- (2,0);
\draw (1,-1) -- (1,-2.5);
\node at (2.2,-1) {\tiny $\mu$};
\draw (0,2) -- (0,1.5); \draw[<->]  (0,1.8)   -- (0,.4); \draw (0,.7) -- (0,.2); \node at (-1.8,1) {\tiny $\coev$}; \node at (0,2.5) {\tiny $s$};
\draw[->]        (2,2) -- (2,.2); 
\draw (1,-2.7) -- (1,-3.2); \draw[>-<]  (1,-2.9)   -- (1,-4.3); \draw (1,-4) -- (1,-4.5); \node at (1.8,-3.5) {\tiny $\ev$}; \node at (1,-5) {\tiny $\omega$};

\end{tikzpicture}
\end{minipage}
\begin{align*}
\boldmu|_{A \otimes^* A^\vee[1]\to A^\vee[1]}
& = s(\ev\tau\otimes 1)(1\otimes\mu^\vee)(1\otimes\omega)\\
& = s(1\otimes\ev\tau)(1\otimes\mu\otimes 1)(\tau\coev\otimes 1\otimes 1)(1\otimes\omega)\\
& = s(\ev \otimes 1) (1 \otimes \mu\tau \otimes 1) (1 \otimes 1 \otimes \coev) \tau (1 \otimes \omega).
\end{align*}


\begin{minipage}{.8\textwidth}
\begin{tikzpicture}[scale=.25]

\node at (-3,1) {\phantom{-}};

\draw (1,0) -- (1,-1.5);
\draw (0,-2.5) -- (1,-1.5) -- (2,-2.5);
\node at (8,-1) {$=$};
\draw[->]        (1,.2) -- (1,1);
\draw[->]        (0,-3.5)  -- (0,-2.7);
\draw[->]        (2,-3.5)  -- (2,-2.7);
\node at (-.8,-2.5) {\tiny $2$};
\node at (2.8,-2.5) {\tiny $1$};

\pgftransformxshift{12cm}
\draw (1,0) -- (1,-1.5);
\draw (0,-2.5) -- (1,-1.5) -- (2,-2.5);
\node at (8,-1) {$=$};
\node at (2.3,-1) {\tiny $\lambda^\vee$};
\draw[->] (1,.2) -- (1,2); \node at (1,2.5) {\tiny $s$};
\draw[->] (0,-4.5) -- (0,-2.7); \node at (0,-5) {\tiny $\omega$};
\draw[->] (2,-4.5) -- (2,-2.7); \node at (2,-5) {\tiny $\omega$};
\node at (-.8,-4.5) {\tiny $2$};
\node at (2.8,-4.5) {\tiny $1$};

\pgftransformxshift{12cm}
\draw (1,0) -- (1,-1.5);
\draw (0,-2.5) -- (1,-1.5) -- (2,-2.5);
\node at (1.8,-1) {\tiny $\lambda$};
\draw (1,2) -- (1,1.5); \draw[<->]  (1,1.8)   -- (1,.4); \draw (1,.7) -- (1,.2); \node at (-.8,1) {\tiny $\coev$}; \node at (1,2.5) {\tiny $s$};
\draw (0,-2.7) -- (0,-3.2); \draw[>-<]  (0,-2.9)   -- (0,-4.3); \draw (0,-4) -- (0,-4.5); \node at (-.8,-3.5) {\tiny $\ev$}; \node at (0,-5) {\tiny $\omega$};
\draw (2,-2.7) -- (2,-3.2); \draw[>-<]  (2,-2.9)   -- (2,-4.3); \draw (2,-4) -- (2,-4.5); \node at (2.8,-3.5) {\tiny $\ev$}; \node at (2,-5) {\tiny $\omega$};
\node at (-.8,-4.5) {\tiny $2$};
\node at (2.8,-4.5) {\tiny $1$};

\end{tikzpicture}
\end{minipage}

\begin{align*}
\MoveEqLeft{\boldmu|_{A^\vee[1] \otimes^* A^\vee[1]\to A^\vee[1]}}\\
& = s \lambda^\vee(\omega\otimes\omega) \\
& = s(1\otimes\ev\tau\otimes\ev\tau)\tau_{34}\tau_{23}(1\otimes\lambda\otimes 1 \otimes 1)(\tau\coev\otimes 1\otimes 1)(\omega\otimes\omega).\\
&  = s (\ev \otimes \ev \otimes 1) \tau_{23} \tau_{34}(1 \otimes 1 \otimes \lambda \otimes 1) (1 \otimes 1 \otimes \coev)(\omega \otimes \omega)\\
&= - s (\ev \otimes \ev \otimes 1) \tau_{23} (1 \otimes 1 \otimes \lambda \otimes 1) (1 \otimes 1 \otimes \coev)(\omega \otimes \omega).   
\end{align*}

\begin{minipage}{.8\textwidth}
\begin{tikzpicture}[scale=.25]
\draw (1,0) -- (1,-1.5);
\draw (0,-2.5) -- (1,-1.5) -- (2,-2.5);
\node at (-3,-1) {$-$};
\node at (8,-1) {=};
\draw[->]        (1,1) -- (1,.2);
\draw[->]        (0,-3.5)  -- (0,-2.7);
\draw[->]        (2,-2.7)  -- (2,-3.5);

\pgftransformxshift{16cm}
\draw (1,0) -- (1,-1.5);
\draw (0,-2.5) -- (1,-1.5) -- (2,-2.5);
\node at (-3,-1) {$-$};
\node at (1.8,-1) {\tiny $\lambda$};
\draw[->] (1,2) -- (1,.2); 
\draw (0,-2.7) -- (0,-3.2); \draw[>-<]  (0,-2.9)   -- (0,-4.3); \draw (0,-4) -- (0,-4.5); \node at (-.8,-3.5) {\tiny $\ev$}; \node at (0,-5) {\tiny $\omega$};
\draw[->] (2,-2.7) -- (2,-4.5); 

\end{tikzpicture}
\end{minipage}

$$
\boldmu|_{A^\vee[1]\otimes^* A\to A}=-(\ev\otimes 1)(1\otimes\lambda)(\omega\otimes 1)
$$


\begin{minipage}{.8\textwidth}
\begin{tikzpicture}[scale=.25]

\node at (-3,1) {\phantom{-}};

\draw (1,0) -- (1,-1.5);
\draw (0,-2.5) -- (1,-1.5) -- (2,-2.5);
\node at (8,-1) {=};
\draw[->]        (1,1) -- (1,.2);
\draw[->]        (0,-2.7) -- (0,-3.5) ;
\draw[->]        (2,-3.5)  -- (2,-2.7);

\pgftransformxshift{12cm}
\draw (1,0) -- (1,-1.5);
\draw (0,-2.5) -- (1,-1.5) -- (2,-2.5);
\node at (1.8,-1) {\tiny $\lambda$};
\draw[->] (1,2) -- (1,.2); 
\draw[->] (0,-2.7) -- (0,-4.5); 
\draw (2,-2.7) -- (2,-3.2); \draw[>-<]  (2,-2.9)   -- (2,-4.3); \draw (2,-4) -- (2,-4.5); \node at (2.8,-3.5) {\tiny $\ev$}; \node at (2,-5) {\tiny $\omega$};

\end{tikzpicture}
\end{minipage}

$$
\boldmu|_{A \otimes^* A^\vee[1]\to A}= (1\otimes\ev\tau)(\lambda\otimes 1)(1\otimes\omega).
$$

The product $\boldmu$ has degree $0$ and is commutative. For example, for the restriction to $A^\vee[1]\otimes^* A^\vee[1]$ we have
\begin{align*}
\boldmu\tau&|_{A^\vee[1] \otimes^* A^\vee[1]}\\
& = s\lambda^\vee(\omega\otimes\omega)\tau = -s\lambda^\vee\tau(\omega\otimes\omega)=s\lambda^\vee(\omega\otimes\omega)=\boldmu|_{A^\vee[1]\otimes^* A^\vee[1]},
\end{align*}
where we have used the graded commutativity $\lambda^\vee\tau=-\lambda^\vee$ for $\lambda^\vee$ and the relation $(\omega\otimes\omega)\tau=-\tau(\omega\otimes\omega)$. 
Similarly, for the component mapping $A^\vee[1] \otimes^* A$ to $A$ we have
\begin{align*}
\boldmu \tau|_{A^\vee[1] \otimes^* A \to A}& = \boldmu|_{A \otimes^* A^\vee[1] \to A} \tau\\
&= (1\otimes\ev\tau)(\lambda\otimes 1)(1\otimes\omega)\tau \\
&= (1\otimes\ev\tau)(\lambda\otimes 1)\tau(\omega\otimes 1) \\
&= (1\otimes\ev\tau)(1 \otimes \tau)(\tau \otimes 1)(1 \otimes \lambda)(\omega\otimes 1) \\
&= (1\otimes\ev)(\tau \otimes 1)(1 \otimes \lambda)(\omega\otimes 1) \\
&= (\ev \otimes 1)(1 \otimes \tau)(1 \otimes \lambda)(\omega\otimes 1) \\
&= -(\ev \otimes 1)(1 \otimes \lambda)(\omega\otimes 1) \\
&= \boldmu |_{A^\vee[1]\otimes^* A \to A}.
\end{align*}
The computations for the other components are analogous.

A direct, though tedious, computation shows that the product $\boldmu$ is associative. This is a consequence of the associativity of $\mu$, the coassociativity of $\lambda$, the infinitesimal relation $\lambda\mu=(1\otimes\mu)(\lambda\otimes 1) + (\mu\otimes 1)(1\otimes\lambda)$, the commutativity of $\mu$ and the cocommutativity of $\lambda$. Strictly speaking, the last two properties are only used via the fact that, together with the infinitesimal relation, they imply the so-called ``anti-symmetry relation" for $\mu$ and $\lambda$, cf.~\cite[\S3, Remark~3.4]{CO-algebra}. In the ungraded case the proof of the associativity of the product $\boldmu$ can be found in Zhelyabin~\cite{Zhelyabin1997} and Aguiar~\cite{Aguiar2001}. We give here one sample computation to illustrate the appearance of signs in the graded case, namely we show how the associativity of $\boldmu$ on the component $A^\vee[1]^{ \otimes^* 3}\to A^\vee[1]$ amounts to graded coassociativity of $\lambda$. Noting that, on $A^\vee[1] \otimes^* A^\vee[1]$, we have $\boldmu=s\lambda^\vee(\omega\otimes \omega)$, we compute
\begin{align*}
\boldmu(\boldmu\otimes 1) & = s\lambda^\vee(\omega\otimes\omega)(s\lambda^\vee(\omega\otimes\omega)) \\
& = s\lambda^\vee(\omega\otimes 1)(s\lambda^\vee(\omega\otimes\omega)\otimes\omega) \\
& = s\lambda^\vee(\lambda^\vee\otimes 1)(\omega\otimes\omega\otimes\omega).\\
\boldmu(1\otimes\boldmu) & = s\lambda^\vee(\omega\otimes\omega) (1\otimes s\lambda^\vee(\omega\otimes\omega))\\
&=s\lambda^\vee(\omega\lambda^\vee(\omega\otimes\omega))\\
&=-s\lambda^\vee(1\otimes\lambda^\vee)(\omega\otimes\omega\otimes\omega).
\end{align*}
In the last line the Koszul sign comes from exchanging the maps $\omega$ and $\lambda^\vee$, both of odd degree. Thus even degree associativity $\boldmu(\boldmu\otimes 1)=\boldmu(1\otimes\boldmu)$ for $\boldmu$ is equivalent to odd degree associativity $\lambda^\vee(\lambda^\vee\otimes 1)=-\lambda^\vee(1\otimes\lambda^\vee)$ for $\lambda^\vee$, which is in turn equivalent to odd degree coassociativity $(\lambda\otimes 1)\lambda=-(1\otimes\lambda)\lambda$ for $\lambda$. 

\smallskip
{\it Step~2. Definition of the coproduct $\boldlambda$ and proof of coassociativity.} As for the product, out of the eight possible components of the coproduct the two components $A \to A^\vee[1] \otimes^! A^\vee[1]$ and $A^\vee[1]\to A \otimes^! A$ vanish, and so we get the following preliminary description:

\begin{center}

\begin{tikzpicture}[scale=.25]

\draw[double, double distance=1.5pt] (0,-2.5) -- (1,-1.5) -- (2,-2.5);
\draw[double, double distance=1.5pt] (1,0) -- (1,-1.5);
\node at (6,-1) {=};

\pgftransformxshift{14cm}
\draw (1,0) -- (1,-1.5);
\draw (0,-2.5) -- (1,-1.5) -- (2,-2.5);
\node at (1,-5.3) {\tiny $A\to A \otimes^! A$};
\node at (7,-1) {$+$};
\draw[->]        (1,1) -- (1,.2);
\draw[->]        (0,-2.7)   -- (0,-3.5);
\draw[->]        (2,-2.7)   -- (2,-3.5);
\node at (-.8,-2.5) {\tiny $1$};
\node at (2.8,-2.5) {\tiny $2$};

\pgftransformxshift{14cm}
\draw (1,0) -- (1,-1.5);
\draw (0,-2.5) -- (1,-1.5) -- (2,-2.5);
\node at (1,-5.3) {\tiny $A^\vee[1]\to A\otimes^! A^\vee[1]$};
\node at (7,-1) {+};
\draw[<-]        (1,1) -- (1,.2);
\draw[->]        (0,-3.5)  -- (0,-2.7);
\draw[->]        (2,-2.7)  -- (2,-3.5);
\node at (2.8,-2.5) {\tiny $1$};
\node at (1.8,0) {\tiny $2$};

\pgftransformxshift{14cm}
\draw (1,0) -- (1,-1.5);
\draw (0,-2.5) -- (1,-1.5) -- (2,-2.5);
\node at (1,-5.3) {\tiny $A^\vee[1]\to A^\vee[1]\otimes^! A$};
\draw[<-]        (1,1) -- (1,.2);
\draw[->]        (0,-2.7) -- (0,-3.5) ;
\draw[->]        (2,-3.5)  -- (2,-2.7);
\node at (0.2,0) {\tiny $1$};
\node at (-0.8,-2.5) {\tiny $2$};

\pgftransformxshift{-28cm}
\pgftransformyshift{-12cm}
\draw (0,0) -- (1,-1) -- (2,0);
\draw (1,-1) -- (1,-2.5);
\node at (1,-5.5) {\tiny $A^\vee[1]\to A^\vee[1]\otimes^! A^\vee[1]$};
\node at (-5,-1) {$+$};
\node at (7,-1) {$-$};
\draw[<-]        (0,1)   -- (0,.2);
\draw[<-]        (2,1)   -- (2,.2);
\draw[<-]        (1,-2.7)   -- (1,-3.5);
\node at (-.8,0) {\tiny $2$};
\node at (2.8,0) {\tiny $1$};

\pgftransformxshift{14cm}
\draw (0,0) -- (1,-1) -- (2,0);
\draw (1,-1) -- (1,-2.5);
\node at (1,-5.5) {\tiny $A\to A \otimes^! A^\vee[1]$};
\node at (7,-1) {+};
\draw[->]        (0,1)   -- (0,.2);
\draw[->]        (2,.2) -- (2,1);
\draw[<-]        (1,-3.5) -- (1,-2.7);
\node at (1.8,-2.5) {\tiny $1$};
\node at (2.8,0) {\tiny $2$};

\pgftransformxshift{14cm}
\draw (0,0) -- (1,-1) -- (2,0);
\draw (1,-1) -- (1,-2.5);
\node at (1,-5.5) {\tiny $A\to A^\vee[1]\otimes^! A$};
\draw[->]        (0,.2) -- (0,1);
\draw[->]        (2,1)   -- (2,.2);
\draw[<-]        (1,-3.5) -- (1,-2.7);
\node at (-0.8,0) {\tiny $1$};
\node at (0.2,-2.5) {\tiny $2$};

\end{tikzpicture}
\end{center}

For the more precise descriptions of the individual terms below, we remind the reader that whenever an operation has an input (resp. an output) in $A^\vee[1]$, one must first precompose with $\omega$ (resp. last postcompose with $s$), and that we use $\ev$ to turn outputs in $A$ into inputs in $A^\vee$ and $\coev$ to turn inputs in $A$ into outputs in $A^\vee$. 

The component $A\to A \otimes^! A$ of the coproduct $\boldlambda$ is simply given by $\lambda$. The formulas for the other components are the following. 

\begin{minipage}{.8\textwidth}
\begin{tikzpicture}[scale=.25]
\node at (-3,0) {\phantom{blah}};
\draw (1,0) -- (1,-1.5);
\draw (0,-2.5) -- (1,-1.5) -- (2,-2.5);
\node at (8,-1) {=};
\draw[<-]        (1,1) -- (1,.2);
\draw[->]        (0,-3.5)  -- (0,-2.7);
\draw[->]        (2,-2.7)  -- (2,-3.5);

\pgftransformxshift{12cm}
\draw (1,0) -- (1,-1.5);
\draw (0,-2.5) -- (1,-1.5) -- (2,-2.5);
\node at (1.8,-1) {\tiny $\lambda$};
\draw (1,2) -- (1,1.5); \draw[<->]  (1,1.8)   -- (1,.4); \draw (1,.7) -- (1,.2); \node at (-.8,1) {\tiny $\coev$}; \node at (1,2.5) {\tiny $s$};
\draw (0,-2.7) -- (0,-3.2); \draw[>-<]  (0,-2.9)   -- (0,-4.3); \draw (0,-4) -- (0,-4.5); \node at (-.8,-3.5) {\tiny $\ev$}; \node at (0,-5) {\tiny $\omega$};
\draw[->] (2,-2.7) -- (2,-4.5);

\end{tikzpicture}
\end{minipage}

$$
\boldlambda|_{A^\vee[1]\to A \otimes^! A^\vee[1]}=(1\otimes s)(\ev\otimes 1\otimes 1)(1\otimes\lambda\otimes 1)(1\otimes\coev)\omega
$$


\begin{minipage}{.8\textwidth}
\begin{tikzpicture}[scale=.25]
\node at (-3,0) {\phantom{blah}};
\draw (1,0) -- (1,-1.5);
\draw (0,-2.5) -- (1,-1.5) -- (2,-2.5);
\node at (8,-1) {=};
\draw[<-]        (1,1) -- (1,.2);
\draw[->]        (0,-2.7) -- (0,-3.5) ;
\draw[->]        (2,-3.5)  -- (2,-2.7);

\pgftransformxshift{12cm}
\draw (1,0) -- (1,-1.5);
\draw (0,-2.5) -- (1,-1.5) -- (2,-2.5);
\node at (1.8,-1) {\tiny $\lambda$};
\draw (1,2) -- (1,1.5); \draw[<->]  (1,1.8)   -- (1,.4); \draw (1,.7) -- (1,.2); \node at (-.8,1) {\tiny $\coev$}; \node at (1,2.5) {\tiny $s$};
\draw[->] (0,-2.7) -- (0,-4.5); 
\draw (2,-2.7) -- (2,-3.2); \draw[>-<]  (2,-2.9)   -- (2,-4.3); \draw (2,-4) -- (2,-4.5); \node at (2.8,-3.5) {\tiny $\ev$}; \node at (2,-5) {\tiny $\omega$};

\end{tikzpicture}
\end{minipage}

\begin{align*}
\boldlambda|_{A^\vee[1]\to A^\vee[1] \otimes^! A}
&=(s\otimes 1)(1\otimes 1\otimes \ev\tau)(1\otimes\lambda\otimes 1)(\tau\coev\otimes 1)\omega\\
&=(1\otimes s)(\ev \otimes 1\otimes 1)(1\otimes\tau\lambda\otimes 1)(1\otimes \coev)\omega
\end{align*}


\begin{minipage}{.8\textwidth}
\begin{tikzpicture}[scale=.25]
\node at (-3,0) {\phantom{blah}}; 
\draw (0,0) -- (1,-1) -- (2,0);
\draw (1,-1) -- (1,-2.5);
\node at (8,-1) {$=$};
\draw[<-]        (0,1)   -- (0,.2);
\draw[<-]        (2,1)   -- (2,.2);
\draw[<-]        (1,-2.7)   -- (1,-3.5);
\node at (-.8,0) {\tiny $2$};
\node at (2.8,0) {\tiny $1$};

\pgftransformxshift{14cm}
\draw (0,0) -- (1,-1) -- (2,0);
\draw (1,-1) -- (1,-2.5);
\node at (2,-1.3) {\tiny $\mu^\vee$};
\draw[<-]        (0,1)   -- (0,.2); \node at (0,1.5) {\tiny $s$};
\draw[<-]        (2,1)   -- (2,.2); \node at (2,1.5) {\tiny $s$};
\draw[<-]        (1,-2.7)   -- (1,-3.5); \node at (1,-4) {\tiny $\omega$};
\node at (-.8,0) {\tiny $2$};
\node at (2.8,0) {\tiny $1$};

\node at (8,-1) {$=$};

\pgftransformxshift{14cm}
\draw (0,0) -- (1,-1) -- (2,0);
\draw (1,-1) -- (1,-2.5);
\node at (1.8,-1) {\tiny $\mu$};
\draw (0,2) -- (0,1.5); \draw[<->]  (0,1.8)   -- (0,.4); \draw (0,.7) -- (0,.2); \node at (-1.5,1) {\tiny $\coev$}; \node at (0,2.5) {\tiny $s$};
\draw (2,2) -- (2,1.5); \draw[<->]  (2,1.8)   -- (2,.4); \draw (2,.7) -- (2,.2); \node at (3.5,1) {\tiny $\coev$}; \node at (2,2.5) {\tiny $s$};
\draw (1,-2.7) -- (1,-3.2); \draw[>-<]  (1,-2.9)   -- (1,-4.3); \draw (1,-4) -- (1,-4.5); \node at (1.8,-3.5) {\tiny $\ev$}; \node at (1,-5) {\tiny $\omega$};
\node at (-.8,0) {\tiny $2$};
\node at (2.8,0) {\tiny $1$};

\end{tikzpicture}
\end{minipage}

\begin{align*}
\boldlambda&|_{A^\vee[1]\to A^\vee[1] \otimes^! A^\vee[1]}\\
&= (s \otimes s) \mu^\vee \omega\\
&=(s\otimes s) (\ev\otimes 1 \otimes 1)(1\otimes\mu\otimes 1\otimes 1)\tau_{23} \tau_{34} (1\otimes\coev\otimes\coev)\omega
\end{align*}



\begin{minipage}{.8\textwidth}
\begin{tikzpicture}[scale=.25]
\node at (-3,0) {\phantom{blah}}; 
\draw (0,0) -- (1,-1) -- (2,0);
\draw (1,-1) -- (1,-2.5);
\node at (-3,-1) {$-$};
\node at (8,-1) {$=$};
\draw[->]        (0,1)   -- (0,.2);
\draw[->]        (2,.2) -- (2,1);
\draw[<-]        (1,-3.5) -- (1,-2.7);

\pgftransformxshift{14cm}
\draw (0,0) -- (1,-1) -- (2,0);
\draw (1,-1) -- (1,-2.5);
\node at (-3,-1) {$-$};
\node at (1.8,-1) {\tiny $\mu$};
\draw[->] (0,2) -- (0,.2); 
\draw (2,2) -- (2,1.5); \draw[<->]  (2,1.8)   -- (2,.4); \draw (2,.7) -- (2,.2); \node at (3.5,1) {\tiny $\coev$}; \node at (2,2.5) {\tiny $s$};
\draw[->] (1,-2.7) -- (1,-4.5);

\end{tikzpicture}
\end{minipage}

$$
\boldlambda|_{A\to A \otimes^! A^\vee[1]}=-(1\otimes s)(\mu\otimes 1)(1\otimes\coev)
$$


\begin{minipage}{.8\textwidth}
\begin{tikzpicture}[scale=.25]
\node at (-3,0) {\phantom{blah}}; 
\draw (0,0) -- (1,-1) -- (2,0);
\draw (1,-1) -- (1,-2.5);
\node at (8,-1) {$=$};
\draw[->]        (0,.2) -- (0,1);
\draw[->]        (2,1)   -- (2,.2);
\draw[<-]        (1,-3.5) -- (1,-2.7);

\pgftransformxshift{14cm}
\draw (0,0) -- (1,-1) -- (2,0);
\draw (1,-1) -- (1,-2.5);
\node at (1.8,-1) {\tiny $\mu$};
\draw[->] (2,2) -- (2,.2); 
\draw (0,2) -- (0,1.5); \draw[<->]  (0,1.8)   -- (0,.4); \draw (0,.7) -- (0,.2); \node at (-1.5,1) {\tiny $\coev$}; \node at (0,2.5) {\tiny $s$};
\draw[->] (1,-2.7) -- (1,-4.5);

\end{tikzpicture}
\end{minipage}

$$
\boldlambda|_{A\to A^\vee[1] \otimes^! A}=(s\otimes 1)(1\otimes\mu)(\tau\coev\otimes 1)
$$

The coproduct $\boldlambda$ has degree $-1$ and graded cocommutativity (meaning $\tau\boldlambda=-\boldlambda$) is established by computations analogous to those showing commutativity of the product $\boldmu$.
 
Just like in the proof of associativity of $\boldmu$, a direct and equally tedious computation shows that the coproduct $\boldlambda$ is graded coassociative, i.e., $(\boldlambda\otimes 1)\boldlambda=-(1\otimes\boldlambda)\boldlambda$. For the direct computation, this is a consequence of the associativity of $\mu$, the coassociativity of $\lambda$, the infinitesimal relation $\lambda\mu=(1\otimes\mu)(\lambda\otimes 1) + (\mu\otimes 1)(1\otimes\lambda)$, the commutativity of $\mu$ and the cocommutativity of $\lambda$ (through the ``anti-symmetry relation" for $\mu$ and $\lambda$). Alternatively, coassociativity for $\boldlambda$ can be deduced formally by dualization from the associativity of $\boldmu$. 

\smallskip
{\it Step~3. Definition of $\boldeta$, $\boldeps$, $\boldDelta$ and proof of the BV Frobenius relation.} 

The \emph{unit} $\boldeta\in A\oplus A^\vee[1]$ is defined to be 
$$
\boldeta = \eta \in A. 
$$
The relations $\boldmu(\boldeta\otimes 1)=1=\boldmu(1\otimes\boldeta)$ are straightforward from the definition of $\boldmu$. 

The \emph{counit} $\boldeps\in (A\oplus A^\vee[1])^\vee$ is defined to be 
$$
\boldeps=\eta\omega=\ev(\eta\otimes\omega) \in (A^\vee[1])^\vee.
$$
In other words, we view $\eta\in A$ as an element of $A^{\vee\vee}$ via the canonical 
 identification $A\simeq A^{\vee\vee}$,
and $\boldeps$ acts on $A^\vee[1]$ by $\alpha\mapsto \langle \eta,\omega\alpha\rangle=\ev(\eta\otimes\omega\alpha)$. That $\boldeps$ is a counit means that $(\boldeps\otimes 1)\boldlambda=1=-(1\otimes\boldeps)\boldlambda$, and these relations can be verified from the definition of $\boldeps$. For example, on the component $A^\vee[1]\to A^\vee[1]$ we have 
\begin{align*}
(& 1\otimes\boldeps)\boldlambda  \\
& = (1\otimes\boldeps)\boldlambda|_{A^\vee[1]\otimes A^\vee[1]} \\
& = (1\otimes \eta\omega)(s\otimes s) (\ev\otimes 1 \otimes 1)\\
& \qquad \qquad \qquad (1\otimes\mu\otimes 1\otimes 1)\tau_{23} \tau_{34} (1\otimes\coev\otimes\coev)\omega\\
& = - (s\otimes \eta)(\ev\otimes 1 \otimes 1)(1\otimes\mu\otimes 1\otimes 1)\tau_{23} \tau_{34} (1\otimes\coev\otimes\coev)\omega\\
& = -  s(\ev\otimes 1)(1\otimes \mu(1\otimes\eta)\otimes 1)(1\otimes \coev) \omega \\
& = -1.
\end{align*}

The \emph{BV operator} $\boldDelta:A\oplus A^\vee[1]\to A\oplus A^\vee[1]$ is defined to be 
$$
\boldDelta = \begin{pmatrix} \Delta & 0 \\ 0 & -s\Delta^\vee \omega \end{pmatrix}.
$$
Note that $\boldDelta$ has degree $1$.
The operator $\Delta^\vee$ is defined by the adjunction equality 
$$
\ev(\Delta^\vee\otimes 1)=\ev(1\otimes \Delta),
$$
or, dually,
\begin{equation}\label{eq:coev_and_delta}
(1\otimes\Delta^\vee)\coev = (\Delta\otimes 1)\coev.
\end{equation}

We now prove the BV Frobenius relation~\eqref{eq:BVFrobenius-relation}, i.e.,  
\begin{equation} \label{eq:Delta-Frobenius}
(\boldDelta\otimes 1) \boldlambda\boldeta = (1\otimes\boldDelta)\boldlambda\boldeta.
\end{equation}
The definition of $\boldeta$ and $\boldlambda$ implies 
\begin{align*}
\boldlambda\boldeta & = -(1\otimes s)(\mu\otimes 1)(1\otimes\coev)\eta + (s\otimes 1)(1\otimes\mu)(\tau \coev\otimes 1) \eta \\
& = -(1\otimes s)\coev + (s\otimes 1)\tau\coev\\
& = -(1\otimes s)\coev + \tau (1 \otimes s)\coev.
\end{align*}
Using the definition of $\boldDelta$ and the fact that both $\Delta$ and $s$ have odd degree, we now get
\begin{align*}
(\boldDelta \otimes 1) \boldlambda \boldeta
& = - (\Delta \otimes 1)(1 \otimes s) \coev + (-s\Delta^\vee\omega \otimes 1)\tau (1 \otimes s) \coev \\
& =  (1 \otimes s)(\Delta \otimes 1) \coev - \tau (1 \otimes s)(1 \otimes \Delta^\vee) \coev \\
&= (1 \otimes s)(1 \otimes \Delta^\vee) \coev - \tau (1 \otimes s)(\Delta \otimes 1) \coev \\
&= (1 \otimes s\Delta^\vee) \coev + \tau (\Delta \otimes 1)(1 \otimes s) \coev \\
&= (1 \otimes s\Delta^\vee\omega) (1 \otimes s)\coev + (1 \otimes \Delta)\tau (1 \otimes s) \coev \\
&= (1 \otimes \boldDelta) \boldlambda \boldeta,
\end{align*}
proving \eqref{eq:Delta-Frobenius}.
Here, in the third step, we have used \eqref{eq:coev_and_delta} twice.
\footnote{If the second diagonal term in $\boldDelta$ is chosen to be $s\Delta^\vee\omega$, then we get the relation $(\boldDelta\otimes1)\boldlambda\boldeta=-(1\otimes\boldDelta)\boldlambda\boldeta$.}

\smallskip
{\it Step~4. Proof of the Frobenius relation~\eqref{eq:Frobenius}.} 

The Frobenius relation 
$$
\boldlambda\boldmu=(\boldmu\otimes 1)(1\otimes\boldlambda)=(1\otimes\boldmu)(\boldlambda\otimes 1)
$$
is proved by a direct computation. This is very much similar to the computations involved in the proof of associativity of $\boldmu$ and coassociativity of $\boldlambda$, and we omit the details. In the ungraded case, the Frobenius relation was proved by Bai~\cite{Bai2010}. 

\smallskip
{\it Step~5. Proof of the 7-term relation for $\boldDelta$ and $\boldmu$.} 

The 7-term relation for $\boldDelta$ and $\boldmu$ holds as a consequence of the 7-term relation for $\Delta$ and $\mu$, the 7-term relation for $\Delta$ and $\lambda$, and the 9-term relation for $\Delta$, $\lambda$ and $\mu$. The 7-term relation for $\boldDelta$ and $\boldmu$ involves 16 components, and we give the details of the proof on the component $A \otimes^* A \otimes^* A^\vee[1]\to A$ in order to illustrate the appearance of the 9-term relation for $\Delta$, $\lambda$ and $\mu$.

Recall that the 7-term relation for $\boldDelta$ and $\boldmu$ writes 
\begin{center}
\begin{tikzpicture}[scale=.23]
\draw[double, double distance=1.5pt] (0,0) -- (1,-1) -- (2,0);
\draw[double, double distance=1.5pt] (1,-1) -- (1,-2);
\draw[double, double distance=1.5pt] (1,-2) -- (2,-3) -- (4,-1);
\draw[double, double distance=1.5pt] (2,-3) -- (2,-4);
\node at (2,-5) {\tiny$\boldDelta$};

\node at (5.5,-2) {$+$};

\pgftransformxshift{7cm}

\draw[double, double distance=1.5pt] (0,0) -- (1,-1) -- (2,0);
\draw[double, double distance=1.5pt] (1,-1) -- (1,-2);
\draw[double, double distance=1.5pt] (1,-2) -- (2,-3) -- (4,-1);
\draw[double, double distance=1.5pt] (2,-3) -- (2,-4);
\node at (0,1) {\tiny$\boldDelta$};

\node at (5.5,-2) {$+$};
\pgftransformxshift{7cm}

\draw[double, double distance=1.5pt] (0,0) -- (1,-1) -- (2,0);
\draw[double, double distance=1.5pt] (1,-1) -- (1,-2);
\draw[double, double distance=1.5pt] (1,-2) -- (2,-3) -- (4,-1);
\draw[double, double distance=1.5pt] (2,-3) -- (2,-4);
\node at (0,.8) {\tiny$\boldDelta$};
\node at (0,2) {\tiny$2$};
\node at (2,1) {\tiny$3$};
\node at (4,0) {\tiny$1$};

\node at (5.5,-2) {$+$};
\pgftransformxshift{7cm}

\draw[double, double distance=1.5pt] (0,0) -- (1,-1) -- (2,0);
\draw[double, double distance=1.5pt] (1,-1) -- (1,-2);
\draw[double, double distance=1.5pt] (1,-2) -- (2,-3) -- (4,-1);
\draw[double, double distance=1.5pt] (2,-3) -- (2,-4);
\node at (0,.8) {\tiny$\boldDelta$};
\node at (0,2) {\tiny$3$};
\node at (2,1) {\tiny$1$};
\node at (4,0) {\tiny$2$};

\node at (5.5,-2) {$-$};
\pgftransformxshift{7cm}

\draw[double, double distance=1.5pt] (0,0) -- (1,-1) -- (2,0);
\draw[double, double distance=1.5pt] (1,-1) -- (1,-2);
\draw[double, double distance=1.5pt] (1,-2) -- (2,-3) -- (4,-1);
\draw[double, double distance=1.5pt] (2,-3) -- (2,-4);
\node at (0.4,-2.4) {\tiny$\boldDelta$};

\node at (5.5,-2) {$-$};
\pgftransformxshift{7cm}

\draw[double, double distance=1.5pt] (0,0) -- (1,-1) -- (2,0);
\draw[double, double distance=1.5pt] (1,-1) -- (1,-2);
\draw[double, double distance=1.5pt] (1,-2) -- (2,-3) -- (4,-1);
\draw[double, double distance=1.5pt] (2,-3) -- (2,-4);
\node at (0.4,-2.4) {\tiny$\boldDelta$};
\node at (0,1) {\tiny$2$};
\node at (2,1) {\tiny$3$};
\node at (4,0) {\tiny$1$};
\node at (5.5,-2) {$-$};
\pgftransformxshift{7cm}

\draw[double, double distance=1.5pt] (0,0) -- (1,-1) -- (2,0);
\draw[double, double distance=1.5pt] (1,-1) -- (1,-2);
\draw[double, double distance=1.5pt] (1,-2) -- (2,-3) -- (4,-1);
\draw[double, double distance=1.5pt] (2,-3) -- (2,-4);
\node at (0,1) {\tiny$3$};
\node at (2,1) {\tiny$1$};
\node at (4,0) {\tiny$2$};
\node at (0.4,-2.4) {\tiny$\boldDelta$};

\node at (6.5,-2) {=\ \ 0.};
\end{tikzpicture}
\end{center}

For the \underline{first term}, the result of the first multiplication must land in $A$, and so it gives rise to a single term 
\begin{center}
\begin{tikzpicture}[scale=.25]
\draw (0,0) -- (1,-1) -- (2,0);
\draw (1,-1) -- (1,-2);
\draw (0,-3) -- (1,-2) -- (2,-3);
\node at (0,-4) {\tiny$\Delta$};
\draw (2,-3.2) -- (2,-3.7); \draw[>-<]  (2,-3.4)   -- (2,-4.8); \draw (2,-4.5) -- (2,-5); \node at (2.8,-4) {\tiny $\ev$}; \node at (2,-5.5) {\tiny $\omega$};
\node at (0,.5) {\tiny$1$};
\node at (2,.5) {\tiny$2$};
\node at (2.5,-3) {\tiny$3$}; 
\end{tikzpicture}
\raisebox{1cm}{\tiny $=\Delta (1\otimes \ev\tau)(\lambda\otimes 1)(1\otimes\omega)(\mu\otimes 1) = (1\otimes \ev\tau)[((\Delta\otimes 1)\lambda\mu)\otimes 1](1\otimes1\otimes\omega)$.} 
\end{center}

Similarly, the \underline{second term} gives rise to a single term 
\begin{center}
\begin{tikzpicture}[scale=.25]
\draw (0,0) -- (1,-1) -- (2,0);
\draw (1,-1) -- (1,-2);
\draw (0,-3) -- (1,-2) -- (2,-3);
\node at (0,.7) {\tiny$\Delta$};
\draw (2,-3.2) -- (2,-3.7); \draw[>-<]  (2,-3.4)   -- (2,-4.8); \draw (2,-4.5) -- (2,-5); \node at (2.8,-4) {\tiny $\ev$}; \node at (2,-5.5) {\tiny $\omega$};
\node at (-.5,-.5) {\tiny$1$};
\node at (2.5,-.5) {\tiny$2$};
\node at (2.5,-3) {\tiny$3$}; 
\end{tikzpicture}
\raisebox{1cm}{\begin{minipage}{10cm}\tiny \begin{align*}&=(1\otimes\ev\tau)(\lambda\otimes 1)(1\otimes\omega)[\mu(\Delta\otimes 1)\otimes 1]\\ & = -(1\otimes \ev\tau)[\lambda\mu(\Delta\otimes 1)](1\otimes1\otimes\omega).\end{align*}\end{minipage}} 
\end{center}

For the \underline{third term}, the output of the first product can land in either $A$ or $A^\vee[1]$, and so it gives rise to the sum
\begin{center}
\begin{tikzpicture}[scale=.25]
\draw (1,0) -- (1,-1);
\draw (-1,-3) -- (1,-1) -- (2,-2);
\node at (1.2,.7) {\tiny$\Delta$};
\draw (2,-2.2) -- (2,-2.7); \draw[>-<]  (2,-2.4)   -- (2,-3.8); \draw (2,-3.5) -- (2,-4); \node at (2.8,-3) {\tiny $\ev$}; \node at (2,-4.5) {\tiny $\omega$};
\node at (.5,0) {\tiny$2$};
\node at (2.5,-2) {\tiny$3$}; 
\draw (-1,-3) -- (2,-6) -- (5,-3);
\draw (2,-6) -- (2,-9);
\node at (5,-2.5) {\tiny$1$};
\node at (8,-3) {$-$};

\pgftransformxshift{14cm}
\draw (1,3) -- (1,0) -- (5,-3) -- (1,0) -- (-1,-2);
\draw (-1,-2.2) -- (-1,-2.7); \draw[>-<]  (-1,-2.4)   -- (-1,-3.8); \draw (-1,-3.5) -- (-1,-4); \node at (0,-3) {\tiny $\ev$}; \node at (-0.2,-3.7) {\tiny $\omega$};
\draw (-1,-4.2) -- (-1,-4.7); \draw[<->]  (-1,-4.4)   -- (-1,-5.8); \draw (-1,-5.5) -- (-1,-6); \node at (0.5,-5.2) {\tiny $\coev$}; \node at (-0.3,-4.4) {\tiny $s$};
\draw (-1,-6) -- (1,-8) -- (3,-6) -- (1,-8) -- (1,-10);
\draw (1,-10.2) -- (1,-10.7); \draw[>-<]  (1,-10.4)   -- (1,-11.8); \draw (1,-11.5) -- (1,-12); \node at (2,-11) {\tiny $\ev$}; \node at (1.8,-11.7) {\tiny $\omega$};
\node at (1,3.5) {\tiny $1$};
\node at (3.5,-5.5) {\tiny $\Delta$};
\node at (2.5,-5.7) {\tiny $2$};
\node at (1.5,-10) {\tiny $3$};
\node at (8,-3) {$=$};
\node at (11,-3) {$-$};

\pgftransformxshift{16cm}
\draw (1,0) -- (1,-2) -- (3,-4); 
\draw (1,-2) -- (-1,-4) -- (-3,-2) -- (-1,-4) -- (-1,-6);
\draw (3,-4.2) -- (3,-4.7); \draw[>-<]  (3,-4.4)   -- (3,-5.8); \draw (3,-5.5) -- (3,-6); \node at (3.8,-5) {\tiny $\ev$}; \node at (3,-6.5) {\tiny $\omega$};
\node at (1.2,.7) {\tiny$\Delta$};
\node at (-3,-1.5) {\tiny$1$};
\node at (.5,0) {\tiny$2$};
\node at (3.5,-4) {\tiny$3$}; 
\node at (8,-3) {$-$};

\pgftransformxshift{12cm}
\draw (1,0) -- (1,-2) -- (-1,-4) -- (-1,-6);
\draw (1,-2) -- (3,-4) -- (5,-2) -- (3,-4) -- (3,-6);
\draw (3,-6.2) -- (3,-6.7); \draw[>-<]  (3,-6.4)   -- (3,-7.8); \draw (3,-7.5) -- (3,-8); \node at (3.8,-7) {\tiny $\ev$}; \node at (3,-8.5) {\tiny $\omega$};
\node at (1,.5) {\tiny $1$};
\node at (5.5,-1.5) {\tiny $\Delta$};
\node at (4.5,-1.7) {\tiny $2$};
\node at (3.5,-6) {\tiny $3$};

\end{tikzpicture}
\end{center}
Here the individual terms on left and right hand side can be described explicitly as
{\tiny \begin{align*}&\mu   
\big([(1\otimes\ev\tau)(\lambda\otimes 1)(1\otimes\omega)]\otimes 1\big)
(\Delta\otimes 1 \otimes 1)(1\otimes\tau)(\tau\otimes 1)
\\ & = -(1\otimes \ev\tau)\big([(\mu\otimes 1)(1\otimes\lambda)(1\otimes\Delta)]\otimes 1\big)(1\otimes1\otimes\omega).\end{align*}}
and
{\tiny \begin{align*}
-(\ev\otimes 1)(1\otimes&\lambda)(\omega\otimes 1) 
\Big[
\big(
s(1\otimes\ev\tau)(1\otimes\mu\otimes1)(\tau\coev\otimes 1\otimes 1)(1\otimes\omega)
\big) 
\otimes 1 \Big]\\
& (\Delta \otimes 1\otimes 1)(1\otimes\tau)(\tau\otimes 1)
 = - (1\otimes\ev\tau)\big([(1\otimes\mu)(\lambda\otimes 1)(1\otimes\Delta)]\otimes1\big)(1\otimes 1 \otimes\omega).
\end{align*}}

In the second computation we use that $\tau\lambda=-\lambda$ and we also get a minus sign from inverting the order of $\omega$ and $\Delta$. These two signs cancel each other, so that the overall sign remains unchanged. 
Using the infinitesimal relation we finally find that the third term is equal to 
\begin{tikzpicture}[scale=.25]
\draw (0,0) -- (1,-1) -- (2,0);
\draw (1,-1) -- (1,-2);
\draw (0,-3) -- (1,-2) -- (2,-3);
\node at (-2,-1.5) {$-$};
\node at (2,.7) {\tiny$\Delta$};
\draw (2,-3.2) -- (2,-3.7); \draw[>-<]  (2,-3.4)   -- (2,-4.8); \draw (2,-4.5) -- (2,-5); \node at (2.8,-4) {\tiny $\ev$}; \node at (2,-5.5) {\tiny $\omega$};
\node at (-.5,-.5) {\tiny$1$};
\node at (2.5,-.5) {\tiny$2$};
\node at (2.5,-3) {\tiny$3$}; 
\end{tikzpicture}
\raisebox{1cm}{\begin{minipage}{10cm}
$=-(1\otimes\ev\tau)([\lambda\mu(1\otimes\Delta)]\otimes 1)(1\otimes1\otimes \omega)$.
\end{minipage}}

The \underline{fourth term} gives rise to the sum 
\begin{center}
\begin{tikzpicture}[scale=.2]
\node at (-3,-3) {$-$};
\draw (1,0) -- (1,-2) -- (-1,-4) -- (-1,-6);
\draw (1,-2) -- (3,-4) -- (5,-2) -- (3,-4) -- (3,-6);
\draw (-1,-6.2) -- (-1,-6.7); \draw[>-<]  (-1,-6.4)   -- (-1,-7.8); \draw (-1,-7.5) -- (-1,-8); \node at (-.2,-7) {\tiny $\ \ev$}; \node at (-.2,-8) {\tiny $\, \omega$};
\draw (-1,-8.2) -- (-1,-9); \draw[<-] (-1,-9) -- (-1,-10); \node at (1.5,-10) {\tiny $\ -s\Delta^\vee\omega$};
\node at (1,.7) {\tiny $1$};
\node at (5,-1.3) {\tiny $2$};
\node at (-1.7,-6) {\tiny $3$};
\node at (8,-3) {$-$};

\pgftransformxshift{14cm}
\draw (1,0) -- (1,-2) -- (3,-4); 
\draw (1,-2) -- (-1,-4) -- (-3,-2) -- (-1,-4) -- (-1,-6);
\draw (-1,-6.2) -- (-1,-6.7); \draw[>-<]  (-1,-6.4)   -- (-1,-7.8); \draw (-1,-7.5) -- (-1,-8); \node at (-.2,-7) {\tiny $\ \ev$}; \node at (-.2,-8) {\tiny $\, \omega$};
\draw (-1,-8.2) -- (-1,-9); \draw[<-] (-1,-9) -- (-1,-10); \node at (1.5,-10) {\tiny $\ -s\Delta^\vee\omega$};
\node at (-3,-1.3) {\tiny$1$};
\node at (.5,0) {\tiny$2$};
\node at (-1.7,-6) {\tiny $3$};
\node at (7,-3) {$=$};
\node at (10,-3) {$-$};

\pgftransformxshift{12cm}
\draw (0,0) -- (2,-2) -- (4,0);
\draw (2,-2) -- (2,-4) -- (0,-6) -- (2,-4) -- (4,-6);
\draw (0,-6.2) -- (0,-6.7); \draw[>-<]  (0,-6.4)   -- (0,-7.8); \draw (0,-7.5) -- (0,-8); \node at (0.8,-7) {\tiny $\ \ev$}; \node at (0.8,-8) {\tiny $\, \omega$};
\draw (0,-8.2) -- (0,-9); \draw[<-] (0,-9) -- (0,-10); \node at (2.5,-10) {\tiny $\ -s\Delta^\vee\omega$};
\node at (-0.5,0) {\tiny$1$};
\node at (4.5,0) {\tiny$2$};
\node at (-0.7,-6) {\tiny $3$};
\node at (7,-3) {$=$};
\node at (10,-3) {$-$};

\pgftransformxshift{12cm}
\draw (0,0) -- (2,-2) -- (4,0);
\draw (2,-2) -- (2,-4) -- (0,-6) -- (2,-4) -- (4,-6);
\draw (0,-7.2) -- (0,-7.7); \draw[>-<]  (0,-7.4)   -- (0,-8.8); \draw (0,-8.5) -- (0,-9); \node at (0.8,-8) {\tiny $\ \ev$}; \node at (0.8,-9) {\tiny $\, \omega$};
\node at (0,-6.6) {\tiny$\Delta$};
\node at (-0.5,0) {\tiny$1$};
\node at (4.5,0) {\tiny$2$};
\node at (-0.7,-6) {\tiny $3$};
\node at (7,-3) {$=$};
\node at (10,-3) {$+$};

\pgftransformxshift{12cm}
\draw (0,0) -- (2,-2) -- (4,0);
\draw (2,-2) -- (2,-4) -- (0,-6) -- (2,-4) -- (4,-6);
\draw (4,-7.2) -- (4,-7.7); \draw[>-<]  (4,-7.4)   -- (4,-8.8); \draw (4,-8.5) -- (4,-9); \node at (4.8,-8) {\tiny $\ \ev$}; \node at (4.8,-9) {\tiny $\, \omega$};
\node at (4,-6.6) {\tiny$\Delta$};
\node at (-0.5,0) {\tiny$1$};
\node at (4.5,0) {\tiny$2$};
\node at (4.7,-6) {\tiny $3$};

\end{tikzpicture}
\end{center}
The first equality in the graphical representation is a consequence of the infinitesimal relation. The second equality is a consequence of $\ev(\omega(-s\Delta^\vee \omega)\otimes 1)=\ev(\omega\otimes 1)(1\otimes \Delta)$. The third equality is a consequence of the graded cocommutativity of $\lambda$. The outcome is 
$$
+(1\otimes\ev\tau)[(1\otimes\Delta)\mu\lambda](1\otimes1\otimes\omega).
$$

The \underline{fifth term} gives rise to the single term
\begin{center}
\begin{tikzpicture}[scale=.25]
\node at (-3,-1.5) {$-$};
\draw (0,0) -- (1,-1) -- (2,0);
\draw (1,-1) -- (1,-2);
\draw (0,-3) -- (1,-2) -- (2,-3);
\node at (1.5,-1.5) {\tiny$\Delta$};
\draw (2,-3.2) -- (2,-3.7); \draw[>-<]  (2,-3.4)   -- (2,-4.8); \draw (2,-4.5) -- (2,-5); \node at (2.8,-4) {\tiny $\ev$}; \node at (2,-5.5) {\tiny $\omega$};
\node at (0,.5) {\tiny$1$};
\node at (2,.5) {\tiny$2$};
\node at (2.5,-3) {\tiny$3$}; 
\end{tikzpicture}
\raisebox{1.1cm}{\tiny $=-(1\otimes \ev\tau)(\lambda\otimes 1)(1\otimes\omega)(\Delta\mu\otimes 1) =  (1\otimes \ev\tau)[(\lambda\Delta\mu)\otimes 1](1\otimes1\otimes\omega)$.} 
\end{center}
The sign change arises from inverting the order of $\omega$ and $\Delta$. 

The \underline{sixth term} gives rise to the sum
\begin{center}
\begin{tikzpicture}[scale=.25]
\node at (-3,-3) {$-$};
\draw (1,0) -- (1,-1);
\draw (-1,-3) -- (1,-1) -- (2,-2);
\node at (-1.5,-3) {\tiny$\Delta$};
\draw (2,-2.2) -- (2,-2.7); \draw[>-<]  (2,-2.4)   -- (2,-3.8); \draw (2,-3.5) -- (2,-4); \node at (2.8,-3) {\tiny $\ev$}; \node at (2,-4.5) {\tiny $\omega$};
\node at (.5,0) {\tiny$2$};
\node at (2.5,-2) {\tiny$3$}; 
\draw (-1,-3) -- (2,-6) -- (5,-3);
\draw (2,-6) -- (2,-9);
\node at (5,-2.5) {\tiny$1$};
\node at (8,-3) {$+$};

\pgftransformxshift{12cm}
\draw (1,3) -- (1,0) -- (5,-3) -- (1,0) -- (-1,-2);
\draw (-1,-2.2) -- (-1,-2.7); \draw[>-<]  (-1,-2.4)   -- (-1,-3.8); \draw (-1,-3.5) -- (-1,-4); \node at (0,-3) {\tiny $\ev$}; \node at (-0.2,-3.7) {\tiny $\omega$};
\draw (-1,-4.2) -- (-1,-5); \draw[<-] (-1,-5) -- (-1,-6); \node at (1.5,-5) {\tiny $-s\Delta^\vee\omega$};
\draw (-1,-6.2) -- (-1,-6.7); \draw[<->]  (-1,-6.4)   -- (-1,-7.8); \draw (-1,-7.5) -- (-1,-8); \node at (0.5,-7.2) {\tiny $\coev$}; \node at (-0.3,-6.4) {\tiny $s$};
\draw (-1,-8) -- (1,-10) -- (3,-8) -- (1,-10) -- (1,-12);
\draw (1,-12.2) -- (1,-12.7); \draw[>-<]  (1,-12.4)   -- (1,-13.8); \draw (1,-13.5) -- (1,-14); \node at (2,-13) {\tiny $\ev$}; \node at (1.8,-13.7) {\tiny $\omega$};
\node at (1,3.5) {\tiny $1$};
\node at (2.5,-7.7) {\tiny $2$};
\node at (1.5,-12) {\tiny $3$};
\node at (8,-3) {$=$};
\node at (11,-3) {$-$};

\pgftransformxshift{16cm}
\draw (1,0) -- (1,-2) -- (3,-4); 
\draw (1,-2) -- (-1,-4) -- (-3,-2) -- (-1,-4) -- (-1,-6);
\draw (3,-4.2) -- (3,-4.7); \draw[>-<]  (3,-4.4)   -- (3,-5.8); \draw (3,-5.5) -- (3,-6); \node at (3.8,-5) {\tiny $\ev$}; \node at (3,-6.5) {\tiny $\omega$};
\node at (0.5,-3.5) {\tiny$\Delta$};
\node at (-3,-1.5) {\tiny$1$};
\node at (.5,0) {\tiny$2$};
\node at (3.5,-4) {\tiny$3$}; 
\node at (8,-3) {$-$};

\pgftransformxshift{12cm}
\draw (1,0) -- (1,-2) -- (-1,-4) -- (-1,-6);
\draw (1,-2) -- (3,-4) -- (5,-2) -- (3,-4) -- (3,-6);
\draw (3,-6.2) -- (3,-6.7); \draw[>-<]  (3,-6.4)   -- (3,-7.8); \draw (3,-7.5) -- (3,-8); \node at (3.8,-7) {\tiny $\ev$}; \node at (3,-8.5) {\tiny $\omega$};
\node at (1,.5) {\tiny $1$};
\node at (2.5,-2.3) {\tiny $\Delta$};
\node at (4.5,-1.7) {\tiny $2$};
\node at (3.5,-6) {\tiny $3$};

\end{tikzpicture}
\end{center}
To transform the first term in the sum we use commutativity of $\mu$. Upon transforming the second term in the sum we pick up two minus signs, arising from inverting the order of $\Delta$ and $\lambda$, and from graded cocommutativity $\tau\lambda=-\lambda$. These cancel each other, so that the outcome is still a minus sign. The final result is 
$$
-(1\otimes\ev\tau)\Big[
(\mu \otimes 1)(1 \otimes \Delta \otimes 1)(1 \otimes \lambda)+ (1 \otimes \mu)(1 \otimes \Delta \otimes 1)(\lambda \otimes 1)
\Big]
(1\otimes 1\otimes\omega).
$$

The \underline{seventh term} gives rise to the sum 
\begin{center}
\begin{tikzpicture}[scale=.23]
\node at (-3,-3) {$+$};
\draw (1,0) -- (1,-2) -- (-1,-4) -- (-1,-6);
\draw (1,-2) -- (3,-4) -- (5,-2) -- (3,-4) -- (3,-6);
\draw (-1,-6.2) -- (-1,-6.7); \draw[>-<]  (-1,-6.4)   -- (-1,-7.8); \draw (-1,-7.5) -- (-1,-8); \node at (-.2,-7) {\tiny $\ev$}; \node at (-.2,-8) {\tiny $\omega$};
\node at (1,.7) {\tiny $1$};
\node at (5,-1.3) {\tiny $2$};
\node at (-1.7,-6) {\tiny $3$};
\node at (2.5,-2.3) {\tiny $\Delta$};
\node at (8,-3) {$+$};

\pgftransformxshift{14cm}
\draw (1,0) -- (1,-2) -- (3,-4); 
\draw (1,-2) -- (-1,-4);

\draw (-1,-4.2) -- (-1,-4.7); \draw[>-<]  (-1,-4.4)   -- (-1,-5.8); \draw (-1,-5.5) -- (-1,-6); \node at (-.2,-5) {\tiny $\ev$}; \node at (-.2,-6) {\tiny $\omega$};
\draw (-1,-6.2) -- (-1,-7); \draw[<-] (-1,-7) -- (-1,-8); \node at (1.5,-7) {\tiny $-s\Delta^\vee\omega$};
\draw (-1,-8.2) -- (-1,-8.7); \draw[<->]  (-1,-8.4)   -- (-1,-9.8); \draw (-1,-9.5) -- (-1,-10); \node at (0.5,-9.2) {\tiny $\coev$}; \node at (-0.3,-8.4) {\tiny $s$};

\draw (-1,-10) -- (-3,-12) -- (-5,-10) -- (-3,-12) -- (-3,-14);
\draw (-3,-14.2) -- (-3,-14.7); \draw[>-<]  (-3,-14.4)   -- (-3,-15.8); \draw (-3,-15.5) -- (-3,-16); \node at (-2.2,-15) {\tiny $\ev$}; \node at (-2.2,-16) {\tiny $\omega$};

\node at (-5,-9.3) {\tiny$1$};
\node at (.5,0) {\tiny$2$};
\node at (-3.7,-16) {\tiny $3$};
\node at (7,-3) {$=$};
\node at (10,-3) {$-$};

\pgftransformxshift{16cm}
\draw (1,0) -- (1,-2) -- (3,-4); 
\draw (1,-2) -- (-1,-4) -- (-3,-2) -- (-1,-4) -- (-1,-6);
\draw (3,-4.2) -- (3,-4.7); \draw[>-<]  (3,-4.4)   -- (3,-5.8); \draw (3,-5.5) -- (3,-6); \node at (3.8,-5) {\tiny $\ev$}; \node at (3,-6.5) {\tiny $\omega$};
\node at (0.5,-3.5) {\tiny$\Delta$};
\node at (-3,-1.3) {\tiny$2$};
\node at (.5,0) {\tiny$1$};
\node at (3.7,-4) {\tiny$3$}; 
\node at (8,-3) {$-$};

\pgftransformxshift{12cm}
\draw (1,0) -- (1,-2) -- (-1,-4) -- (-1,-6);
\draw (1,-2) -- (3,-4) -- (5,-2) -- (3,-4) -- (3,-6);
\draw (3,-6.2) -- (3,-6.7); \draw[>-<]  (3,-6.4)   -- (3,-7.8); \draw (3,-7.5) -- (3,-8); \node at (3.8,-7) {\tiny $\ev$}; \node at (3,-8.5) {\tiny $\omega$};
\node at (1,.7) {\tiny $2$};
\node at (2.5,-2.3) {\tiny $\Delta$};
\node at (4.5,-1.5) {\tiny $1$};
\node at (3.7,-6) {\tiny $3$};

\end{tikzpicture}
\end{center}

To transform the first term in the sum we use commutativity of $\mu$ and graded cocommutativity of $\lambda$. Upon transforming the second term in the sum we pick up three minus signs: one from $-s\Delta^\vee\omega$, another one from inverting the order of $\Delta$ and $\lambda$, and another one from graded cocommutativity $\tau\lambda=-\lambda$ (we also use commutativity $\mu\tau=\mu$, which does not introduce any new sign). The final result is 
$$
-(1\otimes\ev\tau)\Big[
(\mu \otimes 1)(1 \otimes \Delta \otimes 1)(1 \otimes \lambda)+ (1 \otimes \mu)(1 \otimes \Delta \otimes 1)(\lambda \otimes 1)
\Big] \tau 
(1\otimes 1\otimes\omega).
$$

To conclude, we sum up all these seven factors and we find precisely the 9-term relation for $\Delta$, $\lambda$ and $\mu$, precomposed with $1 \otimes 1 \otimes \omega$ and postcomposed with $1 \otimes \ev \tau$. 
\end{proof}


\bibliographystyle{abbrv}
\bibliography{000_bv}

\end{document}